\newtheorem{theorem}{Theorem}[section]
\newtheorem{corollary}[theorem]{Corollary}
\newtheorem{lemma}[theorem]{Lemma}
\newtheorem{proposition}[theorem]{Proposition}
\newtheorem{definition}[theorem]{Definition}
\newtheorem{remark}[theorem]{Remark}
\newtheorem{example}[theorem]{Example}
\newtheorem{conjecture}[theorem]{Conjecture}
\newcommand{\hooklongrightarrow}{\lhook\joinrel\longrightarrow}
\newcommand{\twoheadlongrightarrow}{\relbar\joinrel\twoheadrightarrow}
\newcommand{\us}{\upsilon}
\newcommand{\ra}{\rightarrow}
\newcommand{\lra}{\longrightarrow}
\newcommand{\ul}{\underline}
\newcommand{\bA}{\mathbb A}
\newcommand{\bC}{\mathbb C}
\newcommand{\bG}{\mathbb G}
\newcommand{\Q}{\mathbb Q}
\newcommand{\bR}{\mathbb R}
\newcommand{\Z}{\mathbb Z}
\newcommand{\cN}{\mathcal N}
\newcommand{\cL}{\mathcal L}
\newcommand{\co}{\mathcal O}
\newcommand{\cR}{\mathcal R}
\newcommand{\cH}{\mathcal H}
\newcommand{\cC}{\mathcal C}
\newcommand{\cS}{\mathcal S}
\newcommand{\cD}{\mathcal D}
\newcommand{\cI}{\mathcal I}
\newcommand{\cW}{\mathcal W}
\newcommand{\cM}{\mathcal M}
\newcommand{\cF}{\mathcal F}
\newcommand{\cV}{\mathcal V}
\newcommand{\cE}{\mathcal E}
\newcommand{\cG}{\mathcal G}
\newcommand{\cU}{\mathcal U}
\newcommand{\cP}{\mathcal P}
\newcommand{\fn}{\mathfrak n}
\newcommand{\fh}{\mathfrak h}
\newcommand{\fm}{\mathfrak{m}}
\newcommand{\ub}{\mathfrak b}
\newcommand{\fl}{\mathfrak l}
\newcommand{\fp}{\mathfrak p}
\newcommand{\ug}{\mathfrak g}
\newcommand{\fX}{\mathfrak X}
\newcommand{\fx}{\mathfrak x}
\newcommand{\ft}{\mathfrak t}
\newcommand{\fz}{\mathfrak z}
\newcommand{\fq}{\mathfrak q}
\newcommand{\sW}{\mathscr W}
\newcommand{\sL}{\mathscr L}
\DeclareMathOperator{\la}{\mathrm la}
\DeclareMathOperator{\GL}{\mathrm GL}
\DeclareMathOperator{\gr}{\mathrm gr}
\DeclareMathOperator{\Fil}{\mathrm Fil}
\DeclareMathOperator{\Gal}{\mathrm Gal}
\DeclareMathOperator{\Hom}{\mathrm Hom}
\DeclareMathOperator{\cris}{\mathrm cris}
\DeclareMathOperator{\rig}{\mathrm rig}
\DeclareMathOperator{\an}{\mathrm an}
\DeclareMathOperator{\Spec}{\mathrm Spec}
\DeclareMathOperator{\Frob}{\mathrm Frob}
\DeclareMathOperator{\Ind}{\mathrm Ind}
\DeclareMathOperator{\unr}{\mathrm unr}
\DeclareMathOperator{\Ker}{\mathrm Ker}
\DeclareMathOperator{\Spm}{\mathrm Spm}
\DeclareMathOperator{\Ima}{\mathrm Im}
\DeclareMathOperator{\SL}{\mathrm SL}
\DeclareMathOperator{\lalg}{\mathrm lalg}
\DeclareMathOperator{\cl}{\mathrm cl}
\DeclareMathOperator{\id}{\mathrm id}
\DeclareMathOperator{\dett}{\mathrm det}
\DeclareMathOperator{\alg}{\mathrm alg}
\DeclareMathOperator{\soc}{\mathrm soc}
\DeclareMathOperator{\Supp}{\mathrm Supp}
\DeclareMathOperator{\red}{\mathrm red}
\DeclareMathOperator{\wt}{\mathrm wt}
\DeclareMathOperator{\cfs}{\mathrm fs}
\DeclareMathOperator{\Norm}{\mathrm Norm}
\begin{document}\setcounter{tocdepth}{2}
\setcounter{secnumdepth}{3}
\title{Some results on locally analytic socle for $\GL_n(\Q_p)$}
\author{Yiwen Ding}
\address{Department of Mathematics, Imperial College London}
\email{y.ding@imperial.ac.uk}
\thanks{The author is supported by EPSRC grant EP/L025485/1.}
\maketitle
\begin{abstract}
     We study some closed rigid subspaces of the eigenvarieties, constructed by using the Jacquet-Emerton functor for parabolic non-Borel subgroups. As an application (and motivation), we prove some new results on Breuil's locally analytic socle conjecture for  $\GL_n(\Q_p)$.
\end{abstract}
\tableofcontents
\addtocontents{toc}{\protect\setcounter{tocdepth}{1}}

\section{Introduction}
 This note is devoted to prove some new results on Breuil's locally analytic socle conjecture for $\GL_n(\Q_p)$. We  recall the conjecture, summarize some results and sketch the proof  in $\GL_3(\Q_p)$ case  in the introduction.

Let $F$ be a quadratic  imaginary extension of $\Q$ with $p$ split in $F$, and we fix a place $u$ above $p$; let $G$ be a definite unitary group over $\Q$ associated to $F/\Q$ which is split at $p$. Let $E$ be a finite extension of $\Q_p$ sufficiently large, $U^p$ a compact open subgroup of $G(\bA^{\infty,p})$, put
\begin{equation*}
  \widehat{S}(U^p,E):=\{f: G(\Q)\backslash G(\bA^{\infty})/U^p \ra E\ |\ \text{$f$ is continuous}\},
\end{equation*}
which is a Banach space over $E$ equipped with a continuous action of $G(\Q_p)\cong \GL_3(\Q_p)$ (this isomorphism depends on the choice of $u$), and a continuous action of (commutative) Hecke algebra $\cH^p$ outside $p$. The action of $\cH^p$ commutes with that of $\GL_3(\Q_p)$. Let $\rho$ be a continuous representation of $\Gal(\overline{F}/F)$ over $E$ associated to automorphic forms of $G$, and we associate to $\rho$ a maximal ideal of $\cH^p$ (shrinking $\cH^p$ if needed). Suppose $(\widehat{S}(U^p,E)_{\lalg})^{\fm_\rho}\neq 0$, where ``$\lalg$" denotes the locally algebraic vectors for $\GL_3(\Q_p)$, $(\cdot)^{\fm_{\rho}}$ denotes the maximal $E$-vector space on which $\cH^p$ acts via $\cH^p \twoheadrightarrow \cH^p/\fm_{\rho}$. Put
\begin{equation*}
  \widehat{\Pi}(\rho):=\widehat{S}(U^p,E)^{\fm_{\rho}},
\end{equation*}
which is an admissible unitary Banach representation of $\GL_3(\Q_p)$, and is supposed to be (a direct sum) of the right representation corresponding to $\rho_p:=\rho|_{\Gal(\overline{F_u}/F_u)\cong \Gal(\overline{\Q_p}/\Q_p)}$ in $p$-adic Langlands programme (\cite{Br0}). The structure of $\widehat{\Pi}(\rho)$ is still quite mysterious. In \cite{Br13I}, Breuil made a conjecture on the locally analytic socle of $\widehat{\Pi}(\rho)$, which we recall now.

Suppose $\rho_p$ is crystalline and very regular (cf. Def.\ref{def: gln-fon}). Let $\ul{h}:=(h_1,h_2,h_3)\in \Z^{\oplus}$ (with $h_1<h_2<h_3$) be the Hodge-Tate weights of $\rho_p$, $\ul{\phi}:=(\phi_1,\phi_2,\phi_3)\in E^3$ be the (ordered) eigenvalues of the crystalline Frobenius $\varphi$ on $D_{\cris}(\rho_p)$. For any $w\in S_3$, put $w(\ul{\phi}):=(\phi_{w^{-1}(1)}, \phi_{w^{-1}(2)}, \phi_{w^{-1}(3)})$, which is called a refinement for $\rho_p$. The local Langlands correspondance thus associates to $\rho_p$ a locally algebraic representation of $\GL_3(\Q_p)$ over $E$:
\begin{equation*}
  C(1,w):=\big(\Ind_{\overline{B}(\Q_p)}^{\GL_3(\Q_p)} \psi_w \delta_B^{-1} \big)^\infty \otimes_E \sL(\lambda)
\end{equation*}
where $\lambda:=(-h_1,1-h_2,2-h_3)$ is a dominant weight for $T(\Q_p)$ \big(the group of diagonal matrices of $\GL_3(\Q_p)$\big), $\sL(\lambda)$ denotes the irreducible algebraic representation of $\GL_3(\Q_p)$ with highest weight $\lambda$, $\psi_w:=\unr(\phi_{w^{-1}(1)} p^{-2}) \otimes \unr(\phi_{w^{-1}(2)} p^{-1}) \otimes \unr(\phi_{w^{-1}(3)})$, and $\delta_B=\unr(p^{-2})\otimes 1 \otimes \unr(p^2)$.  Since $\rho_p$ is very regular, the representation $C(1,w)$ are all isomorphic and irreducible. For each refinement $w(\ul{\phi})$ of $\rho_p$, one can get a triangulation of $D_{\rig}(\rho_p)$ of parameter
\begin{multline*}
  \delta=(\delta_1, \delta_2, \delta_3)\\
  =\Big(\unr(\phi_{w^{-1}(1)})x^{-h_{w^{\alg}(w)^{-1}(1)}}, \unr(\phi_{w^{-1}(2)})x^{-h_{w^{\alg}(w)^{-1}(2)}}, \unr(\phi_{w^{-1}(3)})x^{-h_{w^{\alg}(w)^{-1}(3)}}\Big)
\end{multline*}
with $w^{\alg}(w)\in S_3$ (determined by $w$ and $\rho_p$).  Recall the refinement is called \emph{non-critical}, if $w^{\alg}(w)=1$. For each pair $(w^{\alg},w)\in S_3 \times S_3$, Breuil defined an irreducible locally analytic representation $C(w^{\alg},w)$ (cf. (\ref{equ: gln-ngaa})), and conjectured
\begin{conjecture}[$\text{\cite{Br13I}, \cite[Conj.5.3]{Br13II}}$]\label{conj: gln-cwn}
  For $(w^{\alg},w)\in S_3\times S_3$, $C(w^{\alg},w)$ is a subrepresentation of $\widehat{\Pi}(\rho)$ if and only if $w^{\alg}\leq w^{\alg}(w)$ for the Bruhat's ordering.
\end{conjecture}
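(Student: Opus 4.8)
The plan is to reformulate Conjecture~\ref{conj: gln-cwn} as a statement about \emph{companion points} on the eigenvariety $X$ attached to $\widehat{S}(U^p,E)$ via Emerton's Jacquet--Emerton functor $J_B$, and then to prove the two implications by rather different means. For the reformulation, by Emerton's adjunction together with the constituent structure of (``very regular'', hence irreducible-parameter) locally analytic principal series, a nonzero $\GL_3(\Q_p)$-equivariant map $C(w^{\alg},w)\hookrightarrow \widehat{\Pi}(\rho)^{\an}$ is equivalent to the occurrence of an explicit character $\chi(w^{\alg},w)$ of $T(\Q_p)$ (built from $\delta(w^{\alg},w)$ and $\delta_B$) inside $J_B\big(\widehat{S}(U^p,E)^{\an}\big)_{\fm_\rho}$, i.e.\ to a point $x(w^{\alg},w)\in X$ of Hecke eigensystem $\fm_\rho$ and $T$-parameter $\chi(w^{\alg},w)$; a separate and more formal argument with the socle filtration of the principal series is then needed to upgrade ``occurs as a subquotient'' to ``occurs as a subrepresentation''. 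The conjecture thus becomes: $x(w^{\alg},w)$ exists in $X$ if and only if $w^{\alg}\leq w^{\alg}(w)$.

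For the ``only if'' direction I would use that $X$ is supported on a closed subspace of the trianguline deformation space of $\rho_p$: by the local--global compatibility results of Liu and of Kedlaya--Pottharst--Xiao, the Galois representation carried by $x(w^{\alg},w)$ is trianguline of parameter $\delta(w^{\alg},w)$ and must be isomorphic to $\rho_p$. It then remains a purely local question about $(\varphi,\Gamma)$-modules: for which $w^{\alg}$ does the crystalline $D_{\rig}(\rho_p)$ admit a saturated triangulation of parameter $\delta(w^{\alg},w)$? Confronting the Frobenius flag of $D_{\cris}(\rho_p)$ determined by the refinement $w(\ul\phi)$ with the Hodge filtration --- i.e.\ reading off which jumps of the Hodge--Tate weights are compatible with weak admissibility of the successive rank-one and rank-two quotients --- should yield exactly $w^{\alg}\leq w^{\alg}(w)$, essentially by the definition of $w^{\alg}(w)$.

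The ``if'' direction is the substantive one, and is where the non-Borel parabolics enter. The case $w^{\alg}=1$ is classicality: $x(1,w)$ lies on $X$ for every refinement $w(\ul\phi)$, which follows from the usual accumulation/local-model argument on $X$ together with the $\GL_2(\Q_p)$ $p$-adic Langlands input. To reach a nontrivial $w^{\alg}\leq w^{\alg}(w)$, the idea is to choose a parabolic $P=LN\supsetneq B$ adapted to $w^{\alg}$ (so that, combinatorially, $w^{\alg}$ becomes a reflection inside a $\GL_2$-block of the Levi $L$), to form the closed rigid subspace $X_P$ of the $P$-eigenvariety cut out by $J_P\big(\widehat{S}(U^p,E)^{\an}\big)_{\fm_\rho}$, and to exploit the transitivity $J_B\cong J^{L}_{B\cap L}\circ J_P$: a point of $X$ lying over a suitable point of $X_P$ encodes a $\GL_2(\Q_p)$-companion-point problem inside $L$, which is solved by the known $\GL_2$ case, after which the $P$-eigenvariety point has to be lifted back to $X$ with the predicted $T$-parameter $\chi(w^{\alg},w)$. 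The hard part is precisely this lifting: one must control the fibres of the map $X\to X_P$, prove a partial-classicality statement ensuring the lift is ``non-critical in the $N$-direction'', and check that the parabolic-induction combinatorics produces $\delta(w^{\alg},w)$ on the nose rather than some $\delta(w',w)$ with $w'\neq w^{\alg}$. Iterating this over a chain of parabolics between $B$ and $\GL_3(\Q_p)$ would then produce all companion points indexed by $w^{\alg}\leq w^{\alg}(w)$, under the standing ``very regular'' hypothesis (Def.~\ref{def: gln-fon}).
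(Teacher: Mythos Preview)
The statement you are attempting to prove is a \emph{conjecture}; the paper does not prove it in full. What is established is the ``only if'' direction (due to Breuil, Thm.~\ref{thm: gln-sio}(1)) and the ``if'' direction \emph{only for $w^{\alg}$ of length $\leq 1$} (Thm.~\ref{thm: gln-wsn}). The cases $\lg(w^{\alg})\geq 2$ remain open.

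Your sketch of ``only if'' via global triangulation and weak admissibility is essentially Breuil's argument (recorded here as Prop.~\ref{prop: gln-enn}) and is fine in outline.

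For the ``if'' direction your plan diverges from the paper's. The paper does \emph{not} reduce to a $\GL_2$ companion problem inside the Levi. For each simple reflection $s$ it takes the parabolic $P_I$ with $s\notin\sW_I$, builds the closed subvariety $\cV(P_I,\lambda)$ via the subfunctor $J_{B,(P_I,\lambda)}$, and opposes two facts: (i) if $C(s,w)$ does not embed then the weight map is \'etale at the classical point $z_{\rho,w}$ (Thm.~\ref{thm: gln-tnn}, proved from the adjunction (\ref{equ: gln-pcfg}) and the two-step structure (\ref{equ: gln-w1t})); and (ii) if $w^{\alg}(w)\notin\sW_I$ then the weight map is \emph{not} \'etale there (Bergdall, Thm.~\ref{thm: gln-wnn}). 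Your Levi-reduction idea is not obviously equivalent: $J_P(\widehat S(U^p,E)_{\an})$ is an essentially admissible $L_P$-representation, but it is not a priori the completed cohomology of any Shimura datum for $\GL_2$, so the known $\GL_2$ companion-forms theorems do not apply to it directly.

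The decisive gap, however, is your final sentence: ``iterating this over a chain of parabolics \ldots\ would then produce all companion points''. All the tools in play---Bergdall's tangent-space computation, the \'etaleness criterion via infinitesimal classicality, the accumulation of classical points---require the base point to be \emph{classical} with dominant weight. The companion point $x(s,w)$ you have just produced has weight $s\cdot\lambda$, which is not dominant, so none of these arguments can be rerun there to climb from $s$ to $ss'$. This is precisely why the paper stops at simple reflections, and why in the closing $\GL_4$ example the author isolates an explicit $w^{\alg}$ that the method cannot reach.
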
Roughly speaking, $\soc\widehat{\Pi}(\rho)$ would (conjecturally) measure the \emph{criticalness} of $\rho_p$.
In \cite{Br13II}, Breuil proved (some results were also obtained in \cite{BCho14})
\begin{theorem}[cf. $\text{\cite[Thm.1.2]{Br13II}}$]\label{thm: gln-sio}
  (1) If $C(w^{\alg},w)$ is a subrepresentation of $\widehat{\Pi}(\rho)$, then $w^{\alg}\leq w^{\alg}(w)$.

  (2) If $w^{\alg}(w)\neq 1$, then there exists $w^{\alg}\neq 1$, such that $C(w^{\alg},w)$ is a subrepresentation of $\widehat{\Pi}(\rho)$.
\end{theorem}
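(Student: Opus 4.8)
Both parts are extracted from the eigenvariety $\cE$ attached to the locally analytic Jacquet module $J_B\bigl(\widehat S(U^p,E)^{\an}\bigr)$ localised at $\fm_\rho$, together with the local--global compatibility attaching to every point of $\cE$ above $\fm_\rho$ the fixed crystalline, very regular representation $\rho_p$ with a refinement and a triangulation of $D_{\rig}(\rho_p)$. The dictionary between a subrepresentation $C(w^{\alg},w)\hookrightarrow\widehat\Pi(\rho)^{\an}$, a finite slope $T(\Q_p)$-eigencharacter occurring in $J_B\bigl(\widehat\Pi(\rho)^{\an}\bigr)$, and a point of $\cE$ with prescribed refinement is provided by Emerton's adjunction between $J_B$ and locally analytic parabolic induction, together with the Orlik--Strauch description of the Jordan--H\"older constituents of locally analytic principal series of $\GL_3(\Q_p)$; under this dictionary the refinement records the smooth part of the eigencharacter and $w^{\alg}$ records its algebraic part, with $w^{\alg}=1$ corresponding to the locally algebraic situation.

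For (1): suppose $C(w^{\alg},w)\hookrightarrow\widehat\Pi(\rho)^{\an}$. Computing $J_B$ on the Orlik--Strauch presentation of $C(w^{\alg},w)$, one gets a finite slope eigencharacter $\chi$ in $J_B\bigl(\widehat\Pi(\rho)^{\an}\bigr)$ of smooth part $w(\ul\phi)$ and algebraic part attached to $w^{\alg}$, hence a point $z\in\cE$ with eigencharacter $\chi$. Through local--global compatibility, $z$ produces a triangulation of $D_{\rig}(\rho_p)$ refining $w(\ul\phi)$ in which the Hodge--Tate weights are distributed among the graded pieces according to $w^{\alg}$ (at the integral level, up to controlled $t$-torsion). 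As $\rho_p$ is crystalline, the possible such distributions are exactly those allowed by the relative position of its Hodge filtration against the $\varphi$-stable flag on $D_{\cris}(\rho_p)$ cut out by $w(\ul\phi)$; this relative position is by definition measured by $w^{\alg}(w)$, and a direct computation with saturated sub-$(\varphi,\Gamma)$-modules over the Robba ring of $\Q_p$ (explicit for $\GL_3$) shows the distribution $w^{\alg}$ occurs only when $w^{\alg}\leq w^{\alg}(w)$.

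For (2): assume $w^{\alg}(w)\neq1$. Since $\rho_p$ is crystalline and very regular, its crystalline Frobenius eigenvalues are pairwise distinct, so the refinement $w(\ul\phi)$ is accessible and saturation inside $D_{\rig}(\rho_p)$ produces a triangulation refining $w(\ul\phi)$ whose parameter has algebraic part attached to $w^{\alg}(w)$, hence non-dominant. By the realisation of such modular crystalline triangulations on the eigenvariety -- the companion point attached to the accessible refinement $w(\ul\phi)$ -- there is a point $z_w\in\cE$ above $\fm_\rho$ whose eigencharacter $\chi_{z_w}$ has this non-dominant algebraic part. Emerton's adjunction then yields a nonzero $\GL_3(\Q_p)$-equivariant map $f$ from the locally analytic principal series $\bigl(\Ind_{\overline B(\Q_p)}^{\GL_3(\Q_p)}\chi_{z_w}\bigr)^{\an}$ (suitably normalised) into $\widehat\Pi(\rho)^{\an}$; because $\chi_{z_w}$ has non-dominant algebraic part, the socle of this principal series is genuinely locally analytic, i.e.\ a sum of $C(w^{\alg},w)$ with $w^{\alg}\neq1$. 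Combining the existence of $f$ with the structure of $\widehat\Pi(\rho)^{\an}$ -- its locally algebraic part being an isotypic sum of $C(1,1)$ -- and with the companion-point structure of $\cE$ at $z_w$ (a multiplicity property of $\chi_{z_w}$ in $J_B\bigl(\widehat\Pi(\rho)^{\an}\bigr)$), one forces the socle of the image of $f$ to be genuinely locally analytic, so some $C(w^{\alg},w)$ with $w^{\alg}\neq1$ embeds into $\widehat\Pi(\rho)$.

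The main obstacle in (1) is the $p$-adic Hodge theoretic input: to determine precisely which distributions of the Hodge--Tate weights over the graded pieces of an integral triangulation of $D_{\rig}(\rho_p)$ refining $w(\ul\phi)$ can occur, and to see these are exactly the $w^{\alg}\leq w^{\alg}(w)$; one must also check that $C(w^{\alg},w)$ really contributes the expected eigencharacter, since some Jordan--H\"older constituents of locally analytic principal series have degenerate Jacquet modules. The main obstacle in (2) is twofold: first, producing the companion point $z_w$ over the non-dominant weight, which is not formal and rests on the global fact that modular crystalline triangulations are seen by $\cE$; second, upgrading the mere existence of the adjunction map $f$ to the existence of a genuinely locally analytic \emph{sub}representation of $\widehat\Pi(\rho)$, which is the delicate point since a priori the image of $f$ could collapse onto the locally algebraic constituent, and controlling it requires the socle structure of the critical principal series together with multiplicity-one information in $J_B\bigl(\widehat\Pi(\rho)^{\an}\bigr)$.
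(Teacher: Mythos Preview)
Your outline for part (1) is essentially Breuil's approach in \cite{Br13II}: an embedding $C(w^{\alg},w)\hookrightarrow\widehat\Pi(\rho)_{\an}$ produces, via $J_B$ and \cite[Cor.~3.4]{Br13I}, a point on the eigenvariety whose associated triangulation of $D_{\rig}(\rho_p)$ forces $w^{\alg}(\ul h)\leq w^{\alg}(w)(\ul h)$ (this is Prop.~\ref{prop: gln-enn} here, i.e.\ \cite[Prop.~9.2]{Br13II}), which for $\GL_3$ is equivalent to $w^{\alg}\leq w^{\alg}(w)$.

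Your strategy for part (2), however, has a genuine circularity. You write that ``saturation inside $D_{\rig}(\rho_p)$ produces a triangulation refining $w(\ul\phi)$ whose parameter has algebraic part attached to $w^{\alg}(w)$'' and that ``by the realisation of such modular crystalline triangulations on the eigenvariety \dots\ there is a point $z_w\in\cE$ above $\fm_\rho$ whose eigencharacter $\chi_{z_w}$ has this non-dominant algebraic part.'' But these are two different objects: the parameter of the triangulation of $\rho_p$ has algebraic part $w^{\alg}(w)\cdot\lambda$, whereas the eigencharacter of the \emph{classical} point $z_w$ on $\cE$ is $\psi_w\chi_\lambda$ with \emph{dominant} $\lambda$ (see (\ref{equ: gln-sp1})). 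The existence of a point on $\cE$ with non-dominant algebraic part $w^{\alg}\cdot\lambda$, $w^{\alg}\neq 1$, is exactly the existence of a companion point, which by adjunction is equivalent to $C(w^{\alg},w)\hookrightarrow\widehat\Pi(\rho)_{\an}$ --- precisely what you are trying to prove. Your final paragraph flags this step as ``not formal'', but it is not merely non-formal: it is the entire content of (2), and nothing in your outline supplies it.

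The actual argument in \cite{Br13II} runs by contraposition through \'etaleness of the weight map, and is the template for the proof of Thm.~\ref{thm: gln-3dG} in this paper. One shows (Chenevier's method, as in Prop.~\ref{prop: gln-hlt}) that if \emph{no} $C(w^{\alg},w)$ with $w^{\alg}\neq 1$ embeds, then the generalised $(T(\Q_p),\cH^{S,u})$-eigenspace in $J_B$ coincides with its locally algebraic part, which forces $\cE\to\widehat{T^o}$ to be \'etale at the classical point $z_w$. On the other hand, Bergdall's tangent-space computation \cite[Thm.~B]{Bergd14} (Thm.~\ref{thm: gln-tvI0} here with $I=\emptyset$) shows that if $w^{\alg}(w)\neq 1$ then this map is \emph{not} \'etale at $z_w$. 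The contradiction yields (2). Note that no companion point with non-dominant weight is ever constructed directly; its existence is deduced from the failure of \'etaleness.
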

In particular, when $\lg w^{\alg}(w)\leq 1$, the conjecture \ref{conj: gln-cwn} was proved. In the $\GL_n$ case, one needs to put more global hypothesis to get Thm.\ref{thm: gln-sio} (2), and in general one only gets a weaker version of Thm.\ref{thm: gln-sio} (1) (cf. \cite[Thm.1.2]{Br13II}). The main result of this note (in $\GL_3(\Q_p)$ case) is the following theorem which improves Thm.\ref{thm: gln-sio} (2).
\begin{theorem}[cf. Thm.\ref{thm: gln-3dG}, Cor.\ref{cor: gln-ang}]\label{thm: gln-wsn}
  Let $s\in S_3$ be a simple reflection (i.e. $s\in \Delta:=\{(12), (23)\}$), then $C(s,w)$ is a subrepresentation of $\widehat{\Pi}(\rho)$ if and only if $s\leq w^{\alg}(w)$. In particular, if $\lg w^{\alg}(w)\geq 2$, then $\oplus_{s\in \Delta} C(s,w)$ is a subrepresentation of $\widehat{\Pi}(\rho)$.
\end{theorem}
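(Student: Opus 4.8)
The ``only if'' direction is nothing but Theorem \ref{thm: gln-sio}(1), so the content is the converse: given a simple reflection $s=s_\alpha\in\Delta$ with $s\leq w^{\alg}(w)$, one must embed $C(s,w)$ into $\widehat{\Pi}(\rho)$. The plan is to replace the Borel eigenvariety and the dimension argument underlying Theorem \ref{thm: gln-sio}(2) by the eigenvariety $\cE_P$ attached, via the Jacquet--Emerton functor $J_P$ for a \emph{non-Borel} parabolic, to the maximal parabolic $P=P_\alpha\supseteq B$ whose Levi $L_P$ has $\alpha$ as its unique simple root; thus $L_P(\Q_p)$ is, up to the ordering of its blocks, $\GL_2(\Q_p)\times\GL_1(\Q_p)$. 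The crucial point is that the companion phenomenon detected by the simple reflection $s$ takes place entirely in the $\GL_2$-factor of $L_P$, where it is governed by the (well-understood) theory for the Coleman--Mazur eigencurve; $\cE_P$ is precisely the mechanism transporting that two-dimensional input back to $\widehat{\Pi}(\rho)$.

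First I would make explicit the internal structure of $C(s_\alpha,w)$: it is an irreducible locally analytic representation built from a locally analytic parabolic induction $\big(\Ind_{\overline{P}}^{\GL_3(\Q_p)}\pi_{L_P}\big)^{\an}$, where $\pi_{L_P}$ is the tensor product of a smooth character on the $\GL_1$-block with a locally analytic representation on the $\GL_2$-block whose unique locally algebraic Jordan--H\"older factor, after full induction, recovers the locally algebraic representation $C(1,w)$ --- so $\pi_{L_P}$ is exactly an ``$\alpha$-companion'' of the $P$-part of the datum attached to $w$. Emerton's adjunction for $J_P$, in the locally analytic setting, then identifies $\Hom_{\GL_3(\Q_p)}\!\big(C(s_\alpha,w),\widehat{\Pi}(\rho)\big)$ with a space of $L_P(\Q_p)$-equivariant maps out of (a suitable subquotient of) $\pi_{L_P}$ into $J_P\big(\widehat{\Pi}(\rho)_{\an}\big)$; isolating the $\GL_2$-companion constituent reduces this to the existence of a single eigenvector, i.e.\ to the assertion that $\cE_P$ passes through the point $z_s$ carrying the prescribed $s$-twisted weight in the $\GL_2$-block and the prescribed Hecke eigensystem.

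To produce $z_s\in\cE_P$, note that $\cE_P$ already contains the ``classical'' point $z_0$ coming from $\widehat{\Pi}(\rho)_{\lalg}$, namely the image under $J_P$ of the classical point of the Borel eigenvariety determined by the refinement $w(\ul{\phi})$ (using $J_B=J_{\overline{B}\cap L_P}\circ J_P$). The point $z_s$ we need differs from $z_0$ only inside the $\GL_2$-factor: it corresponds to the two-dimensional subquotient of $D_{\rig}(\rho_p)$ cut out by the $\alpha$-step of the triangulation of parameter $\delta$, equipped with the weight twisted by $s$. The hypothesis $s\leq w^{\alg}(w)$ says precisely that this two-dimensional subquotient sits in the \emph{critical} range of the $\GL_2$ theory, and since $\rho_p$ is crystalline and very regular it is moreover sufficiently generic (in particular not split) for the companion-point criterion; hence by the companion-point results for $\GL_2(\Q_p)$ --- Breuil's $\GL_2$ analysis together with the Breuil--Emerton criterion and its interpolation over the eigencurve --- and by the compatibility of $J_P$ with passage to the $\GL_2$-block, the point $z_s$ lies on $\cE_P$. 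Feeding $z_s$ back through the adjunction of the previous paragraph yields a nonzero map $C(s_\alpha,w)\to\widehat{\Pi}(\rho)$, which is injective because $C(s_\alpha,w)$ is irreducible; hence $C(s_\alpha,w)\hookrightarrow\widehat{\Pi}(\rho)$.

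For the final clause: if $\ell\big(w^{\alg}(w)\big)\geq 2$ then in $S_3$ we have $w^{\alg}(w)\in\{(12)(23),\,(23)(12),\,w_0\}$, and each of these elements dominates both $(12)$ and $(23)$ for the Bruhat order; so by the above $C(s,w)$ embeds in $\widehat{\Pi}(\rho)$ for each $s\in\Delta$, and as these two representations are non-isomorphic irreducible subrepresentations they meet only in $0$, whence $\oplus_{s\in\Delta}C(s,w)\hookrightarrow\widehat{\Pi}(\rho)$. The step I expect to be the main obstacle is the passage between the representation-theoretic and eigenvariety sides in the two middle paragraphs: identifying exactly which subquotient of which parabolic induction $C(s_\alpha,w)$ is, verifying that Emerton's adjunction is available for the non-Borel $P$ and for genuinely locally analytic (not just locally algebraic) vectors, and above all transporting the $\GL_2$ companion-point statement to $\cE_P$ uniformly in families --- which requires controlling $\cE_P$ near $z_0$ (e.g.\ knowing that the relevant component is reduced, of the expected dimension, and governed by weight space together with the eigencurve) so as to guarantee that $z_s$ genuinely lies on $\cE_P$ rather than only in an infinitesimal neighborhood. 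A secondary point is local--global compatibility, ensuring that the Galois-theoretic content of $s\leq w^{\alg}(w)$ --- saturation of the $\alpha$-step of the triangulation --- is exactly what the companion-point criterion on $\cE_P$ requires.
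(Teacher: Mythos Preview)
Your proposal is not the paper's approach and has a genuine gap at the decisive step.

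\textbf{Choice of parabolic.} You take $P=P_\alpha$ with $s=s_\alpha\in\sW_{L_P}$, hoping to detect the $s$-companion inside the $\GL_2$-block of the Levi. The paper does the opposite: for the simple reflection $s$ it uses the maximal proper parabolic $P_I$ with $s\notin\sW_I$ (so for $s=(23)$ one takes $L_{P_I}=\GL_2\times\GL_1$, $\sW_I=\{1,(12)\}$). The point of this choice is that the generalized Verma module $\text{U}(\ug)\otimes_{\text{U}(\overline{\fp}_I)}\sL_I(-\lambda)$ then has exactly the two constituents $\sL(-\lambda)$ and $\sL(s\cdot(-\lambda))$ (cf.\ (\ref{equ: gln-gspi})), so the adjunction formula (\ref{equ: gln-pcfg}) for the subfunctor $J_{B,(P_I,\lambda)}$ produces a representation with only $C(1,w)$ and $C(s,w)$ as constituents (cf.\ (\ref{equ: gln-w1t})). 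This is precisely what pins $w^{\alg}$ down to $s$, which was the missing information in Theorem~\ref{thm: gln-sio}(2).

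\textbf{Mechanism.} The paper does not reduce to $\GL_2$ companion points. It runs an \'etaleness/non-\'etaleness dichotomy on the closed subspace $\cV_{I,\lambda_0}^{\chi_0}(U^u,W_p^u)$ over $\widehat{Z_{L_I}^o}$. On one side (Prop.~\ref{prop: gln-hlt}): if $C(s,w)\not\hookrightarrow\widehat{\Pi}(\rho)$, the two-term structure above forces the generalized eigenspace of $J_{B,(P_I,\lambda)}$ at $(\chi_w,\fh_\rho)$ to be locally algebraic, and Chenevier's argument then gives \'etaleness of $\kappa$ at $z_{\rho,w}$. On the other side (Thm.~\ref{thm: gln-tvI0}): Bergdall's tangent-space constraint shows that $w^{\alg}(w)\notin\sW_I$ forces $\kappa$ to be non-\'etale at $z_{\rho,w}$. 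Since $s\le w^{\alg}(w)$ with $s\notin\sW_I$ implies $w^{\alg}(w)\notin\sW_I$, the contradiction yields $C(s,w)\hookrightarrow\widehat{\Pi}(\rho)$.

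\textbf{The gap in your route.} The step you flag as the ``main obstacle'' is indeed a gap, not a technicality. The Breuil--Emerton companion-point criterion is proved for the $\GL_2$ eigencurve, with its globally attached $2$-dimensional Galois representation; there is no mechanism in the literature letting you apply it to the rank-$2$ subquotient of $D_{\rig}(\rho_p)$ cut out by a step of the triangulation, nor to $J_P(\widehat{\Pi}(\rho)_{\an})$ as a representation of the Levi's $\GL_2$-block. Moreover, Breuil's adjunction \cite[Thm.~4.3]{Br13II} requires the inducing datum on $L_P$ to be a smooth representation tensored with an algebraic one; your $\pi_{L_P}$ is genuinely locally analytic, so the identification you invoke between $\Hom_{\GL_3(\Q_p)}(C(s_\alpha,w),\widehat{\Pi}(\rho))$ and $L_P$-maps out of $\pi_{L_P}$ into $J_P$ is not supplied by that theorem. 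The paper's choice of the \emph{other} parabolic, combined with the subfunctor $J_{B,(P_I,\lambda)}$ and Bergdall's result, is exactly what sidesteps both of these problems.
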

Let's remark that the general $\GL_n(\Q_p)$ case is more subtle (beside the global hypothesis), but we do prove that if $\lg w^{\alg}(w)\geq 2$, there exist more than one $w^{\alg}$ such that $C(w^{\alg},w)\hookrightarrow \widehat{\Pi}(\rho)$ (see Cor.\ref{cor: gln-nmf}).

The proof of Thm.\ref{thm: gln-wsn} follows the same strategy of \cite{Br13II}, i.e. using results on the geometry of the eigenvariety (due to Bergdall \cite{Bergd14}, Chenevier \cite{Che}) and locally analytic representation theory (adjunction formulas due to Breuil \cite{Br13II})  to prove the existence of companion points (on the eigenvariety), which would correspond to irreducible components of $\soc \widehat{\Pi}(\rho)$. While, a key idea in this note, inspired by the adjunction formula \cite[Thm.4.3]{Br13II} (see also \cite[Rem.9.11 (ii)]{Br13II}),  is to \emph{locate} some companion points by considering closed subspaces of the eigenvariety constructed via the Jacquet-Emerton functor for parabolic non-Borel subgroups. In fact, such closed subspaces were already constructed by Hill  and Loeffler \cite{HL} (although their motivation was rather different from ours).

We sketch the proof of Thm.\ref{thm: gln-wsn} for $s=(23)$ and discuss some intermediate results. We keep the notations.
\subsection*{Jacquet-Emerton functors}Let $P\supset B$ with the Levi subgroup $L_P=\GL_2\times \GL_1$ (the case $s=(12)$ would use the other maximal parabolic proper subgroup). Denote by $\sL_P(\lambda)$ the irreducible algebraic representation of $L_P$ with highest weight $\lambda$, for an admissible locally analytic representation $V$ of $\GL_3(\Q_p)$, we put (cf. \cite{HL}, and \S \ref{thm: gln-wsn})
\begin{equation*}
 J_{B,(P,\lambda)}(V):= J_{B\cap L_P}\big(\big(J_P(V)\otimes_E \sL_P(\lambda)'\big)_{\infty} \otimes_E \sL_P(\lambda)\big)
\end{equation*}
where ``$(\cdot)_{\infty}$" denotes the smooth vectors for $\GL_2(\Q_p)$ which acts on $J_P(V)\otimes_E \sL_P(\lambda)'$ via $\GL_2 \hookrightarrow \GL_2\times \GL_1\cong L_P$, $\sL_P(\lambda)'$ denotes the algebraic dual of $\sL_P(\lambda)$. In fact, $J_{B,(P,\lambda)}(V)$ is a closed subrepresentation  (of $T(\Q_p)$) of the usual Jacquet-Emerton module $J_B(V)$, and thus is an essentially admissible locally analytic representation of $T(\Q_p)$. We would use the subfunctor $J_{B,(P,\lambda)}(\cdot)$ of $J_{B}(\cdot)$ to construct a closed subspace of the eigenvariety. The adjunction property for $J_{B,(P,\lambda)}(\cdot)$, which we discuss below, would allow us to get some nice properties of such closed subspace (cf. Thm.\ref{thm: gln-aen}, \ref{thm: gln-tnn}).

\subsection*{Adjunction formulas}Suppose $V$ is moreover very strongly admissible  (cf. \cite[Def.0.12]{Em2}), we have an adjunction formula (cf. Thm.\ref{thm: gln-ycn}) (obtained by combining Breuil's adjunction formula \cite[Thm.4.3]{Br13II} and the adjunction formula for the classical Jacquet functor):
  \begin{multline}\label{equ: gln-pcfg}
    \Hom_{\GL_3(\Q_p)}\Big(\cF_{\overline{P}}^{G}\Big( \big(\text{U}(\ug) \otimes_{\text{U}(\overline{\fp})}\sL_P(\lambda)'\big)^{\vee}, \big(\Ind_{\overline{B}(\Q_p)\cap L_{P}(\Q_p)}^{L_{P}(\Q_p)} \psi \otimes_E \delta_{B}^{-1}\big)^{\infty}\Big), V\Big)
    \\
    \xlongrightarrow{\sim} \Hom_{T(\Q_p)}\big( \psi\otimes_E \chi_{\lambda}, J_{B,(P,\lambda)}(V)\big),
  \end{multline}
   where $\psi$ is a finite length smooth representation of $T(\Q_p)$ over $E$,  $\chi_{\lambda}$ denotes the algebraic character of $T(\Q_p)$ with weight $\lambda$ and we refer to \S \ref{def: gln-ryr} for the representations $\cF_{\overline{P}}^G(\cdot,\cdot)$ etc.; meanwhile, recall that for $J_B(V)$, by \cite[Thm.4.3]{Br13II}, one has
  \begin{equation}\label{equ: gon-ebgg}
    \Hom_{\GL_3(\Q_p)}\Big(\cF_{\overline{B}}^{G}\Big( \big(\text{U}(\ug) \otimes_{\text{U}(\overline{\ub})}(-\lambda)\big)^{\vee}, \psi\otimes_E \delta_B^{-1}\Big), V\Big)
    \xlongrightarrow{\sim} \Hom_{T(\Q_p)}\big( \psi\otimes_E \chi_{\lambda}, J_B(V)\big).
  \end{equation}
 Note the generalized Verma module $\text{U}(\ug) \otimes_{\text{U}(\overline{\fp})}\sL_P(\lambda)'$ has $2$ irreducible components, while the Verma module $\text{U}(\ug) \otimes_{\text{U}(\overline{\ub})}(-\lambda)$ has $6$ irreducible components \big(with $\text{U}(\ug) \otimes_{\text{U}(\overline{\fp})}\sL_P(\lambda)'$ as a quotient\big), consequently the locally analytic representation (in (\ref{equ: gln-pcfg}))
 \begin{equation}\label{equ: gln-fPgg}\cF_{\overline{P}}^{G}\Big( \big(\text{U}(\ug) \otimes_{\text{U}(\overline{\fp})}\sL_P(\lambda)'\big)^{\vee}, \big(\Ind_{\overline{B}(\Q_p)\cap L_{P}(\Q_p)}^{L_{P}(\Q_p)} \psi \otimes_E \delta_{B}^{-1}\big)^{\infty}\Big),\end{equation}
 as a quotient of $\cF_{\overline{B}}^{G}\Big( \big(\text{U}(\ug) \otimes_{\text{U}(\overline{\ub})}(-\lambda)\big)^{\vee}, \psi\otimes_E \delta_B^{-1}\Big)$, has much fewer irreducible components. For example,  when $\psi=\psi_{w}$, then the representation in (\ref{equ: gln-fPgg}) has the form (where the line denotes an extension)
 \begin{equation}\label{equ: gln-w1t}
   C(s,w) \ \rule[3pt]{7mm}{.4pt}\  C(1,w);
 \end{equation}
while,  $\cF_{\overline{B}}^{G}\Big( \big(\text{U}(\ug) \otimes_{\text{U}(\overline{\ub})}(-\lambda)\big)^{\vee}, \psi_w\delta_B^{-1}\Big)$ has the form
 \begin{equation}\label{equ: gln-0ws}
  \begindc{\commdiag}[40]
    \obj(2,0)[a]{$C(ss',w)$}
    \obj(4,0)[b]{$C(s',w)$}
    \obj(0,1)[c]{$C(s'ss',w)$}
    \obj(6,1)[d]{$C(1,w)$}
    \obj(2,2)[e]{$C(s's,w)$}
    \obj(4,2)[f]{$C(s,w)$}
    \mor{a}{b}{}[+1,2]
    \mor{a}{f}{}[+1,2]
     \mor{a}{c}{}[+1,2]
     \mor{b}{d}{}[+1,2]
     \mor{b}{e}{}[+1,2]
     \mor{d}{f}{}[+1,2]
     \mor{f}{e}{}[+1,2]
     \mor{e}{c}{}[+1,2]
  \enddc
  \end{equation}
  where $s'$ denotes the simple reflection different from $s$. The adjunction property (\ref{equ: gln-pcfg}) is somehow the key point of this note.
   %(indeed, the difference between (\ref{equ: gln-pcfg}) and (\ref{equ: gon-ebgg}) leads to our improvement Thm.\ref The difference between the adjunction properties of $J_{B,(P,\lambda)}(\cdot)$ and $J_B(\cdot)$  is somehow the key point of this note.

\subsection*{Eigenvariety and closed subspaces} Consider $J_{B,(P,\lambda)}\big(\widehat{S}(K^p,U)_{\an}\big)$, which is an essentially admissible locally analytic representation of $T(\Q_p)$ equipped moreover with a continuous action of $\cH^p$. Following Emerton, one gets a rigid space $\cV(P,\lambda)$ over $E$ such that there exists a bijection
\begin{equation*}
  \cV(P,\lambda)(\overline{E}) \xlongrightarrow{\sim} \Big\{(\chi, \fh)\in \widehat{T}(\overline{E}) \times \Spec \cH^p(\overline{E})\ \big|\ \big(J_{B,(P,\lambda)}\big(\widehat{S}(K^p,U)_{\an}\big)\otimes_E \overline{E}\big)^{T(\Q_p)=\chi,\cH^p=\fh}\neq 0\Big\}.
\end{equation*}
Indeed, $\cV(P,\lambda)$ is a closed subspace of the general eigenvariety $\cV$ \big(constructed from $J_B\big(\widehat{S}(U^p,E)_{\an}\big)$\big). By the definition of $J_{B,(P,\lambda)}(\cdot)$, one easily sees $(\chi_1\otimes \chi_2 \otimes \chi_3,\fh)\in \cV(P,\lambda)(\overline{E})$ implies that $\wt(\chi_1)=-h_1$, $\wt(\chi_2)=1-h_2$. However, the rigid space $\cV(P,\lambda)$ would be more subtle than the closed subspace, denoted by $\cV(\lambda)$, of $\cV$ lying above the corresponding weight space.
%\big(a helpful analogue of the relation  between $\cV(P,\lambda)$ and $\cV(\lambda)$ would be that  between \emph{classical} modular forms of weight $k\in \Z_{\geq 2}$ and finite slope overconvergent modular forms of (fixed) weight $k$\big).
One has
\begin{theorem}[cf. Thm.\ref{thm: gln-vv0i}]\label{thm: gln-aen}
  The classical points are Zariski-dense in $\cV(P,\lambda)$.
\end{theorem}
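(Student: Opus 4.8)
The plan is to realize $\cV(P,\lambda)$ as an eigenvariety in the sense of Emerton and then to run the standard density argument of Chenevier \cite{Che} (as used by Bergdall \cite{Bergd14}) for it. Note that, although classical points are Zariski-dense in the full eigenvariety $\cV$, this does \emph{not} formally pass to the closed subspace $\cV(P,\lambda)$, so a direct argument is required. By construction $\cV(P,\lambda)$ is the support of the coherent sheaf on $\widehat{T}\times\Spec\cH^p$ attached, following \cite{Em2}, to the essentially admissible $T(\Q_p)$-representation $M:=J_{B,(P,\lambda)}\big(\widehat{S}(U^p,E)_{\an}\big)$ together with its commuting $\cH^p$-action (recall $M$ is a closed $T(\Q_p)$-subrepresentation of $J_B(\widehat{S}(U^p,E)_{\an})$). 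It then suffices to prove: (a) $\cV(P,\lambda)$ is equidimensional of dimension $1$; (b) a small-slope classicality criterion on it.

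For (a) one first identifies the correct weight space: since $J_{B,(P,\lambda)}$ forces $\wt(\chi_1)=-h_1$ and $\wt(\chi_2)=1-h_2$, the only invariant moving in $p$-adic families is $\wt(\chi_3)$, so the weight map exhibits $\cV(P,\lambda)$ over the one-dimensional weight space $\cW_{P,\lambda}$ (essentially $\widehat{\Z_p^\times}$, a disjoint union of open disks), the remaining data --- the smooth parts of $\chi_1,\chi_2$, the value $\chi_3(p)$, and the Hecke eigenvalues --- being finite-slope (``spectral'') directions. The compact operators $U_t$, $t\in T(\Q_p)$ suitably dominant, entering Emerton's construction restrict to compact operators on the closed subspace $M\subset J_B(\widehat{S}(U^p,E)_{\an})$, so $\cV(P,\lambda)$ maps finitely into the Fredholm hypersurface in $\cW_{P,\lambda}\times\bA^1$ cut out by the characteristic series of such a $U_t$. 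As that hypersurface is equidimensional over the reduced, equidimensional curve $\cW_{P,\lambda}$, the eigenvariety formalism gives that $\cV(P,\lambda)$ is equidimensional of dimension $1$; moreover each slope-$\le s$ part $\cV^{\le s}(P,\lambda)$ is finite over $\cW_{P,\lambda}$ --- so each of its (one-dimensional) irreducible components dominates a union of the disks constituting $\cW_{P,\lambda}$ --- and $\cV(P,\lambda)=\bigcup_s\cV^{\le s}(P,\lambda)$.

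For (b) the adjunction formula (\ref{equ: gln-pcfg}) is the key tool. Call a point $z=(\chi,\fh)\in\cV(P,\lambda)$ \emph{classical} if $\chi$ is locally algebraic and dominant --- equivalently, if $\wt(\chi_3)$ lies in the dominant range, the first two weights being automatically $\lambda$-compatible. For such a $z$, the adjunction (\ref{equ: gln-pcfg}) (suitably twisted in the $\GL_1$-direction to accommodate the varying third weight) produces a nonzero $\GL_3(\Q_p)$-equivariant map from a locally algebraic representation of the type (\ref{equ: gln-fPgg}) into $\widehat{S}(U^p,E)_{\an}$; by the small-slope classicality criterion for these definite unitary groups (cf.\ \cite{Em2}, \cite{Br13II}), if the $U_p$-slope at $z$ is below the numerical classicality bound attached to the weight, then this map factors through $\widehat{S}(U^p,E)_{\lalg}$, so $z$ corresponds to a genuine classical automorphic form.

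Granting (a) and (b), the conclusion follows as in \cite{Che}. Fix $s$ and let $C'$ be an irreducible component of $\cV^{\le s}(P,\lambda)$: by (a) it dominates a union of the disks constituting $\cW_{P,\lambda}$, so its image contains infinitely many dominant locally algebraic weights whose classicality bound exceeds $s$; over any such weight every point of $\cV^{\le s}(P,\lambda)$ has slope $\le s$, hence is classical by (b), so the classical points of $C'$ map onto a Zariski-dense subset of the disks dominated by $C'$ and therefore --- $C'$ being irreducible of dimension $1$ --- form a Zariski-dense subset of $C'$. Thus classical points are Zariski-dense in each $\cV^{\le s}(P,\lambda)$, hence in $\cV(P,\lambda)=\bigcup_s\cV^{\le s}(P,\lambda)$. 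I expect the main obstacle to be step (a) together with the setup of (b): one must verify that the Fredholm/compactness structure underlying Emerton's machine genuinely survives the passage to the ``hybrid'' functor $J_{B,(P,\lambda)}$ --- where the $\GL_2$-direction is already smooth while only the $\GL_1$-direction is truly overconvergent --- so that both the equidimensionality and the small-slope classicality criterion (through (\ref{equ: gln-pcfg})) hold. Granting these structural inputs, the density argument is routine.
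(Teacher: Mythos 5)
Your proposal follows essentially the same three-step route as the paper's actual proof (Thm.~\ref{thm: gln-vv0i}, via Prop.~\ref{prop: gln-0lni} and Thm.~\ref{thm: gln-enf}): (i) realize $\cV(P,\lambda)$ as an eigenvariety from the Fredholm/compactness structure worked out in \S\ref{sec: gln-2.4}, so that it is equidimensional and locally finite over the relevant weight space; (ii) prove a small-slope classicality criterion using the adjunction formula of Thm.~\ref{thm: gln-ycn} --- the paper makes this numerically precise in Thm.~\ref{thm: gln-enf}/Prop.~\ref{prop: gln-ngi}, the key technical input being Lem.~\ref{lem: gln-ssn}, which shows that the non-locally-algebraic constituents $\cF_{\overline{P}_{I_w}}^{\GL_n}\big(\sL(w\cdot(-\wt(\chi))),\pi_w\big)$ admit no invariant lattice once the slope bound holds; (iii) conclude by Chenevier's accumulation argument (the paper also records that the relevant slope function $(\chi,\fh)\mapsto \us(\chi(\beta_{a_j}(p)))$ is locally constant, which is needed to run (iii)). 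One small slip worth flagging: the weight space is $\widehat{Z_{L_P}^o}$, of dimension $\dim Z_{L_P}=n-|\Delta_P|$ ($=2$ in the $\GL_3$ case of the introduction), not $1$. The body's functor $(\cdot)_0$ takes smooth vectors for $L_P^{\cD}=\SL_2$ only, so the $\GL_2$-determinant direction of the weight still varies; the introduction's informal phrase ``smooth vectors for $\GL_2(\Q_p)$'', which you took at face value, pins down both $\wt(\chi_1)$ and $\wt(\chi_2)$ and therefore overstates the constraint. This miscount does not invalidate your argument, since the Chenevier-style density argument and the accumulation of dominant locally algebraic weights with large classicality bound work in any weight-space dimension, but the correct dimension should be used when invoking Prop.~\ref{prop: gln-0lni}.
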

Thus one can view (the reduced subspace of) $\cV(P,\lambda)$ as the Zariski closure of the classical points in $\cV(\lambda)$. To prove Thm.\ref{thm: gln-aen}, one needs a classicality criterion for closed points in $\cV(P,\lambda)$  proved in  \S \ref{sec: 3.2.1} using (\ref{equ: gln-pcfg}), which is stronger than the general classicality criterion for the points in $\cV$ (e.g. see \cite[\S 4.7.3]{Che}). Indeed, the general classicality criterion for points in $\cV$ seems \emph{not} enough for the density of classical points in $\cV(P,\lambda)$ since certain weights for  $\cV(P,\lambda)$ are fixed.

\subsection*{Some \'etaleness results}Denote by $\fh_{\rho}: \cH^p\ra \cH^p/\fm_{\rho}\ra E$, one can associate to $(\rho,w)$ a classical point $z_{\rho,w}:=(\chi_{w}, \fh_{\rho})$ of $\cV(P,\lambda)$, where $\chi_w=\psi_w \chi_{\lambda}$. Consider the composition $\kappa: \cV(P,\lambda)\ra \widehat{T}\xrightarrow{p_3} \widehat{\Q_p^{\times}} \ra \widehat{\Z_p^{\times}}$, where $p_3$ denotes the projection to the third factor. One has (compare the second statement with \cite[Thm.4.8]{Che11})
\begin{theorem}[cf. Prop.\ref{prop: gln-hlt}, Thm.\ref{thm: gln-tv0} and the proof of Thm.\ref{thm: gln-3dG}]\label{thm: gln-tnn}
  If $C(s,w)$ is not a subrepresentation of $\widehat{\Pi}(\rho)_{\an}$, then $\kappa$ is \'etale at  $z_{\rho,w}$. Consequently (by Thm.\ref{thm: gln-sio} (1)), if $w^{\alg}(w)\in \{1,(12)\}$ (which is in fact the Weyl group of $L_P$), then $\kappa$ is \'etale at $z_{\rho,w}$.
\end{theorem}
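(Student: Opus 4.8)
The plan is to compute the tangent space of $\cV(P,\lambda)$ at $z_{\rho,w}$ relative to $\widehat{\Z_p^\times}$ and, using the adjunction formula (\ref{equ: gln-pcfg}), to show it vanishes when $C(s,w)$ is not a subrepresentation of $\widehat{\Pi}(\rho)_{\an}$. First, since $z_{\rho,w}$ is classical, the classical points of $\cV(\lambda)$ accumulate at $z_{\rho,w}$ with $h_3$ varying $p$-adically, so by Thm.\ref{thm: gln-aen} the point $z_{\rho,w}$ lies on a $1$-dimensional component of $\cV(P,\lambda)$; moreover $\cV(P,\lambda)$ is finite over $\widehat{\Z_p^\times}$ near $z_{\rho,w}$, so $\dim_{z_{\rho,w}}\cV(P,\lambda)=1$, and $\kappa$ is nonconstant there. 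Hence it suffices to prove that the relative tangent space $T_{z_{\rho,w}}\big(\cV(P,\lambda)/\widehat{\Z_p^\times}\big)$ is zero: then $\dim_E T_{z_{\rho,w}}\cV(P,\lambda)\le 1=\dim_{z_{\rho,w}}\cV(P,\lambda)$, so $\cV(P,\lambda)$ is smooth of dimension $1$ at $z_{\rho,w}$, the differential of $\kappa$ is an isomorphism there, and $\kappa$ is étale at $z_{\rho,w}$.

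By Emerton's construction of $\cV(P,\lambda)$ out of the essentially admissible representation $J_{B,(P,\lambda)}\big(\widehat{S}(U^p,E)_{\an}\big)$ of $T(\Q_p)\times\cH^p$, an $E[\epsilon]/(\epsilon^2)$-point of $\cV(P,\lambda)$ above $z_{\rho,w}$ and above the constant point of $\widehat{\Z_p^\times}$ is a first-order deformation $(\widetilde\chi,\widetilde\fh)$ of $(\chi_w,\fh_\rho)$ together with a lift of the associated eigenvector. On $\cV(P,\lambda)$ the weights of $\chi_1,\chi_2$ are identically $-h_1,1-h_2$, and constancy of $\kappa$ pins that of $\chi_3$, so $\widetilde\chi=\chi_w\cdot(1+\epsilon u)$ for an unramified additive character $u\colon T(\Q_p)\to E$, while $\widetilde\fh=\fh_\rho+\epsilon v$ for a tangent vector $v$ of $\Spec\cH^p$ at $\fh_\rho$; let $\widetilde\psi$ be the corresponding self-extension of the smooth character $\psi_w$ (with its $\cH^p$-action) cut out by $(u,v)$. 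Such a deformation is exactly an embedding of $\widetilde\psi\otimes_E\chi_\lambda$ into $J_{B,(P,\lambda)}\big(\widehat{S}(U^p,E)_{\an}\big)$ lifting the $\chi_w$-eigenvector of $z_{\rho,w}$, and applying (\ref{equ: gln-pcfg}) with $\psi=\widetilde\psi$ (recall $\widehat{S}(U^p,E)_{\an}$ is very strongly admissible) turns it into a $\GL_3(\Q_p)$-morphism $\widetilde f\colon\widetilde X\to\widehat{S}(U^p,E)_{\an}$, where
\[
  \widetilde X\ :=\ \cF_{\overline{P}}^{G}\Big(\big(\text{U}(\ug)\otimes_{\text{U}(\overline{\fp})}\sL_P(\lambda)'\big)^{\vee},\ \big(\Ind_{\overline{B}(\Q_p)\cap L_P(\Q_p)}^{L_P(\Q_p)}\widetilde\psi\,\delta_B^{-1}\big)^{\infty}\Big),
\]
and $\widetilde f$ restricts, on the subobject $X:=\cF_{\overline{P}}^G\big(\dots,(\Ind\psi_w\delta_B^{-1})^{\infty}\big)$ of $\widetilde X$ (attached to $\psi_w\subset\widetilde\psi$), to the morphism $f_0$ corresponding under (\ref{equ: gln-pcfg}) to the eigenvector of $z_{\rho,w}$. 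As $\cF_{\overline{P}}^G(\dots,-)$ is exact and $\cH^p$-linear, $\widetilde X$ is the self-extension of $X$ by $X$ attached to $\widetilde\psi$, and $T_{z_{\rho,w}}\big(\cV(P,\lambda)/\widehat{\Z_p^\times}\big)$ is identified with the space of $(u,v)$ for which $f_0$ extends over $X\hookrightarrow\widetilde X$, i.e.\ for which $[\widetilde X]$ dies under $(f_0)_*\colon\Ext^1_{\GL_3(\Q_p)}(X,X)\to\Ext^1_{\GL_3(\Q_p)}\big(X,\widehat{S}(U^p,E)_{\an}\big)$.

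By (\ref{equ: gln-w1t}), $X$ has exactly the two constituents $C(s,w)$ and $C(1,w)$, only $C(1,w)$ being locally algebraic, and $\cH^p$-exactness of $\cF_{\overline{P}}^G(\dots,-)$ shows the $\fm_\rho$-torsion part of $\widetilde X$ is the subobject $X$. A socle/radical analysis of $\widetilde X$ and of $f_0$ — using Breuil's explicit description of the $C(w^{\alg},w)$ and the control that (\ref{equ: gln-pcfg}) gives on which characters of $T(\Q_p)$ occur in $J_{B,(P,\lambda)}$ — then gives the bound
\[
  \dim_E T_{z_{\rho,w}}\big(\cV(P,\lambda)/\widehat{\Z_p^\times}\big)\ \leq\ \dim_E\Hom_{\GL_3(\Q_p)}\big(C(s,w),\widehat{\Pi}(\rho)_{\an}\big):
\]
indeed, from any nonzero $(u,v)$ in the relative tangent space one extracts, via $\widetilde f$ and the $\fm_\rho$-torsion bookkeeping, a nonzero morphism $C(s,w)\to\widehat{S}(U^p,E)_{\an}[\fm_\rho]=\widehat{\Pi}(\rho)_{\an}$, which is an embedding since $C(s,w)$ is irreducible. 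Hence if $C(s,w)$ is not a subrepresentation of $\widehat{\Pi}(\rho)_{\an}$, the relative tangent space vanishes and, by the first paragraph, $\kappa$ is étale at $z_{\rho,w}$. For the last assertion: if $w^{\alg}(w)\in\{1,(12)\}$ then $s=(23)\not\leq w^{\alg}(w)$ for the Bruhat order, so Thm.\ref{thm: gln-sio}(1) shows $C(s,w)$ is not a subrepresentation of $\widehat{\Pi}(\rho)$, a fortiori not of $\widehat{\Pi}(\rho)_{\an}$, and $\kappa$ is étale at $z_{\rho,w}$.

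The main obstacle is to make the last two paragraphs precise — i.e.\ to show the infinitesimal structure of $\cV(P,\lambda)$ at $z_{\rho,w}$ is faithfully described by morphisms from $\widetilde X$ into $\widehat{S}(U^p,E)_{\an}$, and then to carry out the socle/radical analysis of the self-extension $\widetilde X$. It is essential here to use $J_{B,(P,\lambda)}$ rather than the full $J_B$: pinning the weights of $\chi_1,\chi_2$ forces $\widetilde X$ (and its special fibre $X$) to have far fewer constituents than the analogue $\cF_{\overline{B}}^G(\dots)$ of (\ref{equ: gln-0ws}), which is exactly what makes it possible to recover an honest subrepresentation $C(s,w)$ of $\widehat{\Pi}(\rho)_{\an}$ — not just a subquotient — from a would-be extra tangent direction. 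Controlling the socle of $\widetilde X$ while tracking the $\cH^p$-action so as to land in $\widehat{S}(U^p,E)_{\an}[\fm_\rho]$ is the delicate point, and the adjunction (\ref{equ: gln-pcfg}) is what makes it tractable; this is the content of Prop.\ref{prop: gln-hlt} and Thm.\ref{thm: gln-tv0}.
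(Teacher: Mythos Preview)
Your approach differs substantially from the paper's, and the gap you flag in your last paragraph is real and not what Prop.~\ref{prop: gln-hlt} and Thm.~\ref{thm: gln-tv0} actually address.

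The paper does \emph{not} compute the relative tangent space of $\cV(P,\lambda)$ directly. Instead it proves an ``infinitesimal classicality'' statement: if $C(s,w)$ is not a subrepresentation of $\widehat\Pi(\rho)_{\an}$, then the \emph{entire} generalized eigenspace
\[
J_{B,(P,\lambda)}\big(\widehat S(U^p,E)_{\an}\big)^{T^o=\chi_w}[T(\Q_p)=\chi_w,\cH^p=\fh_\rho]
\]
consists of locally algebraic vectors. This follows by applying the adjunction formula (\ref{equ: gln-pcfg}) to this whole generalized eigenspace (not just to a length-two $\widetilde\psi$): every resulting map from a representation of shape (\ref{equ: gln-w1t}) into $\widehat S(U^p,E)_{\an}[\cH^p=\fh_\rho]$ must kill the socle $C(s,w)$ and hence factor through the locally algebraic quotient. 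Once this equality of generalized eigenspaces holds, étaleness is deduced by Chenevier's argument \cite[Thm.~4.8]{Che11}, which is a global argument using accumulation of classical points satisfying the same equality together with \emph{multiplicity one} (this is where the hypotheses ``$n\le 3$ or $F/F^+$ unramified, $G$ quasi-split, $U_v$ hyperspecial'' enter). No direct tangent-space or $\Ext^1$ computation is performed.

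The specific step that fails in your argument is the identification in your second paragraph: an $E[\epsilon]/(\epsilon^2)$-point of $\cV(P,\lambda)$ over the constant point of $\widehat{\Z_p^\times}$ is \emph{not} in general the same as an embedding $\widetilde\psi\otimes\chi_\lambda\hookrightarrow J_{B,(P,\lambda)}(\widehat S_{\an})$ lifting a chosen eigenvector. The eigenvariety is the relative $\Spec$ of the image of $\co(\widehat T)\otimes\cH^p$ in the endomorphisms of the coherent sheaf $\cM$, so a tangent vector is an $E[\epsilon]$-valued character of that image algebra; producing from it a lifted eigenvector in $\cM$ requires some freeness of $\cM$ over the local ring of $\cV(P,\lambda)$ at $z_{\rho,w}$, which you do not have a priori. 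Establishing exactly this kind of control is what Chenevier's argument achieves indirectly, via multiplicity one at nearby classical points and local freeness of $\cM$ over the weight space --- and it genuinely uses the global hypotheses. So Prop.~\ref{prop: gln-hlt} and Thm.~\ref{thm: gln-tv0} follow the generalized-eigenspace route and invoke \cite[Thm.~4.8]{Che11}; they do not carry out the $\widetilde X$/socle analysis you sketch.
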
To prove this theorem, as in \cite{Br13II} (e.g. see the proof of \cite[Thm.9.10]{Br13II}), we use the adjunction formula (\ref{equ: gln-pcfg}) %(note that Breuil used  the adjunction formula (\ref{equ: gon-ebgg})  in \emph{loc. cit.})
and the same arguments in the proof of \cite[Thm.4.8]{Che11}.
Indeed, applying the adjunction formula (\ref{equ: gln-pcfg})
%(resp. (\ref{equ: gon-ebgg}))
to  the generalized eigenspace $J_{B,(P,\lambda)}(\widehat{S}(U^p,E)_{\an})[T(\Q_p)=\chi_w,  \cH^p=\fh_{\rho}]$
%\big(resp. $J_B(\widehat{S}(U^p,E)_{\an})[T(\Q_p)=\chi_w, \fm_{\rho}]$\big)
then using (\ref{equ: gln-w1t}),
%(resp. (\ref{equ: gln-0ws}))
it's not difficult to  show that if $C(s,w)$
%\big(resp. $C(w',w)$ for any $w'\neq 1$\big)
is not a subrepresentation of $\widehat{\Pi}(\rho)$, then
\begin{equation*}
  J_{B,(P,\lambda)}(\widehat{S}(U^p,E)_{\an})[T(\Q_p)=\chi_w, \cH^p=\fh_{\rho}]=J_{B,(P,\lambda)}(\widehat{S}(U^p,E)_{\lalg})[T(\Q_p)=\chi_w, \cH^p=\fh_{\rho}]
\end{equation*}
%\begin{equation*}
 % \Big(\text{resp. } J_B(\widehat{S}(U^p,E)_{\an})[T(\Q_p)=\chi_w, \fm_{\rho}]=J_B(\widehat{S}(U^p,E)_{\lalg}[T(\Q_p)=\chi_w,\fm_{\rho}]\Big),
%\end{equation*}
(such equality might be viewed as an infinitesimal classicality result), from which one can deduce the \'etaleness result for $\cV(\lambda,P)$
%(resp. for $\cV$)
by the same argument as in the proof of \cite[Thm.4.8]{Che11}.

% in \cite{Br13II}, it was showed that if $C(w',w)$ is not a subrepresentation of $\widehat{\Pi}(\rho)_{\an}$ for all $w'\neq 1$, then the whole eigenvariety $\cV$ is \'etale over the weight space at $z_{\rho,w}$.
Conversely, we have the following theorem due to Bergdall (cf. \cite[Thm.B]{Bergd14}).
\begin{theorem}[cf. Thm.\ref{thm: gln-tvI0}] \label{thm: gln-wnn}If $w^{\alg}(w)\notin \{1,(12)\}$, then $\kappa$ is not \'etale at $z_{\rho,w}$.
  \end{theorem}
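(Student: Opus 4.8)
The statement to prove (Theorem \ref{thm: gln-wnn}) asserts that if $w^{\alg}(w)\notin\{1,(12)\}$, then the map $\kappa:\cV(P,\lambda)\to\widehat{\Z_p^\times}$ is not \'etale at $z_{\rho,w}$. This is attributed to Bergdall's \cite[Thm.~B]{Bergd14}, so my plan is to reduce the assertion to his result on the failure of \'etaleness of the weight map on an eigenvariety at non-(de Rham-)classical or ``critical'' points, then transport it across the closed immersion $\cV(P,\lambda)\hookrightarrow\cV$.

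\textbf{Step 1: Identify the local geometric picture.} First I would recall that near $z_{\rho,w}$ the eigenvariety $\cV$ (hence its closed subspace $\cV(P,\lambda)$) is, by Chenevier's construction and the theory of trianguline deformations (Bella\"iche--Chenevier, Kisin), governed by the trianguline deformation space of $D_{\rig}(\rho_p)$ with its refinement $w(\ul\phi)$. The relevant invariant is the algebraic permutation $w^{\alg}(w)$ recording the ``position'' of the Hodge filtration relative to the refinement: $w^{\alg}(w)=1$ is the non-critical case, and more generally $w^{\alg}(w)\leq w^{\alg}(w)$ controls criticality. Bergdall's theorem says precisely that when the refinement is ``critical'' in a suitable sense — here the hypothesis $w^{\alg}(w)\notin\{1,(12)\}$, i.e.\ $w^{\alg}(w)$ is not in the Weyl group $W_{L_P}=\{1,(12)\}$ of the Levi $L_P$ — the weight map on the relevant (partial) eigenvariety fails to be smooth/\'etale at that point, because the Hodge--Tate weight in the third ``direction'' (the $\GL_1$ factor, which is exactly what $\kappa$ reads off via $p_3$) is not locally constant / the fiber is not reduced of the expected dimension there.

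\textbf{Step 2: Match $\kappa$ with the relevant coordinate.} The map $\kappa$ is the composition $\cV(P,\lambda)\to\widehat T\xrightarrow{p_3}\widehat{\Q_p^\times}\to\widehat{\Z_p^\times}$, i.e.\ it records the ``wild'' part of the weight of the third character $\chi_3$. On $\cV(P,\lambda)$ the first two weights $\wt(\chi_1)=-h_1$, $\wt(\chi_2)=1-h_2$ are \emph{fixed} by construction of $J_{B,(P,\lambda)}$ (as noted in the excerpt just before Theorem \ref{thm: gln-aen}), so the only moving weight coordinate is the third one, and $\kappa$ captures exactly its variation. Thus \'etaleness of $\kappa$ at $z_{\rho,w}$ is equivalent to: the third Hodge--Tate--Sen weight varies like a coordinate in a neighborhood of $z_{\rho,w}$ inside $\cV(P,\lambda)$. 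I would then invoke Bergdall's criterion — phrased in terms of the non-vanishing/vanishing of a certain ``infinitesimal'' obstruction class, or equivalently the existence of a nonzero tangent vector on which the Sen weight derivative vanishes — to conclude that when $w^{\alg}(w)\notin\{1,(12)\}$ this variation degenerates, so $\kappa$ is not \'etale.

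\textbf{Step 3: Assemble.} Concretely I would proceed as follows: (i) recall that $z_{\rho,w}$ is a classical point of $\cV(P,\lambda)$ with associated trianguline $(\rho_p, \text{refinement})$ of parameter $\delta$ as in the displayed formula in the introduction, with $w^{\alg}(w)$ the attached algebraic permutation; (ii) cite Thm.~\ref{thm: gln-aen} to know $\cV(P,\lambda)$ is, near $z_{\rho,w}$, cut out inside $\cV(\lambda)$ as the Zariski closure of classical points, and cite the local description of $\cV$ near such a point via Chenevier/Bella\"iche--Chenevier so that the tangent space of $\cV$ (hence of $\cV(P,\lambda)$) at $z_{\rho,w}$ injects into an explicit trianguline deformation module; (iii) apply \cite[Thm.~B]{Bergd14}: since the refinement is critical with $w^{\alg}(w)\notin W_{L_P}$, the Sen-weight-variation map along $\kappa$ has a nontrivial kernel on the tangent space (equivalently the derivative of $\kappa$ is not surjective / not an isomorphism onto the tangent space of $\widehat{\Z_p^\times}$), so $\kappa$ is not \'etale at $z_{\rho,w}$. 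I expect the main obstacle to be purely bookkeeping: verifying that Bergdall's hypotheses (which are stated for the full Borel eigenvariety and phrased in terms of his ``critical'' condition) apply verbatim to the closed subspace $\cV(P,\lambda)$ and that his notion of criticality translates exactly to ``$w^{\alg}(w)\notin\{1,(12)\}$'' in the present normalization — in particular being careful that passing to the closed subspace $\cV(P,\lambda)$ (which already fixes two of the three weights) does not accidentally make $\kappa$ \'etale by cutting down dimensions, which one rules out precisely because the problematic tangent direction survives in $\cV(P,\lambda)$ (it is a trianguline direction with the correct first two weights).
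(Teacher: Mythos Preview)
Your approach is essentially the paper's: deduce non-\'etaleness of $\kappa$ at $z_{\rho,w}$ from Bergdall's linear constraint on the image of the tangent map. However, your final worry is phrased backwards and brings in more machinery than needed. You do not need to show that a ``problematic tangent direction survives'' in $\cV(P,\lambda)$, nor to pass through trianguline deformation spaces or the Bella\"iche--Chenevier local description. The paper argues directly via non-surjectivity: since $\cV(P,\lambda)$ is a \emph{closed subspace} of the full eigenvariety $\cV$, its tangent space at $z_{\rho,w}$ is a subspace of $T_{\cV,z_{\rho,w}}$, so Bergdall's relation $(\nabla_z v)_i = (\nabla_z v)_{w^{\alg}(w)^{-1}(i)}$ (valid for every $v$ in the tangent space of $\cV$ and every $i$) holds a fortiori on $T_{\cV(P,\lambda),z_{\rho,w}}$. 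On the other hand, because the first two weights are fixed on $\cV(P,\lambda)$, the image of $\nabla_z$ already sits in the subspace $X=\{x_1=x_2\}\subset T_{\widehat{T},\chi}\cong k(z)^3$, and $X$ is identified with the tangent space of the weight target. When $w^{\alg}(w)\notin\{1,(12)\}$ there is an index $i$ with $i$ and $w^{\alg}(w)^{-1}(i)$ lying in different blocks of the partition $\{1,2\}\sqcup\{3\}$, so Bergdall's relation is an \emph{extra} linear equation on $X$, forcing $\Ima(\nabla_z)$ into a proper subspace. Hence the tangent map to the weight space is not surjective and $\kappa$ is not \'etale. The point is that constraints on the image only get \emph{stronger} when passing to a closed subspace, so there is nothing to check about directions ``surviving''; your concern that restricting to $\cV(P,\lambda)$ might accidentally restore \'etaleness is resolved by this observation rather than by exhibiting a specific kernel vector.
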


  Combing Thm.\ref{thm: gln-tnn} and Thm.\ref{thm: gln-wnn}, Thm.\ref{thm: gln-wsn} thus follows. We refer to the body of the text for more detailed and more precise statements (with slightly different notations).
%which essentially follows from Breuil's adjunction formula)
%\begin{multline*}
 % \Hom_{\GL_3(\Q_p)}\big(\pi(P, \chi_w), \widehat{S}(U^p,E)[\cH^p=\fm_{\rho}]\big) \xlongrightarrow{\sim}?
%\end{multline*}
%\begin{theorem}
% (1) The classical points are Zariski-dense in $\cV_{\lambda}$.
%
% (2) For $z$, if $s\notin w(z)$, then $\cV_{\lambda}$ is \'etale over the weight space.
%\end{theorem}
%Let's remark that the rigid space $\cV_\lambda$ is more subtle than the closed subspace, denoted by $\cV_{\lambda}'$, of the whole eigenvariety lying above the corresponding weight space. Indeed, one could not expect the classical points to be Zariski-dense inside $\cV_{\lambda}'$.  A helpful analogue of the relation  between $\cV_\lambda$ and $\cV_{\lambda}'$ is that  between \emph{classical} modular forms of weight $k\in \Z_{\geq 2}$ and finite slope overconvergent modular forms of (fixed) weight $k$.
%
%By the adjunction formula Thm.\ref{thm: gln-ycn} and some arguments in the proof of \cite[Thm.4.8]{Che11} (see Prop.\ref{prop: gln-hlt}),  if $\kappa$ is not \'etale at $z_{\rho,w}$, then $C(s, w)$ is a subrepresentation of $\widehat{\Pi}(\rho)$.
%A key ingredient to prove Thm.\ref{} is the following proposition
%\begin{proposition}
 % If $=$, then $\cV_{\lambda}$ is \'etale over the weight space.
%\end{proposition}
%\begin{theorem}[Bergdall]
%  If $s\in w(z)$, then $\cV_{\lambda}$ is not \'etale over the weight space.
%\end{theorem}

\subsection*{Acknowledgement} The debt that this work owes  to \cite{Br13II} is clear, and I also would like to thank Christophe Breuil for answering my questions.
\addtocontents{toc}{\protect\setcounter{tocdepth}{2}}
\section{Locally analytic representations and Jacquet-Emerton functors}Let $G$ be a split connected reductive algebraic group over $\Q_p$, $T$ be a split maximal torus of $G$, $Z_G\subseteq T$ the center of $G$,  $B$ a Borel subgroup of $G$ containing $T$,  $P\supset B$ a parabolic subgroup of $G$ with $L_P$ the Levi subgroup and $N_P$ the nilpotent radical, thus $T\cong L_B$ and put $N:=N_B$. Let $\ug$, $\ft$, $\fz_G$, $\ub$, $\fp$, $\fl_P$, $\fn_P$, $\fn$ denote the associated Lie algebras over $\Q_p$ of $G$, $T$, $Z_G$, $B$, $P$, $L_P$, $N_P$, $N$ respectively. Let $E$ be a finite extension of $\Q_p$, with $\co_E$ the ring of integers and $\varpi_E$ a uniformizer of $\co_E$.
\subsection{The BGG category $\co^{\fp}$ and the representations $\cF_{P}(M,\pi)$}
\begin{definition}[$\text{\cite[\S 9.3]{Hum08}}$]\label{def: gln-ryr}
  Let $\co^{\fp}$ be the full subcategory of the category of linear representations of $\ug$ on $E$-vector spaces made out of representations $M$ such that:
  \begin{enumerate}
    \item $M$ is a finite type $\text{U}(\ug)\otimes_{\Q_p} E$-module;
    \item $M|_{\text{U}(\fl_P)}$ is a direct sum of irreducible algebraic $\text{U}(\fl_P)\otimes_{\Q_p} E$-modules;
    \item for all $v\in M$, the $E$-vector space $\big(U(\fn_P)\otimes_{\Q_p} E\big)v$ is finite dimensional.
  \end{enumerate}
\end{definition}
%We recall some properties about the BGG category $\co^{\wp}$ which we would use.
One has (cf. \cite[\S 1.1, \S 1.2, \S 9.3]{Hum08})
\begin{theorem}
  (1) The category $\co^{\fp}$ is abelian, closed under submodules, quotients and finite direct sums.

  (2) For $P_1 \subset P_2$ two parabolic subgroups of $G$, $\co^{\fp_2}$ is a full subcategory of $\co^{\fp_1}$.

  (3) Let $W$ be an irreducible algebraic representation of $L_P$, the \emph{generalized Verma module} $\text{U}(\ug) \otimes_{\text{U}(\fp)} W$ lies in $\co^{\fp}$, and admits a unique irreducible quotient denoted by $M(\lambda)$ where $\lambda$ is the highest weight of $W$.
\end{theorem}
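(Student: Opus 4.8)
The approach I would take is to verify, one part at a time, that each of the three conditions of Definition~\ref{def: gln-ryr} is preserved under the operation in question. Two standard facts will be used throughout: first, $U(\ug)\otimes_{\Q_p}E$ is left and right Noetherian (PBW, with polynomial associated graded), so any submodule of a finitely generated module is finitely generated; second, the restriction of a finite-dimensional algebraic representation of a reductive group to a reductive subgroup is again semisimple and algebraic (complete reducibility in characteristic zero). For part (1), given a morphism $f$ in $\co^{\fp}$, the module $\Ker f$ is a submodule and $\Ima f$, $\mathrm{coker}\,f$ are subquotients of objects of $\co^{\fp}$, taken inside the abelian category of $U(\ug)\otimes_{\Q_p}E$-modules; condition (1) is inherited by Noetherianity, condition (2) because a subquotient of a semisimple $\fl_P$-module which is a sum of finite-dimensional algebraic irreducibles is again of this shape, and condition (3) because a subquotient of an $\fn_P$-locally finite module is $\fn_P$-locally finite. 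Stability under finite direct sums is immediate, and this exhibits $\co^{\fp}$ as a full abelian subcategory.

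For part (2), I would first record the geometry: $P_1\subseteq P_2$ gives $\fl_{P_1}\subseteq\fl_{P_2}$, $\fn_{P_2}\subseteq\fn_{P_1}$, and $\fn_{P_1}\subseteq\fp_2$, with $\fn_{P_2}$ an ideal of $\fp_2$. Condition (1) is unchanged, and (2) follows from complete reducibility of the restriction from $\fl_{P_2}$ to $\fl_{P_1}$. For condition (3), take $v\in M\in\co^{\fp_2}$ and, using condition (2) for $\fl_{P_2}$, enclose $v$ in a finite-dimensional algebraic $\fl_{P_2}$-submodule $V_0$. PBW (using that $\fn_{P_2}$ is an ideal of $\fp_2$) gives $U(\fp_2)=U(\fl_{P_2})U(\fn_{P_2})$; then $U(\fn_{P_2})V_0$ is finite-dimensional by condition (3) for $P_2$, and applying $U(\fl_{P_2})$ keeps it finite-dimensional because $M$ is $\fl_{P_2}$-semisimple with finite-dimensional constituents. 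Hence $V:=U(\fp_2)V_0$ is a finite-dimensional $\fp_2$-submodule, and $U(\fn_{P_1})v\subseteq U(\fp_2)V=V$ is finite-dimensional, so condition (3) for $P_1$ holds and $M\in\co^{\fp_1}$; fullness is automatic.

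For part (3), let $\lambda$ be the highest weight of $W$ and $\overline{\fn}_P$ the opposite nilradical; PBW gives $N(W):=U(\ug)\otimes_{U(\fp)}W\cong U(\overline{\fn}_P)\otimes_E W$ as vector spaces, the $\fl_P$-action being $\ad\otimes 1+1\otimes(\cdot)$. Condition (1) holds because $1\otimes W$ generates $N(W)$ over $U(\ug)$. Condition (2): $U(\overline{\fn}_P)$ under the adjoint action is the union of the finite-dimensional $\fl_P$-stable pieces of the PBW filtration, hence a sum of finite-dimensional algebraic irreducibles, and tensoring with the finite-dimensional algebraic $W$ preserves this. Condition (3) is where the argument has content: every $\ft$-weight of $N(W)$ lies in $\lambda-Q^+$, where $Q^+$ is the set of non-negative integral combinations of positive roots, each weight space is finite-dimensional, $\fn_P$ raises weights, and for each $\nu$ there are only finitely many weights $\nu'$ of $N(W)$ with $\nu\le\nu'\le\lambda$; therefore $U(\fn_P)v$ is finite-dimensional for any weight vector $v$, hence for any $v$, and $N(W)\in\co^{\fp}$. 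For the unique irreducible quotient, note that $1\otimes w_\lambda$ (for a highest weight vector $w_\lambda\in W$) is a $\ub$-highest weight vector of weight $\lambda$ generating $N(W)$, while $N(W)_\lambda$ is one-dimensional; hence a submodule is proper precisely when its $\lambda$-weight space is zero, so the sum of all proper submodules is still proper. This sum is the unique maximal proper submodule, and $M(\lambda):=N(W)/(\text{this submodule})$ is the unique irreducible quotient.

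The main obstacle, such as it is, is concentrated in the two verifications of condition (3): the PBW reordering in part (2) (which hinges on $\fn_{P_2}$ being an ideal of $\fp_2$) and, in part (3), the finiteness of the set of weights of $N(W)$ between a fixed weight and $\lambda$, i.e. the finiteness of intervals in the root lattice; everything else is bookkeeping with the three defining conditions.
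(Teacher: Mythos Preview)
Your proof is correct and follows the standard textbook arguments. Note that the paper itself gives no proof of this theorem: it is stated with a reference to \cite[\S 1.1, \S 1.2, \S 9.3]{Hum08} and left as background. So there is no ``paper's approach'' to compare against; you have essentially reproduced the relevant arguments from Humphreys, and they are sound.
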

Denote by $\Phi$ the root system of $G$, $\Delta$ the set of simple roots with respect to $B$ and $\Phi^+$ (resp. $\Phi^-$) the set  of positive (resp. negative) roots. Any parabolic subgroup $P$ containing $B$ corresponds thus to a set of positive simple roots denoted by $\Delta_P$, which is the simple roots of $L_P$ with respect to $B\cap L_P$ (thus $\Delta_B=\emptyset$). Consider the $E$-vector space $\ft^*:=\Hom_{\Q_p}(\ft, E)$. Any element in $\ft^*$ is called a weight of $\ft$ (or a weight for $G$). Any root $\alpha\in \Phi$ can be viewed as a weight still denoted by $\alpha$, and one has a natural embedding $\oplus_{\alpha \in \Delta} E\alpha\hookrightarrow \ft^*$. In fact, one has $\cap_{\alpha\in \Phi} \Ker(\alpha)=\fz_G$, thus a natural decomposition $\ft^*\cong \big(\oplus_{\alpha\in \Delta} E \alpha\big) \oplus \fz_G^*$. Recall the root space $\oplus_{\alpha\in \Delta} E\alpha$ is equipped with a natural inner product (using Killing form) which extends to $\ft^*$ by putting $\langle a,z\rangle=0$ for all $z\in \fz_G^*$.  Let $\alpha^{\vee}:=2\alpha/\langle \alpha,\alpha\rangle$. We call a weight $\lambda$ is \emph{$P$-dominant} if $\langle \lambda, \alpha^{\vee}\rangle\in \Z_{\geq 0}$ for all $\alpha \in \Delta_P$, $\lambda$ is called \emph{dominant} if $\lambda$ is $G$-dominant.

Let $\lambda \in \ft^*$, denote by $M(\lambda):=\text{U}(\ug) \otimes_{\text{U}(\ub)} \lambda$ the Verma module with highest weight $\lambda$, denote by $\sL(\lambda)$ the unique simple quotient of $M(\lambda)$. By \cite[Thm.1.3]{Hum08}, every simple object in $\co^{\ub}$ is isomorphic to a such $\sL(\lambda)$. Note if $\lambda$ is dominant, $\sL(\lambda)$ is thus the unique finite dimensional algebraic representation of $G$ with highest weight $\lambda$.
\begin{proposition}[$\text{cf. \cite[p.185]{Hum08}}$]
   Let $\lambda\in \ft^*$, $\sL(\lambda)\in \co^{\ub}$ lies in $\co^{\fp}$ if and only if $\lambda$ is $P$-dominant. In particular, there exists a unique maximal parabolic $P$ of $G$ containing $B$ such that $\sL(\lambda)\in \co^{\fp}$.
\end{proposition}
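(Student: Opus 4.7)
My plan is to prove both directions by examining the action of $U(\fl_P)$ on a highest weight vector $v_\lambda \in \sL(\lambda)$, and then deduce the ``in particular'' statement by choosing $P$ maximal for the $P$-dominance condition on $\lambda$.

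For the ``if'' direction, assume $\lambda$ is $P$-dominant. Then there is an irreducible algebraic $\fl_P$-module $\sL_P(\lambda)$ of highest weight $\lambda$; extending the action trivially through $\fn_P$ makes it a $\fp$-module. Form the generalized Verma module $M_P(\lambda) := \text{U}(\ug) \otimes_{\text{U}(\fp)} \sL_P(\lambda)$, which lies in $\co^{\fp}$ by the preceding theorem. The surjection $\lambda \twoheadrightarrow \sL_P(\lambda)$ of $\ub$-modules induces a surjection $M(\lambda) \twoheadrightarrow M_P(\lambda)$. Since $\sL(\lambda)$ is the unique simple quotient of $M(\lambda)$ and the image of its highest weight vector in $M_P(\lambda)$ is nonzero, $\sL(\lambda)$ is also a quotient of $M_P(\lambda)$, hence lies in $\co^{\fp}$ by closure under quotients.

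For the ``only if'' direction, suppose $\sL(\lambda) \in \co^{\fp}$. Let $v_\lambda \in \sL(\lambda)$ be a nonzero weight vector of weight $\lambda$. Since $v_\lambda$ is annihilated by every positive root vector of $\ug$, it is in particular annihilated by the positive root vectors of $\fl_P$ (those attached to roots in $\Phi^+ \cap \bigl(\oplus_{\alpha \in \Delta_P} \Z\alpha\bigr)$). Thus the $\text{U}(\fl_P)$-submodule $M := \text{U}(\fl_P) \cdot v_\lambda$ is a highest weight $\fl_P$-module of highest weight $\lambda$. By condition (2) of Definition~\ref{def: gln-ryr}, $M$ decomposes as a direct sum of irreducible algebraic $\fl_P$-modules; the component containing $v_\lambda$ must then be finite-dimensional and algebraic with highest weight $\lambda$, which forces $\langle \lambda, \alpha^\vee\rangle \in \Z_{\geq 0}$ for every $\alpha \in \Delta_P$, i.e., $\lambda$ is $P$-dominant.

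For the ``in particular'' claim, set $\Delta_\lambda := \{\alpha \in \Delta \mid \langle \lambda, \alpha^\vee\rangle \in \Z_{\geq 0}\}$ and let $P_\lambda \supseteq B$ be the unique standard parabolic with $\Delta_{P_\lambda} = \Delta_\lambda$. By the equivalence just proved, $\sL(\lambda) \in \co^{\fp_\lambda}$, and any parabolic $P' \supseteq B$ with $\sL(\lambda) \in \co^{\fp'}$ must satisfy $\Delta_{P'} \subseteq \Delta_\lambda = \Delta_{P_\lambda}$, hence $P' \subseteq P_\lambda$, proving maximality and uniqueness. The only nontrivial step is the argument in the ``only if'' direction that $v_\lambda$ generates a genuinely finite-dimensional algebraic $\fl_P$-submodule; once one observes that it is already a highest weight vector for the Borel $B \cap L_P$ of $L_P$, this is immediate from the decomposition assumption in $\co^{\fp}$.
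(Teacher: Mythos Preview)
The paper does not give a proof of this proposition; it is stated with a citation to \cite[p.~185]{Hum08} and no argument appears in the text, so there is nothing in the paper to compare your proof against.

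Your argument is essentially correct and is the standard one. One piece of phrasing should be fixed: you write ``the surjection $\lambda \twoheadrightarrow \sL_P(\lambda)$ of $\ub$-modules,'' but this map is an \emph{injection} (the one-dimensional $\ub$-module $\lambda$ maps onto the highest weight line of $\sL_P(\lambda)$, not onto all of it). The induced map $M(\lambda) \to M_P(\lambda)$ is nevertheless surjective, for the correct reason that the image of the highest weight vector generates $M_P(\lambda)$ over $\text{U}(\ug)$; you should say this rather than invoke a nonexistent surjection upstairs. In the ``only if'' direction, the phrase ``the component containing $v_\lambda$'' is slightly loose, but since the $\lambda$-weight space of $M = \text{U}(\fl_P)\cdot v_\lambda$ is one-dimensional, $M$ is in fact already irreducible and your conclusion holds.
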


%Let $B$ be the Borel subgroup of $G$, $M(\lambda)$ an irreducible object in $\co^{\ub}$, let $P$ be the parabolic subgroup of $G$ containing $B$ corresponding to the subset of the set of simple positive roots $\alpha$ such that $\langle \lambda, \alpha^{\vee}\rangle\geq 0$, then $P$ is the unique maximal parabolic subgroup of $G$ containing $B$ such that $M(\lambda)\in \co^{\fp}$.
Following Orlik-Strauch \cite{OS}, one can associate a locally analytic representation $\cF_P^G(M,\pi)$ of $G(\Q_p)$ with $M\in \co^{\fp}$ and $\pi$ being a finite length smooth admissible representation of $L_P(\Q_p)$. We recall the constructions and some properties.

 For a Hausdorff locally convex topological vector space $V$ over $E$, denote by $\cC^{\la}(G(\Q_p),V)$ the $E$-vector space of locally analytic functions of $G(\Q_p)$ with values in $V$, equipped with the finest locally convex topology. This space is equipped with a right regular locally analytic $G(\Q_p)$-action $g(f)(g')=f(g'g)$. While this space can also be equipped with another locally analytic $G(\Q_p)$-action given by $(g\cdot f)(g')=f(g^{-1}g')$, which induces by derivation a continuous $\ug$-action
 \begin{equation}\label{equ:gln-g0=}
   (\fx\cdot f)(g)=\frac{d}{dt} f(\exp(-t\fx)g)\big|_{t=0}
 \end{equation}
 for $\fx\in \ug$, $f\in \cC^{\la}(G(\Q_p),V)$ and $g\in G(\Q_p)$.

 Let $M\in \co^{\fp}$, and $W\subseteq M$ be a finite dimensional algebraic representation of $\fp$ over $E$, by Def.\ref{def: gln-ryr}, enlarging $W$, we can (and do) assume $W$ generates $M$ over $\text{U}(\ug)\otimes_{\Q_p} E$. One has thus an exact sequence in the category $\co^{\fp}$
 \begin{equation*}
   0 \ra \Ker(\phi) \ra \text{U}(\ug)\otimes_{\text{U}(\fp)} W \xrightarrow{\phi} M \ra 0.
 \end{equation*}The $\fp$-action on $W$ can be lifted to a unique algebraic $P(\Q_p)$-action (cf. \cite[Lem.3.2]{OS}). Denote by $W':=\Hom_E(W,E)$ with $(p(f))(v):=f(p^{-1}v)$ for $p\in P(\Q_p)$ and $v\in W$.

Let $\pi$ be a finite length smooth admissible representation of $L_P(\Q_p)$ over $E$, each element of $\text{U}(\ug)\otimes_{\Q_p} W$ would induce a continuous morphism
\begin{equation*}
  \cC^{\la}\big(G(\Q_p), W'\otimes_E \pi\big) \lra \cC^{\la}\big(G(\Q_p), \pi\big)
\end{equation*}
with $(\fx\otimes v)(f):=[g\mapsto (\fx\cdot f)(g)(v)]$, where $\fx\in \text{U}(\ug)$, $v\in W'$, $f\in  \cC^{\la}\big(G(\Q_p),W'\otimes_E \pi\big)$, $\fx\cdot f$ denotes the $\ug$-action on $\cC^{\la}\big(G(\Q_p),W'\otimes_E \pi\big)$ as in (\ref{equ:gln-g0=}). Consider $\big(\Ind_{P(\Q_p)}^{G(\Q_p)}W'\otimes_E \pi\big)^{\an}\subseteq \cC^{\la}\big(G(\Q_p), W'\otimes_E \pi\big)$, by restriction, one gets thus an $E$-linear map
\begin{equation}\label{equ: gln-wae}
  \text{U}(\ug)\otimes_E W \lra \Hom_{E}\Big(\big(\Ind_{P(\Q_p)}^{G(\Q_p)}W'\otimes_E \pi\big)^{\an}, \cC^{\la}\big(G(\Q_p), \pi\big)\Big).
\end{equation}
One can check this map factors through $\text{U}(\ug)\otimes_{\text{U}(\fp)} W$ (cf. \cite[Lem.2.1]{Br13I}).
Following \cite[\S 4]{OS}, put (where  $X\cdot f$ is given via (\ref{equ: gln-wae}))
\begin{equation*}
  \cF_P^G(M,\pi):=\Big\{f\in \big(\Ind_{P(\Q_p)}^{G(\Q_p)}W'\otimes_E \pi\big)^{\an}, \ X\cdot f=0,\ \forall X\in \Ker(\phi)\Big\},
\end{equation*}
which can be checked to be  a closed subrepresentation of $\big(\Ind_{P(\Q_p)}^{G(\Q_p)}W'\otimes_E \pi\big)^{\an}$ and independent of the choice of $W$ (thus only depends on $M$ and $\pi$, cf. \cite[Prop.4.5]{OS}).
\begin{theorem}[$\text{\cite[Thm]{OS}}$]\label{thm: gln-pst}Keep the above notations.

  (1) $\cF_P^G(M,\pi)$ is non-zero if and only if $M$ and $\pi$ are non-zero.

  (2) The functor $(M,\pi)\mapsto \cF_P^G(M,\pi)$ (covariant on $\pi$, and contravariant  on $M$) is exact in both arguments.

  (3) Let $Q\supset P$ be another parabolic subgroup and assume $M$ lies in $\co^{\fq}\subset \co^{\fp}$, then
  \begin{equation*}\cF_P^G(M,\pi) \cong \cF_Q^G\big(M,\big(\Ind_{P(\Q_p)\cap L_Q(\Q_p)}^{L_Q(\Q_p)} \pi\big)^{\infty}\big),\end{equation*}
  where $(\cdot)^{\infty}$ denotes the usual smooth parabolic induction.

  (4) Suppose the following hypothesis
  \begin{itemize}
    \item if $\Phi$ has irreducible components of type $B$, $C$ or $F_4$, then $p$ is odd,
    \item if $\Phi$ has irreducible components of type $G_2$, then $p>3$;
  \end{itemize}if $M$ and $\pi$ are irreducible and $P$ is the maximal parabolic subgroup of $M$, then $\cF_P^G(M,\pi)$ is an irreducible representation of $G(\Q_p)$.
\end{theorem}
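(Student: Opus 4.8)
The plan is to reconstruct the Orlik--Strauch argument, whose shape is: prove everything first for the two extreme classes of modules $M$ in $\co^{\fp}$ — the generalized Verma modules $\text{U}(\ug)\otimes_{\text{U}(\fp)}W$ and the finite dimensional ones — and then bootstrap to all of $\co^{\fp}$ using exactness and the fact that every object of $\co^{\fp}$ is presented by generalized Verma modules. When $M=\text{U}(\ug)\otimes_{\text{U}(\fp)}W$ the map $\phi$ is an isomorphism, $\Ker(\phi)=0$, and $\cF_P^G(M,\pi)=\big(\Ind_{P(\Q_p)}^{G(\Q_p)}W'\otimes_E\pi\big)^{\an}$; this is nonzero whenever $W,\pi\neq 0$, it is exact in the finite dimensional algebraic variable $W$, and it is exact in $\pi$ since analytic parabolic induction is exact on short exact sequences of finite length smooth admissible representations (left exactness is formal; exactness on the right uses admissibility and the open mapping theorem for spaces of compact type). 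For general $M$, after fixing a finite dimensional generating $W$ and a presentation $0\to K\to\text{U}(\ug)\otimes_{\text{U}(\fp)}W\to M\to 0$ in $\co^{\fp}$, the space $\cF_P^G(M,\pi)$ is \emph{by construction} the closed subspace of $\big(\Ind_{P(\Q_p)}^{G(\Q_p)}W'\otimes_E\pi\big)^{\an}$ cut out by the operators attached to $K$; hence $\cF_P^G(-,\pi)$ is manifestly left exact in $M$ (contravariantly), and the whole content of part (2) is the statement that a monomorphism $M_1\hookrightarrow M_2$ in $\co^{\fp}$ induces a \emph{topological epimorphism} $\cF_P^G(M_2,\pi)\twoheadrightarrow\cF_P^G(M_1,\pi)$ (the bookkeeping with common generating subspaces being routine).

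I would prove this surjectivity by restriction to the big cell: transporting $\big(\Ind_{P(\Q_p)}^{G(\Q_p)}W'\otimes_E\pi\big)^{\an}$ to the space of locally analytic functions on $\overline{N}_P(\Q_p)$ valued in $W'\otimes_E\pi$ turns the $\ug$-action into an action by differential operators and the defining conditions into a finite linear system of such equations indexed by a generating set of $K$; one then shows the associated complex of function spaces is exact — i.e. the equations can be solved — using nuclearity of the dual Fr\'echet spaces together with a functional-analytic lifting argument, after which the statement over all of $G(\Q_p)/P(\Q_p)$ is recovered by gluing over the cells. Granting exactness, part (1) follows: the ``only if'' is immediate, and for the ``if'' one identifies the strong dual of $\cF_P^G(M,\pi)$ with a module of ``generalized Verma'' type over the locally analytic distribution algebra $D(G(\Q_p),E)$, built from $M$ and $\pi$, which is nonzero as soon as $M,\pi\neq 0$ by flatness of $D(G(\Q_p),E)$ over $D(\overline{P}(\Q_p),E)$.

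For part (3), with $Q\supseteq P$ and $M\in\co^{\fq}\subseteq\co^{\fp}$, I would choose the generating subspace $W$ of $M$ to be $\fl_Q$-algebraic (possible because $M\in\co^{\fq}$), so that $W'$ extends to an algebraic representation of $L_Q(\Q_p)$. Transitivity of analytic parabolic induction gives
\[
\big(\Ind_{P(\Q_p)}^{G(\Q_p)}W'\otimes_E\pi\big)^{\an}\;\cong\;\Big(\Ind_{Q(\Q_p)}^{G(\Q_p)}W'\otimes_E\big(\Ind_{P(\Q_p)\cap L_Q(\Q_p)}^{L_Q(\Q_p)}\pi\big)^{\infty}\Big)^{\an},
\]
and under this isomorphism the $\ug$-conditions presenting the left-hand side as $\cF_P^G(M,\pi)$ match, term by term, those presenting the right-hand side as $\cF_Q^G\big(M,(\Ind_{P(\Q_p)\cap L_Q(\Q_p)}^{L_Q(\Q_p)}\pi)^{\infty}\big)$, because in both descriptions these are precisely the equations cutting out the common quotient $M$ of the generalized Verma module generated by $W$ (over $\text{U}(\fp)$, resp. over $\text{U}(\fq)$). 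This is a routine verification on the big cell once the definitions are unwound.

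Part (4) is the deepest, and the main obstacle. One may assume $M=\sL(\lambda)$ irreducible, $\pi$ irreducible, and $P=P_\lambda$ the maximal parabolic with $\sL(\lambda)\in\co^{\fp_\lambda}$. The strategy is: (i) using exactness and part (3), together with the Jordan--H\"older filtration of $\big(\Ind_{P(\Q_p)}^{G(\Q_p)}W'\otimes_E\pi\big)^{\an}$, show that $\cF_P^G(\sL(\lambda),\pi)$ has finite length with all constituents of the form $\cF_{P'}^G(\sL(\mu),\tau)$ for parabolics $P'\supseteq P$ maximal for $\sL(\mu)$; (ii) analyze these constituents via an orbit (Bruhat) filtration of $\big(\Ind_{P(\Q_p)}^{G(\Q_p)}W'\otimes_E\pi\big)^{\an}$ by closed subspaces supported on the $\overline{P}(\Q_p)$-orbit closures in $G(\Q_p)/P(\Q_p)$ (indexed by $\overline{P}(\Q_p)\backslash G(\Q_p)/P(\Q_p)$), whose graded pieces are analytic inductions from cell stabilizers, and use (3) to peel off maximal parabolics so as to reduce, by induction on the semisimple rank, to the rank-one situation attached to a single root $\alpha\notin\Delta_P$; (iii) the resulting rank-one building block is the irreducibility of a locally analytic principal-series-type representation $\cF_{\overline{B}}^{G_\alpha}(\text{Verma},\,\cdot\,)$ for $G_\alpha$ of semisimple rank one ($\SL_2$ or $\PGL_2$), which is the classical $\GL_2(\Q_p)$ computation; it is irreducible except, for short/long roots in types $B$, $C$, $F_4$ (resp. $G_2$), at $p=2$ (resp. $p=3$), which is exactly why those primes are excluded. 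Reassembling the graded data, the $\ug$-conditions then single out the one constituent $\cF_P^G(\sL(\lambda),\pi)$. The hard part is precisely this last step: the bookkeeping of the Bruhat filtration, the identification of its graded constituents, and the reduction of the $\ug$-conditions to the rank-one cases; the topological surjectivity feeding into (2) is a secondary technical obstacle.
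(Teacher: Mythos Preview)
The paper does not give its own proof of this theorem: it is stated purely as a citation of the main results of Orlik--Strauch \cite{OS}, with no argument supplied beyond the reference. There is therefore nothing in the paper to compare your sketch against.

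For what it is worth, your outline is in broad strokes a faithful summary of the Orlik--Strauch strategy --- exactness proved by passing to the big cell and analyzing the differential operators there (with the dual picture over the distribution algebra supplying non-vanishing), transitivity in (3) via transitivity of parabolic induction, and irreducibility in (4) via an orbit filtration of the induced representation indexed by double cosets together with a reduction to the rank-one case --- so as a reconstruction of \cite{OS} it is on the right track. But if the task is to match the paper's proof, the honest answer is that the paper defers entirely to \cite{OS} and proves nothing here itself.
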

\begin{remark}
  Keep the notations of Thm.\ref{thm: gln-pst} and the hypothesis in (4), if the representation $\big(\Ind_{P(\Q_p)}^{G(\Q_p)}\pi\big)^{\infty}$ is irreducible, we see the irreducible components of $\cF_P^G(M,\pi)$ are exactly $\cF_P^G(M_i,\pi)$ where the $M_i$'s are the irreducible components of $M$ in $\co^{\fp}$.
\end{remark}
\subsection{Jacquet-Emerton functors}
Let $V$ be an essentially admissible locally analytic representation of $G(\Q_p)$ over $E$ (cf. \cite[Def.6.4.9]{Em04}). Let $N_P^o$ be an open compact subgroup of $N_P(\Q_p)$,
\begin{equation}\label{equ: gln-1np}
  L_P(\Q_p)^+:=\big\{g \in L_P(\Q_p)\ |\ g N_P^o g^{-1} \subseteq N_P^o\big\}.
\end{equation}
The closed subspace $V^{N_P^o}$ \big(of vectors fixed by $N_P^o$\big) is equipped with a natural $L_P(\Q_p)^+$-action given by (cf. \cite[\S 3.4]{Em11})
\begin{equation}\label{equ: gln-gnp}
  \pi_g(v):=\frac{1}{|N_P^o/gN_P^og^{-1}|} \sum_{n\in N_P^o/gN_P^og^{-1}} (ng) v, \ v\in V^{N_P^o}, \ g\in L_P(\Q_p)^+.
\end{equation}
Following Emerton (\cite[Def.3.2.1, 3.4.5]{Em11}), put
\begin{equation*}J_P(V):=(V^{N_P^o})_{\cfs}:=\cL_{Z_{L_P}(\Q_p)^+}\Big(\co\big(\widehat{Z_{L_P}}\big), V^{N_P^o}\Big)
 \end{equation*}where  ``$\cL$" signifies continuous linear maps, ``$\cfs$" signifies \emph{finite slope}, $Z_H$ denotes the center of $H$ for an algebraic group $H$, $Z_{L_P}(\Q_p)^+:=Z_{L_P}(\Q_p)\cap L_P(\Q_p)^+$, $\widehat{Z}$ denotes the rigid space over $E$ parameterizing locally analytic characters of $Z(\Q_p)$ for a commutative algebraic group $Z$ over $\Q_p$ and $\co\big(\widehat{Z}\big)$ denotes the global sections of $\widehat{Z}$. Roughly speaking, $J_P(V)$  is the maximal subspace of $V^{N_P^o}$ on which the $Z_{L_P}(\Q_p)^+$-action extends canonically to a locally analytique $Z_{L_P}(\Q_p)$-action. Since $L_P(\Q_p)^+Z_{L_P}(\Q_p)=L_P(\Q_p)$, $J_P(V)$ is equipped with a natural action of $L_P(\Q_p)$.
 \begin{theorem}[$\text{\cite[Prop.3.4.11,Thm.4.2.32]{Em11}}$]\label{thm: gln-npv}Keep the above notation, $J_P(V)$ is independent of the choice of $N_P^o$, and is an essentially admissible locally $\Q_p$-analytic representation of $L_P(\Q_p)$.
\end{theorem}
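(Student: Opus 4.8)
The statement is \cite[Prop.3.4.11, Thm.4.2.32]{Em11}, and the plan is to reproduce Emerton's argument, whose guiding idea is to dualize: the three operations defining $J_P(V)$ — passage to $N_P^o$-invariants, the Hecke action (\ref{equ: gln-gnp}) of $L_P(\Q_p)^+$, and the finite-slope functor $\cL_{Z_{L_P}(\Q_p)^+}(\co(\widehat{Z_{L_P}}),-)$ — should be translated into algebraic operations on the strong dual $V_b'$, which by essential admissibility (\cite[Def.6.4.9]{Em04}) is a coadmissible module over the weakly Fréchet–Stein algebra $\co(\widehat{Z_G})\,\widehat{\otimes}_E\, D(G_0,E)$, where $G_0\subseteq G(\Q_p)$ is a compact open subgroup and $D(-,E)$ is the locally analytic distribution algebra.

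First I would arrange the choices compatibly: fix a compact open subgroup $L_{P,0}\subseteq L_P(\Q_p)$ contained in $G_0$, and choose $N_P^o$ to be normalized by $L_{P,0}$, so that $V^{N_P^o}$ carries — besides the Hecke action of the monoid $L_P(\Q_p)^+$ — a genuine locally analytic action of $L_{P,0}$. The closed subspace $V^{N_P^o}\subseteq V$ has strong dual the Hausdorff quotient of $V_b'$ by the closed submodule generated by $\{(n-1)\delta : n\in N_P^o,\ \delta\in V_b'\}$; since $L_{P,0}$ normalizes $N_P^o$ and $Z_G$ is central, one checks that this quotient is again coadmissible when the coefficient ring is cut down to $\co(\widehat{Z_G})\,\widehat{\otimes}_E\, D(L_{P,0},E)$ (the requisite finiteness properties of $D(G_0,E)$ over the relevant subalgebra are part of Emerton's analysis).

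Next comes the finite-slope functor, which is the substantive part. Fix $z_0\in Z_{L_P}(\Q_p)^+$ strictly contracting $N_P^o$; the key input is that the Hecke operator $\pi_{z_0}$ acts on $V^{N_P^o}$ — fibrewise over $\widehat{Z_G}$ — as a compact operator, a consequence of the admissibility of $V$. Choosing a $Z_{L_P}(\Q_p)^+$-invariant Banach model and using Riesz theory one obtains, locally over $\widehat{Z_G}$, a slope decomposition of $V^{N_P^o}$; the functor $W\mapsto W_{\cfs}:=\cL_{Z_{L_P}(\Q_p)^+}(\co(\widehat{Z_{L_P}}),W)$ then picks out the part of $V^{N_P^o}$ on which the $Z_{L_P}(\Q_p)^+$-action extends to a locally analytic $Z_{L_P}(\Q_p)$-action, which at the level of duals enlarges the coefficient ring from $\co(\widehat{Z_G})$ to $\co(\widehat{Z_{L_P}})$. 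One must then show this preserves coadmissibility: covering $\widehat{Z_{L_P}}$ by affinoids, restricting the slope decompositions, and checking that the pieces glue into a coherent sheaf; this yields that $J_P(V)_b'$ is coadmissible over $\co(\widehat{Z_{L_P}})\,\widehat{\otimes}_E\, D(L_{P,0},E)$. Since $L_P(\Q_p)^+Z_{L_P}(\Q_p)=L_P(\Q_p)$, the $L_{P,0}$- and $Z_{L_P}(\Q_p)$-actions combine into a locally analytic $L_P(\Q_p)$-action, so $J_P(V)$ is essentially admissible.

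Finally, for independence of $N_P^o$: given $N_P^{o'}\subseteq N_P^o$ (both taken $L_{P,0}$-stable, and with $N_P^{o'}$ normal in $N_P^o$, which is harmless by a cofinality argument) the inclusion $V^{N_P^o}\hookrightarrow V^{N_P^{o'}}$ is $L_P(\Q_p)^+$-equivariant and admits averaging over $N_P^o/N_P^{o'}$ as a one-sided inverse; a Hecke operator $\pi_{z_1}$ for a sufficiently deep $z_1\in Z_{L_P}(\Q_p)^+$ intertwines the two maps, and since $\pi_{z_1}$ acts invertibly on finite-slope parts, applying $(\cdot)_{\cfs}$ turns the inclusion into an isomorphism compatible with the $L_P(\Q_p)$-action. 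The main obstacle is the essential-admissibility step — concretely, the claim that the finite-slope functor enlarges the center from $\widehat{Z_G}$ to $\widehat{Z_{L_P}}$ while preserving coadmissibility — which rests on the spectral theory of the compact operator $\pi_{z_0}$ on the (non-Banach, compact-type) space $V^{N_P^o}$ and on controlling how its slope decompositions vary over $\widehat{Z_{L_P}}$; by comparison the dualization and the $N_P^o$-independence are essentially formal.
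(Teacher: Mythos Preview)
The paper does not give its own proof of this statement: it is quoted verbatim as \cite[Prop.~3.4.11, Thm.~4.2.32]{Em11} and used as a black box, so there is no argument in the paper to compare your proposal against. Your sketch is a reasonable high-level outline of Emerton's proof in \cite{Em11}: the independence of $N_P^o$ is indeed Prop.~3.4.11 there and is proved essentially as you say (averaging plus invertibility of a deep Hecke operator on finite-slope parts), while essential admissibility is Thm.~4.2.32, whose proof does rest on the compactness of the Hecke action and the resulting spectral theory over the weight space. For the purposes of this paper a one-line citation suffices; if you want to include an argument, what you wrote is in the right spirit, though in a formal write-up the coadmissibility of $(V^{N_P^o})_b'$ and the gluing of slope decompositions over $\widehat{Z_{L_P}}$ would each need to point to the precise lemmas in \cite{Em11} rather than be asserted.
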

%Following Emerton, one has a functor $J_P(\cdot)$ sending essentially admissible locally $\Q_p$-analytic representations of $G(L)$ to essentially admissible locally $\Q_p$-analytic representations of $L_P(L)$.

%Recall the construction:
\begin{theorem}[$\text{\cite[Thm.5.3]{HL}}$]\label{thm: gln-sgv}
Let $P_1\subset P_2$ be two parabolic subgroups of $G$, $V$ be an essentially admissible locally analytic representation of $G(\Q_p)$, then one has a natural isomorphism of $L_{P_1}(\Q_p)$\big($=L_{P_1\cap L_{P_2}}(\Q_p)$\big)-representations
\begin{equation*}
      J_{L_{P_2} \cap P_1} (J_{P_2}(V))\xlongrightarrow{\sim} J_{P_1}(V).
\end{equation*}
\end{theorem}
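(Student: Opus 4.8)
The plan is to reduce the statement to a comparison at the level of the underlying topological vector spaces together with the extra torus actions, exploiting the concrete description of $J_P$ as $(V^{N_P^o})_{\cfs}$. First I would fix compatible choices of open compact subgroups: pick $N_{P_1}^o \subseteq N_{P_1}(\Q_p)$ and $N_{P_2}^o \subseteq N_{P_2}(\Q_p)$, and an open compact $N^o$ of $N_{P_1 \cap L_{P_2}}(\Q_p)$ inside $L_{P_2}(\Q_p)$, arranged so that $N_{P_1}^o = N^o \cdot N_{P_2}^o$ (using that $N_{P_1} = (N_{P_1}\cap L_{P_2}) \ltimes N_{P_2}$ as algebraic groups, since $P_1 \subseteq P_2$). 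With such a choice, $V^{N_{P_1}^o} = (V^{N_{P_2}^o})^{N^o}$ as topological vector spaces, and one checks the $L_{P_1}(\Q_p)^+$-action defined by the averaging formula (\ref{equ: gln-gnp}) on the left agrees with the iterated averaging on the right. This is the bookkeeping core of the argument.

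Next I would identify the center-finite-slope parts. The key point is that $Z_{L_{P_1}}(\Q_p)$ is cocompact, up to finite index issues, in a suitable product involving $Z_{L_{P_2}}(\Q_p)$ and $Z_{L_{P_1 \cap L_{P_2}}}(\Q_p)$; more precisely $Z_{L_{P_2}} \subseteq Z_{L_{P_1}}$ is automatic, and taking finite-slope parts is transitive because the ``canonical extension to a locally analytic action of the whole center'' can be built in two stages — first extend the $Z_{L_{P_2}}(\Q_p)^+$-action, landing in $J_{P_2}(V)$, then extend the action of the center of $L_{P_1 \cap L_{P_2}}$ acting on $J_{P_2}(V)^{N^o}$, landing in $J_{L_{P_2}\cap P_1}(J_{P_2}(V))$. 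Concretely, $J_{P_1}(V) = \cL_{Z_{L_{P_1}}(\Q_p)^+}(\co(\widehat{Z_{L_{P_1}}}), V^{N_{P_1}^o})$, and I would rewrite $\co(\widehat{Z_{L_{P_1}}})$ via a (completed) tensor decomposition corresponding to a splitting of the cocharacter lattice of $Z_{L_{P_1}}$ adapted to $Z_{L_{P_2}}$, so that the finite-slope construction factors as a composition. Emerton's formalism (the adjunction characterizing $(\cdot)_{\cfs}$ as the finite-slope part, \cite[\S 3.2]{Em11}) is what makes the two-stage construction unambiguous.

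The cleanest route is probably to verify the isomorphism by a universal property: both sides corepresent, on the category of essentially admissible $L_{P_1}(\Q_p)$-representations, the functor sending $W$ to the space of $L_{P_1}(\Q_p)$-equivariant continuous maps $W \to V$ that are "balanced" with respect to $N_{P_1}^o$ in Emerton's sense. For the left side this is the adjunction for $J_{P_1}$, and for the right side it is the composition of the adjunctions for $J_{P_2}$ and for $J_{L_{P_2}\cap P_1}$, provided one knows that the intermediate object $J_{P_2}(V)$ is again essentially admissible as an $L_{P_2}(\Q_p)$-representation — which is exactly Thm.\ref{thm: gln-npv}. So the argument is: (i) set up compatible open compacts and check $V^{N_{P_1}^o}=(V^{N_{P_2}^o})^{N^o}$ with matching Hecke/torus actions; (ii) invoke Thm.\ref{thm: gln-npv} to know $J_{P_2}(V)$ lands in the right category; (iii) compose adjunctions, or equivalently compose the two finite-slope constructions, to get the natural map and check it is an isomorphism.

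The main obstacle I anticipate is step (i), specifically the compatibility of the smoothed/averaged $L_P(\Q_p)^+$-actions in (\ref{equ: gln-gnp}) under iteration: one must check that averaging over $N_{P_1}^o/gN_{P_1}^o g^{-1}$ for $g \in L_{P_1}(\Q_p)^+$ coincides with first averaging over $N_{P_2}^o/gN_{P_2}^o g^{-1}$ and then over $N^o/gN^o g^{-1}$, which uses the semidirect-product structure of $N_{P_1}$ and the fact that $g$ normalizes the filtration. A secondary subtlety is that $L_{P_1}(\Q_p)^+$ (defined relative to $N_{P_1}^o$) need not literally equal the submonoid of $L_{P_2}(\Q_p)^+$ that stabilizes $N^o$ inside $L_{P_2}$; one handles this by passing to the finite-slope completion, where only the behavior of a cofinal system of $g$'s (e.g. strictly dominant ones) matters, so the monoids agree after the relevant localizations. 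Apart from these point-set and monoid-theoretic checks, the argument is formal once Emerton's machinery and Thm.\ref{thm: gln-npv} are in hand.
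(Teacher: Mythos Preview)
The paper does not give its own proof of this theorem; it is simply quoted as \cite[Thm.5.3]{HL} without argument. Your sketch is a faithful outline of the Hill--Loeffler proof: decompose $N_{P_1}$ as the semidirect product of $N_{P_1}\cap L_{P_2}$ and $N_{P_2}$, choose compatible open compacts so that $V^{N_{P_1}^o}=(V^{N_{P_2}^o})^{N^o}$, verify the iterated averaging operators match, and then show the finite-slope construction is transitive (either directly via the $\cL_{Z^+}(\co(\widehat{Z}),-)$ description or by the universal property in \cite[\S 3.2]{Em11}). The obstacles you flag---compatibility of the two-step averaging and the possible mismatch of the monoids $L_{P_1}(\Q_p)^+$---are exactly the technical points Hill--Loeffler address, and your proposed resolutions (semidirect product bookkeeping, and that only a cofinal family of strictly dominant elements matters after passing to finite slope) are correct.
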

%\begin{proof}
%This lemma follows from the universal property of Jacquet-Emerton functors.
%Let $N_{P_1}^o$ be an open compact subgroup of $N_{P_1}(L)$, $N_{P_2}^o:=N_{P_1}^o \cap N_{P_2}(L)$, $N_{P_1\cap L_{P_2}}^o:= N_{P_1}^o \cap N_{P_1\cap L_{P_2}}$. Note $N_{P_2}^o$ is a normal subgroup of $N_{P_1}^o$, and one has a natural isomorphism
%%\begin{equation*}
 %% \big(V^{N_{P_2}^o}\big)^{N_{P_1\cap L_{P_2}}^o}\xlongrightarrow{\sim} V^{N_{P_1}^o}.
%\end{equation*}
 %\big(note $L_{P_1\cap L_{P_2}}=L_{P_1}$\big)
%$L_{P_1}(L)^+$-invariant
%\begin{equation*}
%      L_{P_i}(L)^+:=\big\{\big\}
%\end{equation*}
%\end{proof}

\subsubsection{A digression: locally algebraic vectors}\label{sec: 2.2.1}
Denote by $G^{\cD}$ the derived subgroup of $G$, and $\ug^{\cD}$ the Lie algebra of $G^{\cD}$ over $\Q_p$. Since $G$ is reductive, $G^{\cD}$ is semisimple, and we have a local isomorphism $Z_G \times G^{\cD}\xrightarrow{\sim} G$. Let $T_{G^{\cD}}:=T\cap G^{\cD}$, which is thus a maximal split torus of $G^{\cD}$, let $\ft_{G^{\cD}}$ denote the Lie algebra of $T_{G^{\cD}}$. The inclusion $\ft_{G^{\cD}} \hookrightarrow\ft$ induces a projection $\Hom_E(\ft,E)\twoheadrightarrow \Hom_{E}(\ft_{G^{\cD}},E)$. In fact, we have an isomorphism $\ft\xrightarrow{\sim} \ft_{G^{\cD}} \times \fz_G$, and $\oplus_{\alpha\in \Delta} E \alpha \xrightarrow{\sim} \Hom_E(\ft_{G^{\cD}}, E)$. Thus a weight $\lambda$ of $\ft$ is dominant if and only if its restriction to $\ft_{G^{\cD}}$ is dominant.

For a locally $\Q_p$-analytic representation $V$ of $G(\Q_p)$ over $E$, denote by $V_0$ the $E$-vector space generated by the vectors fixed by $\ug^{\cD}$, in other words, the smooth vectors for $G^{\cD}$. It's straightforward to check $V_0$ is stable under the $G(\Q_p)$-action and is a closed subrepresentation of $V$. Let $\lambda_0$ be a dominant weight for $G^{\cD}$, $\lambda$ a weight for $G$ above $\lambda_0$ (which is thus also dominant).
%, denote by $\sL(\lambda)$ the irreducible algebraic representation of $G$ over $E$ with highest weight $\lambda$.
Put
\begin{equation*}
  V_{\lambda_0}:=\big(V\otimes_E \sL(\lambda)'\big)_0 \otimes_E \sL(\lambda),
\end{equation*}
where $\sL(\lambda)'$ denotes the dual algebraic representation of $\sL(\lambda)$. And we would use $-\lambda$ to denote the highest weight of $\sL(\lambda)'$.
\begin{lemma}\label{lem: gln-nvd}
  Keep the above notation, $V_{\lambda_0}$ is a closed subrepresentation of $V$ of  $G(\Q_p)$ and is independent of the choice of $\lambda$, in other words, $V_{\lambda_0}$ only depends on $\lambda_0$.
\end{lemma}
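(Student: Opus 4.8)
The plan is to embed $V_{\lambda_0}$ into $V$ through the natural contraction map, identify its image concretely as an isotypic subspace for the $\ug^{\cD}$-action, and then read off both assertions of the lemma from that description. Concretely: since $\sL(\lambda)$ is finite dimensional, the canonical isomorphism $V\otimes_E\sL(\lambda)'\xrightarrow{\sim}\Hom_E(\sL(\lambda),V)$ carries the diagonal $\ug^{\cD}$-action to the conjugation action, whose invariants are $\Hom_{\ug^{\cD}}(\sL(\lambda),V)$; so $(V\otimes_E\sL(\lambda)')_0=\Hom_{\ug^{\cD}}(\sL(\lambda),V)$, and $V_{\lambda_0}=\Hom_{\ug^{\cD}}(\sL(\lambda),V)\otimes_E\sL(\lambda)$ comes equipped with the evaluation map
\[
  \mathrm{ev}\colon\ V_{\lambda_0}=(V\otimes_E\sL(\lambda)')_0\otimes_E\sL(\lambda)\ \hookrightarrow\ V\otimes_E\sL(\lambda)'\otimes_E\sL(\lambda)\ \xrightarrow{\ \id_V\otimes\,\mathrm{ev}_{\sL(\lambda)}\ }\ V,
\]
where $\mathrm{ev}_{\sL(\lambda)}\colon\sL(\lambda)'\otimes_E\sL(\lambda)\to E$ is the evaluation pairing. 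It is this $\mathrm{ev}$ that will be shown to be a closed $G(\Q_p)$-embedding. It is continuous (contraction against a finite-dimensional factor) and $G(\Q_p)$-equivariant: $\ug^{\cD}$ is an ideal of $\ug$ since $G^{\cD}\trianglelefteq G$, so $(V\otimes_E\sL(\lambda)')_0$ is a $G(\Q_p)$-subrepresentation, and $\mathrm{ev}_{\sL(\lambda)}$ is a morphism of algebraic $G$-representations.

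First I would compute the image. By construction $\Ima(\mathrm{ev})=\sum_{\phi}\phi(\sL(\lambda))$, the sum running over $\phi\in\Hom_{\ug^{\cD}}(\sL(\lambda),V)$. Since $\sL(\lambda)$ is the irreducible algebraic representation of $G$ of highest weight $\lambda$ and $G=Z_G\cdot G^{\cD}$ with $Z_G$ acting by a central character, $\sL(\lambda)$ stays irreducible over $\ug^{\cD}$, with highest weight $\lambda|_{\ft_{G^{\cD}}}=\lambda_0$ and $\End_{\ug^{\cD}}(\sL(\lambda))=E$ (absolute irreducibility for the split group $G^{\cD}$). Hence each $\phi(\sL(\lambda))$ is $0$ or $\cong\sL(\lambda)$, so $\Ima(\mathrm{ev})$ equals the $\sL(\lambda)$-isotypic subspace $V[\sL(\lambda)]$ of $V$ for the $\ug^{\cD}$-action. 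Moreover $\mathrm{ev}$ is injective: this is the standard fact that for a finite-dimensional simple module $S$ with $\End_{\ug^{\cD}}(S)=E$ the canonical map $\Hom_{\ug^{\cD}}(S,V)\otimes_E S\to V$ is injective — taking $\phi_1,\dots,\phi_m$ linearly independent, the kernel of the $\ug^{\cD}$-map $\sL(\lambda)^{\oplus m}\to V$, $(w_i)\mapsto\sum_i\phi_i(w_i)$, is a submodule of a semisimple module, so a nonzero kernel would produce, via Schur, scalars $c_i$ not all zero with $\sum_ic_i\phi_i=0$. Thus $\mathrm{ev}$ identifies $V_{\lambda_0}$ with the subspace $V[\sL(\lambda)]\subseteq V$.

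The crux is that $V[\sL(\lambda)]$ is closed in $V$, and here is where I would spend the effort. Let $\mathfrak a\subseteq\mathrm U(\ug^{\cD})\otimes_{\Q_p}E$ be the two-sided annihilator of $\sL(\lambda)$; by the Jacobson density theorem $\mathrm U(\ug^{\cD})\otimes_{\Q_p}E/\mathfrak a\cong M_n(E)$ with $n=\dim_E\sL(\lambda)$, a semisimple ring all of whose modules are direct sums of copies of $\sL(\lambda)$. Consequently, for $v\in V$ one has $\mathfrak a\,v=0$ if and only if $\bigl(\mathrm U(\ug^{\cD})\otimes_{\Q_p}E\bigr)v$ is a (necessarily finite-dimensional) $M_n(E)$-module, i.e. if and only if $v\in V[\sL(\lambda)]$. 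Therefore
\[
  V[\sL(\lambda)]\ =\ \bigcap_{X\in\mathfrak a}\Ker\bigl(X\colon V\to V\bigr),
\]
an intersection of closed subspaces, because every $X\in\mathrm U(\ug^{\cD})\otimes_{\Q_p}E$ acts on the locally analytic representation $V$ through a continuous operator. Hence $V[\sL(\lambda)]$ is closed and $\mathrm{ev}$ is a closed $G(\Q_p)$-embedding $V_{\lambda_0}\hookrightarrow V$.

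It remains to check independence of $\lambda$. If $\lambda_1,\lambda_2$ are weights of $G$ above $\lambda_0$, then $\lambda_1|_{\ft_{G^{\cD}}}=\lambda_2|_{\ft_{G^{\cD}}}=\lambda_0$ by the decomposition $\ft^*\cong\Hom_E(\ft_{G^{\cD}},E)\oplus\fz_G^*$, so $\sL(\lambda_1)|_{\ug^{\cD}}\cong\sL(\lambda_2)|_{\ug^{\cD}}$ is the simple $\ug^{\cD}$-module of highest weight $\lambda_0$; hence the ideal $\mathfrak a$, and therefore the subspace $V[\sL(\lambda)]=\Ima(\mathrm{ev}_\lambda)$, depends only on $\lambda_0$. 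This proves the lemma. The only nonformal point is the closedness of $V[\sL(\lambda)]$; the device that makes it go through — and that makes mere local analyticity of $V$ enough — is rewriting the isotypic condition as annihilation by the ideal $\mathfrak a$, which is available precisely because $\sL(\lambda)$ is finite dimensional and simple (this is where dominance of $\lambda_0$ is used).
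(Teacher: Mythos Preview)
Your proof is correct and follows the same skeleton as the paper's: both use the contraction map $(V\otimes_E\sL(\lambda)')_0\otimes_E\sL(\lambda)\to V$, $v\otimes w'\otimes w\mapsto w'(w)v$, to realize $V_{\lambda_0}$ inside $V$. The paper simply invokes \cite[Prop.~4.2.4]{Em04} (applied to $G^{\cD}(\Q_p)$) for injectivity and then observes that two dominant lifts of $\lambda_0$ differ by a determinantal twist, from which independence follows immediately; closedness is not argued separately there.

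Your version is more self-contained in two respects. First, you replace the citation to Emerton by the explicit Schur argument identifying $(V\otimes_E\sL(\lambda)')_0\cong\Hom_{\ug^{\cD}}(\sL(\lambda),V)$ and the image with the $\ug^{\cD}$-isotypic piece $V[\sL(\lambda)]$. Second, your closedness argument via the annihilator ideal $\mathfrak a\subset \mathrm U(\ug^{\cD})\otimes_{\Q_p}E$---writing $V[\sL(\lambda)]=\bigcap_{X\in\mathfrak a}\Ker(X:V\to V)$---is a clean point that the paper leaves implicit. Your independence argument (same $\ug^{\cD}$-restriction, hence same $\mathfrak a$) is equivalent to the paper's determinant-twist remark. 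One small phrasing issue: you call $\mathrm{ev}$ a ``closed embedding'' but only verify injectivity and closedness of the image; the homeomorphism onto the image is not addressed. This is harmless here since the lemma only asks that the image be a closed $G(\Q_p)$-subrepresentation of $V$, which you have established.
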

\begin{proof}
One has a natural $G(\Q_p)$-invariant map
\begin{equation*}
    \big(V\otimes_E \sL(\lambda)'\big)_0 \otimes_E \sL(\lambda) \ \lra V,\ v\otimes w' \otimes w \mapsto w'(w) v.
\end{equation*}
Moreover, by \cite[Prop.4.2.4]{Em04} \big(applied to $G=G^{\cD}(\Q_p)$\big), we know this is injective. Let $\mu$ be another dominant weight which restricts to $\lambda_0$, then $\sL(\mu)$ would differ from $\sL(\lambda)$ by certain determinant twist, from which  the second part easily follows.
\end{proof}
In particular, if $V$ is essentially admissible, so is $V_{\lambda_0}$ (cf. \cite[Prop.6.4.11]{Em04}).
\begin{lemma}\label{lem: gln-atr}Let $\lambda$ be a dominant weight for $G$, $\pi$ be a smooth representation of $G(\Q_p)$ over $E$, $V$ a locally analytic representation of $G(\Q_p)$ smooth for the $G^{\cD}(\Q_p)$-action, then the following map
\begin{equation}\label{equ: gln-gip}
  \Hom_{G(\Q_p)}(\pi, V) \lra \Hom_{G(\Q_p)}\big(\pi\otimes_E \sL(\lambda), V\otimes_E \sL(\lambda)\big),\ f\mapsto f\otimes \id,
\end{equation}
is bijective.
\end{lemma}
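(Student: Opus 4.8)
The plan is to reduce the statement to a computation of smooth vectors. Injectivity of $f\mapsto f\otimes\id$ is immediate, since $\sL(\lambda)\neq 0$ forces $f=0$ whenever $f\otimes\id=0$. For surjectivity I would first exploit that $\sL(\lambda)$ is finite dimensional to rewrite, by $\Hom$--$\otimes$ adjunction,
\[
  \Hom_{G(\Q_p)}\big(\pi\otimes_E\sL(\lambda),\,V\otimes_E\sL(\lambda)\big)\;\cong\;\Hom_{G(\Q_p)}\big(\pi,\,V\otimes_E\End_E(\sL(\lambda))\big),
\]
where $G(\Q_p)$ acts diagonally, through its (algebraic) conjugation action on $\End_E(\sL(\lambda))=\sL(\lambda)\otimes_E\sL(\lambda)'$. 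Under this identification the map of the lemma becomes composition with the $G(\Q_p)$-equivariant split injection $V=V\otimes_E E\cdot\id_{\sL(\lambda)}\hookrightarrow V\otimes_E\End_E(\sL(\lambda))$, so it suffices to prove that every $G(\Q_p)$-equivariant map $\pi\to V\otimes_E\End_E(\sL(\lambda))$ has image in $V\otimes_E E\cdot\id_{\sL(\lambda)}$.

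Since $\pi$ is a smooth representation of $G(\Q_p)$, hence of $G^{\cD}(\Q_p)$, the image of any $G(\Q_p)$-equivariant map out of $\pi$ consists of $G^{\cD}(\Q_p)$-smooth vectors; and as $G^{\cD}$ is normal in $G$, the $G^{\cD}(\Q_p)$-smooth vectors of a $G(\Q_p)$-representation form a $G(\Q_p)$-subrepresentation. So the point is the identity $\big(V\otimes_E\End_E(\sL(\lambda))\big)^{G^{\cD}(\Q_p)\text{-}\mathrm{sm}}=V\otimes_E E\cdot\id_{\sL(\lambda)}$. For this I would use the elementary fact that for $V$ smooth and $U$ a finite dimensional $G^{\cD}(\Q_p)$-representation one has $(V\otimes_E U)^{G^{\cD}(\Q_p)\text{-}\mathrm{sm}}=V\otimes_E U^{G^{\cD}(\Q_p)\text{-}\mathrm{sm}}$ (write a smooth vector as $\sum v_i\otimes u_i$ with the $v_i$ linearly independent, shrink a fixing subgroup so that it also fixes each $v_i$, and compare coefficients), together with $\End_E(\sL(\lambda))^{G^{\cD}(\Q_p)\text{-}\mathrm{sm}}=E\cdot\id$: a vector of $\End_E(\sL(\lambda))$ fixed by a compact open subgroup of $G^{\cD}(\Q_p)$ is annihilated by $\ug^{\cD}=\Lie G^{\cD}(\Q_p)$, and $\End_{\ug^{\cD}}(\sL(\lambda))=E\cdot\id$ by Schur's lemma, because $\sL(\lambda)$ restricts to $\ug^{\cD}$ as the finite dimensional irreducible module of highest weight $\lambda_0$ (using from \S\ref{sec: 2.2.1} that $\lambda_0$ is dominant integral and that $Z_G$ acts by a scalar, so no proper subspace is $\ug^{\cD}$-stable) and such modules are absolutely irreducible over $E$.

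Combining the two displays and unwinding the adjunction isomorphism, the induced inverse sends $g$ to the unique $f$ with $g=f\otimes\id$, so $f\mapsto f\otimes\id$ is bijective. I expect the only genuine care required --- the ``hard part'', such as it is, being bookkeeping rather than conceptual --- is checking that the $\Hom$--$\otimes$ adjunction and the smooth-vectors manipulations are valid in the relevant topological setting (continuous $G(\Q_p)$-equivariant maps between possibly non-admissible locally analytic representations). One can also avoid the topological adjunction entirely: any such $g\colon\pi\otimes_E\sL(\lambda)\to V\otimes_E\sL(\lambda)$ is $\ug^{\cD}$-equivariant, and since $\pi$ and $V$ are trivial $\ug^{\cD}$-modules, both sides are isomorphic as $\ug^{\cD}$-modules to direct sums of copies of the absolutely irreducible module $\sL(\lambda)$; hence $\Hom_{\ug^{\cD}}(\pi\otimes_E\sL(\lambda),V\otimes_E\sL(\lambda))\cong\Hom_E(\pi,V)$ via $f\mapsto f\otimes\id$, and imposing $G(\Q_p)$-equivariance on $g$ is exactly imposing $G(\Q_p)$-equivariance on the corresponding $f$.
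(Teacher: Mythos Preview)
Your argument is correct and is essentially the same as the paper's: the paper constructs the inverse by sending $g$ to the composite $\pi\xrightarrow{\mathrm{coev}}\pi\otimes_E\sL(\lambda)\otimes_E\sL(\lambda)'\xrightarrow{g\otimes\id}V\otimes_E\sL(\lambda)\otimes_E\sL(\lambda)'$ and then invoking \cite[Prop.~4.2.4]{Em04} to identify the $G^{\cD}$-smooth vectors of the target with $V$, which is exactly your Hom--$\otimes$ adjunction step together with your explicit computation $(V\otimes_E\End_E(\sL(\lambda)))_0=V\otimes_E E\cdot\id_{\sL(\lambda)}$ via Schur. Your write-up merely unpacks the cited reference and adds the alternative $\ug^{\cD}$-module argument.
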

\begin{proof}
%Firstly, for a locally analytic representation $U$ of $L_{P_1}(\Q_p)$ smooth for the action $L^{\cD}_{P_1}(\Q_p)$, we claim the natural map
%  \begin{equation}\label{equ: gln-gip}
%  \Hom_{L_{P_1}(\Q_p)}\big(\pi, U\big) \lra
%    \Hom_{L_{P_1}(\Q_p)}\big(\pi\otimes_E \sL_1(\lambda), U\otimes_E \sL_1(\lambda)\big),\  f\mapsto f\otimes \id
 % \end{equation}
%  is bijective.
A $G(\Q_p)$-invariant map $g: \pi \otimes_E \sL(\lambda)\ra V \otimes_E \sL(\lambda)$ induces
  \begin{equation*}\label{}
    \pi \lra \pi \otimes_E \sL(\lambda)\otimes_E \sL(\lambda)' \lra V \otimes_E \sL(\lambda) \otimes_E \sL(\lambda)';
  \end{equation*}
  since $\pi$ is smooth, this map factors through in particular %\big(recall $(\cdot)_0$ denotes the smooth vectors for $G^{\cD}(\Q_p)$\big)
  \begin{equation*}\pi \lra \big(V \otimes_E \sL(\lambda) \otimes_E \sL(\lambda)'\big)_0 \cong V\end{equation*}
 where the last isomorphism follows from the isomorphism above \cite[Prop.4.2.4]{Em04}. One easily sees this gives an inverse (up to scalars) of (\ref{equ: gln-gip}).
\end{proof}
\subsubsection{Subfunctors}\label{sec: 2.2.2}
Return to the situation before \S \ref{sec: 2.2.1} (in particular, $V$ is an essentially admissible locally analytic representation of $G(\Q_p)$ over $E$), and let $P_1\subseteq P_2$ be two parabolic subgroups of $G$ containing $B$. Let $\lambda_0$ be a dominant weight for $L_{P_2}^\cD$ (the derived subgroup of $L_{P_2}$), for an essentially admissible  locally analytic representation $V$ of $G(\Q_p)$, put (cf. \S \ref{sec: 2.2.1})
\begin{equation*}
  J_{P_1,(P_2,\lambda_0)}(V):=J_{P_1\cap L_{P_2}}\big(J_{P_2}(V)_{\lambda_0}\big).
\end{equation*}
By the left exactness of the functor $J_{P_1}(\cdot)$ and Lem.\ref{lem: gln-nvd}, $J_{P_1,(P_2,\lambda_0)}(V)$ is a closed subrepresentation of $J_{P_1}(V)=J_{P_1\cap L_{P_2}}(J_{P_2}(V))$. By Thm.\ref{thm: gln-npv}, \ref{thm: gln-sgv} and Lem.\ref{lem: gln-nvd}, one has
 \begin{corollary}$J_{P_1,(P_2,\lambda_0)}(V)$ is an essentially admissible locally analytic representation of $L_{P_1}(\Q_p)$.
 \end{corollary}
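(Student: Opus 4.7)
The plan is simply to chain the three tools assembled immediately before the statement, applying Emerton's theorem twice. First I would apply Theorem \ref{thm: gln-npv} to $V$ with the parabolic $P_2$, which gives that $J_{P_2}(V)$ is an essentially admissible locally $\Q_p$-analytic representation of $L_{P_2}(\Q_p)$. Second, I would pass to the closed subrepresentation $J_{P_2}(V)_{\lambda_0}$ defined in \S \ref{sec: 2.2.1}: Lemma \ref{lem: gln-nvd}, applied to the reductive group $L_{P_2}$ with derived subgroup $L_{P_2}^{\cD}$ and to the dominant weight $\lambda_0$, shows that it is indeed a closed subrepresentation, and the remark right after Lemma \ref{lem: gln-nvd} (invoking \cite[Prop.6.4.11]{Em04}) guarantees that essential admissibility is inherited. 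So $J_{P_2}(V)_{\lambda_0}$ is itself an essentially admissible locally analytic representation of $L_{P_2}(\Q_p)$.

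Next I would observe that because $P_1\subseteq P_2$, the intersection $P_1\cap L_{P_2}$ is a parabolic subgroup of $L_{P_2}$ containing $B\cap L_{P_2}$, with Levi factor $L_{P_1\cap L_{P_2}}=L_{P_1}$ (since $L_{P_1}\subseteq L_{P_2}$). Thus Theorem \ref{thm: gln-npv} can be applied a second time, now inside the reductive group $L_{P_2}$, to the representation $J_{P_2}(V)_{\lambda_0}$ and its parabolic $P_1\cap L_{P_2}$. The output $J_{P_1\cap L_{P_2}}\bigl(J_{P_2}(V)_{\lambda_0}\bigr)$ is then essentially admissible locally analytic over $L_{P_1}(\Q_p)$, and by definition it is exactly $J_{P_1,(P_2,\lambda_0)}(V)$.

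The assertion in \S \ref{sec: 2.2.2} that $J_{P_1,(P_2,\lambda_0)}(V)$ embeds as a closed subrepresentation of $J_{P_1}(V)$ is used here only as consistency check: it follows in parallel by combining the closed embedding $J_{P_2}(V)_{\lambda_0}\hookrightarrow J_{P_2}(V)$ with left exactness of $J_{P_1\cap L_{P_2}}(\cdot)$ and the Hill--Loeffler identification $J_{P_1\cap L_{P_2}}(J_{P_2}(V))\xrightarrow{\sim}J_{P_1}(V)$ of Theorem \ref{thm: gln-sgv}. There is essentially no serious obstacle: the proof is a formal concatenation of Theorem \ref{thm: gln-npv}, Lemma \ref{lem: gln-nvd} and \cite[Prop.6.4.11]{Em04}. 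The only minor verification is that Lemma \ref{lem: gln-nvd}, stated for $G$, applies equally well to the reductive group $L_{P_2}$; but since its statement and proof depend only on the formalism of reductive groups and algebraic representations, this transfer is automatic.
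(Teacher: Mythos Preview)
Your proposal is correct and matches the paper's approach: the paper simply records ``By Thm.\ref{thm: gln-npv}, \ref{thm: gln-sgv} and Lem.\ref{lem: gln-nvd}, one has'' before stating the corollary, and you have spelled out exactly the intended chain (apply Thm.\ref{thm: gln-npv} to get $J_{P_2}(V)$ essentially admissible, use Lem.\ref{lem: gln-nvd} and \cite[Prop.6.4.11]{Em04} to pass to the closed subrepresentation $J_{P_2}(V)_{\lambda_0}$, then apply Thm.\ref{thm: gln-npv} again inside $L_{P_2}$). The role of Thm.\ref{thm: gln-sgv} is precisely the consistency check you mention, giving the closed embedding into $J_{P_1}(V)$.
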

\begin{lemma}\label{lem: gln-att}
  Keep the above notation, let $\lambda$ be dominant weight (for $L_{P_2}$) above $\lambda_0$, $\sL_2(\lambda)$ the irreducible algebraic representation of $L_{P_2}$ with highest weight $\lambda$ we have
  \begin{equation*}
    J_{P_1,(P_2,\lambda_0)}(V)\xlongrightarrow{\sim} J_{P_1\cap L_{P_2}}\big((J_{P_2}(V) \otimes_E \sL_2(\lambda)')_0\big) \otimes_E \sL_2(\lambda)^{N_{P_1\cap L_{P_2}}}.
  \end{equation*}
\end{lemma}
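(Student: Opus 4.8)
The plan is to unwind both sides of the claimed isomorphism using the definitions and the material from \S\ref{sec: 2.2.1}. By definition $J_{P_1,(P_2,\lambda_0)}(V) = J_{P_1\cap L_{P_2}}\big(J_{P_2}(V)_{\lambda_0}\big)$, and by the definition of the functor $(\cdot)_{\lambda_0}$ applied to the representation $J_{P_2}(V)$ of $L_{P_2}(\Q_p)$ (which is essentially admissible by Thm.\ref{thm: gln-npv}), we have $J_{P_2}(V)_{\lambda_0} = \big(J_{P_2}(V)\otimes_E \sL_2(\lambda)'\big)_0 \otimes_E \sL_2(\lambda)$, where the subscript $0$ denotes the closed subrepresentation of vectors smooth for $L_{P_2}^{\cD}(\Q_p)$. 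So the content of the lemma is to commute the functor $J_{P_1\cap L_{P_2}}(\cdot)$ past the tensor factor $\otimes_E \sL_2(\lambda)$, at the cost of replacing $\sL_2(\lambda)$ by its $N_{P_1\cap L_{P_2}}$-invariants.

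First I would record the general fact that for an essentially admissible locally analytic representation $W$ of $L_{P_2}(\Q_p)$ and a finite-dimensional algebraic representation $\sL_2(\lambda)$ of $L_{P_2}$, one has a natural $L_{P_1}(\Q_p)$-equivariant isomorphism $J_{P_1\cap L_{P_2}}\big(W\otimes_E \sL_2(\lambda)\big) \xrightarrow{\sim} J_{P_1\cap L_{P_2}}(W)\otimes_E \sL_2(\lambda)^{N_{P_1\cap L_{P_2}}}$. This should be extracted from Emerton's construction of $J_{P}(\cdot)$: passing to $N^{o}$-invariants for $N^o \subseteq N_{P_1\cap L_{P_2}}(\Q_p)$ open compact turns the algebraic factor into its $N^o$-invariants, which for $N^o$ sufficiently small is $\sL_2(\lambda)^{N_{P_1\cap L_{P_2}}}$ (a highest-weight-type line/space on which $N_{P_1\cap L_{P_2}}$ acts trivially because the group is generated by its compact open subgroups and the algebraic action is analytic); then the finite-slope/Hecke-smoothing operation $(\cdot)_{\cfs}$ commutes with tensoring by this finite-dimensional space on which $Z_{L_{P_1}}(\Q_p)^+$ acts through an algebraic (hence locally analytic) character, so it can be pulled out of $\cL_{Z_{L_{P_1}}(\Q_p)^+}\big(\co(\widehat{Z}),-\big)$. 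One checks this is independent of the auxiliary $N^o$ and $L_{P_1}(\Q_p)$-equivariant, exactly as in the proof of Thm.\ref{thm: gln-npv}. Citing \cite{Em11} and \cite{HL} for the relevant compatibilities should shorten this considerably.

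Having that, I apply it with $W = \big(J_{P_2}(V)\otimes_E \sL_2(\lambda)'\big)_0$, which is a legitimate essentially admissible representation of $L_{P_2}(\Q_p)$ by Lem.\ref{lem: gln-nvd} and the remark following it. This directly yields
$J_{P_1\cap L_{P_2}}\big(J_{P_2}(V)_{\lambda_0}\big) = J_{P_1\cap L_{P_2}}\big(W\otimes_E \sL_2(\lambda)\big)\xrightarrow{\sim} J_{P_1\cap L_{P_2}}(W)\otimes_E \sL_2(\lambda)^{N_{P_1\cap L_{P_2}}} = J_{P_1\cap L_{P_2}}\big((J_{P_2}(V)\otimes_E\sL_2(\lambda)')_0\big)\otimes_E \sL_2(\lambda)^{N_{P_1\cap L_{P_2}}}$,
which is the assertion. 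Independence of the choice of $\lambda$ above $\lambda_0$ is inherited from Lem.\ref{lem: gln-nvd}: a different lift changes $\sL_2(\lambda)$ by a determinant-type character of $L_{P_2}$, which simultaneously adjusts $J_{P_2}(V)_{\lambda_0}$ in a way that cancels on both sides.

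The main obstacle I expect is the first step — verifying cleanly that $J_{P_1\cap L_{P_2}}(\cdot)$ commutes with tensoring by a finite-dimensional algebraic representation up to taking $N_{P_1\cap L_{P_2}}$-invariants. The subtlety is that $J_{P}(\cdot)$ is not simply ``$N^o$-invariants'' but involves the Hecke action (\ref{equ: gln-gnp}) and the finite-slope/essentially-admissible completion $(\cdot)_{\cfs}$; one must check that the splitting $\sL_2(\lambda)|_{N^o} \cong \sL_2(\lambda)^{N_{P_1\cap L_{P_2}}}$ at the level of $N^o$-invariants is compatible with the $Z_{L_{P_1}}(\Q_p)^+$-action and survives the $(\cdot)_{\cfs}$ operation. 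This is essentially bookkeeping with Emerton's formalism, and the argument is parallel to how one shows $J_P(V)$ is independent of $N^o$, so I do not anticipate a genuine difficulty, only the need to be careful about equivariance and topologies.
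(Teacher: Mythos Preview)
Your ``general fact'' is false as stated. Take $L_{P_2}=\GL_2$, $P_1\cap L_{P_2}=B$, and $W=\sL_2(\lambda)'$ for a nontrivial dominant $\lambda$. Then $W\otimes_E\sL_2(\lambda)$ contains the trivial representation as a summand, so $(W\otimes_E\sL_2(\lambda))^{N^o}$ is at least two-dimensional (the highest weight lines of the trivial piece and of the adjoint piece), while $W^{N^o}\otimes_E\sL_2(\lambda)^{N^o}$ is one-dimensional. Hence $J_B(W\otimes_E\sL_2(\lambda))\not\cong J_B(W)\otimes_E\sL_2(\lambda)^{N}$. Passing to $N^o$-invariants of a tensor product does \emph{not} simply ``turn the algebraic factor into its $N^o$-invariants'' when $N^o$ acts nontrivially on both tensorands; there are cross terms, and no amount of bookkeeping with $(\cdot)_{\cfs}$ repairs this.

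The paper's argument works precisely because the particular $W=(J_{P_2}(V)\otimes_E\sL_2(\lambda)')_0$ is not arbitrary: by construction of $(\cdot)_0$ the action of $L_{P_2}^{\cD}(\Q_p)$ on $W$ is smooth, and since $N_{P_1\cap L_{P_2}}\subset L_{P_2}^{\cD}$ the Lie algebra $\fn_{P_1\cap L_{P_2}}$ annihilates $W$. That is the missing ingredient. With it one has
\[
\big(W\otimes_E\sL_2(\lambda)\big)^{N_{P_1\cap L_{P_2}}^o}\subseteq \big(W\otimes_E\sL_2(\lambda)\big)^{\fn_{P_1\cap L_{P_2}}}\subseteq W\otimes_E\sL_2(\lambda)^{\fn_{P_1\cap L_{P_2}}}=W\otimes_E\sL_2(\lambda)^{N_{P_1\cap L_{P_2}}},
\]
and the desired splitting of invariants (hence of the Jacquet functor, the $(\cdot)_{\cfs}$ step being harmless once the algebraic factor carries only a character of $Z_{L_{P_1}}^+$) follows at once. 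Your outline becomes correct if you restrict the ``general fact'' to $W$ on which $N_{P_1\cap L_{P_2}}$ acts smoothly and invoke this smoothness explicitly; but as written, the step you flagged as routine is exactly where the content of the lemma lies.
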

\begin{proof}
  It's sufficient to prove \begin{equation}\label{equ: gln-jpv}\Big((J_{P_2}(V) \otimes_E \sL_2(\lambda)')_0 \otimes_E \sL_2(\lambda)\Big)^{N_{P_1\cap L_{P_2}}}\cong \big((J_{P_2}(V) \otimes_E \sL_2(\lambda)')_0\big)^{N_{P_1\cap L_{P_2}}} \otimes_E \sL_2(\lambda)^{N_{P_1\cap L_{P_2}}}.\end{equation} Since the action of $N_{P_1\cap L_{P_2}}$ is smooth on $(J_{P_2}(V) \otimes_E \sL_2(\lambda)')_0$, thus we have
  \begin{multline*}
    \Big((J_{P_2}(V) \otimes_E \sL_2(\lambda)')_0 \otimes_E \sL_2(\lambda)\Big)^{N_{P_1\cap L_{P_2}}}\subseteq \Big((J_{P_2}(V) \otimes_E \sL_2(\lambda)')_0 \otimes_E \sL_2(\lambda)\Big)^{\fn_{P_1\cap L_{P_2}}} \\ \subseteq (J_{P_2}(V) \otimes_E \sL_2(\lambda)')_0\otimes_E \sL_2(\lambda)^{\fn_{P_1\cap L_{P_2}}}=(J_{P_2}(V) \otimes_E \sL_2(\lambda)')_0\otimes_E \sL_2(\lambda)^{N_{P_1\cap L_{P_2}}}
  \end{multline*}
  where $\fn_{P_1\cap L_{P_2}}$ denotes the Lie algebra of $N_{P_1\cap L_{P_2}}$, from which we easily deduce that the left side is contained in the right side in (\ref{equ: gln-jpv}). The other direction is trivial, and the lemma follows.
\end{proof}
Keep the above notation, and denote by $\sL_1(\lambda)$ the irreducible algebraic representation of $L_{P_1}(\Q_p)$ with highest weight $\lambda$, thus $\sL_1(\lambda)\cong \sL_2(\lambda)^{N_{P_1\cap L_{P_2}}}$. Note that the action of $L_{P_1}(\Q_p) \cap L_{P_2}^{\cD}(\Q_p)$ on $J_{P_1\cap L_{P_2}}\big((J_{P_2}(V) \otimes_E \sL_2(\lambda)')_0\big) $ is smooth, thus $J_{P_1,(P_2,\lambda_0)}(V)$ is a locally algebraic representation of $L_{P_1}^{\cD}(\Q_p)$ \big(of type $\sL_1(\lambda)$\big). Let $H$ be an open compact subgroup of $\big(Z_{L_{P_1}}\cap L_{P_2}^{\cD}\big)(\Q_p)$, $\chi$ be a smooth character of $H$ over $E$, and put
\begin{equation}\label{equ: gln-ivP}
  J_{P_1, (P_2,\lambda_0)}^{H,\chi}(V):=J_{P_1\cap L_{P_2}}\big((J_{P_2}(V)\otimes_E \sL_2(\lambda)')_0\big)^{H=\chi}\otimes_E \sL_1(\lambda)\hooklongrightarrow J_{P_1,(P_2,\lambda_0)}(V),
\end{equation}
which is also an essentially admissible locally analytic representation of $L_{P_1}(\Q_p)$.
% For an open compact subgroup $U$ of $L_{P_1}(\Q_p)$, put
% \begin{equation*}J_{P_1,(P_2,\lambda_0)}^U(V):=J_{P_1\cap L_{P_2}}\big((V \otimes_E \sL_2(\lambda)')_0\big)^U \otimes_E \sL_1(\lambda).\end{equation*} So
%\begin{equation*}
%  J_{P_1,(P_2,\lambda)}(V) \cong \varinjlim_{U} J_{P_1,(P_2,\lambda_0)}^U(V),
%\end{equation*}
%with $H$ running through open compact subgroups of $L_{P_1}(\Q_p)$. At last, note $J_{P_1,(P_2,\lambda_0)}^H(V)$ is moreover a closed subrepresentation of $J_{P_1,(P_2,\lambda_0)}(V)$ if $L_{P_1}(\Q_p)$ is commutative.
\subsection{Adjunction formulas}In this section, we deduce from \cite[Thm.4.3]{Br13II} an adjunction formula for the functor $J_{P_1,(P_2,\lambda_0)}(\cdot)$, which would play a crucial role in our study below of certain closed rigid subspaces of the eigenvarieties.

Denote by $\delta_i$ the modulus character of $P_i(\Q_p)$ for $i=1,2$, $\delta_{12}$ the modulus character of $P_1(\Q_p) \cap L_{P_2}(\Q_p)$ \big(where $P_1\cap L_{P_2}$ is viewed as a parabolic subgroup of $L_{P_2}$\big). Note the character $\delta_i$ factors through $L_{P_i}$, and $\delta_{12}$ factors through $L_{P_1}$. We have $\delta_{12}|_{L_{P_1}}\delta_2|_{L_{P_1}}=\delta_1|_{L_{P_1}}$.

Let $W$ be an irreducible algebraic representation of $L_{P_1}$ of highest weight $\mu$, $\pi$ be a finite length smooth admissible representation of $L_{P_1}(\Q_p)$, suppose there exists a non-zero $L_{P_1}(\Q_p)$-invariant map: $\pi \otimes_E W \ra J_{P_1,(P_2,\lambda_0)}(V)$. Then one has
\begin{lemma}
  The weight $\mu$ is $P_2$-dominant and restricts to $\lambda_0$.
\end{lemma}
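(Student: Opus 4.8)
The plan is to unwind the definition of $J_{P_1,(P_2,\lambda_0)}(V)$ via Lemma~\ref{lem: gln-att} and extract the weight constraint from the two separate sources it comes from: the tensor factor $\sL_1(\lambda)$ and the Jacquet module $J_{P_1\cap L_{P_2}}\big((J_{P_2}(V)\otimes_E \sL_2(\lambda)')_0\big)$. First I would fix a dominant weight $\lambda$ for $L_{P_2}$ lying above $\lambda_0$, so that by Lemma~\ref{lem: gln-att} we have $J_{P_1,(P_2,\lambda_0)}(V)\cong J_{P_1\cap L_{P_2}}\big((J_{P_2}(V)\otimes_E \sL_2(\lambda)')_0\big)\otimes_E \sL_1(\lambda)$, with $\sL_1(\lambda)\cong \sL_2(\lambda)^{N_{P_1\cap L_{P_2}}}$ an irreducible algebraic representation of $L_{P_1}$. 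A non-zero $L_{P_1}(\Q_p)$-map $\pi\otimes_E W\to J_{P_1,(P_2,\lambda_0)}(V)$ then lands in this locally algebraic representation, whose algebraic part (for the derived group $L_{P_1}^{\cD}$) is of type $\sL_1(\lambda)$.

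The key step is to compare the $L_{P_1}^{\cD}(\Q_p)$-algebraic types. Since $\pi$ is smooth, $\pi\otimes_E W$ is locally algebraic of type $W$ for the $L_{P_1}^{\cD}(\Q_p)$-action; similarly the target is locally algebraic of type $\sL_1(\lambda)$. A non-zero $L_{P_1}(\Q_p)$-equivariant (hence $\ug$-equivariant for $\fl_{P_1}$) map between locally algebraic representations must respect the algebraic constituents, so $W$ must occur in $\sL_1(\lambda)$ as an $L_{P_1}^{\cD}$-representation; as both are irreducible algebraic of that group this forces $W\cong \sL_1(\lambda)$ after a central twist, i.e.\ $\mu$ and $\lambda$ agree on $\ft_{L_{P_1}^{\cD}}$ — equivalently their difference lies in $\fz_{L_{P_1}}^*$. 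Using the decomposition of $\ft^*$ analogous to $\ft^*\cong(\oplus_{\alpha\in\Delta_{P_2}}E\alpha)\oplus\fz_{L_{P_2}}^*$ and that $\sL_1(\lambda)=\sL_2(\lambda)^{N_{P_1\cap L_{P_2}}}$ has the same highest weight $\lambda$, I can conclude $\mu$ restricted to $\ft_{L_{P_2}^{\cD}}$ equals $\lambda$ restricted there, and since $\lambda$ was chosen above $\lambda_0$, this says exactly that $\mu$ restricts to $\lambda_0$ on $L_{P_2}^{\cD}$. The $P_2$-dominance of $\mu$ is then immediate: $\lambda_0$ is dominant for $L_{P_2}^{\cD}$ by hypothesis, and $\mu$ having the same restriction to $\ft_{L_{P_2}^{\cD}}$ inherits $\langle\mu,\alpha^\vee\rangle=\langle\lambda_0,\alpha^\vee\rangle\in\Z_{\geq 0}$ for all $\alpha\in\Delta_{P_2}$.

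I expect the main obstacle to be the bookkeeping around \emph{which} group's algebraic type is being matched — one must be careful that $\sL_1(\lambda)$ is the irreducible algebraic representation of $L_{P_1}$ (not $L_{P_2}$), that its highest weight is still $\lambda$ but now as a weight for $\ft$ via the decomposition for $P_1$, and that the relevant "locally algebraic" structure is for $L_{P_1}^{\cD}(\Q_p)$ rather than all of $L_{P_1}(\Q_p)$ (since $\pi$ is only smooth, not trivial, under the center). Concretely, the argument hinges on the elementary fact (a form of Lemma~\ref{lem: gln-atr}, or of \cite[Prop.4.2.4]{Em04}) that $\Hom_{L_{P_1}^{\cD}(\Q_p)}$ between $\pi\otimes W$ and a locally algebraic representation of type $\sL_1(\lambda)$ forces $W$ and $\sL_1(\lambda)$ to be isomorphic as algebraic $L_{P_1}^{\cD}$-representations, hence equality of highest weights modulo the center. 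Once that identification is in hand, translating "equality modulo $\fz_{L_{P_1}}$" into "$\mu|_{\ft_{L_{P_2}^{\cD}}}=\lambda_0$" and reading off $P_2$-dominance is routine.
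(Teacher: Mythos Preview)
There is a genuine gap at the step where you pass from ``$\mu$ and $\lambda$ agree on $\ft_{L_{P_1}^{\cD}}$'' to ``$\mu$ restricted to $\ft_{L_{P_2}^{\cD}}$ equals $\lambda$ restricted there''. Since $P_1\subseteq P_2$, one has $\Delta_{P_1}\subseteq\Delta_{P_2}$ and hence $\ft_{L_{P_1}^{\cD}}\subsetneq\ft_{L_{P_2}^{\cD}}$ in general; knowing $\mu-\lambda$ vanishes on the smaller subspace says nothing about its values on the rest of $\ft_{L_{P_2}^{\cD}}$. Equivalently, $\fz_{L_{P_1}}^*\supsetneq\fz_{L_{P_2}}^*$, so $\mu-\lambda\in\fz_{L_{P_1}}^*$ does not force $\mu-\lambda\in\fz_{L_{P_2}}^*$. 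Your invocation of the decomposition $\ft^*\cong(\oplus_{\alpha\in\Delta_{P_2}}E\alpha)\oplus\fz_{L_{P_2}}^*$ does not bridge this: you have only pinned down the $\Delta_{P_1}$-coordinates of $\mu$, not the $\Delta_{P_2}\setminus\Delta_{P_1}$ ones.

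The fix, which is what the paper does, is to compare algebraic types for the larger group $L_{P_1}(\Q_p)\cap L_{P_2}^{\cD}(\Q_p)$ rather than just $L_{P_1}^{\cD}(\Q_p)$. The crucial observation (stated in the paper right after Lemma~\ref{lem: gln-att}) is that the first tensor factor $J_{P_1\cap L_{P_2}}\big((J_{P_2}(V)\otimes_E\sL_2(\lambda)')_0\big)$ is smooth for the action of $L_{P_1}(\Q_p)\cap L_{P_2}^{\cD}(\Q_p)$, because $(\cdot)_0$ extracts $L_{P_2}^{\cD}$-smooth vectors and the Jacquet functor preserves this. Matching algebraic types for this group gives $W|_{L_{P_1}\cap L_{P_2}^{\cD}}\cong\sL_1(\lambda)|_{L_{P_1}\cap L_{P_2}^{\cD}}$; since $T\subseteq L_{P_1}$ one has $T\cap L_{P_2}^{\cD}\subseteq L_{P_1}\cap L_{P_2}^{\cD}$, so this pins down $\mu$ on all of $\ft_{L_{P_2}^{\cD}}$, i.e.\ $\mu$ restricts to $\lambda_0$. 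The $P_2$-dominance then follows as you say. Your own final paragraph flags exactly the right worry (``which group's algebraic type is being matched'') but resolves it with the wrong group.
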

\begin{proof}
  By considering the action of the Lie algebra of $L_{P_1}(\Q_p)\cap L^{\cD}_{P_2}(\Q_p)$  and Lem.\ref{lem: gln-att}, there exists a $P_2$-dominant weight $\lambda$ above $\lambda_0$ such that $W|_{L_{P_1}(\Q_p)\cap L^{\cD}_{P_2}(\Q_p)} \cong \sL_1(\lambda)|_{L_{P_1}(\Q_p)\cap L^{\cD}_{P_2}(\Q_p)}$. Since $L_{P_1}^{\cD}\subseteq  L_{P_1} \cap L_{P_2}^{\cD}$, we see $W$ differs from $\sL_1(\lambda)$ by certain determinantal twist $\dett_1$. Since $\dett_1$ is trivial on $L_{P_1}(\Q_p) \cap L^{\cD}_{P_2}(\Q_p)$ \big(thus trivial on $(T\cap L^{\cD}_{P_2})(\Q_p)$\big), the weight $\dett_1$ is $P_2$-dominant. The lemma follows.
\end{proof}For a parabolic subgroup $P$ of $G$, we use $\overline{P}$ to denote the parabolic subgroup of $G$ opposite to $P$.
\begin{proposition}\label{prop: gln-trs}
  Let $\lambda$ be a dominant weight for $L_{P_2}$ which restricts to $\lambda_0$, $\pi$ be a finite length smooth admissible representation of $L_{P_1}(\Q_p)$, $V$ be an essentially admissible locally analytic representation of $L_{P_2}(\Q_p)$, then one has a natural bijection
\begin{multline}\label{equ: gln-g2p}
    \Hom_{L_{P_2}(\Q_p)}\Big(\big(\Ind_{\overline{P}_1(\Q_p)\cap L_{P_2}(\Q_p)}^{L_{P_2}(\Q_p)} \pi\otimes_E \delta_{12}^{-1}\big)^{\infty}\otimes_E \sL_2(\lambda) , V_{\lambda_0}\Big) \\ \xlongrightarrow{\sim} \Hom_{L_{P_1}(\Q_p)}\Big( \pi\otimes_E \sL_1(\lambda) , J_{P_1\cap L_{P_2}}(V_{\lambda_0})\Big).
\end{multline}
\end{proposition}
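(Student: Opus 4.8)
The plan is to reduce the claimed bijection, by peeling off the two algebraic twists $\sL_2(\lambda)$ (on the source) and $\sL_1(\lambda)$ (on the target), to the classical ``second adjointness'' between smooth parabolic induction from $\overline{P}_1\cap L_{P_2}$ and the smooth Jacquet functor inside $L_{P_2}(\Q_p)$; the machinery of \S\ref{sec: 2.2.1}--\S\ref{sec: 2.2.2}, and especially Lemma \ref{lem: gln-atr}, is exactly what makes this peeling possible.

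First I would put $W:=(V\otimes_E\sL_2(\lambda)')_0$, so that $V_{\lambda_0}=W\otimes_E\sL_2(\lambda)$, and $W$ is by Lemma \ref{lem: gln-nvd} a closed essentially admissible subrepresentation of $V\otimes_E\sL_2(\lambda)'$ which is smooth for $L_{P_2}^{\cD}(\Q_p)$; and I would abbreviate $\pi':=\big(\Ind_{\overline{P}_1(\Q_p)\cap L_{P_2}(\Q_p)}^{L_{P_2}(\Q_p)}\pi\otimes_E\delta_{12}^{-1}\big)^{\infty}$, a finite length admissible smooth representation of $L_{P_2}(\Q_p)$. Since $\pi'$ is smooth and $\lambda$ is dominant for $L_{P_2}$, Lemma \ref{lem: gln-atr} identifies the left-hand side of the proposition with $\Hom_{L_{P_2}(\Q_p)}(\pi',W)$. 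For the right-hand side, the same argument as in the proof of Lemma \ref{lem: gln-att} (the $N_{P_1\cap L_{P_2}}$-invariance computation (\ref{equ: gln-jpv}), applied directly to $V_{\lambda_0}$ viewed as a representation of $L_{P_2}(\Q_p)$) gives $J_{P_1\cap L_{P_2}}(V_{\lambda_0})\cong J_{P_1\cap L_{P_2}}(W)\otimes_E\sL_1(\lambda)$; and since $J_{P_1\cap L_{P_2}}(W)$ is smooth for $L_{P_1}^{\cD}(\Q_p)$ and $\lambda$ is dominant for $L_{P_1}$, Lemma \ref{lem: gln-atr} identifies the right-hand side with $\Hom_{L_{P_1}(\Q_p)}(\pi,J_{P_1\cap L_{P_2}}(W))$. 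Hence it remains to produce a natural bijection
\[
\Hom_{L_{P_2}(\Q_p)}(\pi',W)\ \xlongrightarrow{\ \sim\ }\ \Hom_{L_{P_1}(\Q_p)}\big(\pi,\,J_{P_1\cap L_{P_2}}(W)\big).
\]

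For this last isomorphism I would first replace $W$ by its smooth vectors $W^{\mathrm{sm}}$ for $L_{P_2}(\Q_p)$: every $L_{P_2}(\Q_p)$-map out of the smooth representation $\pi'$ lands in $W^{\mathrm{sm}}$, and, $W$ being already smooth for $L_{P_2}^{\cD}(\Q_p)$ and essentially admissible, $W^{\mathrm{sm}}$ is an \emph{admissible smooth} representation of $L_{P_2}(\Q_p)$; the matching reduction on the right uses $J_{P_1\cap L_{P_2}}(W)^{\mathrm{sm}}\cong (W^{\mathrm{sm}})_{N_{P_1\cap L_{P_2}}}$, the smooth Jacquet module of $W^{\mathrm{sm}}$ (compatibility of Emerton's functor, \cite[\S\S 3.4--3.5]{Em11}, with the smooth theory). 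On $W^{\mathrm{sm}}$ the sought bijection is then precisely Bernstein's second adjointness between the functors $\sigma\mapsto\big(\Ind_{\overline{P}_1\cap L_{P_2}}^{L_{P_2}}\sigma\otimes_E\delta_{12}^{-1}\big)^{\infty}$ and $\tau\mapsto\tau_{N_{P_1\cap L_{P_2}}}$, the modulus character $\delta_{12}$ of $P_1\cap L_{P_2}$ in $L_{P_2}$ being exactly the normalization needed to match Emerton's smoothed action (\ref{equ: gln-gnp}) on $J_{P_1\cap L_{P_2}}(\cdot)$ — consistently with the relation $\delta_{12}|_{L_{P_1}}\delta_2|_{L_{P_1}}=\delta_1|_{L_{P_1}}$ recorded above. (Once the relevant admissibility hypothesis on $V$ is in force, this last bijection can instead be read off from the trivial-highest-weight case of \cite[Thm.4.3]{Br13II} applied to $L_{P_2}$ and $P_1\cap L_{P_2}$: the Orlik--Strauch representation \cite{OS} attached to the dual of the generalized Verma module of highest weight $0$ and to $\pi\otimes_E\delta_{12}^{-1}$ has $\pi'$ as its smooth quotient, and every map from it into the $L_{P_2}^{\cD}(\Q_p)$-smooth representation $W$ factors through $\pi'$, the remaining irreducible constituents being locally $\mu$-algebraic with $\mu\neq 0$.)

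The step I expect to be the real obstacle is the bookkeeping of modulus characters and normalizations: matching Emerton's finite-slope construction and the averaging operators (\ref{equ: gln-gnp}) with the classical smooth Jacquet module so that the twist appearing is precisely $\delta_{12}^{-1}$, together with the (routine but slightly delicate) verification that the passage to smooth vectors $W\rightsquigarrow W^{\mathrm{sm}}$ is compatible with $J_{P_1\cap L_{P_2}}(\cdot)$ on both sides. The underlying adjunction input — second adjointness in the smooth category, or \cite[Thm.4.3]{Br13II} — is otherwise used essentially as a black box.
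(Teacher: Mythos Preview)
Your overall strategy coincides with the paper's: peel off the algebraic twists $\sL_2(\lambda)$ and $\sL_1(\lambda)$ using Lemmas~\ref{lem: gln-atr} and~\ref{lem: gln-att}, then invoke the classical adjunction in the smooth category. The difference lies in how you pass to that smooth category, and here your argument has a gap.

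You replace $W:=(V\otimes_E\sL_2(\lambda)')_0$ by its full subspace $W^{\mathrm{sm}}$ of $L_{P_2}(\Q_p)$-smooth vectors and assert that $W^{\mathrm{sm}}$ is admissible smooth; but ``essentially admissible plus $L_{P_2}^{\cD}$-smooth'' does not obviously yield this, and you give no argument. More seriously, the compatibility $J_{P_1\cap L_{P_2}}(W)^{\mathrm{sm}}\cong (W^{\mathrm{sm}})_{N_{P_1\cap L_{P_2}}}$ you invoke requires Emerton's functor to commute with passage to smooth vectors, whereas \cite[\S 4.3]{Em11} only identifies Emerton's functor with the classical one on \emph{admissible} smooth inputs. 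The paper sidesteps both issues by exploiting the finite-length hypothesis on $\pi$: since $Z_{L_{P_2}}(\Q_p)$ acts on $\pi$ through a finite-dimensional quotient, there is an ideal $\cI\subset E[Z_{L_{P_2}}(\Q_p)]$ of finite codimension with $\pi=\pi^{\cI}$, and one may replace $W$ by $W^{\cI}$ on both sides without changing either $\Hom$-set. By \cite[Prop.~6.4.13]{Em04} and \cite[Thm.~6.6]{ST03}, $W^{\cI}$ is genuinely admissible smooth, so Emerton's functor agrees with the classical Jacquet functor there and the usual adjunction applies. This $\cI$-trick is the missing ingredient in your reduction; once inserted, your argument becomes the paper's.
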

\begin{proof}This proposition follows  from the adjunction property for classical Jacquet functors. Recall $V_{\lambda_0}\cong (V\otimes_E \sL_2(\lambda)')_0 \otimes_E \sL_2(\lambda)$.  By Lem.\ref{lem: gln-atr}, \ref{lem: gln-att}, one has
 \begin{equation}\label{equ: gln-ip2}
 \Hom_{L_{P_1}(\Q_p)}\big(\pi, J_{P_1\cap L_{P_2}}\big((V \otimes_E \sL_2(\lambda)')_0\big)\big) \xlongrightarrow{\sim}
    \Hom_{L_{P_1}(\Q_p)}\big(\pi\otimes_E \sL_1(\lambda), J_{P_1\cap L_{P_2}}(V_{\lambda_0})\big).
 \end{equation}
Since $\pi$ is smooth and of finite length, $Z_{L_{P_1}}(\Q_p)$ \big(thus $Z_{L_{P_2}}(\Q_p)$\big) acts on $\pi$ via certain finite dimensional representation, in other words, there exists an ideal $\cI$ of $E[Z_{L_{P_2}}(\Q_p)]$, $\dim_E (E[Z_{L_{P_2}}(\Q_p)]/\cI) < \infty$, such that $\pi=\pi^{I}$ \big(where $\pi$ is viewed as an $E[Z_{L_{P_2}}(\Q_p)]$-module\big). So
  \begin{equation*}\label{equ: gln-nsew}
    \Hom_{L_{P_1}(\Q_p)}\Big(\pi, J_{P_1\cap L_{P_2}}\big((V\otimes_E \sL_2(\lambda)')_0^{\cI}\big)\Big) \xlongrightarrow{\sim} \Hom_{L_{P_1}(\Q_p)}\Big(\pi,  J_{P_1\cap L_{P_2}}\big((V\otimes_E \sL_2(\lambda)')_0\big)\Big).
  \end{equation*}
  On the other hand, by \cite[Prop.6.4.13]{Em04} and \cite[Thm.6.6]{ST03}, $(V\otimes_E \sL_2(\lambda)')_0^{\cI}$ is an admissible smooth representation of $L_{P_2}(\Q_p)$. In this case, the Jacquet-Emerton functor coincides with the classical Jacquet functor (cf. \cite[\S 4.3]{Em11}), and one has a bijection
  \begin{multline}\label{equ: gln-dll}
    \Hom_{L_{P_2}(\Q_p)}\Big(\big(\Ind_{\overline{P_1}(\Q_p)\cap L_{P_2}(\Q_p)}^{L_{P_2}(\Q_p)} \pi\otimes_E \delta_{12}^{-1}\big)^{\infty}, (V\otimes_E \sL_2(\lambda)')_0^{\cI}\Big) \\
    \xlongrightarrow{\sim} \Hom_{L_{P_1}(\Q_p)}\Big(\pi, J_{P_1\cap L_{P_2}}\big( (V\otimes_E \sL_2(\lambda)')_0^{\cI}\big)\Big),
  \end{multline}
  Note one can remove ``$\cI$" on either side since the corresponding set would not change.
  Again by Lem.\ref{lem: gln-atr}, one has a bijection
  \begin{multline}\label{equ: gln-lgii}
      \Hom_{L_{P_2}(\Q_p)}\Big(\big(\Ind_{\overline{P_1}(\Q_p)\cap L_{P_2}(\Q_p)}^{L_{P_2}(\Q_p)} \pi\otimes_E \delta_{12}^{-1}\big)^{\infty}, (V\otimes_E \sL_2(\lambda)')_0\Big) \\ \xlongrightarrow{\sim} \Hom_{L_{P_2}(\Q_p)}\Big(\big(\Ind_{\overline{P_1}(\Q_p)\cap L_{P_2}(\Q_p)}^{L_{P_2}(\Q_p)} \pi\otimes_E \delta_{12}^{-1}\big)^{\infty}\otimes_E \sL_2(\lambda), (V\otimes_E \sL_2(\lambda)')_0\otimes_E \sL_2(\lambda)\Big).
  \end{multline}
  Putting (\ref{equ: gln-ip2}) (\ref{equ: gln-dll}) (\ref{equ: gln-lgii}) together, the proposition follows.
\end{proof}
\begin{remark}\label{rem: gln-tfe}Note the left set of (\ref{equ: gln-g2p}) won't change if  $V_{\lambda_0}$ is replaced by $V$ since any non-zero $L_{P_2}(\Q_p)$-invariant map
 \begin{equation*}
 \big(\Ind_{\overline{P}_1(\Q_p)\cap L_{P_2}(\Q_p)}^{L_{P_2}(\Q_p)} \pi\otimes_E \delta_{12}^{-1}\big)^{\infty}\otimes_E \sL_2(\lambda)\lra V
 \end{equation*}
factors automatically through $V_{\lambda_0}$. However, the set on the right side is rather subtile, indeed, the natural injection
\begin{equation*}
  \Hom_{L_{P_1}(\Q_p)}\Big( \pi\otimes_E \sL_1(\lambda) , J_{P_1\cap L_{P_2}}(V_{\lambda_0})\Big) \hooklongrightarrow \Hom_{L_{P_1}(\Q_p)}\Big( \pi\otimes_E \sL_1(\lambda) , J_{P_1\cap L_{P_2}}(V)\Big)
\end{equation*}
is \emph{not} bijective in general.
\end{remark}
%The following theorem follows from \cite[Thm.4.3]{Br13II}.
\begin{theorem}\label{thm: gln-ycn}
  Let $V$ be a very strongly admissible locally $\Q_p$-analytic representation of $G(\Q_p)$ (resp. \cite[Def.0.12]{Em2}), $\lambda$ a dominant weight for $L_{P_2}$ above $\lambda_0$, $\pi$ a finite length smooth admissible representation of $L_{P_1}(\Q_p)$, thus there exists a natural bijection
  \begin{multline}\label{equ: gln-e2G}
    \Hom_{G(\Q_p)}\Big(\cF_{\overline{P}_2}^{G}\Big( \big(\text{U}(\ug) \otimes_{\text{U}(\overline{\fp}_2)}\sL_2(\lambda)'\big)^{\vee}, \big(\Ind_{\overline{P}_1(\Q_p)\cap L_{P_2}(\Q_p)}^{L_{P_2}(\Q_p)} \pi \otimes_E \delta_1^{-1}\big)^{\infty}\Big), V\Big)
    \\
    \xlongrightarrow{\sim} \Hom_{L_{P_1}(\Q_p)}\big( \pi\otimes_E \sL_1(\lambda), J_{P_1,(P_2,\lambda_0)}(V)\big),
  \end{multline}
  where $(\cdot)^{\vee}$ denotes the dual in the category $\co^{\overline{\ub}}$ (cf. \cite[Chap.3]{Hum08}).
\end{theorem}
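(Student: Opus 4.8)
The plan is to factor the bijection (\ref{equ: gln-e2G}) through the ordinary Jacquet--Emerton module $J_{P_2}(V)$: first descend from $G(\Q_p)$ to $L_{P_2}(\Q_p)$ by Breuil's adjunction formula \cite[Thm.4.3]{Br13II} applied to the parabolic $\overline{P}_2$, then descend from $L_{P_2}(\Q_p)$ to $L_{P_1}(\Q_p)$ by Proposition \ref{prop: gln-trs} applied to $J_{P_2}(V)$. Write $M:=\big(\text{U}(\ug) \otimes_{\text{U}(\overline{\fp}_2)}\sL_2(\lambda)'\big)^{\vee}$ and $\sigma:=\big(\Ind_{\overline{P}_1(\Q_p)\cap L_{P_2}(\Q_p)}^{L_{P_2}(\Q_p)} \pi \otimes_E \delta_1^{-1}\big)^{\infty}$. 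One checks routinely that $M\in\co^{\overline{\fp}_2}$ (it is the BGG-dual of a generalized Verma module, and $\co^{\overline{\fp}_2}$ is stable under $(\cdot)^{\vee}$), and that $\sigma$ is a finite length smooth admissible representation of $L_{P_2}(\Q_p)$, being the smooth parabolic induction, from the parabolic $\overline{P}_1\cap L_{P_2}$ of $L_{P_2}$, of the finite length admissible representation $\pi\otimes_E\delta_1^{-1}$ of $L_{P_1}(\Q_p)$ inflated to $\overline{P}_1\cap L_{P_2}$. Thus $\cF_{\overline{P}_2}^G(M,\sigma)$ is of the shape to which \cite[Thm.4.3]{Br13II} applies.

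Since $V$ is very strongly admissible, \cite[Thm.4.3]{Br13II} gives a natural bijection
\[
\Hom_{G(\Q_p)}\big(\cF_{\overline{P}_2}^G(M,\sigma),\,V\big)\ \xlongrightarrow{\sim}\ \Hom_{L_{P_2}(\Q_p)}\big((\sigma\otimes_E\delta_2)\otimes_E\sL_2(\lambda),\,J_{P_2}(V)\big),
\]
where the twist by $\delta_2$ and the algebraic factor $\sL_2(\lambda)$ come respectively from the modulus character of $\overline{P}_2$ and from the highest weight of the generalized Verma module $\text{U}(\ug) \otimes_{\text{U}(\overline{\fp}_2)}\sL_2(\lambda)'$, exactly as in the Borel instance (\ref{equ: gon-ebgg}). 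Next I would absorb the modulus characters: using $\delta_1|_{L_{P_1}}=\delta_{12}|_{L_{P_1}}\,\delta_2|_{L_{P_1}}$ and the fact that $\delta_2$ extends to a character of $L_{P_2}(\Q_p)$, one gets $\sigma\cong\sigma'\otimes_E\delta_2^{-1}$ with $\sigma':=\big(\Ind_{\overline{P}_1(\Q_p)\cap L_{P_2}(\Q_p)}^{L_{P_2}(\Q_p)} \pi \otimes_E \delta_{12}^{-1}\big)^{\infty}$, so $\sigma\otimes_E\delta_2\cong\sigma'$; the two occurrences of $\delta_2$ cancel, which is precisely why the statement is normalised with $\delta_1^{-1}$. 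Hence the target above is $\Hom_{L_{P_2}(\Q_p)}\big(\sigma'\otimes_E\sL_2(\lambda),\,J_{P_2}(V)\big)$.

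By Theorem \ref{thm: gln-npv}, $J_{P_2}(V)$ is an essentially admissible locally analytic representation of $L_{P_2}(\Q_p)$, so Proposition \ref{prop: gln-trs} applies with its ``$V$'' replaced by $J_{P_2}(V)$; by Remark \ref{rem: gln-tfe} the subscript $\lambda_0$ may be dropped on the source. This yields a natural bijection
\[
\Hom_{L_{P_2}(\Q_p)}\big(\sigma'\otimes_E\sL_2(\lambda),\,J_{P_2}(V)\big)\ \xlongrightarrow{\sim}\ \Hom_{L_{P_1}(\Q_p)}\big(\pi\otimes_E\sL_1(\lambda),\,J_{P_1\cap L_{P_2}}\big(J_{P_2}(V)_{\lambda_0}\big)\big).
\]
Since $J_{P_1\cap L_{P_2}}\big(J_{P_2}(V)_{\lambda_0}\big)=J_{P_1,(P_2,\lambda_0)}(V)$ by definition, composing the two displayed bijections gives (\ref{equ: gln-e2G}); naturality in $V$ and $\pi$ is inherited from the two steps.

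The step I expect to be the main obstacle is pinning down \cite[Thm.4.3]{Br13II} precisely for the non-minimal parabolic $\overline{P}_2$: one must identify the modulus twist (here $\delta_2$) and check that the algebraic factor is genuinely $\sL_2(\lambda)$ rather than a determinant twist of it, which requires keeping track of the highest weight of $\sL_2(\lambda)'$ viewed as a representation of $\overline{\fp}_2$, and then verify that these normalisations dovetail with those of Proposition \ref{prop: gln-trs} (phrased with $\delta_{12}^{-1}$). Once the characters are matched, the argument is a purely formal concatenation and needs no analytic input beyond Theorem \ref{thm: gln-npv}, Proposition \ref{prop: gln-trs}, Remark \ref{rem: gln-tfe} and \cite[Thm.4.3]{Br13II}.
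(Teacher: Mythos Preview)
Your proposal is correct and follows essentially the same approach as the paper: both proofs compose Breuil's adjunction \cite[Thm.4.3]{Br13II} for $\overline{P}_2$ with Proposition~\ref{prop: gln-trs} applied to $J_{P_2}(V)$, invoking Remark~\ref{rem: gln-tfe} to drop the subscript $\lambda_0$ and using $\delta_1|_{L_{P_1}}=\delta_{12}|_{L_{P_1}}\delta_2|_{L_{P_1}}$ to reconcile the modulus twists. The only cosmetic difference is that the paper applies Proposition~\ref{prop: gln-trs} first and Breuil's formula second, whereas you do them in the opposite order; this is immaterial since one is simply composing two natural bijections.
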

\begin{proof}By Prop.\ref{prop: gln-trs} (applied to $V=J_{P_2}(V)$), one has a bijection %(note one can replace $J_{P_2}(V)_{\lambda_0}$ by $J_{P_2}(V)$ for the right set)
  \begin{multline*}
    \Hom_{L_{P_2}(\Q_p)}\Big(\big(\Ind_{\overline{P}_1(\Q_p)\cap L_{P_2}(\Q_p)}^{L_{P_2}(\Q_p)} \pi\otimes_E \delta_{12}^{-1}\big)^{\infty}\otimes_E \sL_2(\lambda), J_{P_2}(V)_{\lambda_0}\Big) \\ \xlongrightarrow{\sim} \Hom_{L_{P_1}(\Q_p)}\Big( \pi\otimes_E \sL_1(\lambda), J_{P_1\cap L_{P_2}}(J_{P_2}(V)_{\lambda_0})\Big),
\end{multline*}
and the left set would not change if $J_{P_2}(V)_{\lambda_0}$ is replaced by $J_{P_2}(V)$ (see Rem.\ref{rem: gln-tfe}).
Since $V$ is very strongly admissible, by \cite[Thm.4.3]{Br13II}, one has
\begin{multline*}
    \Hom_{G(L)}\Big(\cF_{\overline{P}_2}^{G}\Big( \big(\text{U}(\ug) \otimes_{\text{U}(\overline{\fp}_2)} \sL_2(\lambda)'\big)^{\vee}, \big(\Ind_{\overline{P}_1(\Q_p)\cap L_{P_2}(\Q_p)}^{L_{P_2}(\Q_p)} \pi \otimes_E \delta_{12}^{-1}\big)^{\infty}\otimes_E \delta_2^{-1}\Big), V\Big)
    \\
    \xlongrightarrow{\sim}    \Hom_{L_{P_2}(\Q_p)}\Big(\big(\Ind_{\overline{P}_1(\Q_p)\cap L_{P_2}(\Q_p)}^{L_{P_2}(\Q_p)} \pi\otimes_E \delta_{12}^{-1}\big)^{\infty}\otimes_E \sL_2(\lambda), J_{P_2}(V)\Big).
  \end{multline*}
Since $\delta_1|_{L_{P_1}(\Q_p)}=\delta_{12}|_{L_{P_1}(\Q_p)}\delta_2|_{L_{P_1}(\Q_p)}$, the theorem follows.
\end{proof}

\begin{remark}Keep the notations in Thm.\ref{thm: gln-ycn}. By \cite[Thm.4.3]{Br13II}, one has a bijection
\begin{equation}\label{equ: gln-1Gp}
    \Hom_{G(\Q_p)}\Big(\cF_{\overline{P}_1}^{G}\Big( \big(\text{U}(\ug) \otimes_{\text{U}(\overline{\fp}_1)} \sL_1(\lambda)'\big)^{\vee}, \pi\otimes_E \delta_1^{-1}\Big), V\Big)
    \xlongrightarrow{\sim} \Hom_{L_{P_1}(\Q_p)}\Big( \pi\otimes_E \sL_1(\lambda), J_{P_1}(V)\Big).
\end{equation}
Denote by $M_i(\lambda):=\text{U}(\ug) \otimes_{\text{U}(\overline{\fp}_i)} \sL_i(\lambda)'$. Note $M_2(\lambda)$ is a quotient of $M_1(\lambda)$ in the category $\co^{\overline{\fp}_1}$ (cf. \cite[\S 9]{Hum08}), from which we deduce (by Thm.\ref{thm: gln-pst} (2) and (3)) the representation
\begin{equation*}
  \cF_{\overline{P}_2}^{G}\Big( \big(\text{U}(\ug) \otimes_{\text{U}(\overline{\fp}_2)} \sL_2(\lambda)'\big)^{\vee}, \big(\Ind_{\overline{P}_1(\Q_p)\cap L_{P_2}(\Q_p)}^{L_{P_2}(\Q_p)} \pi \otimes_E \delta_{12}^{-1}\big)^{\infty}\otimes_E \delta_2^{-1}\Big)
\end{equation*}is a quotient of $\cF_{\overline{P}_1}^{G}\Big( \big(\text{U}(\ug) \otimes_{\text{U}(\overline{\fp}_1)} \sL_1(\lambda)'\big)^{\vee}, \pi\otimes_E \delta_1^{-1}\Big)$. In particular, when $P_1 \neq P_2$, the right set of (\ref{equ: gln-1Gp}) might be strictly bigger than the right set of (\ref{equ: gln-e2G}). This subtlety is somehow the key point of this note.
\end{remark}

\subsection{Structure of $J_{B,(P,\lambda_0) }(V)$}\label{sec: gln-2.4} Let $H$ be an open compact uniform prop-$p$-subgroup of $G(\Q_p)$ such that $H$ is a normal subgroup of the maximal open compact subgroup of $G(\Q_p)$. Let $V$ be a locally $\Q_p$-analytic representation of $G(\Q_p)$  over $E$ such that $V|_H\cong \cC^{\la}(H,E)^{\oplus r}$ for $r\in \Z_{\geq 1}$. This section is devoted to the structure $J_{B,(P,\lambda_0)}(V)$. In fact, the  results in this section were already obtained in \cite{HL}, while we reformulate them in a way that suits our context.

For any parabolic subgroup $P'$ of $G$, let $N_{P'}^o:=H\cap N_{P'}$, $L_{P'}^o:=H\cap L_{P'}^o$, put $L_{P'}(\Q_p)^+$ as (\ref{equ: gln-1np}) with respect to $N_{P'}^o$. Note $T\cong L_B$, and put $N:=N_B$, $\overline{N}:=N_{\overline{B}}$, $N^o:=N_B^o$, $\overline{N}^o:=N_{\overline{B}}^o$, $T^o:=L_B^o$ and $T(\Q_p)^+:=L_B(\Q_p)^+$.

Suppose the natural multiplication induces a homemorphism
\begin{equation}\label{equ: gln-nwo}
  N_{*}^o \times L_*^o \times N_{\overline{*}}^o \xlongrightarrow{\sim} H
\end{equation}
for $*\in \{P, B\}$. Denote by $L_P^{\cD,o}:=L_P^{\cD}(\Q_p)\cap L_P^o$, $Z_{L_P}^{o}:=Z_{L_P}(\Q_p) \cap L_P^{o}$, and suppose the following natural map is an isomorphism of $p$-adic analytic groups% (e.g. if $G=\GL_n$, and $H$ is sufficiently small)
\begin{equation}\label{equ: gln-omo}
  L_P^{\cD,o}\times Z_{L_P}^{o} \xlongrightarrow{\sim} L_P^{o}.
\end{equation}
Let $\lambda$ be a dominant weight for $G$ above $\lambda_0$, we use $\sL_P(\lambda)$ \big(resp. $\sL_G(\lambda)$\big) to denote the irreducible algebraic representation of $P$ (resp. $G$) over $E$ with highest weight $\lambda$.  Recall \begin{equation*}J_{B,(P,\lambda_0)}(V)\cong \Big(\big(V^{N_P^o}_{\cfs} \otimes_E \sL_P(\lambda)'\big)_0 \otimes_E \sL_P(\lambda)\Big)^{N_{B\cap L_P}^{o}}_{\cfs},\end{equation*}where $N_{B\cap L_P}^o:=N_{B\cap L_P}\cap H$, the first ``$\cfs$" is with respect to $Z_{L_P}(\Q_p)^+$, and second one to $T(\Q_p)^+=Z_{L_B}(\Q_p)^+$. Note $Z_{L_P}(\Q_p)^+\subseteq T(\Q_p)^+$.
\begin{lemma}
$J_{B,(P,\lambda_0)}(V)\cong \Big(\big(V^{N_P^{o}}\otimes_E \sL_P(\lambda)'\big)_0 \otimes_E \sL_P(\lambda)\Big)^{N_{B\cap L_P}^{o}}_{\cfs}$, where $\big(V^{N_P^{o}}\otimes_E \sL_P(\lambda)'\big)_0:=\varinjlim_{U} \big(V^{N_P^{o}}\otimes_E \sL_P(\lambda)'\big)^U$ with $U$ running through the open compact subgroups of $L_P^{\cD,o}$.\end{lemma}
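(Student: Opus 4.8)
The lemma claims an identification
\[
J_{B,(P,\lambda_0)}(V)\cong \Big(\big(V^{N_P^{o}}\otimes_E \sL_P(\lambda)'\big)_0 \otimes_E \sL_P(\lambda)\Big)^{N_{B\cap L_P}^{o}}_{\cfs},
\]
where on the right one first takes $N_P^o$-invariants of $V$ (without finite-slope completion) and only applies the finite-slope construction at the very end with respect to $T(\Q_p)^+$. Comparing with the formula recalled just above the lemma, the content is that one may commute the finite-slope functor $(\cdot)_{\cfs}$ relative to $Z_{L_P}(\Q_p)^+$ past the remaining operations—the algebraic twist $\otimes_E\sL_P(\lambda)'$, passage to $G^{\cD}$-smooth vectors $(\cdot)_0$ for $L_P^{\cD}$, the twist back by $\sL_P(\lambda)$, and the invariants under $N_{B\cap L_P}^o$—and that it then gets absorbed into the outer finite-slope functor relative to $T(\Q_p)^+$. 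The key structural input is $Z_{L_P}(\Q_p)^+\subseteq T(\Q_p)^+$, already noted in the text, together with the decomposition $L_P^{o}\cong L_P^{\cD,o}\times Z_{L_P}^{o}$ from (\ref{equ: gln-omo}).

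The plan is as follows. First I would recall that $\cL_{Z_{L_P}(\Q_p)^+}\big(\co(\widehat{Z_{L_P}}),-\big)$ commutes with the exact functors in play: it is applied to $V^{N_P^o}$, and since $\sL_P(\lambda)'$ is finite-dimensional algebraic, tensoring commutes with the Hausdorff completion and with the $\cL$-functor (this is the same bookkeeping as in \cite[Prop.~3.2.9, Lem.~3.2.12]{Em11}); passage to $L_P^{\cD}$-smooth vectors $(\cdot)_0$ is a colimit of invariants under open compact subgroups of $L_P^{\cD,o}$, which by (\ref{equ: gln-omo}) commute with the $Z_{L_P}(\Q_p)^+$-action, so $(\cdot)_0$ commutes with $(\cdot)_{\cfs}$ as well. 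Hence
\[
\Big(\big(V^{N_P^o}_{\cfs}\otimes_E \sL_P(\lambda)'\big)_0\otimes_E \sL_P(\lambda)\Big)^{N_{B\cap L_P}^o}\cong \Big(\big(V^{N_P^o}\otimes_E \sL_P(\lambda)'\big)_0\otimes_E \sL_P(\lambda)\Big)^{N_{B\cap L_P}^o}_{\cfs,\,Z_{L_P}(\Q_p)^+},
\]
where I now need to apply the outer finite-slope functor relative to $T(\Q_p)^+$. Second I would invoke the transitivity/idempotence of the finite-slope construction: if a space already carries a $Z_{L_P}(\Q_p)^+$-finite-slope structure and one then forms $(\cdot)_{\cfs}$ relative to the larger monoid $T(\Q_p)^+\supseteq Z_{L_P}(\Q_p)^+$, the inner finite-slope step is absorbed, i.e. the composite equals $(\cdot)_{\cfs}$ taken once relative to $T(\Q_p)^+$ on the raw $N_{B\cap L_P}^o$-invariants. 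This is exactly the mechanism behind Theorem~\ref{thm: gln-sgv} (the Hill–Loeffler transitivity $J_{L_{P_2}\cap P_1}\circ J_{P_2}\cong J_{P_1}$), specialized to $P_2=P$, $P_1=B$; so I would deduce the claim by citing \cite[Thm.~5.3]{HL} directly, applied to the essentially admissible representation $J_P(V)_{\lambda_0}$ of $L_P(\Q_p)$, and unwinding the definition $J_{B\cap L_P}\big(J_P(V)_{\lambda_0}\big)=J_{B,(P,\lambda_0)}(V)$ together with Lem.~\ref{lem: gln-att}.

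Concretely, I would write: by Lem.~\ref{lem: gln-att} applied with $P_1=B$, $P_2=P$, $J_{B,(P,\lambda_0)}(V)\cong J_{B\cap L_P}\big((J_P(V)\otimes_E\sL_P(\lambda)')_0\big)\otimes_E\sL_P(\lambda)^{N_{B\cap L_P}^o}$; since $J_P(V)=V^{N_P^o}_{\cfs}$, the discussion of the first step lets me replace $(J_P(V)\otimes_E\sL_P(\lambda)')_0$ by $(V^{N_P^o}\otimes_E\sL_P(\lambda)')_0$ at the cost of a finite-slope completion relative to $Z_{L_P}(\Q_p)^+$ inside; then $J_{B\cap L_P}(-)=(-)^{N_{B\cap L_P}^o}_{\cfs}$ with $\cfs$ taken relative to $T(\Q_p)^+$, and the idempotence of $\cfs$ under enlarging the monoid from $Z_{L_P}(\Q_p)^+$ to $T(\Q_p)^+$ collapses the two completions into one, giving the asserted formula. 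The main obstacle is the second step: making precise and justifying that nesting two finite-slope constructions with respect to nested submonoids $Z_{L_P}(\Q_p)^+\subseteq T(\Q_p)^+$ collapses to a single one—this is a genuine lemma about Emerton's $\cfs$-functor (essentially \cite[\S 3.2]{Em11}), and the cleanest route is to not reprove it but to reduce to Theorem~\ref{thm: gln-sgv}, which already packages exactly this transitivity, at which point the lemma becomes a formal unwinding of definitions and of Lem.~\ref{lem: gln-att}. The only care needed is checking that all spaces involved remain essentially admissible so that $J_P(\cdot)$, hence the Hill–Loeffler result, applies—and that is guaranteed by Lem.~\ref{lem: gln-nvd} and the Corollary preceding Lem.~\ref{lem: gln-att}.
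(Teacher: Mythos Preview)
Your overall plan is sound and matches the paper's approach in its two essential steps: (i) commuting the inner $(\cdot)_{\cfs}$ relative to $Z_{L_P}(\Q_p)^+$ past the algebraic twist and $(\cdot)_0$, and (ii) absorbing the resulting inner finite-slope part into the outer one relative to $T(\Q_p)^+$. The paper carries out (i) exactly as you describe, citing \cite[Prop.~3.2.11]{Em11} for the commutation $(W^{N_P^o}_{\cfs})_0\cong((W^{N_P^o})_0)_{\cfs}$, after first reducing to $W=V\otimes_E\sL_G(\lambda)'$ to strip off the twist.

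Where you diverge is in step (ii). Your proposed reduction to Theorem~\ref{thm: gln-sgv} (Hill--Loeffler transitivity) does not work as a black-box citation: that theorem concerns $J_{P_1\cap L_{P_2}}\circ J_{P_2}\cong J_{P_1}$ applied to a representation of $G(\Q_p)$, with nothing inserted between the two Jacquet functors. Here the operation $(\cdot)_0$ sits in the middle, so the situation is not an instance of their statement, and applying their theorem to $J_P(V)_{\lambda_0}$ as a representation of $L_P(\Q_p)$ involves only a single Jacquet functor, hence no transitivity to invoke. The paper instead handles (ii) directly via the universal property \cite[Prop.~3.2.4~(ii)]{Em11}: the inclusion $\big((W^{N_P^o})_0\big)^{N_{B\cap L_P}^o}_{\cfs}\hookrightarrow (W^{N_P^o})_0$ carries a $Z_{L_P}(\Q_p)$-action (inherited from the outer $T(\Q_p)$-action), hence factors through $\big((W^{N_P^o})_0\big)_{\cfs}\cong(W^{N_P^o}_{\cfs})_0$, and this furnishes the inverse to the obvious injection. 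You already identify this universal property as ``the mechanism behind'' the transitivity; the fix is simply to argue with it directly rather than trying to package it as a citation of Hill--Loeffler.
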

\begin{proof}
  Since $V^{N_P^o}_{\cfs}$ is a closed subspace of $V^{N_P^o}$, we have
  \begin{equation*}
   J_{B,(P,\lambda_0)}(V)\hooklongrightarrow \Big(\big(V^{N_P^{o}}\otimes_E \sL_P(\lambda)'\big)_0 \otimes_E \sL_P(\lambda)\Big)^{N_{B\cap L_P}^{o}}_{\cfs}.
  \end{equation*}We prove this map is bijective. Since $\sL_P(\lambda)^{N_{B\cap L_P}^o}\cong \chi_{\lambda}$ (where $\chi_{\lambda}$ denotes the algebraic character of $T$ with weight $\lambda$), we reduce to prove
  \begin{equation*}
     \Big(\big(V^{N_P^{o}}_{\cfs}\otimes_E \sL_P(\lambda)'\big)_0\Big)^{N_{B\cap L_P}^o}_{\cfs} \hooklongrightarrow  \Big(\big(V^{N_P^{o}}\otimes_E \sL_P(\lambda)'\big)_0\Big)^{N_{B\cap L_P}^o}_{\cfs}
  \end{equation*}
  is bijective.
  Since $\sL_P(\lambda)'\cong \big(\sL_G(\lambda)'\big)^{N_P^o}=\big(\sL_G(\lambda)'\big)^{N_P^o}_{\cfs}$, thus $\big(V^{N_P^{o}}_{*}\otimes_E \sL_P(\lambda)'\big)_0\cong \big(\big(V\otimes_E \sL_G(\lambda)'\big)^{N_P^o}_{*}\big)_0$ with $*\in \{\cfs, \emptyset\}$. We reduce to prove the natural injection
  \begin{equation}\label{equ: gln-bpo}
    \big(\big(W^{N_P^o}_{\cfs}\big)_0\big)^{N_{B\cap L_P}^o}_{\cfs} \hooklongrightarrow \big(\big(W^{N_P^o}\big)_0\big)^{N_{B\cap L_P}^o}_{\cfs}
  \end{equation}
  is bijective for any essentially admissible locally analytic representation $W$ of $G(\Q_p)$. Firstly, note $\big(W^{N_P^o}_{\cfs}\big)_0\cong \big(\big(W^{N_P^o}\big)_0\big)_{\cfs}$ since the operation $(\cdot)_0$ depends only on the action of $L_P^{\cD,o}$ and thus commutes with $(\cdot)_{\cfs}$ (cf. \cite[Prop.3.2.11]{Em11}).  Consider the inclusion map, $\big(\big(W^{N_P^o}\big)_0\big)^{N_{B\cap L_P}^o}_{\cfs}\hookrightarrow \big(W^{N_P^o}\big)_0$, by the universal property \cite[Prop.3.2.4 (ii)]{Em11}, we see this map factors through $\big(\big(W^{N_P^o}\big)_0\big)^{N_{B\cap L_P}^o}_{\cfs}\hookrightarrow \big(\big(W^{N_P^o}\big)_0\big)_{\cfs}\cong  \big(W^{N_P^o}_{\cfs}\big)_0$, whose image is thus contained in $ \big(\big(W^{N_P^o}_{\cfs}\big)_0\big)^{N_{B\cap L_P}^o}_{\cfs}$. This gives an inverse of (\ref{equ: gln-bpo}).
\end{proof}
By this lemma (and the proof), \begin{equation*}J_{B,(P,\lambda_0)}(V)\cong \Big(\big(V^{N_P^{o}}\otimes_E \sL_P(\lambda)'\big)_0 \otimes_E \sL_P(\lambda)\Big)^{N_{B\cap L_P}^{o}}_{\cfs}\cong \Big(\big(V^{N_P^{o}}\otimes_E \sL_P(\lambda)'\big)_0\Big)^{N_{B\cap L_P}^{o}}_{\cfs}\otimes_E \chi_{\lambda}, \end{equation*}in the following, we would consider $\Big(\big(V^{N_P^{o}}\otimes_E \sL_P(\lambda)'\big)_0\Big)^{N_{B\cap L_P}^{o}}_{\cfs}$, which only  differs from $J_{B,(P,\lambda_0)}(V)$ by the twist $\chi_{\lambda}$. The following lemma is well known.
\begin{lemma}\label{lem: gln-nec}Let  $W$ be an irreducible algebraic representation of $G$ over $E$, then one has an isomorphism of $H$-representations
  $\cC^{\la}(H,E) \otimes_E W \cong \cC^{\la}(H,E)^{\oplus \dim_E W}$, where $H$ acts on the left object via diagonal action.
\end{lemma}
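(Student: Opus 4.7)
The plan is to construct an explicit $H$-equivariant isomorphism that ``untwists'' the diagonal action. Since $W$ is finite dimensional, I first identify $\cC^{\la}(H,E) \otimes_E W$ with $\cC^{\la}(H,W)$; under this identification, the diagonal $H$-action becomes $(h_0 \cdot f)(h) = h_0 \cdot f(h h_0)$, i.e.\ the right regular action on the source combined with the given (algebraic) $H$-action on the values in $W$. The target $\cC^{\la}(H,E)^{\oplus \dim_E W}$ corresponds to $\cC^{\la}(H,W)$ equipped with the pure right regular action $(h_0 * g)(h) = g(hh_0)$, once one trivializes $W$ by picking an $E$-basis.

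The key step is to define $\Phi \colon \cC^{\la}(H,W) \to \cC^{\la}(H,W)$ by
\begin{equation*}
\Phi(f)(h) := h \cdot f(h),
\end{equation*}
with inverse $\Phi^{-1}(g)(h) = h^{-1} \cdot g(h)$. A direct computation gives
\begin{equation*}
\Phi(h_0 \cdot f)(h) = h \cdot (h_0 \cdot f)(h) = h h_0 \cdot f(h h_0) = \Phi(f)(h h_0) = (h_0 * \Phi(f))(h),
\end{equation*}
so $\Phi$ intertwines the diagonal action on the source with the right regular action on the target. Thus $\Phi$ is an $H$-equivariant bijection, and composing with the trivialization of $W$ gives the desired isomorphism $\cC^{\la}(H,E) \otimes_E W \xrightarrow{\sim} \cC^{\la}(H,E)^{\oplus \dim_E W}$.

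There is no serious obstacle: the only point requiring care is that $\Phi$ and $\Phi^{-1}$ are genuinely locally analytic (not merely $E$-linear bijections). This is immediate from the hypothesis that the $H$-representation on the finite dimensional space $W$ is algebraic, so the map $H \to \End_E(W)$, $h \mapsto (w \mapsto h \cdot w)$, is locally analytic; multiplication by this matrix-valued function preserves $\cC^{\la}(H,W)$. Since the lemma is stated as ``well known'' and is used only to identify the underlying Banach/locally analytic structure, this short argument is all that is needed.
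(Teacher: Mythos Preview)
Your argument is correct and considerably more direct than the paper's. You give an explicit $H$-equivariant untwisting isomorphism $\Phi(f)(h) = h\cdot f(h)$, and the check that $\Phi$ preserves local analyticity is immediate from the algebraicity of $W$.

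The paper takes a very different route: it first proves the analogous statement for \emph{continuous} functions $\cC(H,E)\otimes_E W \cong \cC(H,E)^{\oplus \dim_E W}$ by passing to an $H$-stable $\co_E$-lattice $W_0\subset W$, working modulo $\varpi_E^n$, and using that $H$ is pro-$p$ to argue via injectivity in the category of smooth admissible $H$-representations over $\co_E/\varpi_E^n$; the rank is then computed by taking invariants under a small open subgroup. Only at the end does the paper pass to locally analytic vectors via \cite[Prop.~3.6.15]{Em11}. Your approach bypasses all of this: no integral structures, no injectivity, no limit argument. The paper's method has the mild advantage of yielding the Banach-level statement $\cC(H,E)\otimes_E W \cong \cC(H,E)^{\oplus \dim_E W}$ along the way and of working for any finite-dimensional continuous $H$-representation admitting a stable lattice, but for the lemma as stated your explicit trivialization is both shorter and more transparent.
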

\begin{proof}We include a proof for the convenience  of the reader.
  We first prove $\cC(H,E) \otimes_E W \cong \cC(H,E)^{\oplus \dim_E W}$. Since $H$ is pro-$p$, $(\co_E/\varpi_E^n)[[H]]$ is a complete local algebra, any finitely generated projective $(\co_E/\varpi_E^n)[[H]]$-module is isomorphic to $(\co_E/\varpi_E^n)[[H]]^{\oplus r}$ for $r\in \Z_{\geq 1}$. Thus by dualizing, any smooth admissible $H$-representation over $\co_E/\varpi_E^n$ which is moreover injective,  is isomorphic to $\cC(H,\co_E/\varpi_E)^{\oplus r}$ for $r\in \Z_{\geq 1}$. Let $W_0$ be a $H$-invariant $\co_E$-lattice of $W$, let $V_n:=\cC(H,\co_E/\varpi_E^n) \otimes_{\co_E/\varpi_E^n} W_0/\varpi_E^n$, we claim $V_n$ is an injective object in the category $\cG_n$ of smooth admissible $H$-representations over $\co_E/\varpi_E^n$.

  %Thus is sufficient to prove the dual of $\cC(H,E)\otimes_E W$ is a projective $E[[H]]$-module. we prove $V_n$ is an injective object in the category $\cG_n$ of smooth admissible representations of $H$ over $\co_E/\varpi_E^n$.

   Indeed, for any $M\in \cG_n$, we have\begin{equation*}
    \Hom_{\co_E/\varpi_E^n}\big(M, \cC(H,\co_E/\varpi_E^n)\big)\cong\cC(H,\co_E/\varpi_E^n) \otimes_{\co_E/\varpi_E^n} M^{\vee}\cong \cC(H,M^{\vee}),
  \end{equation*}which is moreover $H$-invariant, where the $H$-action on the left object is given by $h(f)(m):=h^{-1}(f(h(m)))$, on the middle one is via diagonal action, and on $\cC(H,M^{\vee})$ is given $h(f)(h')=h^{-1}f(h'h)$, thus one has $\Hom_H\big(M,\cC(H,\co_E/\varpi_E^n)\big)\cong M^{\vee}$, $f\mapsto f(1)$. Consequently, \begin{equation*}
    \Hom_H\big(M, V_n\big)=\Hom_H\big(M\otimes_{\co_E/\varpi_E^n} (W_0/\varpi_E^n)^{\vee}, \cC(H,\co_E/\varpi_E^n)\big),
  \end{equation*}
  since the functor $-\otimes_{\co_E/\varpi_E^n} (W_0/\varpi_E^n)^{\vee}$ is exact, we deduce $V_n$ is injective from the injectivity of $\cC(H,\co_E/\varpi_E^n)$.

  Thus there exists $r\in \Z_{\geq 1}$, such that $V_n\cong \cC(H,\co_E/\varpi_E^n)^{\oplus r}$. On the other hand, let $H'$ be an open compact subgroup of $H$ which acts trivially on $W_0/\varpi_E^n$, and we have thus $\big(\cC(H,\co_E/\varpi_E^n)^{H'}\big)^{\oplus r}\cong V_n^{H'}\cong \big(\cC(H,\co_E/\varpi_E^n)^{H'}\big) \otimes_{\co_E/\varpi_E^n} W_0/\varpi_E^n$ (note these are all finite sets), from which we see $r=\dim_E W$. By taking projective limit on $n$ and tensoring with $E$, we get $\cC(H,E)\otimes_E W\cong \cC(H,E)^{\oplus \dim_E W}$.
  %we know $\cC(H,\co_E/\varpi_E^n)$ is an injective object in $\cG_n$: for any $M\in \cG_n$,

  By \cite[Prop.3.6.15]{Em11}, $(\cC(H,E)\otimes_E W)_{\an}\cong \cC^{\la}(H,E) \otimes_E W$, the lemma follows.
\end{proof}
For a $p$-adic analytic group $H'$, denote by $\cD(H',E):=\cC^{\la}(H',E)^{\vee}_b$ the $E$-algebra of distributions of $H'$, which is a Fr\'echet-Stein algebra when $H'$ is compact.
Let $\cC^{\infty}(H',E)\hookrightarrow \cC^{\la}(H',E)$ be the (closed) subspace of smooth functions, i.e. functions killed by the Lie algebra action. Put $\cD^{\infty}(H',E):=\cC^{\infty}(H',E)^{\vee}_b$ which is a closed quotient of $\cD(H',E)$ and thus is also a Fr\'echet-Stein algebra when $H'$ is compact. In fact, one has an isomorphism of topological $E$-algebras $\cD^{\infty}(H',E)\cong \varprojlim_{U\subseteq H'} E[H'/U]$ with $U$ running over open compact normal subgroups of $H'$ (cf. \cite[\S 2]{ST01f}).

Let $T_P^o:=T(\Q_p) \cap L_P^{\cD,o}$. By (\ref{equ: gln-omo}), $Z_{L_P}^o \times L_P^{\cD,o}\xrightarrow{\sim}L_P^o$, thus
\begin{equation}\label{equ: gln-pzp}T^o=T(\Q_p)\cap L_P^o\cong Z_{L_P}^o\times (L_P^{\cD,o}\cap T(\Q_p))=Z_{L_P}^o\times T_P^o.\end{equation}
This isomorphism induces thus an isomorphism of Fr\'echet-Stein algebras
\begin{equation*}
  \cD(T_P^o,E)\widehat{\otimes}_E \cD(Z_{L_P}^o,E) \xlongrightarrow{\sim} \cD(T^o,E).
\end{equation*}
Put
\begin{equation*}
  \cD'(T^o,E):=\cD^{\infty}(T^o_P, E) \widehat{\otimes}_E \cD(Z_{L_P}^o,E)
\end{equation*}
which is thus a quotient of $\cD(T^o,E)$ and is also a Fr\'echet-Stein algebra. %For any open compact subgroup $U$ of $T^o_P$, put $\cD(T^o, U,E):=E[T^o_P/U] \widehat{\otimes}_E \cD(Z_{L_P}^o,E)$, which is thus a finite (free) $\cD(Z_{L_P}^o,E)$-algebra. We have $\cD'(T^o,E)\cong \varprojlim_{U} \cD(T^o, U,E)$.

%Recall one has a natural isomorphism $\cD^{\infty}(T^{\co}_P,E)\cong \varprojlim_{H} E[T^o_P/H]$ with $H$ running through the open compact subgroups of $T^o_P$.
%For an open compact subgroup $H$ of $T^o_P$, denote by $\cD(T^o, H,E):=E[T^o_P/H] \widehat{\otimes}_E \cD(Z_{L_P}^o,E)$ which is thus a finite (free)  $\cD(Z_{L_P}^{o},E)$-algebra.

%\begin{lemma}Let $A$ be a Banach $E$-algebra, $B$ a finite free $A$-algebra, $M$ an orthonormalisable $B$-module. Then $M$ is a potentially orthonormalisable $A$-module.
%\end{lemma}
%\begin{proof}
%  The proof is straightforward, noting $B\cong A^{\oplus n}$ as topological $E$-vector spaces.
%\end{proof}
Recall for a compact uniform prop-$p$-group $H'$, and for $\frac{1}{p} <r<1$, $\cD(H',E)$ is equipped with a multiplicative norm $\|\cdot\|_r$ (\cite[\S 4]{ST03}). As in \emph{loc. cit.}, denote by $\cD_r(H',E)$ the completion of $\cD(H',E)$ via $\|\cdot\|_r$, which is thus a Banach $E$-algebra. We can also define a bigger Banach $E$-algebra $\cD_{< r}(H',E)$ (\cite[p.161]{ST03}). One has $\cD(H',E)\cong \varprojlim_{r} \cD_r(H',E)\cong \varprojlim_{r} \cD_{<r}(H',E)$. For $n\in \Z_{\geq 1}$, put $r_n:=\frac{1}{p^{n}(p-1)}$. For a locally analytic representation $W$ of $H'$, let $W^{(n)}$ denote the subrepresentation generated by $r_n$-analytic vectors as in \cite[\S 0.3]{CD}. Put $\cC^{(n)}(H',E):=\cC^{\la}(H',E)^{(n)}$. By definition, one has $\cC^{(n)}(H',E)^{\vee}_b \cong \cD_{<r_n}(H',E)$.

Since $H$ is uniform, by the isomorphisms (\ref{equ: gln-nwo}) (\ref{equ: gln-omo}), so are the groups $T^o$, $L_P^o$, $N_P^o$, $N_{\overline{P}}^o$, $N^o$, $\overline{N}^o$, $T_P^o$, $L_P^{\cD,o}$, $Z_{L_P}^o$.
For $\frac{1}{p}< r < 1$, put $\cD_*^{\infty}(T^o_P,E)$ to be the image of $\cD_*(T^o_P,E)$ via the projection $\cD(T^o_P,E)\twoheadrightarrow \cD^{\infty}(T^o_P,E)$ for $*\in \{r, <r\}$.
%Since $\varprojlim_{r} D_r^{\infty}(T^o_P,E)\cong D^{\infty}(T^o_P,E)\cong \varprojlim_{U} E[T^o_P/U]$, by cofinalness, we see $D_r^{\infty}(T^o_P,E)$ and $D_{<r}^{\infty}(T^o_P,E)$ are also finite $E$-algebras.
Put $A_n:=\cD'_{r_n}(T^o,E):=D^{\infty}_{r_n}(T^o_P,E) \widehat{\otimes}_E \cD_{r_n}(Z_{L_P}^o,E)$. Let $z\in T(\Q_p)^+$ such that $T(\Q_p)$ is generated by $z^{-1}$ and $T(\Q_p)^+$ by multiplication.
\begin{proposition}
For all $n\in \Z_{\geq 1}$, there exists an  orthonormalisable $A_n$-module $M_n$ such that
  \begin{enumerate}
  \item $M_n$ is equipped with a compact $A_n$-linear operator $z_n$;
  \item there exist continuous $A_n$-linear maps $\alpha_n: M_n \ra M_{n+1} \otimes_{A_{n+1}} A_n$ and $\beta_n: M_{n+1} \widehat{\otimes}_{A_{n+1}}A_n \ra M_n$ such that $\beta_n \circ \alpha_n=z_n$, $\alpha_n \circ \beta_n=z_{n+1}\otimes 1_{A_n}$;
  \item one has an isomorphism of $\cD'(T^o,E)$-modules:
  \begin{equation*}\varprojlim_n M_n \xlongrightarrow{\sim} \Big(\big(\big(V^{N_P^o} \otimes_E \sL_P(\lambda)'\big)_0\big)^{N_{B\cap L_P}^o}\Big)^{\vee}
  \end{equation*}
  \big(where the maps in the projective system are given by the composition of $\beta_n$ and the natural map $M_{n+1} \ra M_{n+1} \widehat{\otimes}_{A_{n+1}} A_n$\big), which commutes with the action $\{z_n\}$ on the left and $\pi_z$ on the right (cf. (\ref{equ: gln-gnp})).
      \end{enumerate}
In summary, one has the following commutative diagram
\begin{equation*}
  \begindc{\commdiag}[40]
  \obj(0,1)[a]{$ \Big(\big(\big(V^{N_P^o} \otimes_E \sL_P(\lambda)'\big)_0\big)^{N_{B\cap L_P}^o}\Big)^{\vee}$}
  \obj(3,1)[b]{$\cdots$}
  \obj(4,1)[c]{$M_{n+1}$}
  \obj(6,1)[d]{$M_{n+1}\otimes_{A_{n+1}} A_n$}
  \obj(8,1)[e]{$M_n$}
    \obj(0,0)[a']{$ \Big(\big(\big(V^{N_P^o} \otimes_E \sL_P(\lambda)'\big)_0\big)^{N_{B\cap L_P}^o}\Big)^{\vee}$}
  \obj(3,0)[b']{$\cdots$}
  \obj(4,0)[c']{$M_{n+1}$}
  \obj(6,0)[d']{$M_{n+1}\otimes_{A_{n+1}} A_n$}
  \obj(8,0)[e']{$M_n$}
  \mor{a}{b}{}
  \mor{b}{c}{}
  \mor{c}{d}{}
  \mor{d}{e}{$\beta_n$}
    \mor{a'}{b'}{}
  \mor{b'}{c'}{}
  \mor{c'}{d'}{}
  \mor{d'}{e'}{$\beta_n$}
  \mor{a}{a'}{$\pi_z$}
  \mor{c}{c'}{$z_{n+1}$}
  \mor{d}{d'}{$z_{n+1}\otimes \id$}[\atright,\solidarrow]
  \mor{e}{e'}{$z_n$}
  \mor{e}{d'}{$\alpha_n$}
  \enddc.
\end{equation*}
%\begin{equation*}
%  \begin{CD}
%    \Big(\big(\big(V^{N_P^o} \otimes_E \sL_P(\lambda)'\big)_0\big)^{N_{B\cap L_P}^o}\Big)^{\vee} @>>> \cdots @>>> M_{n+1} @>>> M_{n+1} \otimes_{A_{n+1}} A_n @>\beta_n >> M_n \\
 %   @V \pi_z VV @. @V z_{n+1}VV @V z_{n+1}\otimes 1_{A_{n+1}} VV @V z_n VV \\
 %      \Big(\big(\big(V^{N_P^o} \otimes_E \sL_P(\lambda)'\big)_0\big)^{N_{B\cap L_P}^o}\Big)^{\vee} @>>> \cdots @>>> M_{n+1} @>>> M_{n+1} \otimes_{A_{n+1}} A_n % @>\beta_n >> M_n
%  \end{CD}
%\end{equation*}
%Moreover, for any $n\in \Z_{\geq 1}$, there exists a compact $A_n$-linear operator $z_n$ acting on $M_n$ with $z_n=z_{n+1} \otimes_E \id$ via the above isomorphism (1)  such that the following diagram commutes:
%\begin{equation*}
%  \begin{CD}
 %    \Big(\big(V^{N_P^o} \otimes_E \sL(\lambda)\big)_0^{N_{B\cap L_P}^o}\Big)^{\vee} @>>> M_n \\
%     @V\pi(z) VV @V z_n VV \\
%     \Big(\big(V^{N_P^o} \otimes_E \sL(\lambda)\big)_0^{N_{B\cap L_P}^o}\Big)^{\vee} @>>> M_n
%  \end{CD}
%\end{equation*}
\end{proposition}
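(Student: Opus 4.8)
I will follow the spectral analysis of Jacquet modules of \cite[\S 4.2]{Em11}; the statement for $J_B(\cdot)$ is essentially there, and the parabolic refinement is essentially \cite{HL}. The first step is to reduce to a single representation by absorbing $\sL_P(\lambda)'$. Put $U:=V\otimes_E\sL_G(\lambda)'$: by Lemma~\ref{lem: gln-nec}, $U|_H\cong\cC^{\la}(H,E)^{\oplus r'}$ for some $r'\in\Z_{\geq 1}$, and, exactly as in the proof of the preceding lemma (using $\sL_P(\lambda)'\cong(\sL_G(\lambda)')^{N_P^o}$), one has $\big(V^{N_P^o}\otimes_E\sL_P(\lambda)'\big)_0\cong\big(U^{N_P^o}\big)_0$. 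Hence it suffices to build the tower $(M_n,z_n,\alpha_n,\beta_n)$ for $X:=\big(\big(U^{N_P^o}\big)_0\big)^{N_{B\cap L_P}^o}$, with the operator $\pi_z$ of (\ref{equ: gln-gnp}) acting on $X$.

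Next, make $X$ explicit using the homeomorphisms (\ref{equ: gln-nwo}), (\ref{equ: gln-omo}) and the splitting (\ref{equ: gln-pzp}). Passing to $N_P^o$-invariants replaces $\cC^{\la}(H,E)$ by the locally analytic functions on the manifold $H/N_P^o\cong N_{\overline{P}}^o\times L_P^o$; taking the $L_P^{\cD,o}$-smooth vectors $(\cdot)_0$ and then the $N_{B\cap L_P}^o$-invariants — after untwisting the diagonal action as in the proof of Lemma~\ref{lem: gln-nec} — identifies $X$, compatibly with the $T^o=T_P^o\times Z_{L_P}^o$-action, with a finite direct sum of copies of a completed tensor product in which the $T_P^o$-direction is smooth, the $Z_{L_P}^o$-direction is locally analytic, and the remaining directions are functions on $\overline{N}^o$ (locally analytic along $N_{\overline{P}}^o$, smooth along $\overline{N}_{B\cap L_P}^o$) on which $\pi_z$ acts through the substitution $\overline{n}\mapsto z\overline{n}z^{-1}$, while $\pi_z$ acts by translation on the first two factors. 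Dualising and using that $X$ is essentially admissible (the corollary preceding Lemma~\ref{lem: gln-att}), $X^{\vee}$ becomes, up to closed $E$-linear conditions cut out by $(\cdot)_0$ and the $N_{B\cap L_P}^o$-invariance, a finite direct sum of copies of $\cD'(T^o,E)\,\widehat{\otimes}_E\,\cD(\overline{N}^o,E)$, with $\cD'(T^o,E)=\cD^{\infty}(T_P^o,E)\,\widehat{\otimes}_E\,\cD(Z_{L_P}^o,E)$ as defined before the proposition; this is precisely the structure made explicit in \cite{HL}.

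Now set $M_n:=A_n\,\widehat{\otimes}_{\cD'(T^o,E)}X^{\vee}$. From the description above, $M_n$ is, up to the same closed conditions, $A_n$ completed-tensored over $E$ with the level-$n$ Banach completion of $\cD(\overline{N}^o,E)^{\oplus r'}$, hence an orthonormalisable $A_n$-module, and $\varprojlim_n M_n\cong X^{\vee}$ with the natural transition maps, by the Fr\'echet-Stein property of $\cD'(T^o,E)$. Let $z_n$ be the operator induced by $\pi_z$ on $M_n$. The crucial point, identical to \cite[\S 4.2]{Em11} and \cite{HL}, is that $z\in T(\Q_p)^+$ is strictly dominant, so conjugation by $z$ maps $\overline{N}^o$ properly into itself: on $r_n$-analytic functions the substitution $\overline{n}\mapsto z\overline{n}z^{-1}$ yields functions analytic on a strictly larger polydisc (and similarly pushes smooth functions along $\overline{N}_{B\cap L_P}^o$ to a deeper level), while translation by $z$ on the smooth $T_P^o$-direction acts through a finite quotient; consequently $z_n$ factors through a compact map. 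Dually, $z$ improves analyticity, so it factors as
\begin{equation*}
  M_n\ \xrightarrow{\ \alpha_n\ }\ M_{n+1}\widehat{\otimes}_{A_{n+1}}A_n\ \xrightarrow{\ \beta_n\ }\ M_n,
\end{equation*}
where $\beta_n$ is the base change along $A_{n+1}\to A_n$ of the Fr\'echet-Stein transition map $M_{n+1}\to M_n$ and $\alpha_n$ is the action of $z$ followed by the natural map into the ``more analytic'' module $M_{n+1}\widehat{\otimes}_{A_{n+1}}A_n$. Then $\beta_n\circ\alpha_n$ is the action of $z$ on $M_n$, namely $z_n$, and $\alpha_n\circ\beta_n$ is the action of $z$ on $M_{n+1}\widehat{\otimes}_{A_{n+1}}A_n$, namely $z_{n+1}\otimes 1_{A_n}$; in particular $z_n$ is compact, the $\alpha_n,\beta_n$ assemble into the asserted commutative diagram, and $\varprojlim_n M_n\cong X^{\vee}$ intertwines $\{z_n\}$ with $\pi_z$ by construction.

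The main obstacle is the quantitative contraction estimate: one must bound the norms $\|\cdot\|_{r_n}$ on $\cD(\overline{N}^o,E)$ under the substitution induced by conjugation by $z$ and check that strict dominance forces a genuine gain of radius — hence a factorisation through a compact inclusion rather than mere continuity — while verifying that the smooth $T_P^o$-layer introduced by $(\cdot)_0$ and by the $N_{B\cap L_P}^o$-invariance is compatible with this mechanism. This is exactly the computation of \cite[\S 4.2]{Em11} in the Borel case and of \cite{HL} in general; the remaining points — orthonormalisability of $M_n$, the behaviour of the closed conditions under the base changes, and the matching of operators on the inverse limit — are routine.
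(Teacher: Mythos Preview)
Your overall strategy matches the paper's: reduce to $\Pi := V \otimes_E \sL_G(\lambda)'$, exploit the decompositions (\ref{equ: gln-nwo}), (\ref{equ: gln-omo}) to make $\big((\Pi^{N_P^o})_0\big)^{N_{B\cap L_P}^o}$ explicit, and then invoke the compactness mechanism of \cite[\S 4.2]{Em11} for $\pi_z$. Your discussion of the factorisation $\beta_n\circ\alpha_n=z_n$ via the contraction of $\overline{N}^o$ under conjugation is fine.

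There is, however, a genuine issue with your definition $M_n := A_n \,\widehat{\otimes}_{\cD'(T^o,E)}\, X^{\vee}$. Base-changing over $\cD'(T^o,E)$ only truncates the $T^o$-direction; it does nothing to the Fr\'echet structure in the $N_{\overline{P}}^o$-factor. So your $M_n$ is (up to the smooth pieces) $A_n \,\widehat{\otimes}_E\, \cD(N_{\overline{P}}^o,E)^{\oplus r'}$, still a Fr\'echet $A_n$-module rather than a Banach one, and hence not orthonormalisable. The assertion that this equals ``$A_n$ completed-tensored with the level-$n$ Banach completion of $\cD(\overline{N}^o,E)$'' does not follow from the definition you wrote; nothing in the tensor product over $\cD'(T^o,E)$ sees the analyticity radius on $\overline{N}^o$.

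The paper (following \cite[Prop.~5.3]{BHS1}) resolves this by passing to the $r_n$-analytic vectors $\Pi_H^{(n)}$ of $\Pi$ \emph{before} taking any invariants: this simultaneously truncates every factor to level $n$, so that after computing $N_P^o$-invariants, $L_P^{\cD,o}$-smooth vectors, and $N_{B\cap L_P}^o$-invariants one lands in the honest Banach space
\[
\Big(\cC^{(n)}(N_{\overline{P}}^o,E)\,\widehat{\otimes}_E\, \cC^{\infty,(n)}(N_{\overline{B}\cap L_P}^o,E)\Big)^{\oplus r} \,\widehat{\otimes}_E\,\cC^{(n)}(Z_{L_P}^o,E)\,\widehat{\otimes}_E\, \cC^{\infty,(n)}(T_P^o,E).
\]
Its strong dual is then visibly orthonormalisable over $\cD_{<r_n}(Z_{L_P}^o,E)\,\widehat{\otimes}_E\, \cD_{<r_n}^{\infty}(T_P^o,E)$, and $M_n$ is defined as the base-change of this dual to $A_n$. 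Once $M_n$ is set up this way, the remainder of your argument applies verbatim.
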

\begin{proof}
We use the argument of \cite[Prop.5.3]{BHS1}, which is rather a variation of the arguments in \cite[\S 4.2]{Em11}. One has $V^{N_P^o} \otimes_E \sL_P(\lambda)'\cong (V\otimes_E \sL_G(\lambda)')^{N_P^o}$. Denote by $\Pi:=V\otimes_E \sL_G(\lambda)'$, by Lem.\ref{lem: gln-nec}, $\Pi|_H\cong \cC^{\la}(H,E)^{\oplus r}$ for some $r\in \Z_{\geq 1}$. Denote by $\Pi_H^{(n)}$ the $r_n$-analytic vectors for the $H$-action (cf. \cite[\S 0.3]{CD}), thus $\Pi_H^{(n)}\cong \cC^{(n)}(H,E)^{\oplus r}$. By the isomorphism (\ref{equ: gln-nwo}), one has
\begin{equation*}
  \Pi_H^{(n)}\xlongrightarrow{\sim} \Big(\cC^{(n)}(N_{\overline{P}}^o,E) \widehat{\otimes}_E \cC^{(n)}(L_P^{o},E) \widehat{\otimes}_E \cC^{(n)}(N_P^o,E)\Big)^{\oplus r},
\end{equation*}thus
\begin{equation*}
  \big(\Pi_H^{(n)}\big)^{N_P^o}\xlongrightarrow{\sim} \Big(\cC^{(n)}(N_{\overline{P}}^o,E) \widehat{\otimes}_E \cC^{(n)}(L_P^o,E)\Big)^{\oplus r}\\
  \xlongrightarrow{\sim} \Big(\cC^{(n)}(N_{\overline{P}}^o,E)\widehat{\otimes}_E \cC^{(n)}(L_P^{\cD,o},E)\widehat{\otimes}_E \cC^{(n)}(Z_{L_P}^o,E)\Big)^{\oplus r}.
\end{equation*}
Put $\cC^{\infty,(n)}(H',E):=\cC^{(n)}(H',E) \cap \cC^{\infty}(H',E)$, for a compact prop-$p$ uniform $p$-adic analytic group $H'$. One gets
\begin{multline*}
   \big(\big(\Pi_H^{(n)}\big)^{N_P^o}\big)_0:= \big(\Pi_H^{(n)}\big)^{N_P^o}\cap (\Pi^{N_P^o})_0\xlongrightarrow{\sim} \Big(\cC^{(n)}(N_{\overline{P}}^o,E)\widehat{\otimes}_E \cC^{(n)}(Z_{L_P}^o,E)\widehat{\otimes}_E \cC^{\infty,(n)}(L_P^{\cD,o},E)\Big)^{\oplus r} \\
   \xlongrightarrow{\sim}    \Big(\cC^{(n)}(N_{\overline{P}}^o,E)\widehat{\otimes}_E \cC^{(n)}(Z_{L_P}^o,E)\widehat{\otimes}_E \cC^{\infty,(n)}(N_{\overline{B}\cap L_P}^o,E) \widehat{\otimes}_E \cC^{\infty,(n)}(T_P^o,E) \widehat{\otimes}_E \cC^{\infty,(n)}(N_{B\cap L_P}^o,E)\Big)^{\oplus r},
\end{multline*}
and thus
\begin{multline}
  \Big(\big(\big(\Pi_H^{(n)}\big)^{N_P^o}\big)_0\Big)^{N_{B\cap L_P}^o} \xlongrightarrow{\sim}   \Big(\cC^{(n)}(N_{\overline{P}}^o,E)\widehat{\otimes}_E \cC^{\infty,(n)}(N_{\overline{B}\cap L_P}^o,E)\widehat{\otimes}_E\cC^{(n)}(Z_{L_P}^o,E)\widehat{\otimes}_E \cC^{\infty,(n)}(T_P^o,E)\Big)^{\oplus r} \\
  \xlongrightarrow{\sim} \Big(\cC^{(n)}(N_{\overline{P}}^o,E)\widehat{\otimes}_E \cC^{\infty,(n)}(N_{\overline{B}\cap L_P}^o,E)\Big)^{\oplus r} \widehat{\otimes}_E\cC^{(n)}(Z_{L_P}^o,E)\widehat{\otimes}_E \cC^{\infty,(n)}(T_P^o,E)
\end{multline}
Note the strong dual of $\cC^{(n)}(Z_{L_P}^o,E)\widehat{\otimes}_E \cC^{\infty,(n)}(T_P^o,E)$ is $\cD_{<r_n}(Z_{L_P}^o,E)\widehat{\otimes}_E \cD_{<r_n}^{\infty}(T_P^o,E)$. Put thus
\begin{equation*}M_n:=\Big( \Big(\big(\big(\Pi_H^{(n)}\big)^{N_P^o}\big)_0\Big)^{N_{B\cap L_P}^o}\Big)^{\vee}_b \widehat{\otimes}_{\cD_{<r_n}(Z_{L_P}^o,E)\widehat{\otimes}_E \cD_{<r_n}^{\infty}(T_P^o,E)} A_n.
\end{equation*}
Indeed, since $\Big( \Big(\big(\big(\Pi_H^{(n)}\big)^{N_P^o}\big)_0\Big)^{N_{B\cap L_P}^o}\Big)^{\vee}_b$ is obviously an orthonormalisable $\cD_{<r_n}(Z_{L_P}^o,E)\widehat{\otimes}_E \cD_{<r_n}^{\infty}(T_P^o,E)$-module, we see $M_n$ is an orthonormalisable $A_n$-module. The existence of the maps $\alpha_n$, $\beta_n$ follows by the same arguments as in the proof of \cite[Prop.5.3]{BHS1}.
\end{proof}
Let $\chi_0$ be a smooth character of $T_P^o$ over $E$, let $N(\chi_0)\in \Z_{\geq 1}$ such that $\chi_0\in \cC^{(N(\chi_0))}(T_P^o,E)$. Note $\chi_0$ corresponds to a maximal ideal of $\cD^{\infty}_{r_n}(T_P^o,E)$, and thus induces a projection $\cD^{\infty}_{r_n}(T_P^o,E)\twoheadrightarrow E$ for all $n\geq N(\chi_0)$. Put $M_n^{T_P^o=\chi_0}:=M_n \otimes_{\cD^{\infty}_{r_n}(T_P^o,E),\chi_0} E$, $B_n:=\cD_{r_n}(Z_{L_P}^o,E)$, thus $M_n^{T_P^o=\chi_0}$ is an orthonormalisable $B_n$-module. Note the $z_n$-action on $M_n$ induces a compact $B_n$-linear action $z_n$-action on $M_n^{T_P^o=\chi_0}$. By the above proposition, one has
\begin{corollary}\label{cor: gln-ose}
  One has an isomorphism
  \begin{equation*}
    \Big(\big(\big(V^{N_P^o} \otimes_E \sL_P(\lambda)'\big)_0\big)^{N_{B\cap L_P}^o,T_P^o=\chi_0}\Big)^{\vee} \xlongrightarrow{\sim} \varprojlim_{n\geq N(\chi_0)} M_n^{T_P^o=\chi_0},
  \end{equation*}
  and the following diagram commutes
  \begin{equation*}
  \begindc{\commdiag}[40]
  \obj(0,1)[a]{$ \Big(\big(\big(V^{N_P^o} \otimes_E \sL_P(\lambda)'\big)_0\big)^{N_{B\cap L_P}^o,T_P^o=\chi_0}\Big)^{\vee}$}
  \obj(3,1)[b]{$\cdots$}
  \obj(4,1)[c]{$M_{n+1}^{T_P^o=\chi_0}$}
  \obj(6,1)[d]{$M_{n+1}^{T_P^o=\chi_0}\otimes_{B_{n+1}} B_n$}
  \obj(8,1)[e]{$M_n^{T_P^o=\chi_0}$}
    \obj(0,0)[a']{$ \Big(\big(\big(V^{N_P^o} \otimes_E \sL_P(\lambda)'\big)_0\big)^{N_{B\cap L_P}^o,T_P^o=\chi_0}\Big)^{\vee}$}
  \obj(3,0)[b']{$\cdots$}
  \obj(4,0)[c']{$M_{n+1}^{T_P^o=\chi_0}$}
  \obj(6,0)[d']{$M_{n+1}^{T_P^o=\chi_0}\otimes_{B_{n+1}} B_n$}
  \obj(8,0)[e']{$M_n^{T_P^o=\chi_0}$}
  \mor{a}{b}{}
  \mor{b}{c}{}
  \mor{c}{d}{}
  \mor{d}{e}{$\beta_n$}
    \mor{a'}{b'}{}
  \mor{b'}{c'}{}
  \mor{c'}{d'}{}
  \mor{d'}{e'}{$\beta_n$}
  \mor{a}{a'}{$\pi_z$}
  \mor{c}{c'}{$z_{n+1}$}
  \mor{d}{d'}{$z_{n+1}\otimes \id$}[\atright,\solidarrow]
  \mor{e}{e'}{$z_n$}
  \mor{e}{d'}{$\alpha_n$}
  \enddc.
\end{equation*}
\end{corollary}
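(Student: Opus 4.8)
The plan is to deduce this corollary from the preceding proposition by base changing the whole picture along the character $\chi_0$ on the factor $\cD^{\infty}(T_P^o,E)$ of the Fr\'echet--Stein algebra $\cD'(T^o,E)=\cD^{\infty}(T_P^o,E)\widehat{\otimes}_E\cD(Z_{L_P}^o,E)$. First I would record that, restricted to this factor, the $\cD'(T^o,E)$-action on $\big(\big(\big(V^{N_P^o}\otimes_E\sL_P(\lambda)'\big)_0\big)^{N_{B\cap L_P}^o}\big)^{\vee}$ is dual to the smooth $T_P^o$-action on $\big(\big(V^{N_P^o}\otimes_E\sL_P(\lambda)'\big)_0\big)^{N_{B\cap L_P}^o}$ (smooth because $T_P^o\subseteq L_P^{\cD,o}$ and the operation $(\cdot)_0$ kills the $L_P^{\cD,o}$-analytic directions), and that each $A_n$ acts on $M_n$ through its quotient $\cD^{\infty}_{r_n}(T_P^o,E)\widehat{\otimes}_E\cD_{r_n}(Z_{L_P}^o,E)$, compatibly with the $A_n$-linear maps $\alpha_n,\beta_n$ and the operators $z_n$ of the proposition.

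Next I would use that, $E$ being large and $\chi_0$ smooth, $\cC^{\infty,(n)}(T_P^o,E)$ is the space of functions inflated from a finite $p$-group quotient $T_P^o/U_n$, so $\cD^{\infty}_{r_n}(T_P^o,E)\cong E[T_P^o/U_n]$ is a split finite-dimensional $E$-algebra and, for $n\geq N(\chi_0)$, the projection $\cD^{\infty}_{r_n}(T_P^o,E)\twoheadrightarrow E$ attached to $\chi_0$ is split by an idempotent $e_{\chi_0}$; moreover these idempotents are compatible under the surjections $\cD^{\infty}_{r_{n+1}}(T_P^o,E)\twoheadrightarrow\cD^{\infty}_{r_n}(T_P^o,E)$ and $\cD^{\infty}(T_P^o,E)\twoheadrightarrow\cD^{\infty}_{r_n}(T_P^o,E)$. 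Then $M_n^{T_P^o=\chi_0}=e_{\chi_0}M_n$ is a direct summand of $M_n$ as a $B_n$-module, hence orthonormalisable over $B_n$; the operator $z_n$, being $A_n$-linear and in particular commuting with $e_{\chi_0}$, restricts to a compact $B_n$-linear operator on it; and $\alpha_n,\beta_n$ descend to $B_n$-linear maps $M_n^{T_P^o=\chi_0}\to M_{n+1}^{T_P^o=\chi_0}\otimes_{B_{n+1}}B_n$ and $M_{n+1}^{T_P^o=\chi_0}\widehat{\otimes}_{B_{n+1}}B_n\to M_n^{T_P^o=\chi_0}$ satisfying the same identities $\beta_n\circ\alpha_n=z_n$, $\alpha_n\circ\beta_n=z_{n+1}\otimes\id$. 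Dually, $e_{\chi_0}$ acts on $\big(\big(\big(V^{N_P^o}\otimes_E\sL_P(\lambda)'\big)_0\big)^{N_{B\cap L_P}^o}\big)^{\vee}$ as the transpose of the projector onto the $\chi_0$-isotypic part (the $T_P^o$-action being smooth), so its image is canonically $\big(\big(\big(V^{N_P^o}\otimes_E\sL_P(\lambda)'\big)_0\big)^{N_{B\cap L_P}^o,T_P^o=\chi_0}\big)^{\vee}$.

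Then I would simply apply $e_{\chi_0}(\cdot)$ to the isomorphism $\varprojlim_n M_n\xlongrightarrow{\sim}\big(\ldots\big)^{\vee}$ of the proposition: since projective limits commute with finite direct sums, $e_{\chi_0}\varprojlim_n M_n=\varprojlim_n e_{\chi_0}M_n=\varprojlim_{n\geq N(\chi_0)}M_n^{T_P^o=\chi_0}$ (passing to the cofinal index set $n\geq N(\chi_0)$ does not change the limit), which is exactly the asserted isomorphism. For the diagram, every arrow occurring in the proposition's commutative diagram --- $\pi_z$, $z_n$, $z_{n+1}$, $z_{n+1}\otimes\id$, $\beta_n$, $\alpha_n$, and the natural maps $M_{n+1}\to M_{n+1}\widehat{\otimes}_{A_{n+1}}A_n$ --- is $\cD^{\infty}(T_P^o,E)$- resp. $\cD^{\infty}_{r_n}(T_P^o,E)$-linear and $T_P^o$-equivariant, hence commutes with the relevant $e_{\chi_0}$; applying $e_{\chi_0}$ to each square therefore produces precisely the commutative diagram of the corollary.

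The only step demanding genuine care is the commutation of the functor $(\cdot)^{T_P^o=\chi_0}$ with $\varprojlim_n$; this is exactly what the compatible family of idempotents $e_{\chi_0}$ supplies, and it is here that the hypotheses that $\chi_0$ is smooth and that $E$ is sufficiently large enter. If one prefers to avoid idempotents, one can instead view $(\cdot)^{T_P^o=\chi_0}$ as the base change of a coadmissible module along the quotient $\cD'(T^o,E)\twoheadrightarrow\cD(Z_{L_P}^o,E)$ determined by $\chi_0\widehat{\otimes}\id$, and invoke the compatibility of such base change with the Fr\'echet--Stein presentations, as in \cite{ST03} and \cite{Em11}.
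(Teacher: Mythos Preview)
Your argument is correct and is essentially the natural way to justify the corollary; the paper itself gives no proof beyond the phrase ``By the above proposition, one has'' preceding the statement, so you are supplying the details the author left implicit. The idempotent approach is exactly right: since $T_P^o$ is a compact abelian pro-$p$ group and $E$ has characteristic $0$ (and is assumed sufficiently large), each $\cD^{\infty}_{r_n}(T_P^o,E)$ is a finite product of copies of $E$ for $n\geq N(\chi_0)$, the projector $e_{\chi_0}$ makes sense and is compatible across the system, and applying it termwise to the proposition's diagram yields the corollary.

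One small inaccuracy worth flagging, though it does not affect the corollary as stated: you assert that $M_n^{T_P^o=\chi_0}=e_{\chi_0}M_n$, being a direct summand of an orthonormalisable $B_n$-module, is itself orthonormalisable. In general a direct summand of an orthonormalisable Banach module is only guaranteed to satisfy Buzzard's property~(Pr) (i.e.\ to be \emph{potentially} orthonormalisable), not to be orthonormalisable on the nose. This is harmless here --- property~(Pr) is all that the eigenvariety machine requires, and in any case the corollary itself makes no orthonormalisability claim --- but you should either weaken the wording to ``satisfies property~(Pr)'' or simply drop the remark, since it is not needed for the isomorphism or the commutative diagram.
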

Keep the above notation, put
\begin{equation*}
  J_ {B,(P,\lambda_0)}^{T_P^o,\chi_0}(V):=\big(\big(V^{N_P^o} \otimes_E \sL_P(\lambda)'\big)_0\big)^{N_{B\cap L_P}^o,T_P^o=\chi_0} \otimes_E \chi_{\lambda},
\end{equation*}
which is thus a closed subrepresentation (hence also essentially admissible) of $J_{B,(P,\lambda_0)}(V)$.

%We give some geometrical interpretation of the above results.
For a topologically finitely generated abelian group $Z$, denote by $\widehat{Z}$ the rigid space over $E$ parameterizing locally analytic characters of $Z$. By \cite[Prop.6.4.6]{Em04}, if $Z$ is moreover compact, one has a natural isomorphism $\co(\widehat{Z}) \xlongrightarrow{\sim} \cD(Z,E)$ (where for a rigid analytic space $\fX$, we use $\co(\fX)$ to denote the global sections on $\fX$). By definition (\cite[Def.6.4.9]{Em04}), there exists an equivalence of categories of the category of coadmissible $\co(\widehat{Z})$-modules and that of essentially admissible locally analytic representations of $Z$.

In particular, the strong dual of $J_{B,(P,\lambda_0)}(V)$ \Big(resp. $J_{B,(P,\lambda_0)}^{T_P^o,\chi_0}(V)$\Big) corresponds to a coherent sheaf $\cM_{\lambda_0}(V)$ \big(resp. $\cM_{\lambda_0}^{\chi_0}(V)$\big) over $\widehat{T}:=\widehat{T(\Q_p)}$, with
\begin{equation*}
  \cM_{\lambda_0}(V)\big(\widehat{T}\big)\xlongrightarrow{\sim} J_{B,(P,\lambda_0)}(V)^{\vee}_b\end{equation*}
  \begin{equation*}\text{\Big(resp. }\cM_{\lambda_0}^{\chi_0}(V)\big(\widehat{T}\big)\xlongrightarrow{\sim} J_{B,(P,\lambda_0)}^{T_P^o,\chi_0}(V)^{\vee}_b\Big).
\end{equation*}The character $\chi_{\lambda}$ induces an isomorphism of rigid spaces:
\begin{equation}\label{equ: gln-Toi}
  \chi_{\lambda}: \widehat{T} \lra \widehat{T}, \ \chi\mapsto \chi\chi_{\lambda}.
\end{equation}
One easily sees the coherent sheaf $\chi_{\lambda}^* \big(\cM_{\lambda_0}(V)\big)$ \big(resp. $\chi_{\lambda}^*\big(\cM_{\lambda_0}^{\chi_0}(V)\big)$\big) corresponds to the $T(\Q_p)$-representation $\big(\big(V^{N_P^o} \otimes_E \sL_P(\lambda)'\big)_0\big)^{N_{B\cap L_P}^o}_{\cfs}$ \Big(resp. $\big(\big(V^{N_P^o} \otimes_E \sL_P(\lambda)'\big)_0\big)^{N_{B\cap L_P}^o,T_P^o=\chi_0}_{\cfs}$\Big). One has
\begin{equation*}
\kappa:  \widehat{T} \twoheadlongrightarrow \widehat{T^o} \times \bG_m \xlongrightarrow{\sim} \widehat{T_P^o} \times \widehat{Z_{L_P}^o} \times \bG_m
\end{equation*}
where the first projection maps $\chi$ to $\big(\chi|_{T^o},\chi(z)\big)$ for any $\chi \in \widehat{T}(\overline{E})$. The character $\chi_0$ corresponds to an $E$-point of $\widehat{T_P^o}$. The affinoids $\{\Spm B_n\}_{n\in \Z_{\geq 1}}$ form an admissible covering of $\widehat{Z_{L_P}^o}$. We view $M_n^{\chi_0}$ as a $B_n[X]$ module with $X$ acting on $M_n$ via $z_n$, and put $M_{n,\cfs}^{\chi_0}:=M_n^{\chi_0}\widehat{\otimes}_{E[X]} E\{\{X,X^{-1}\}\}$. By \cite[Prop.2.2.6]{Em11}, $M_{n,\cfs}^{\chi_0}$ is a coadmissible $B_n\{\{X,X^{-1}\}\}$-module, and thus corresponds to a coherent sheaf $\cM_{n}^{\chi_0}$ over $\Spm B_n\times \bG_m$. By \cite[Prop.2.1.9]{Em11}, one has $M_{n+1,\cfs}^{\chi_0} \widehat{\otimes}_{B_{n+1}\{\{X,X^{-1}\}\}} B_n\{\{X,X^{-1}\}\}\xlongrightarrow{\sim} M_{n,\cfs}^{\chi_0}$. Thus $\{\cM_n^{\chi_0}\}_{n}$ glues to a coherent sheaf $\cM^{\chi_0}$ over $\widehat{T^o} \times \bG_m$. By the isomorphism \begin{equation}\label{equ: gln-sep}
  \varprojlim_{n\geq N(\chi_0)} M_{n,\cfs}^{\chi_0} \xlongrightarrow{\sim} \big(\big(V^{N_P^o} \otimes_E \sL_P(\lambda)'\big)_0\big)^{N_{B\cap L_P}^o,T_P^o=\chi_0}_{\cfs},
\end{equation}
we see  $\kappa_* \big(\chi_{\lambda}^*\big(\cM_{\lambda_0}^{\chi_0}(V)\big)\big)$  is a coherent sheaf over $\widehat{T_P^o} \times \widehat{Z_{L_P}^o} \times \bG_m$ \big(since its global section is no other than the module in (\ref{equ: gln-sep})\big), supported on the closed subspace $\widehat{Z_{L_P}^o}\times \bG_m\hookrightarrow \widehat{T_P^o} \times \widehat{Z_{L_P}^o} \times \bG_m$, $(\chi,x)\mapsto (\chi_0, \chi, x)$ (and we use $\chi_0$ to denote this closed embedding), moreover, $\chi_0^* \kappa_* \big(\chi_{\lambda}^*\big(\cM_{\lambda_0}^{\chi_0}(V)\big)\big) \xrightarrow{\sim} \cM^{\chi_0}$.

Recall for a coherent sheaf $\cM$ over a rigid analytic space $\fX$, the support $\Supp(\cM)$ of $\cM$ is defined to be the closed rigid subspace of $\fX$ such that for any affinoid open $\Spm A$ of $\fX$, $\Supp(\cM)\times_{\fX} \Spm A\cong \Spm(A/I)$ where $I:=\{a \in A\ |\ am=0, \ \forall m\in \cM(\Spm(A))\}$.
Denote by $F_n(X)\in B_n\{\{X,X^{-1}\}\}$ the characteristic power series of the compact operator  $z_n$ on $M_n^{T_P^o=\chi_0}$, as in \cite[Lem.3.10]{BHS1} (see also \cite[Prop.5.A.6]{Ding}), one can prove the support of the coherent sheaf $\cM_n^{\chi_0}$ on $B_{n}\{\{X,X^{-1}\}\}$ is $\Spm F_n(X^{-1})$. By \cite[Prop.6.4.2]{Che}, $\dim \Supp \cM^{\chi_0}=\dim Z_{L_P}^o$. Moreover, by \cite[Prop.A5.8]{Cole97}, there exists an admissible covering $\{\cU_{n,i}\}_i$ of affinoid opens of $\Spm F_n(X^{-1})$, such that the image of $\cU_{n,i}$ via the projection $\Spm B_n \times \bG_m\twoheadrightarrow \Spm B_n$ is an affinoid open, denoted by $\Spm B_{n,i}$, in $\Spm B_n$, and the sections of $\cM_n^{\chi_0}(\cU_{n,i})$ form a finite projective $B_{n,i}$-module. Since (\ref{equ: gln-Toi}) is an isomorphism, we deduce from the above discussion:
%One has an isomorphism of coadmissible $\widehat{Z_{L_P}^o}$-modules:
\begin{corollary}\label{cor: gln-0iv}
  (1) The support of $\cM_{\lambda_0}^{\chi_0}(V)$ is equidimensional of dimension $\dim Z_{L_P}^o$.

  (2) There exists an admissible covering $\{\cU_i\}$ of $\Supp \cM_{\lambda_0}^{\chi_0}(V)$, such that
  \begin{itemize}
    \item the image $\cV_i$ of $\cU_i$ via the composition $\Supp \cM_{\lambda_0}^{\chi_0}(V)\hookrightarrow \widehat{T}\ra \widehat{Z_{L_P}^o}$ is an affinoid open in $\widehat{Z_{L_P}^o}$,
    \item the sections $\cM_{\lambda_0}^{\chi_0}(V)$ over $\cU_i$ form a finite projective $\co(\cV_i)$-module.
  \end{itemize}
\end{corollary}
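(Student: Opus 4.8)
The plan is to deduce both assertions by transporting the local structure of the sheaves $\cM_n^{\chi_0}$ on $\Spm B_n\times\bG_m$ through the chain of identifications already established above. Recall that $\chi_{\lambda}\colon\widehat{T}\xrightarrow{\sim}\widehat{T}$ is an isomorphism of rigid spaces, that $\kappa$ identifies $\widehat{T}$ with $\widehat{T_P^o}\times\widehat{Z_{L_P}^o}\times\bG_m$, that $\kappa_*\big(\chi_{\lambda}^*(\cM_{\lambda_0}^{\chi_0}(V))\big)$ is supported on the $\{\chi_0\}$-slice $\widehat{Z_{L_P}^o}\times\bG_m$, and that $\chi_0^*\kappa_*\big(\chi_{\lambda}^*(\cM_{\lambda_0}^{\chi_0}(V))\big)\xrightarrow{\sim}\cM^{\chi_0}$. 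Composing, the maps $\Supp\cM_{\lambda_0}^{\chi_0}(V)\hookrightarrow\widehat{T}\xrightarrow{\chi_{\lambda}}\widehat{T}\xrightarrow{\kappa}\widehat{T_P^o}\times\widehat{Z_{L_P}^o}\times\bG_m$ identify $\Supp\cM_{\lambda_0}^{\chi_0}(V)$ with the closed subspace $\Supp\cM^{\chi_0}\subseteq\widehat{Z_{L_P}^o}\times\bG_m$, carry $\cM_{\lambda_0}^{\chi_0}(V)$ to $\cM^{\chi_0}$, and carry the composition $\Supp\cM_{\lambda_0}^{\chi_0}(V)\hookrightarrow\widehat{T}\to\widehat{Z_{L_P}^o}$ appearing in (2) to the first projection $\pr_1\colon\widehat{Z_{L_P}^o}\times\bG_m\to\widehat{Z_{L_P}^o}$. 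So it suffices to prove (1) and (2) for $\cM^{\chi_0}$ and $\pr_1$.

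For (1), over each $\Spm B_n$ the support of $\cM_n^{\chi_0}$ is the Fredholm hypersurface $\Spm F_n(X^{-1})$ attached to the compact operator $z_n$ on the orthonormalizable $B_n$-module $M_n^{T_P^o=\chi_0}$ (cf. \cite[Lem.3.10]{BHS1}, \cite[Prop.5.A.6]{Ding}); by the structure theory of such hypersurfaces over an affinoid base (\cite[Prop.6.4.2]{Che}, via \cite[Prop.A5.8]{Cole97}), each $\Spm F_n(X^{-1})$ is equidimensional of dimension $\dim B_n=\dim Z_{L_P}^o$. The base-change compatibility $M_{n+1,\cfs}^{\chi_0}\widehat{\otimes}_{B_{n+1}\{\{X,X^{-1}\}\}}B_n\{\{X,X^{-1}\}\}\xrightarrow{\sim}M_{n,\cfs}^{\chi_0}$ gives $\Spm F_{n+1}(X^{-1})\times_{\Spm B_{n+1}}\Spm B_n=\Spm F_n(X^{-1})$, so the equidimensional pieces glue compatibly along the admissible covering $\{\Spm B_n\}$ of $\widehat{Z_{L_P}^o}$; hence $\Supp\cM^{\chi_0}$ is equidimensional of dimension $\dim Z_{L_P}^o$, and (1) follows by transport through the isomorphisms above.

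For (2), apply \cite[Prop.A5.8]{Cole97} to each $F_n(X^{-1})$, obtaining an admissible affinoid covering $\{\cU_{n,i}\}_i$ of $\Spm F_n(X^{-1})$ whose image $\Spm B_{n,i}$ under $\pr_1$ is affinoid open in $\Spm B_n$ and over which the sections of $\cM_n^{\chi_0}$ form a finite projective $B_{n,i}$-module. Since $\{\Spm B_n\}_n$ is an admissible covering of $\widehat{Z_{L_P}^o}$ and the $\cU_{n,i}$ are compatible under the gluing isomorphisms, $\{\cU_{n,i}\}_{n,i}$ is an admissible covering of $\Supp\cM^{\chi_0}$; transporting it back through $\kappa$ and $\chi_{\lambda}$ produces the required covering $\{\cU_i\}$, with $\cV_i=\Spm B_{n,i}$ affinoid open in $\widehat{Z_{L_P}^o}$ and $\cM_{\lambda_0}^{\chi_0}(V)(\cU_i)\cong\cM_n^{\chi_0}(\cU_{n,i})$ a finite projective $\co(\cV_i)$-module.

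I expect the only genuine work to be the bookkeeping of the three identifications — the automorphism $\chi_{\lambda}$, the pushforward $\kappa_*$, and the restriction to the $\{\chi_0\}$-slice — and in particular checking that $\kappa$ restricts to an isomorphism of $\Supp\cM_{\lambda_0}^{\chi_0}(V)$ onto $\Supp\cM^{\chi_0}$ compatibly both with the coherent sheaves and with the two projections to $\widehat{Z_{L_P}^o}$. Once that is in place, both statements are immediate consequences of the theory of Fredholm hypersurfaces of Coleman and Chenevier already invoked in the discussion preceding the corollary.
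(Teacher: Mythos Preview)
Your proposal is correct and follows essentially the same approach as the paper: the corollary is stated there as an immediate consequence of the preceding discussion (the identification of $\kappa_*\chi_\lambda^*\cM_{\lambda_0}^{\chi_0}(V)$ with $\cM^{\chi_0}$, the Fredholm-hypersurface structure of $\Supp\cM_n^{\chi_0}=\Spm F_n(X^{-1})$, and the Coleman--Chenevier covering from \cite[Prop.A5.8]{Cole97} and \cite[Prop.6.4.2]{Che}), together with the fact that $\chi_\lambda$ is an isomorphism. One small imprecision: $\kappa$ is only a surjection $\widehat{T}\twoheadrightarrow\widehat{T_P^o}\times\widehat{Z_{L_P}^o}\times\bG_m$, not an identification, and the two projections to $\widehat{Z_{L_P}^o}$ (with and without the $\chi_\lambda$-twist) differ by the automorphism given by translation by $\chi_\lambda|_{Z_{L_P}^o}$; but since $\kappa_*$ of the sheaf is coherent and this translation preserves affinoid opens, neither point affects the conclusion, and you correctly flag the first in your final paragraph.
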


\section{Eigenvarieties and closed subspaces}
\subsection{Notations and preliminaries}\label{sec: gln-3.1}We fix embeddings $\iota_{\infty}: \overline{\Q} \hookrightarrow \bC$, and $\iota_{\infty}: \overline{\Q} \hookrightarrow \overline{\Q_p}$.
Let $F^+$ be a totally real number field, $F$ a quadratic imaginary extension of $F^+$, denote by $c$ the unique non-trivial element of $\Gal(F/F^+)$. We suppose $p$ is split in $F$, thus $p$ is split in $F^+$, and for any place $v$ of $F^+$ above $p$, $v$ is split in $F$. Denote by $\Sigma_p$ the places of $F^+$ above $p$.

Let  $G$ be a definite quasi-split unitary group over $F^+$ associated to $F/F^+$, thus $G\times_{F^+} F\xrightarrow{\sim} \GL_n/F$, and $G(F^+\otimes_{\Q} \bR)$ is compact.   Note one has an isomorphism $G(F^+ \otimes_{\Q} \Q_p)\xrightarrow{\sim} \prod_{v\in \Sigma_p} G(F^+_v)\xrightarrow{\sim}  \prod_{v\in \Sigma_p} \GL_n(\Q_p)$ (where the last isomorphism depends on the choice of the places of $F$ above each $v\in \Sigma_p$).

% We fix a prime $u$ of $\cE$ above $p$, and denote by $u^c$ its conjugate of $u$. We also use $u$ to denote the induced embedding $\cE\hookrightarrow \Q_p$, which induces an isomorphism $u: G(\Q_p)\xrightarrow{\sim} \GL_n(\Q_p)$ and we assume $n\geq 2$.

Let $U^p$ be an open compact subgroup of $G(\bA_{F^+}^{p,\infty})$ with the form $U^p=\prod_{v\nmid p} U_v$, put
\begin{equation*}
  \widehat{S}(U^p,E):=\Big\{f: G(F^+) \setminus G(\bA_{F^+}^{\infty})/U^p\ra E\ |\ f \text{ is continuous}\Big\}.
\end{equation*}
Since $G(F^+\otimes_{\Q} \bR)$ is compact, $G(F^+)\setminus G(\bA_{F^+}^{\infty})/U^p$ is a profinite set. We see $\widehat{S}(U^p,E)$ is a Banach space over $E$ with the norm defined by the (completed) $\co_E$-lattice
\begin{equation*}\widehat{S}(U^p,\co_E):=\Big\{f: G(F^+)\setminus G(\bA_{F^+}^{\infty})/U^p \ra \co_E\ |\ f \text{ is continuous}\Big\}.\end{equation*}
Moreover, $\widehat{S}(U^p,E)$ is equipped with a continuous action of $G(F^+\otimes_{\Q}\Q_p)$ given by $(gf)(g')=f(g'g)$ for $f\in \widehat{S}(U^p,E)$, $g\in G(F^+\otimes_{\Q} \Q_p)$, $g'\in G(\bA_{F^+}^{\infty})$. The lattice $\widehat{S}(U^p,\co_E)$ is stable by this action, thus the Banach representation $\widehat{S}(U^p,E)$ of $G(F^+\otimes_{\Q} \Q_p)$ is unitary.
\begin{lemma}\label{lem: gln-htn}Let $H$ be a compact open subgroup of $G(F^+\otimes_{\Q} \Q_p)$ such that $G(F^+)\cap (U^pH)=\{1\}$, then there exists $r\in \Z_{\geq 1}$ such that
  \begin{equation*}
    \widehat{S}(U^p,E)|_H \xlongrightarrow{\sim} \cC(H,E)^{\oplus r}.
  \end{equation*}
  Thus, $\widehat{S}(U^p,E)$ is a unitary admissible Banach representation of $G(F^+\otimes_{\Q} \Q_p)$ over $E$.
\end{lemma}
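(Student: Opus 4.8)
The plan is to identify $\widehat{S}(U^p,E)$ with the space $\cC(X,E)$ of continuous $E$-valued functions on the profinite set $X:=G(F^+)\backslash G(\bA_{F^+}^{\infty})/U^p$, on which $H\subseteq G(F^+\otimes_{\Q}\Q_p)\subseteq G(\bA_{F^+}^{\infty})$ acts by right translation, and to decompose $X$ into finitely many \emph{free} $H$-orbits. First I would note that, since $H$ (resp. $U^p$) is open in the $p$-part (resp. the prime-to-$p$ part) of $G(\bA_{F^+}^{\infty})$, the subgroup $HU^p$ is open; hence $X/H=G(F^+)\backslash G(\bA_{F^+}^{\infty})/(HU^p)$ is discrete, and it is compact since $X$ is (this compactness is the finiteness of class numbers, and uses that $G$ is definite). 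So $X/H$ is finite, say of size $r$, with representatives $g_1,\dots,g_r$, and $X=\bigsqcup_{i=1}^{r}O_i$ where each $H$-orbit $O_i$ is open and closed in $X$ and is $H$-equivariantly homeomorphic to $H/H_i$ with $H_i:=\mathrm{Stab}_H([g_i])$.

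Next I would compute $H_i$. If $h\in H_i$ then $g_ih\in G(F^+)g_iU^p$, i.e. $g_i^{-1}\gamma g_i=u'h$ for some $\gamma\in G(F^+)$, $u'\in U^p$, and looking at the $p$-component of this identity the $H$-part of $g_i^{-1}\gamma g_i$ is exactly $h$; conversely every $\gamma\in G(F^+)\cap g_i(U^pH)g_i^{-1}$ arises this way. Thus $h$ is determined by $\gamma$, which embeds $H_i$ into $G(F^+)\cap g_i(U^pH)g_i^{-1}$; this intersection is trivial by the hypothesis (the vanishing for the conjugated groups $g_i(U^pH)g_i^{-1}$ being the evident variant used here—equivalently one takes $U^p$ small enough that these finitely many intersections vanish). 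Hence $H_i=\{1\}$ and each $O_i$ is $H$-equivariantly homeomorphic to $H$ itself (a continuous $H$-equivariant bijection from a compact space to a Hausdorff space is a homeomorphism). Since the $O_i$ are clopen and $X=\bigsqcup_{i=1}^r O_i$, this yields
\begin{equation*}
  \widehat{S}(U^p,E)=\cC(X,E)=\prod_{i=1}^{r}\cC(O_i,E)\cong \cC(H,E)^{\oplus r}
\end{equation*}
as Banach $H$-representations, where $\cC(H,E)$ carries the right regular action; this is the first assertion.

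For the second assertion I would recall that $\cC(H,E)$ is an admissible unitary Banach representation of $H$: its unit ball $\cC(H,\co_E)$ has continuous $\co_E$-linear dual $\Hom_{\co_E}\big(\cC(H,\co_E),\co_E\big)\cong \co_E[[H]]$, the Iwasawa algebra, which is free of rank one, in particular finitely generated over the Noetherian ring $\co_E[[H]]$. Hence $\widehat{S}(U^p,\co_E)\cong\cC(H,\co_E)^{\oplus r}$ has dual $\co_E[[H]]^{\oplus r}$, finitely generated over $\co_E[[H]]$, so $\widehat{S}(U^p,E)|_H$ is admissible; since admissibility of a Banach representation of a $p$-adic reductive group can be checked on any single compact open subgroup, $\widehat{S}(U^p,E)$ is an admissible Banach representation of $G(F^+\otimes_{\Q}\Q_p)$. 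Unitarity has already been observed, the lattice $\widehat{S}(U^p,\co_E)$ being $G(F^+\otimes_{\Q}\Q_p)$-stable.

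The only substantial point is the triviality of the orbit stabilizers $H_i$, which is precisely where the hypothesis $G(F^+)\cap(U^pH)=\{1\}$ (for the relevant conjugates of $U^p$) enters and upgrades the decomposition from "finitely many $H$-orbits" to "finitely many copies of $H$"; the rest is soft point-set topology, the class-number finiteness coming from $G$ being definite, and the standard identification of $\cC(H,\co_E)^{\vee}$ with the Iwasawa algebra $\co_E[[H]]$.
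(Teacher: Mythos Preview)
Your proof is correct and follows essentially the same approach as the paper's: both decompose the profinite set $G(F^+)\backslash G(\bA_{F^+}^{\infty})/U^p$ into finitely many free $H$-orbits indexed by $G(F^+)\backslash G(\bA_{F^+}^{\infty})/(U^pH)$, using the hypothesis to kill stabilizers. You are in fact more careful than the paper on two points: you spell out the admissibility deduction via $\cC(H,\co_E)^\vee\cong\co_E[[H]]$, and you correctly flag that freeness of the orbit through $g_i$ requires $G(F^+)\cap g_i(U^pH)g_i^{-1}=\{1\}$ rather than the unconjugated hypothesis as literally stated --- the paper's proof simply asserts injectivity from $G(F^+)\cap(U^pH)=\{1\}$ without addressing this, so your parenthetical caveat is warranted rather than a defect.
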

\begin{proof}
Let $S\subseteq G(\bA_{F^+}^{\infty})$ be a finite representative set of the finite set $G(F^+)\setminus G(\bA_{F^+}^{\infty})/\big(U^pH\big)$. We have a $H$-invariant decomposition
  \begin{equation*}
    \sqcup_{s\in S} \big(sH\big) \xlongrightarrow{\sim} G(F^+)\setminus G(\bA_{F^+}^{\infty})/U^p,
  \end{equation*}
  where any element of $H$ is viewed as an element of $G(\bA_{F^+}^{\infty})$ via $G(F^+\otimes_{\Q} \Q_p)\hookrightarrow G(\bA_{F^+}^{\infty})$. Indeed, the surjectivity follows from the fact $S$ is a representative set of $G(F^+)\setminus G(\bA_{F^+}^{\infty})/(U^pH)$, while the injectivity is from the fact $G(F^+) \cap (U^pH)=\{1\}$. The lemma follows.
\end{proof}

%In the following, we assume $U^p$ is small enough, such that the lemma applies for $H_0:=\GL_n(\Z_p)$.

Let $S(U^p)$ be the set of primes $v$ of $F^+$ satisfying
\begin{itemize}
  \item $v\nmid p$, and $v$ is split in $F$;
  \item $U_v$ is a maximal compact open subgroup of $G(F^+_v)$.
\end{itemize}Let $\cH^p_0$ be the spherical Hecke algebra $\co_E\big[\prod_{v\in S(U^p)} G(F^+_v)//\prod_{v\in  S(U^p)} U_v\big]$. Indeed, for $v\in S(U^p)$, let $\tilde{v}$ be a finite place above $v$, which would induce an isomorphism $\iota_{G,\tilde{v}}: G(F^+_v) \xrightarrow{\sim} \GL_n(F_{\tilde{v}})$.
 For $1\leq i \leq n$, let
\begin{equation*}
  T_{\tilde{v}}^{(i)}:=\Big[U_{v} \iota_{G,w}^{-1}\begin{pmatrix}
   \text{\textbf{1}}_{n-i} & 0 \\ 0 & \text{$\varpi_{\tilde{v}}\cdot$ \textbf{1}}_{i}
  \end{pmatrix}U_{\ell}\Big],
\end{equation*}
where $\varpi_{\tilde{v}}$ is a uniformizer of $F_{\tilde{v}}$. Then the Hecke algebra $\co_E[G(F^+_v)//U_v]$ is the $\co_E$-polynomial algebra generated by
$T_{\tilde{v}}^{(i)}$ for $1\leq i \leq n$. Moreover, if we denote by $\tilde{v}^c$ the another place over $v$, then $T_{\tilde{v}^c}^{(i)}=(T_{\tilde{v}}^{(n)})^{-1} T_{\tilde{v}}^{(n-i)}$. Let $S$ be a finite set of places of $F^+$ containing the places $v$ where $U_v$ is ramified, such that $S\cap \Sigma_p=\emptyset$, $S\cap S(U^p)=\emptyset$ and $U_v$ is maximal hyperspecial for $v\notin S \cup \Sigma_p$. Let $\cH^{S,p}$ be the commutative spherical Hecke algebra $\co_E\big[\prod_{v\notin S\cup \Sigma_p} G(F^+_v)//\prod_{v\notin S\cup \Sigma_p} U_v\big]$, thus $\cH^p_0\subset \cH^{S,p}$. Note $\cH^{S,p}$ acts naturally on $\widehat{S}(U^p,E)$, and the action commutes with $G(F^+ \otimes_{\Q} \Q_p)$.

Recall the automorphic representations of $G(\bA_{F^+})$ are the irreducible constituants of the $\bC$-vector space of functions $f: G(F^+)\backslash G(\bA_{F^+})\ra \bC$, which are
\begin{itemize}
  \item $\cC^{\infty}$ when restricted to $G(F^+\otimes_{\Q} \bR)$,
  \item locally constant when restricted to $G(\bA_{F^+}^{\infty})$,
  \item $G(F^+\otimes_{\Q} \bR)$-finite,
\end{itemize}
where $G(\bA_{F^+})$ acts on this space via right translation. An automorphic representation $\pi$ is isomorphic to $\pi_{\infty}\otimes_{\bC} \pi^{\infty}$ where $\pi_{\infty}=W_{\infty}$ is an irreducible algebraic representation of $G(F^+\otimes_{\Q} \bR)$ over $\bC$ and $\pi^{\infty}\cong \Hom_{G(F^+\otimes_{\Q}\bR)}(W_{\infty}, \pi)\cong \otimes'_{v}\pi_v$ is an irreducible smooth representation of $G(\bA_{F^+}^{\infty})$. The algebraic representation $W_{\infty}$ is defined over $\overline{\Q}$ via $\iota_{\infty}$, and we denote by $W_p$ its base change to $\overline{\Q_p}$, which is thus an irreducible algebraic representation of $G(F^+\otimes_{\Q} \Q_p)$ over $\overline{\Q_p}$. Via the decomposition $G(F^+\otimes_{\Q} \Q_p)\xrightarrow{\sim} \prod_{v\in \Sigma_p} G(F^+_v)$, one has $W_p\cong \otimes_{v\in \Sigma_p} W_v$ where $W_v$ is an irreducible algebraic representation of $G(F^+_v)$. One can also prove $\pi^{\infty}$ is defined over a number field via $\iota_{\infty}$ (e.g. see \cite[\S 6.2.3]{BCh}). Denote by $\pi^{\infty,p}:=\otimes'_{v\nmid p} \pi_v$, thus $\pi \cong \pi^{\infty,p}\otimes_{\overline{\Q}} \pi_p$. Let $m(\pi)\in \Z_{\geq 1}$ be the multiplicity of $\pi$ in the space of functions as above.
\begin{proposition}[$\text{\cite[Prop.5.1]{Br13II}}$]\label{prop: gln-stm}
One has a $G(F^+ \otimes_{\Q} \Q_p)  \times \cH^{S,p}$-invariant isomorphism
\begin{equation*}
  \widehat{S}(U^p,E)_{\lalg} \otimes_E \overline{\Q_p} \cong \bigoplus_{\pi}\Big((\pi^{\infty,p})^{U^p} \otimes_{\overline{\Q}}(\pi_p \otimes_{\overline{\Q}} W_p)\Big)^{\oplus m(\pi)},
\end{equation*}
where   $\widehat{S}(U^p,E)_{\lalg}$ denotes the locally algebraic subrepresentation of $\widehat{S}(U^p,E)$, $\pi\cong \pi_{\infty}\otimes_{\bC} \pi^{\infty}$ runs through the automorphic representation of $G(\bA_{F^+})$ and $W_p$ is associated to $\pi_{\infty}$ as above.
\end{proposition}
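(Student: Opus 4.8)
The plan is to combine the general structure theory of locally algebraic representations with the classical description of automorphic forms for the anisotropic group $G$. Write $G_p:=G(F^+\otimes_{\Q}\Q_p)\cong\prod_{v\in\Sigma_p}\GL_n(\Q_p)$. The first step is to recall that any (admissible) locally algebraic representation $V$ of $G_p$ over $E$ decomposes canonically as $\bigoplus_W W\otimes_E\underline{H}_W$, where $W$ runs over the isomorphism classes of irreducible algebraic representations of $G_p$ (all defined over $E$), $\underline{H}_W$ is the $G_p$-smooth part of $V\otimes_E W^{\vee}$, and $G_p$ acts diagonally on $W\otimes_E\underline{H}_W$. Applying this to $V=\widehat{S}(U^p,E)_{\lalg}$ and observing that the $\cH^{S,p}$-action commutes with $G_p$, hence preserves each summand and acts on each $\underline{H}_W$, it is enough to compute $\underline{H}_W\otimes_E\overline{\Q_p}$ as an $\cH^{S,p}$-module together with its residual smooth $G_p$-action.

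The second step is to identify $\underline{H}_W$ with algebraic modular forms. Since $\widehat{S}(U^p,E)\otimes_E W^{\vee}\cong\cC\big(G(F^+)\backslash G(\bA_{F^+}^{\infty})/U^p,\,W^{\vee}\big)$ with $G_p$ acting diagonally (by right translation on the base and by the algebraic action on $W^{\vee}$), its $G_p$-smooth vectors are $\varinjlim_{U_p}\{F\colon G(F^+)\backslash G(\bA_{F^+}^{\infty})/U^p\to W^{\vee}\ \text{loc.\ const.},\ F(gu_p)=u_p^{-1}F(g)\ \forall\,u_p\in U_p\}$, i.e.\ the union over compact open $U_p\subset G_p$ of the spaces of algebraic modular forms of weight (attached to) $W$ and level $U^pU_p$, with its usual residual $G_p$-action, and the $\cH^{S,p}$-action is the standard one. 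Base-changing along $\iota_p\colon\overline{\Q}\hookrightarrow\overline{\Q_p}$, the representation $W$ corresponds to $W_p$ for a unique irreducible algebraic representation $W_\infty$ of $G(F^+\otimes_{\Q}\bR)$ over $\bC$ (via $\iota_\infty$).

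The third step invokes the classical theory: since $G$ is anisotropic over $F^+$ and $G(F^+\otimes_{\Q}\bR)$ is compact, the space $\cA(G)$ of automorphic forms of $G$ is, via $\iota_\infty$, the union of the spaces of algebraic modular forms; extracting the $W_\infty$-isotypic part for $G(F^+\otimes_{\Q}\bR)$ and comparing with $\cA(G)=\bigoplus_\pi\pi^{\oplus m(\pi)}$ gives a $G_p\times\cH^{S,p}$-equivariant isomorphism identifying $\underline{H}_W\otimes_E\overline{\Q_p}$ with $\bigoplus_{\pi\,:\,\pi_\infty\leftrightarrow W_p}\big((\pi^{\infty,p})^{U^p}\otimes_{\overline{\Q}}\pi_p\big)^{\oplus m(\pi)}$; since each automorphic $\pi$ has a single archimedean component $\pi_\infty$, the index sets for varying $W$ are disjoint and jointly exhaust all $\pi$. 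Combining the steps, $\widehat{S}(U^p,E)_{W\text{-}\lalg}\otimes_E\overline{\Q_p}\cong W_p\otimes_{\overline{\Q_p}}(\underline{H}_W\otimes_E\overline{\Q_p})$, and summing over $W$ yields the asserted isomorphism.

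I expect the main obstacle to be bookkeeping rather than any genuine difficulty: one must pin down the duality conventions (whether $W$ or $W^{\vee}$ is the weight pairing with $\pi_\infty$, and the precise interplay of $\iota_\infty$ and $\iota_p$ in the statement "$W_p$ is associated to $\pi_\infty$"), verify that all the identifications above are simultaneously $G_p$- and $\cH^{S,p}$-equivariant, and use the admissibility of $\widehat{S}(U^p,E)$ (Lemma \ref{lem: gln-htn}) to guarantee that the multiplicity spaces $(\pi^{\infty,p})^{U^p}$ are finite-dimensional at each level, so that the displayed direct sums are legitimate and the isomorphism is topological.
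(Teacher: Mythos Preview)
The paper does not supply its own proof of this proposition; it is stated with a citation to \cite[Prop.~5.1]{Br13II} and used as input. Your sketch is the standard argument for this well-known fact (decompose the locally algebraic vectors into algebraic isotypic pieces via \cite[Prop.~4.2.4]{Em04}, identify each smooth multiplicity space with a colimit of spaces of algebraic modular forms of fixed weight, then invoke the semisimple decomposition of the automorphic spectrum for a compact-at-infinity group), and it is correct; this is essentially the argument one finds in \cite{Br13II} and its antecedents.
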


We fix a place $u$ of $F^+$ above $p$, and $\tilde{u}|u$, thus we fix an isomorphism $i_{G,\tilde{u}}: G(F^+_u)\xrightarrow{\sim} \GL_n(\Q_p)$. Let $W_p^u$ be an irreducible algebraic representation of $\prod_{v|p, v\neq u} G(F^+_v)$ over $E$, $U_p^u=\prod_{v|p,v\neq u} U_v$ be a maximal compact open subgroup of $\prod_{v|p, v\neq u}G(F^+_v)$, and put $U^u:=U^pU_p^u$. Put $\widehat{S}(U^u,W_p^u):=\big(\widehat{S}(U^p,E)\otimes_E W_p^u\big)^{U_p^u}$, which is an admissible unitary Banach representation of $G(F^+_u)$ over $E$, and is equipped with an actin of $\cH^{S,u}$, the $\co_E$-algebra generated by $\cH^{S,p}$ and the spherical Hecke algebra $\co_E\big[G(F^+_v)//U_v\big]$ for $v|p$, $v\neq u$. Moreover, the action of $\cH^{S,u}$ commutes with that of $\GL_n(\Q_p)$. By Lem.\ref{lem: gln-htn}, one has
 \begin{lemma}
   Let $H$ be an  open compact subgroup of $\GL_n(\Q_p)$ \big(viewed as a subgroup of $G(F_u^+)$ via $i_{G,\tilde{u}}$\big) such that $\big(HU^u\big) \cap G(F^+)=1$, then there exists $r\in \Z_{\geq 1}$ such that
   \begin{equation*}
     \widehat{S}(U^u,W_p^u)|_H \xlongrightarrow{\sim} \cC(H,E)^{\oplus r}.
   \end{equation*}
 \end{lemma}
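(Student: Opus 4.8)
The plan is to bootstrap from Lemma~\ref{lem: gln-htn} by absorbing the groups at the other $p$-adic places. Set $H':=i_{G,\tilde{u}}(H)\cdot U_p^u$, a compact open subgroup of $G(F^+\otimes_{\Q}\Q_p)\xlongrightarrow{\sim}G(F^+_u)\times\prod_{v|p,\,v\neq u}G(F^+_v)$. Since $H$, $U_p^u$ and $U^p$ pairwise commute inside $G(\bA_{F^+}^{\infty})$ and $U^u=U^pU_p^u$, one has $U^pH'=HU^u$, so the hypothesis $(HU^u)\cap G(F^+)=1$ is precisely the hypothesis of Lemma~\ref{lem: gln-htn} for the subgroup $H'$. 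Running the proof of that lemma: if $S$ is a finite set of representatives for $G(F^+)\backslash G(\bA_{F^+}^{\infty})/(U^pH')$, then $\bigsqcup_{s\in S}sH'\xlongrightarrow{\sim}G(F^+)\backslash G(\bA_{F^+}^{\infty})/U^p$ as right $H'$-sets, whence
\begin{equation*}
  \widehat{S}(U^p,E)\xlongrightarrow{\sim}\bigoplus_{s\in S}\cC(H',E)=\bigoplus_{s\in S}\cC(H,E)\,\widehat{\otimes}_E\,\cC(U_p^u,E)
\end{equation*}
as $H\times U_p^u$-representations, with $H$ acting by right translation on the $\cC(H,E)$-factor and $U_p^u$ by right translation on the $\cC(U_p^u,E)$-factor.

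Next I would tensor with $W_p^u$ and take $U_p^u$-invariants. As $H\subseteq G(F^+_u)$ acts trivially on $W_p^u$ while $U_p^u$ acts via its algebraic action, the previous isomorphism becomes, $H\times U_p^u$-equivariantly,
\begin{equation*}
  \widehat{S}(U^p,E)\otimes_E W_p^u\xlongrightarrow{\sim}\bigoplus_{s\in S}\cC(H,E)\,\widehat{\otimes}_E\,\big(\cC(U_p^u,E)\otimes_E W_p^u\big),
\end{equation*}
with $H$ acting on $\cC(H,E)$ and $U_p^u$ acting diagonally on $\cC(U_p^u,E)\otimes_E W_p^u$. Using $\cC(H,E)\,\widehat{\otimes}_E\,M=\cC(H,M)$ and the fact that $U_p^u$ acts on the values only, the formation of $U_p^u$-invariants passes through $\bigoplus_{s}$ and through $\cC(H,E)\,\widehat{\otimes}_E\,(-)$, reducing the problem to the computation of $\big(\cC(U_p^u,E)\otimes_E W_p^u\big)^{U_p^u}$.

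For that last step I would not invoke Lemma~\ref{lem: gln-nec} directly, since it is phrased for pro-$p$ groups whereas $U_p^u$ need not be pro-$p$; instead I use the elementary fact that for a compact group $K$ and a finite-dimensional continuous $K$-representation $W$, evaluation at $1$ gives an isomorphism $\big(\cC(K,W)\big)^{K}\xlongrightarrow{\sim}W$, with inverse $w\mapsto(x\mapsto x^{-1}\cdot w)$, where $\cC(K,W)=\cC(K,E)\otimes_E W$ carries the diagonal action. Applied with $K=U_p^u$ and $W=W_p^u$, and keeping track of the (trivial) residual $H$-action, this yields
\begin{equation*}
  \widehat{S}(U^u,W_p^u)=\big(\widehat{S}(U^p,E)\otimes_E W_p^u\big)^{U_p^u}\xlongrightarrow{\sim}\bigoplus_{s\in S}\cC(H,E)\,\widehat{\otimes}_E\,W_p^u\cong\cC(H,E)^{\oplus r}
\end{equation*}
as $H$-representations, with $r=|S|\cdot\dim_E W_p^u\in\Z_{\geq 1}$. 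The whole argument is routine; the only point requiring care is the bookkeeping of which of $H$, $U_p^u$, $U^p$ acts on which tensor factor, so that taking $U_p^u$-invariants commutes with everything in sight, together with the minor point that $U_p^u$ fails to be pro-$p$, so that the last step is handled by the Frobenius-reciprocity computation above rather than by Lemma~\ref{lem: gln-nec}.
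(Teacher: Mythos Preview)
Your proof is correct and follows essentially the same route as the paper: apply Lemma~\ref{lem: gln-htn} to $H\times U_p^u$ to split off $\cC(H,E)$ from $\cC(U_p^u,E)$, then tensor with $W_p^u$ and take $U_p^u$-invariants. The only difference is cosmetic: the paper simply notes that $(\cC(U_p^u,E)\otimes_E W_p^u)^{U_p^u}$ is finite-dimensional and stops, whereas you compute it explicitly as $W_p^u$ via evaluation at $1$, thereby identifying $r=|S|\cdot\dim_E W_p^u$.
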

 \begin{proof}Applying Lem.\ref{lem: gln-htn} to the group $H\times U_p^u$, one gets
 \begin{equation*}\widehat{S}(U^p,E)|_{H\times U_p^u}\cong \cC(H\times U^u_p,E)^{\oplus r} \cong \cC(H,E)^{\oplus r} \widehat{\otimes}_E \cC(U_p^u,E).\end{equation*}
By definition, one has
 \begin{equation*}
   \widehat{S}(U^u,W_p^u)|_{H} \cong \cC(H,E)^{\oplus r} \widehat{\otimes}_E \big(\cC(U_p^u,E) \otimes_E W_p^u\big)^{U_p^u},
 \end{equation*}
 since $\big(\cC(U_p^u,E) \otimes_E W_p^u\big)^{U_p}$ is a finite dimensional $E$-vector space, the lemma follows.
 \end{proof}
 %$\widehat{S}(U^u,W^u)|_{H}\cong \cC(H,E)^{\oplus r}$ for $H$ a compact open subgroup of $\GL_n(\Q_p)$ sufficiently small, and $r\in \Z_{\geq 1}$.
%the (commutative) $E$-algebra generated by double coset operators $\big[\big]$

%In the following, let $U^p$ be a compact open subgroup of $G(\bA^{p,\infty})$ small enough that $(U^p \GL_n(\Z_p))\cap G(\Q)=\{1\}$ \big(note $G(\Q)\cap U$ is a finite set for any compact open subgroup of $G(\bA^{\infty})$\big), where $\GL_n(\Z_p)$ is viewed as a subgroup of $G(\Q_p)$ via the isomorphism $u$.

%\begin{equation*}
%a
%\end{equation*}

%this is a $E$-Banach space (\textbf{precise}) equipped with a continuous action of Hecke operators $\cH^p$ (\textbf{precise}) and a unitary $\GL_n(\Q_p)$-action (\textbf{precise}).

Let $B$ be the Borel subgroup of $\GL_n$ of upper triangular matrices, $\Phi$ the root system of $\GL_n$, $\Delta$ the set of simple roots with respect to $B$ and $\Phi^+$ (resp. $\Phi^-$) the set of positive (resp. negative) roots, and $\sW\cong S_{n}$ the Weyl group which is generated by the simple reflections $s_{\alpha}$ for all $\alpha\in \Delta$. Each subset $I\subset \Delta$ defines a root system $\Phi_I \subset \Phi$ with positive roots $\Phi_I^+$, negative roots $\Phi_I^-$, and Weyl group $\sW_I \subset \sW$ generated by $s_{\alpha}$ for $\alpha\in I$ (put $\sW_{\emptyset}=\{1\}$). Let $P_I$ be the parabolic subgroup associated to $\Delta_I$, denote by $L_{I}$ the Levi subgroup of $P_I$, $N_I$ the nilpotent radical of $P_I$, $\overline{P}_I$ the opposite parabolic of $P_I$, $\overline{N}_I$ the nilpotent radical of $\overline{P}_I$. Thus $B=P_{\emptyset}$, $G=P_{\Delta}$, put $T:=L_{\emptyset}$, $N:=N_{\emptyset}$ etc. Denote by $\ug$, $\ub$, $\fp_I$, $\fl_I$, $\fn_I$, $\overline{\fp}_I$, $\overline{\fn}_I$, $\ft$, $\fn$ the associated Lie algebras of $\GL_n$, $B$, $P_I$, $L_I$, $N_I$, $\overline{P}_I$, $\overline{N}_I$, $T$, $N$ respectively.

Let $\lambda\in \ft^*:=\Hom_{\Q_p}(\ft,E)$ be a weight, thus there exist $k_{\lambda,i}\in E$ for $i=1,\cdots, n$ such that $\lambda\big((x_i)_{i=1,\cdots,n}\big)=\sum_{i=1}^n k_{\lambda,i} x_i$. Moreover, $\lambda$ is dominant if and only if $k_{\lambda,i}-k_{\lambda,i+1}\in \Z_{\geq 0}$ for $i=1,\cdots, n-1$. We enumerate the simple roots with $\{1,\cdots,n-1\}$ such that $\alpha_j$  gives the weight $(x_i)_{i=1,\cdots, n}\mapsto x_j-x_{j+1}$, and fix hence a bijection $\Delta \xrightarrow{\sim} \{1,\cdots,n-1\}$. Any subset $I$ of $\Delta$ can be viewed as a subset of $\{1,\cdots, n-1\}$,  and a weight $\lambda$ is $P_I$-dominant if and only if $k_{\lambda,i}-k_{\lambda,i+1}\in \Z_{\geq 0}$ for all $i\in I$. A weight $\lambda$ is called \emph{integral} if $k_{\lambda,i}\in \Z$ for all $i=1,\cdots, n$.

%Let $E$ be a finite extension of $\Q_p$ sufficiently large, a character $\lambda: \ft \ra E$ is called a weight of $\ft$, in particular, any root $\alpha$ in $\Phi$ gives a weight of $\ft$ still denoted by $\alpha$. Indeed, $\Hom_{\Q_p}(\ft, E)$ is generated by $\{\alpha\}_{\alpha\in I}$ and the determinantal character, and thus equipped with a natural inner product. A weight $\lambda$ is called \emph{$I$-dominan}t if $\langle \lambda, \alpha\rangle\in \Z_{\geq 0}$ for all $\alpha \in I$, and $\lambda$ is called \emph{dominant} if $\lambda$ is $\Delta$-dominant. More explicitly, for $\lambda: \ft\ra E$, there exist $k_{\lambda,i}\in E$ for $i=1,\cdots, n$, such that $\lambda\big((x_i)_{i=1,\cdots,n}\big)=\sum_{i=1}^n k_{\lambda,i}x_i$ for all $(x_i)_{i=1,\cdots, n}\in \ft$. A subset $I$ of $\Delta$  thus can viewed as a subset of $\{1,\cdots, n-1\}$. Then a weight $\lambda$ is $I$-dominant if and only if $k_{\lambda,i}-k_{\lambda,i+1}\in \Z_{\geq 0}$ for all $i\in I$. A weight $\lambda$ is called

Let $I\subset \Delta$, by the highest weight theory, for any $P_I$-dominant integral weight $\lambda$, there exists a unique irreducible finite dimensional algebraic representation $\sL_I(\lambda)$ of $L_I$ with highest weight $\lambda$. This gives an one-to-one bijection between the irreducible finite dimensional algebraic representations of $L_I$ and the $I$-dominant integral weights. Put $\sL(\lambda):=\sL_{\Delta}(\lambda)$. For general $\lambda$, denote by $\sL(\lambda)\in \co^{\overline{\ub}}$ to be the unique simple quotient of the Verma module $\text{U}(\ug) \otimes_{\text{U}(\overline{\ub})} \lambda$.

The Weyl group acts naturally on $\ft^*$ by $s_{\alpha}(\lambda)=\lambda-\langle \lambda, \alpha\rangle \alpha$ for $\alpha\in \Delta$. We would use frequently the \emph{dot action} given by $w \cdot \lambda:=w(\lambda+\frac{1}{2}\sum_{\alpha\in \Phi^+} \alpha)-\frac{1}{2}\sum_{\alpha\in \Phi^+}\alpha$. Note that $s_{\alpha}\cdot \lambda=s_{\alpha}(\lambda)-\alpha$ for $\alpha\in \Delta$ (since $s_{\alpha}$ permutes the set $\Phi^+\setminus \{\alpha\}$ and sends $\alpha$ to $-\alpha$). And we have $k_{w\cdot \lambda,i}=k_{\lambda,w^{-1}(i)}-(w^{-1}(i)-i)$.

For a locally analytic character $\chi$ of $T(\Q_p)$ over $E$, denote by $\wt(\chi)\in \ft^*$ the corresponding weight. %Thus there exist $\wt(\chi)_i\in E$ for $i=1,\cdots, n$, such that $\wt(\chi)\big((x_i)_{i=1,\cdots,n}\big)=\sum_{i=1}^n \wt(\chi)_ix_i$.
The character $\chi$ is called locally algebraic (resp. dominant) if $\wt(\chi)$ is integral (resp. dominant). For an integral weight $\lambda$, denote by $\chi_{\lambda}:=z^{k_{\lambda,1}} \otimes \cdots \otimes z^{k_{\lambda,n}}$.
%We call $\chi$ spherically algebraic if $\chi \chi_{\wt(\chi)}^{-1}$ is an unramified character or $T(\Q_p)$.

For $1\leq i \leq n$, denote by $\beta_i$ the cocharacter $\bG_m \ra T$, $x\mapsto (\underbrace{x, \cdots, x}_i, 1,\cdots, 1)$.

Let $V$ be an $E$-vector space equipped with an $E$-linear action of $A$ (with $A$ a set of operators), $\chi$ a system of eigenvalues of $A$, denote by $V^{A=\chi}$ the $\chi$-eigenspace, $V[A=\chi]$ the generalized $\chi$-eigenspace
\subsection{Eigenvarieties}  Let $m\in \Z_{>0}$ such that $\{x^i=1\}\cap (1+p^m \Z_p) =\{1\}$ for $i=1\cdots, n$. Let $H$ be the compact open subgroup generated by $\big\{g\in \SL_n(\Z_p)\ |\ g\equiv 1 \pmod{p^m}\big\}$ and $\big\{g\in Z_{\GL_n}(\Z_p)\ |\ g\equiv 1 \pmod{p^m}\big\}$. For any algebraic subgroup $M$ of $\GL_n$, denote by $M^o:=M(\Q_p)\cap H$. Note  the isomorphism (\ref{equ: gln-omo}) holds for any parabolic subgroup of $\GL_n$. For $I\subseteq \Delta$, denote by $L_I(\Q_p)^+$ the subgroup of $L_I(\Q_p)$ defined as in (\ref{equ: gln-1np}) with respect to $N_I^o$, and put $Z_{L_I}(\Q_p)^+:=Z_{L_I}(\Q_p)\cap L_I(\Q_p)^+$.

We identify $\GL_n(\Q_p)$ and $G(F_u^+)$ via $i_{G,\tilde{u}}$. Consider the locally analytic representation $\widehat{S}(U^u,W_p^u)_{\an}$ of $\GL_n(\Q_p)$, which is admissible and dense in $\widehat{S}(U^u,W_p^u)$ (cf. \cite[Thm.7.1]{ST03}).  Let $I\subsetneq \Delta$, $\lambda_0$ be a dominant weight for $\SL_n$, $\chi_0$ a smooth character of $T_I^o:=H\cap T(\Q_p) \cap  L_I^{\cD}(\Q_p)$, applying the functor $J_{B,(P_I,\lambda_0)}^{T_I^o,\chi_0}(\cdot)$ (cf. (\ref{equ: gln-ivP})), we get an essentially admissible locally analytic representation of $T(\Q_p)$ over $E$:
\begin{equation*}
  J_{B,(P_I,\lambda_0)}^{T_I^o,\chi_0}\big(\widehat{S}(U^u,W_p^u)_{\an}\big)
\end{equation*}
which is equipped with a continuous action of $\cH^{S,u}$ commuting with $T(\Q_p)$. By definition (of essentially admissible locally analytic representations), we associate to $J_{B,(P_I,\lambda_0)}^{T_I^o,\chi_0}\big(\widehat{S}(U^u,W_p^u)\big)$ a coherent sheaf $\cN_{I, \lambda_0}^{\chi_0}(U^u, W_p^u)$ over $\widehat{T}$, equipped moreover with an $\co(\widehat{T})$-linear action of $\cH^{S,u}$. By Emerton's method (\cite[\S 2.3]{Em1}), we can construct an eigenvariety from the triplet $\Big\{\cN_{I,\lambda_0}^{\chi_0}(U^u, W_p^u), \widehat{T}, \cH^{S,u}\Big\}$:
\begin{theorem}There exists a rigid analytic space $\cV_{I,\lambda_0}^{\chi_0}(U^u, W_p^u)$ over $E$, together with a finite morphism of rigid spaces
  \begin{equation*}
    i: \cV_{I,\lambda_0}^{\chi_0}(U^u, W_p^u)\lra \widehat{T}
  \end{equation*}
  and a morphism of $E$-algebras (compatible with $i$) with dense image
  \begin{equation*}
    \co(\widehat{T}) \otimes_{\co_E} \cH^{S,u} \lra \co\big(\cV_{I,\lambda_0}^{\chi_0}(U^u, W_p^u)\big),
  \end{equation*}
  satisfying that
  \begin{enumerate}
    \item a closed point $z$ of $\cV_{I,\lambda_0}^{\chi_0}(U^u, W_p^u)$ is uniquely determined by its image $\chi$ in $\widehat{T}(\overline{E})$ and the induced morphism $\fh: \cH^{S,u}\lra \overline{E}$, called a system of eigenvalues of $\cH^{S,u}$, hence a such $z$ would be  denoted by $(\chi,\fh)$;
    \item for a finite extension $L$ of $E$, $(\chi,\fh)\in \cV_{I,\lambda_0}^{\chi_0}(U^u, W_p^u)(L)$ if and only if the corresponding eigenspace
    \begin{equation*}
     \Big(J_{B,(P_I,\lambda_0)}^{T_I^o,\chi_0}\big(\widehat{S}(U^u,W_p^u)_{\an}\big) \otimes_E L\Big)^{T(\Q_p)=\chi, \cH^{S,u}=\fh}
    \end{equation*}
    is non-zero;
    \item there exists a coherent sheaf  $\cM_{I,\lambda_0}^{\chi_0}(U^u, W_p^u)$ over $\cV_{I,\lambda_0}^{\chi_0}(U^u, W_p^u)$ such that $i_* \cM_{I,\lambda_0}^{\chi_0}(U^u, W_p^u) \cong \cN_{I,\lambda_0}^{\chi_0}(U^u, W_p^u)$ and that for an $L$-point $z=(\chi, \fh)$, the special fiber $z^* \cM_{I,\lambda_0}^{\chi_0}(U^u, W_p^u)$ is naturally dual to the (finite dimensional) $L$-vector space
        \begin{equation*}
         \Big(J_{B,(P_I,\lambda_0)}^{T_I^o,\chi_0}\big(\widehat{S}(U^u,W_p^u)_{\an}\big) \otimes_E L\Big)^{T(\Q_p)=\chi, \cH^{S,u}=\fh}.
        \end{equation*}
  \end{enumerate}
%\begin{itemize}
 % \item $\cV(U^p, I,\lambda_0, \chi_0)$ is finite over $\widehat{T}$ with the morphism denoted by $\beta$ there exists a morphism of $E$-algebras with dense image
  %\begin{equation*}
   % \co(\widehat{T}) \otimes_E \cH^p \lra \co\big(\cV(U^p, I,\lambda_0,\chi_0\big);
  %\end{equation*}
  %\item there exists a coherent sheaf $\cM(U^p,I,\lambda_0,\chi_0)$ over $\cV(U^p,I,\lambda_0,\chi_0)$
%\end{itemize}
\end{theorem}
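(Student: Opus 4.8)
The plan is to feed the coherent sheaf $\cN:=\cN_{I,\lambda_0}^{\chi_0}(U^u,W_p^u)$ on $\widehat{T}$, together with its $\co(\widehat{T})$-linear action of $\cH^{S,u}$, into the standard eigenvariety formalism of \cite[\S 2.3]{Em1} (see also \cite{Che}). Recall that, by the anti-equivalence of \cite[\S 6.4]{Em04} between coadmissible $\co(\widehat{T})$-modules and essentially admissible locally analytic $T(\Q_p)$-representations, $\cN$ is the coherent sheaf with $\cN(\widehat{T})\cong J^{\vee}_b$, where $J:=J_{B,(P_I,\lambda_0)}^{T_I^o,\chi_0}\big(\widehat{S}(U^u,W_p^u)_{\an}\big)$; its coherence, and the existence of an admissible affinoid cover of $\widehat{T}$ trivialising it, have already been established (cf.\ Cor.~\ref{cor: gln-0iv} applied with $V=\widehat{S}(U^u,W_p^u)_{\an}$), and $\cH^{S,u}$ acts on it by $\co(\widehat{T})$-linear endomorphisms.

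First I would build the relative Hecke algebra. The $\cH^{S,u}$-action gives a morphism of sheaves of $\co_E$-algebras $\co_{\widehat{T}}\otimes_{\co_E}\cH^{S,u}\to \mathcal{E}nd_{\co_{\widehat{T}}}(\cN)$; let $\cA$ be its image. Over an affinoid open $\Spm A\subseteq\widehat{T}$ belonging to a trivialising cover, $\cN(\Spm A)$ is a finite $A$-module, so $\End_A\big(\cN(\Spm A)\big)$ is a finite $A$-module (affinoid algebras being Noetherian), whence the image of $A\otimes_{\co_E}\cH^{S,u}$ therein is a finite commutative $A$-algebra; these images are compatible under restriction, so $\cA$ is a coherent sheaf of commutative $\co_{\widehat{T}}$-algebras, finite over $\co_{\widehat{T}}$. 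Let $\cV:=\cV_{I,\lambda_0}^{\chi_0}(U^u,W_p^u)$ be the rigid space over $\widehat{T}$ obtained from $\cA$ by the relative spectrum construction (over $\Spm A$ as above it is $\Spm\big(\cA(\Spm A)\big)$, and these glue over a refinement of the cover); the structural map $i:\cV\to\widehat{T}$ is finite because $\cA$ is finite over $\co_{\widehat{T}}$, and the induced map $\co(\widehat{T})\otimes_{\co_E}\cH^{S,u}\to\co(\cV)$ has dense image, being surjective onto the sections over each member of the cover. Since, by construction, the $\cH^{S,u}$-action on $\cN$ factors through $\cA=i_*\co_{\cV}$, the sheaf $\cN$ descends to a unique coherent $\co_{\cV}$-module $\cM:=\cM_{I,\lambda_0}^{\chi_0}(U^u,W_p^u)$ with $i_*\cM\cong\cN$.

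It remains to check the three properties. For $\chi\in\widehat{T}(\overline{E})$ with residue field $k(\chi)$, the fibre $i^{-1}(\chi)$ is $\Spm\big(\cA\otimes_{\co(\widehat{T})}k(\chi)\big)$, the spectrum of a finite commutative $k(\chi)$-algebra; a closed point $z$ lying over $\chi$ therefore corresponds, over a finite extension, to an $E$-algebra homomorphism $\cA\otimes_{\co(\widehat{T})}k(\chi)\to\overline{E}$, i.e.\ to a system of eigenvalues $\fh:\cH^{S,u}\to\overline{E}$, and $z$ is determined by the pair $(\chi,\fh)$: this gives~(1). For~(2) and~(3), the anti-equivalence of \cite[\S 6.4]{Em04} identifies, for a finite extension $L/E$, the fibre $\cN\otimes_{\co(\widehat{T})}k(\chi)$ with the $E$-linear dual of the (finite-dimensional) eigenspace $\big(J\otimes_E L\big)^{T(\Q_p)=\chi}$, compatibly with $\cH^{S,u}$; decomposing this finite-dimensional space along the idempotents of $\cA\otimes_{\co(\widehat{T})}L$ and using $i_*\cM=\cN$ identifies $z^*\cM$ with the dual of $\big(J\otimes_E L\big)^{T(\Q_p)=\chi,\ \cH^{S,u}=\fh}$, which is~(3); in particular $(\chi,\fh)\in\cV(L)$ precisely when this eigenspace is non-zero, which is~(2). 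Finite-dimensionality of these spaces follows from coherence of $\cM$.

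I expect no serious obstacle: once $\cN$ is known to be a coherent sheaf on $\widehat{T}$ carrying an $\co(\widehat{T})$-linear Hecke action — the content of the preceding subsections — the construction is purely formal. The one point that must be handled with care is the fibre computation in the last step, namely that the naive fibre of $\cN$ at $\chi$ recovers the honest $T(\Q_p)=\chi$ eigenspace of $J$ (and not merely a generalised eigenspace), which rests on the precise form of the anti-equivalence in \cite[\S 6.4]{Em04}; a minor nuisance is that $\widehat{T}$ is not affinoid, so the formation of $\cA$ and of $\cV$ has to be carried out over an admissible affinoid cover and then glued, which is harmless since $\cN$ is coherent and the formation of $\cA$ commutes with restriction to affinoid subdomains.
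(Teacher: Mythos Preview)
Your proposal is correct and follows essentially the same approach as the paper: the paper does not give a detailed proof but states just before the theorem that the construction is obtained ``by Emerton's method (\cite[\S 2.3]{Em1})'' applied to the triplet $\big\{\cN_{I,\lambda_0}^{\chi_0}(U^u,W_p^u),\widehat{T},\cH^{S,u}\big\}$, which is exactly what you spell out. The only additional remark in the paper is that, using Lem.~\ref{lem: gln-htn} and the results of \S\ref{sec: gln-2.4}, one can alternatively build $\cV_{I,\lambda_0}^{\chi_0}(U^u,W_p^u)$ via Buzzard's eigenvariety machine \cite{Bu}; this alternative description is what is then used to deduce the finer structural statements in Prop.~\ref{prop: gln-0lni}.
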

\begin{remark} \label{rem: gln-y0ct}Denote by $\chi_{\lambda_0}$ the algebraic character of $T(\Q_p) \cap \SL_n(\Q_p)$ with weight $\lambda_0$, by definition, if $(\chi,\lambda)\in \cV_{I,\lambda_0}^{\chi_0}(U^u, W_p^u)(\overline{E})$, then $\chi\chi_{\lambda_0}^{-1}$ is a smooth character of $T(\Q_p)\cap L_I^{\cD}(\Q_p)$ \big(moreover, $\chi\chi_{\lambda_0}^{-1}|_{T_I^o}=\chi_0$\big). From which we deduce $k_{\wt(\chi),i}-k_{\wt(\chi),i+1}=k_{\lambda_0,i}-k_{\lambda_0,i+1}$ if $i, i+1\in I$ \big(since $\ft\cong \ft_{\SL_n}\oplus \fz$, we view $\lambda_0$ as a dominant weight of $\ft$ which equals $0$ when restricted to $\fz$\big).
\end{remark}
By Lem.\ref{lem: gln-htn} and the results in \S \ref{sec: gln-2.4}, one can construct $\cV_{I,\lambda_0}^{\chi_0}(U^u, W_p^u)$ by using Buzzard's eigenvariety machine (\cite{Bu}). Indeed, let $A_i:=\begin{pmatrix}
  p^i & 0 & \cdots & 0 \\
  0 & p^{i-1} & \cdots & 0 \\
  \vdots &\vdots & \ddots & \vdots\\
  0 & 0 & \cdots & 1
\end{pmatrix}\in \GL_{i+1}(\Q_p)$, $U_p^{(i)}:=\begin{pmatrix}
  A_i & 0 \\
  0 & \text{\textbf{1}}_{n-i-1}
\end{pmatrix}$ for $1\leq i \leq n-1$, $S_p:= p \cdot \text{\textbf{1}}_n$, $R_p \subset T(\Z_p)$ be a (finite) representative set of $T(\Z_p)/T^o$. For $m\in \Z_{\geq 1}$ big enough, let $B_m$ be the affinoid algebra as in \S \ref{sec: gln-2.4}, thus $\{\Spm B_m\}_m$ form an admissible covering of $\widehat{Z_{L_I}^o}$. Let $M_n^{T_I^o=\chi_0}$ be the orthonormalisable $B_m$-modules as in Cor.\ref{cor: gln-ose}, which is equipped with a continuous action of $\cH^S$, the commutative $\co_E$-algebra generated by $\cH^{S,u}$ and $U_p^{(i)}$ for $1\leq i \leq n-1$, $S_p$ and the elements in $R_p$. Moreover, the action of $U_p^{(n-1)}$ corresponds to the operator $z_m$ in Cor.\ref{cor: gln-ose} and thus is compact. We apply results of \cite{Bu} to $\big\{M_m^{T_I^o=\chi_0}, \cH^S, U_p^{(n-1)}\big\}$ for each $m$, and glue them;  the resulted rigid space is just $\cV_{I,\lambda_0}^{\chi_0}(U^u, W_p^u)$. Moreover, the action of $T_I^o$ (via $\chi_0$), $Z_{L_I}^o$ \big(note $T^o \cong T_I^o \times Z_{L_I}^o$\big), $U_p^{(i)}$ for $1\leq i\leq n-1$, $S_p$ and $R_p$ gives rise to the morphism of $\cV_{I,\lambda_0}^{\chi_0}(U^u, W_p^u)$ over $\widehat{T}$. One has thus (cf. \cite[Prop.6.4.2]{Che})
\begin{proposition}\label{prop: gln-0lni}
(1) The rigid analytic space $\cV_{I,\lambda_0}^{\chi_0}(U^u, W_p^u)$ is equidimensional of dimension $n-|I|$.

(2) Let $\kappa$ denote the composition $\cV_{I,\lambda_0}^{\chi_0}(U^u, W_p^u)\ra \widehat{T} \ra \widehat{Z_{L_I}^o}$, thus for any closed point $z$ of  $\cV_{I,\lambda_0}^{\chi_0}(U^u, W_p^u)$ there exists an affinoid open neighborhood $\cU$ of $z$, such that $\kappa(\cU)$ is an affinoid open in $\widehat{Z_{L_I}^o}$ and that $\kappa: \cU \ra \kappa(\cU)$ is finite and surjective when restricted to any irreducible component of $\cU$.
\end{proposition}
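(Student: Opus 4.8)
The plan is to read both assertions off the general eigenvariety formalism, following \cite[Prop.6.4.2]{Che} (and ultimately Buzzard \cite{Bu} and Coleman \cite{Cole97}): essentially all the nontrivial input has already been prepared in \S\ref{sec: gln-2.4}, so what is left is a dimension count over the correct base. That base is $\widehat{Z_{L_I}^o}$. For $I\subsetneq\Delta$ the Levi $L_I\subseteq\GL_n$ is isomorphic to $\GL_{n_1}\times\cdots\times\GL_{n_k}$ with $n_1+\cdots+n_k=n$ and $k=n-|I|$, so $Z_{L_I}$ is a torus of rank $n-|I|$ and $Z_{L_I}^o$ is an abelian uniform pro-$p$ group of $\Q_p$-dimension $n-|I|$; hence $\widehat{Z_{L_I}^o}$ is an open unit polydisc of dimension $n-|I|$, in particular smooth and reduced, and $\{\Spm B_m\}_m$ of \S\ref{sec: gln-2.4} is an admissible affinoid cover of it. Over each $\Spm B_m$ one has the orthonormalisable $B_m$-module $M_m^{T_I^o=\chi_0}$ of Cor.\ref{cor: gln-ose}, carrying a commuting action of the Hecke algebra $\cH^S$ in which $z_m=U_p^{(n-1)}$ is compact, together with the linking maps $\alpha_m,\beta_m$; this is exactly the input of Buzzard's machine, producing $\cV:=\cV_{I,\lambda_0}^{\chi_0}(U^u,W_p^u)$ finite over the spectral variety $\cZ\hookrightarrow\widehat{Z_{L_I}^o}\times\bG_m$ cut out by the characteristic series $F_m(X^{-1})\in B_m\{\{X,X^{-1}\}\}$ of $z_m$, together with the coherent sheaf $\cM_{I,\lambda_0}^{\chi_0}(U^u,W_p^u)$ on $\cV$ attached to the $M_m^{T_I^o=\chi_0}$.

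For (1): by the theory of Fredholm hypersurfaces (\cite{Cole97}; cf. \cite[\S6.4]{Che}) $\cZ$ is equidimensional of dimension $\dim\widehat{Z_{L_I}^o}=n-|I|$, so finiteness of $\cV\to\cZ$ already gives $\dim\cV\le n-|I|$. For the reverse inequality I would invoke the finite-projectivity of Cor.\ref{cor: gln-0iv}(2), transported to $\cV$ along the finite map $\cV\to\widehat T$: it yields an admissible affinoid cover $\{\cU_i\}$ of $\cV$ such that $\cV_i:=\kappa(\cU_i)$ is affinoid open in $\widehat{Z_{L_I}^o}$, $\kappa|_{\cU_i}$ is finite, and the sections of $\cM_{I,\lambda_0}^{\chi_0}(U^u,W_p^u)$ over $\cU_i$ form an $\co(\cU_i)$-module on which $\co(\cU_i)$ acts faithfully and which is finite projective, in particular faithfully flat, over $\co(\cV_i)$. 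Since $\co(\cU_i)$ is then a finite $\co(\cV_i)$-algebra embedding $\co(\cV_i)$-linearly into a finite direct sum of copies of a flat $\co(\cV_i)$-module, $\co(\cU_i)$ is torsion-free over the reduced ring $\co(\cV_i)$ of pure dimension $n-|I|$; hence every minimal prime of $\co(\cU_i)$ contracts to a minimal prime of $\co(\cV_i)$, so every irreducible component of $\Spm\co(\cU_i)$ dominates a component of $\Spm\co(\cV_i)$, of dimension $n-|I|$. Combined with $\dim\cV\le n-|I|$ and gluing over the cover, this shows $\cV$ is equidimensional of dimension $n-|I|$.

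For (2): given a closed point $z\in\cV$, choose a member $\cU$ of the above cover containing $z$, shrinking it if necessary so that $\kappa(\cU)$ is connected, hence irreducible ($\widehat{Z_{L_I}^o}$ being smooth). Then $\kappa(\cU)$ is an affinoid open in $\widehat{Z_{L_I}^o}$ and $\kappa|_{\cU}$ is finite by construction. If $C$ is an irreducible component of $\cU$ then by (1) $\dim C=n-|I|=\dim\kappa(\cU)$, and since $\kappa|_C$ is finite its image $\kappa(C)$ is a closed analytic subset of the irreducible space $\kappa(\cU)$ of full dimension, whence $\kappa(C)=\kappa(\cU)$. Thus $\kappa\colon\cU\to\kappa(\cU)$ is finite and surjective on each irreducible component of $\cU$, which is (2).

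The rest being a faithful transcription of the Coleman--Buzzard--Chenevier formalism, the one point requiring care is the bookkeeping of the two weight-type parameters: the smooth character $\chi_0$ of $T_I^o$ must be held \emph{fixed} while only the character of $Z_{L_I}^o$ varies, so that the weight space is $\widehat{Z_{L_I}^o}$ (dimension $n-|I|$) and not $\widehat{T^o}$ (dimension $n$). It is exactly the work of \S\ref{sec: gln-2.4} and Cor.\ref{cor: gln-0iv} that packages $J_{B,(P_I,\lambda_0)}^{T_I^o,\chi_0}\big(\widehat{S}(U^u,W_p^u)_{\an}\big)^{\vee}$ as a coadmissible family over $\widehat{Z_{L_I}^o}$ equipped with a compact operator, and hence makes the dimension come out to $n-|I|$; checking that $\co(\cV_i)$ is genuinely the structure ring of an affinoid open of $\widehat{Z_{L_I}^o}$ (and not a proper finite subring), so that the going-down step really applies, is the only mildly delicate bit.
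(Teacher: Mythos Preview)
Your proposal is correct and follows essentially the same route as the paper: the paper simply records the Buzzard-machine construction over $\widehat{Z_{L_I}^o}$ in the paragraph preceding the proposition and then cites \cite[Prop.~6.4.2]{Che}, while you have unpacked that citation (Fredholm hypersurface, the finite-projective cover of Cor.~\ref{cor: gln-0iv}, and the dimension count $\dim\widehat{Z_{L_I}^o}=n-|I|$) in detail. No substantive difference in strategy.
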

Note that if $P_I=B$, one can easily check $J_{B,(P_I,\lambda_0)}^{T_I^o,\chi_0}\big(\widehat{S}(U^u,W_p^u)_{\an}\big)\cong J_B\big(\widehat{S}(U^u,W_p^u)_{\an}\big)$, denote by $\cV(U^u, W_p^u)$ the corresponding eigenvariety, which was well studied in \cite{Che}. For general case,  since $J_{B,(P_I,\lambda_0)}^{T_I^o, \chi_0}\big(\widehat{S}(U^u,W_p^u)_{\an}\big)$ is a closed subrepresentation of $J_B\big(\widehat{S}(U^u,W_p^u)_{\an}\big)$, one has
\begin{proposition}
  One has a natural closed embedding
  \begin{equation}\label{equ: gln-gwo}
    \cV_{I,\lambda_0}^{\chi_0}(U^u, W_p^u)\hooklongrightarrow \cV(U^u,W_p^u), \  (\chi,\fh)\mapsto (\chi,\fh).
  \end{equation}
\end{proposition}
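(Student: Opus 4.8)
The plan is to deduce the statement formally from the fact, recalled just above, that $J_{B,(P_I,\lambda_0)}^{T_I^o,\chi_0}\big(\widehat{S}(U^u,W_p^u)_{\an}\big)$ is a closed subrepresentation of $J_B\big(\widehat{S}(U^u,W_p^u)_{\an}\big)$ (cf. \S\ref{sec: 2.2.2}, \S\ref{sec: gln-2.4} and Lem.\ref{lem: gln-nvd}), together with the functoriality of Emerton's eigenvariety machine. First I would observe that this inclusion is $\cH^{S,u}$-equivariant: the Hecke operators away from $p$ commute with the $\GL_n(\Q_p)$-action on $\widehat{S}(U^u,W_p^u)_{\an}$, hence with all the operations ($N_{P_I}^o$- and $N_{B\cap L_{P_I}}^o$-invariants, passage to finite-slope vectors, the functor $(\cdot)_0$, the twists by algebraic characters, and the $T_I^o=\chi_0$ part) entering the definition of $J_{B,(P_I,\lambda_0)}^{T_I^o,\chi_0}(\cdot)$. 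Passing to strong duals, which by \cite[Def.6.4.9]{Em04} identifies (contravariantly) essentially admissible locally analytic representations of $T(\Q_p)$ with coherent sheaves over $\widehat{T}$, the closed embedding becomes an $\cH^{S,u}$-equivariant surjection of coherent $\co(\widehat{T})$-modules $\cN(U^u,W_p^u)\twoheadlongrightarrow\cN_{I,\lambda_0}^{\chi_0}(U^u, W_p^u)$.

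Next I would unwind the eigenvariety construction locally. By construction (cf. \cite[\S 2.3]{Em1}, \cite{Bu}, \cite{Che}), over each affinoid open $\Spm A$ in a suitable admissible covering of $\widehat{T}$ one takes the finite $A$-module $N$ (resp. $N'$) of sections of $\cN(U^u,W_p^u)$ (resp. $\cN_{I,\lambda_0}^{\chi_0}(U^u, W_p^u)$), forms the finite commutative $A$-algebra $\cA\subseteq\End_A(N)$ (resp. $\cA'\subseteq\End_A(N')$) generated by the image of $A\otimes_{\co_E}\cH^{S,u}$, and glues the relative spectra over the covering. Since $N'$ is a quotient of $N$ as an $A\otimes_{\co_E}\cH^{S,u}$-module, any element of $A\otimes_{\co_E}\cH^{S,u}$ annihilating $N$ annihilates $N'$, so $\cA$ surjects onto $\cA'$ and $\Spm\cA'\hookrightarrow\Spm\cA$ is a closed immersion over $\Spm A$ compatible with the gluing data; these glue to the asserted closed immersion $\cV_{I,\lambda_0}^{\chi_0}(U^u, W_p^u)\hooklongrightarrow\cV(U^u,W_p^u)$ of rigid spaces over $\widehat{T}$. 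That it induces $(\chi,\fh)\mapsto(\chi,\fh)$ on points then follows from the defining property of the eigenvarieties: a point of $\cV_{I,\lambda_0}^{\chi_0}(U^u, W_p^u)$ is a pair $(\chi,\fh)$ with $\big(J_{B,(P_I,\lambda_0)}^{T_I^o,\chi_0}(\widehat{S}(U^u,W_p^u)_{\an})\otimes_E\overline{E}\big)^{T(\Q_p)=\chi,\cH^{S,u}=\fh}\neq 0$, and this space sits inside the corresponding eigenspace for $J_B$, which is therefore non-zero, so $(\chi,\fh)$ is a point of $\cV(U^u,W_p^u)$; this matches the map on special fibres induced by the surjection of sheaves.

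I expect the only genuine point requiring care to be the bookkeeping with the admissible coverings: the general eigenvariety $\cV(U^u,W_p^u)$ is glued over a covering of $\widehat{T}$ (really of $\widehat{T^o}\times\bG_m$), whereas $\cV_{I,\lambda_0}^{\chi_0}(U^u, W_p^u)$ is naturally glued over a covering of $\widehat{Z_{L_I}^o}$; to compare them one uses that $\cN_{I,\lambda_0}^{\chi_0}(U^u, W_p^u)$ is supported above $\chi_0\in\widehat{T_I^o}$ (\S\ref{sec: gln-2.4}, Cor.\ref{cor: gln-0iv}) and that Emerton's construction is canonical, hence independent of the chosen covering, so that one may legitimately use a common covering of $\widehat{T}$ for both. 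Beyond this, and the (standard) exactness of the duality of \cite{Em04} used to produce the surjection of sheaves, the argument is purely formal: there is no representation-theoretic input left once the closedness of $J_{B,(P_I,\lambda_0)}^{T_I^o,\chi_0}(\cdot)$ inside $J_B(\cdot)$ has been granted.
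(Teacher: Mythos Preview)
Your argument is correct and is exactly the formal unwinding the paper has in mind: the paper gives no proof beyond the single sentence ``since $J_{B,(P_I,\lambda_0)}^{T_I^o,\chi_0}\big(\widehat{S}(U^u,W_p^u)_{\an}\big)$ is a closed subrepresentation of $J_B\big(\widehat{S}(U^u,W_p^u)_{\an}\big)$, one has\ldots'', leaving the passage to duals and the local comparison of the relative spectra implicit. Your write-up simply makes explicit the functoriality of Emerton's construction that the paper takes for granted.
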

\begin{remark}\label{rem: gln-msa}
  Consider the natural morphism $\cV(U^u,W_p^u)\ra \widehat{T^o} \xrightarrow{\sim} \widehat{T_I^o} \times \widehat{Z_{L_I}^o}$, we view $\chi_0 \chi_{\lambda_0}|_{T_I^o}$ as a closed point of $\widehat{T_I^o}$, and put $\cV_{I,\lambda_0}^{\chi_0}(U^u,W_p^u)':=\cV(U^u,W_p^u)\times_{\widehat{T_I^o}\times \widehat{Z_{L_I}^o}} \big(\{\chi_0 \chi_{\lambda_0}|_{T_I^o}\}\times \widehat{Z_{L_I}^o}\big)$, which is thus a closed rigid subspace of $\cV(U^u,W_p^u)$ containing $\cV_{I,\lambda_0}^{\chi_0}(U^u,W_p^u)$ (cf. Rem.\ref{rem: gln-y0ct}). Let's remark that in general $\cV_{I,\lambda_0}^{\chi_0}(U^u,W_p^u)$ is more subtle than $\cV_{I,\lambda_0}^{\chi_0}(U^u,W_p^u)'$ (see Rem.\ref{rem: gln-gii}, \ref{rem: gln-scr} below).
\end{remark}
\begin{definition}
  Let $L$ be a finite extension of $E$, $z=(\chi,\fh)\in \cV(U^u,W_p^u)(L)$ is called classical, if
  \begin{equation*}
    J_B\big(\widehat{S}(U^u,W_p^u)_{\lalg}\otimes_E L\big)^{T(\Q_p)=\chi,\cH^{S,u}=\fh}\neq 0;
  \end{equation*}
  %$z$ is called spherically classical if $\chi$ is moreover spherically algebraic.
  For a closed point $z$ of $\cV_{I,\lambda_0}^{\chi_0}(U^u, W_p^u)$,  $z$ is called classical, if $z$ is a classical point of $\cV(U^u,W_p^u)$ via the closed embedding (\ref{equ: gln-gwo}). \end{definition}
Note $z=(\chi,\fh)$ being classical implies that $\wt(\chi)$ is dominant. Let $Z_{\cl}$ denote the set of classical points in $\cV(U^u,W_p^u)$, it's known that $Z_{\cl}$ is Zariski-dense in $\cV(U^u,W_p^u)$ and accumulates over the points $z=(\chi,\fh)$ with $\chi$ locally algebraic (cf. \cite[\S 6.4.5]{Che}).
%\begin{theorem}\label{thm: gln-eur}
 % The set $C$ of  classical points is Zariski-dense in $\cV(U^p)$, and accumulates over the points
%\end{theorem}
%In the following, we would show
\begin{theorem}\label{thm: gln-vv0i}
  The set $Z_{\cl}\cap \cV_{I,\lambda_0}^{\chi_0}(U^u, W_p^u)\big(\overline{E}\big)$ is Zariski-dense in $\cV_{I,\lambda_0}^{\chi_0}(U^u, W_p^u)$ and accumulates over the points $(\chi,\fh)\in \cV_{I,\lambda_0}^{\chi_0}(U^u, W_p^u)\big(\overline{E}\big)$ with $\chi$ locally  algebraic.
\end{theorem}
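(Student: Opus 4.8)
The plan is to combine a \emph{refined classicality criterion} for closed points of $\cV_{I,\lambda_0}^{\chi_0}(U^u,W_p^u)$ with the equidimensionality and the structure of the weight map $\kappa$ furnished by Prop.\ref{prop: gln-0lni}. The essential difficulty, already flagged in the introduction, is that on $\cV_{I,\lambda_0}^{\chi_0}(U^u,W_p^u)$ the differences $k_{\wt(\chi),i}-k_{\wt(\chi),i+1}$ for $i\in I$ are \emph{frozen} at the values dictated by $\lambda_0$ (Rem.\ref{rem: gln-y0ct}), so the usual ``small slope $\Rightarrow$ classical'' criterion for $\cV(U^u,W_p^u)$, which asks the $U_p$-slope to beat \emph{all} the gaps $\langle\wt(\chi),\alpha^\vee\rangle$, $\alpha\in\Delta$, can never hold at a generic point here. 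One therefore needs a criterion in which only the gaps for $\alpha\in\Delta\setminus I$ appear, and this is exactly what the adjunction formula (\ref{equ: gln-e2G}) for $J_{B,(P_I,\lambda_0)}(\cdot)$ provides.

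First I would establish the refined criterion. Let $z=(\chi,\fh)$ be a closed point with $\lambda:=\wt(\chi)$ dominant integral, set $\psi:=\chi\chi_{\lambda}^{-1}$ (a smooth character of $T(\Q_p)$), and let $h$ be the valuation of $\fh(U_p^{(n-1)})$ (more precisely the relevant combination of $U_p$-slopes). A nonzero vector in $J_{B,(P_I,\lambda_0)}^{T_I^o,\chi_0}\big(\widehat{S}(U^u,W_p^u)_{\an}\big)^{T(\Q_p)=\chi,\cH^{S,u}=\fh}$ is also one in the corresponding eigenspace of $J_{B,(P_I,\lambda_0)}\big(\widehat{S}(U^u,W_p^u)_{\an}\big)$, and by (\ref{equ: gln-e2G}) of Thm.\ref{thm: gln-ycn} (with $P_1=B$, $P_2=P_I$, applied to the very strongly admissible $\widehat{S}(U^u,W_p^u)_{\an}$) it produces a nonzero $\GL_n(\Q_p)$-equivariant map $\cF_{\overline{P}_I}^{G}\big( (\text{U}(\ug)\otimes_{\text{U}(\overline{\fp}_I)}\sL_I(\lambda)')^{\vee} , \big(\Ind_{\overline{B}(\Q_p)\cap L_I(\Q_p)}^{L_I(\Q_p)}\psi\otimes_E\delta_B^{-1}\big)^{\infty} \big)\lra\widehat{S}(U^u,W_p^u)_{\an}$. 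The source has a finite filtration whose unique locally algebraic constituent is $\cF_{\overline{P}_I}^{G}\big(\sL(\lambda),(\ldots)^{\infty}\big)\cong\sL(\lambda)\otimes_E\big(\Ind_{\overline{B}(\Q_p)}^{\GL_n(\Q_p)}\psi\delta_B^{-1}\big)^{\infty}$, and the remaining constituents are of the form $\cF_{\overline{P}_I}^{G}(M_j,(\ldots)^{\infty})$ with $M_j$ a proper subquotient of the generalized Verma module $\text{U}(\ug)\otimes_{\text{U}(\overline{\fp}_I)}\sL_I(\lambda)'$ in $\co^{\overline{\fp}_I}$; crucially these $M_j$ only involve Weyl elements outside $\sW_I$, so the weights $w\cdot\lambda$ through which they contribute to $J_B$ differ from $\lambda$ only in the $\Delta\setminus I$ directions. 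Running the ``non-critical slope'' argument of \cite[\S 4.7.3]{Che} and \cite[Thm.4.8]{Che11} — a $U_p$-eigenvector of slope $h$ supported on a non-locally-algebraic constituent forces $h$ to exceed an explicit positive linear combination of the gaps $\langle\lambda,\alpha^\vee\rangle$, $\alpha\in\Delta\setminus I$ — one obtains: there is a constant $c$ (depending only on $I,\lambda_0,U^u,W_p^u$) such that if $\lambda$ is dominant and $\langle\lambda,\alpha^\vee\rangle>c+h$ for all $\alpha\in\Delta\setminus I$, then the above map is nonzero on the locally algebraic constituent, hence its image meets $\widehat{S}(U^u,W_p^u)_{\lalg}$ with the right eigenvalues and $z$ is classical.

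Next I would promote this to density. Fix $z_0=(\chi',\fh')\in\cV_{I,\lambda_0}^{\chi_0}(U^u,W_p^u)(\overline E)$ with $\chi'$ locally algebraic. By Prop.\ref{prop: gln-0lni}(2) choose an affinoid neighbourhood $\cU\ni z_0$ with $\kappa(\cU)=:\cW$ affinoid open in $\widehat{Z_{L_I}^o}$ and $\kappa|_{\cU}$ finite and surjective on every irreducible component of $\cU$; after shrinking, the characteristic power series $F_m(X^{-1})$ (Cor.\ref{cor: gln-ose}, Cor.\ref{cor: gln-0iv}) bound the slope $h$ on $\cU$ by some constant $C$. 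The locally algebraic characters $\chi\in\cW(\overline E)$ with $\wt(\chi)$ dominant and $\langle\wt(\chi),\alpha^\vee\rangle>c+C$ for all $\alpha\in\Delta\setminus I$ form a Zariski-dense subset $\cW_{\cl}$ of $\cW$ which moreover accumulates at $\kappa(z_0)$: the $\Delta\setminus I$ gaps and the center/similitude parameters of $\wt(\chi)$ vary freely as $\chi$ ranges over $\widehat{Z_{L_I}^o}$ (only the $L_I^{\cD}$-algebraic part is pinned down by $\lambda_0$), so this is the standard ``accumulation of sufficiently dominant weights'' statement. By the previous paragraph every point of $\kappa^{-1}(\cW_{\cl})\cap\cU$ is classical; since $\kappa|_{\cU}$ is finite and surjective on each irreducible component of $\cU$, the set $\kappa^{-1}(\cW_{\cl})\cap\cU$ is Zariski-dense in $\cU$ and accumulates at $z_0$. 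As points $(\chi,\fh)$ with $\chi$ locally algebraic are Zariski-dense in $\cV(U^u,W_p^u)$, hence in the closed subspace $\cV_{I,\lambda_0}^{\chi_0}(U^u,W_p^u)$, and the latter is equidimensional (Prop.\ref{prop: gln-0lni}(1)), the Zariski closure of $Z_{\cl}\cap\cV_{I,\lambda_0}^{\chi_0}(U^u,W_p^u)(\overline E)$ meets every irreducible component in a nonempty open set, hence is all of $\cV_{I,\lambda_0}^{\chi_0}(U^u,W_p^u)$; the accumulation assertion is the local statement at $z_0$.

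The main obstacle is the refined classicality criterion of the second paragraph. The general criterion for $\cV(U^u,W_p^u)$ (via $J_B$ and $\cF_{\overline{B}}^{G}(\text{Verma},\psi\delta_B^{-1})$) demands the slope beat the gaps for all $\alpha\in\Delta$, which is impossible here since those for $\alpha\in I$ are fixed by $\lambda_0$; extracting a criterion involving only $\alpha\in\Delta\setminus I$ is precisely what the adjunction (\ref{equ: gln-e2G}) for $J_{B,(P_I,\lambda_0)}(\cdot)$ buys us, because $\cF_{\overline{P}_I}^{G}(\text{gen.\ Verma},\psi\delta_B^{-1})$ has far fewer irreducible constituents than $\cF_{\overline{B}}^{G}(\text{Verma},\psi\delta_B^{-1})$ (compare (\ref{equ: gln-w1t}) with (\ref{equ: gln-0ws}) in the $\GL_3$ case). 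The delicate point is keeping careful track of these constituents and of the central characters through which they enter $J_B$, so that the Chenevier-type slope estimate genuinely goes through under the weaker hypothesis; the remaining ingredients — the free variation of the $\Delta\setminus I$ weights over $\widehat{Z_{L_I}^o}$, and the passage from fibral accumulation to global Zariski-density via equidimensionality — are routine.
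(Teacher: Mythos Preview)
Your proposal is correct and follows essentially the same path as the paper: the refined classicality criterion you sketch is exactly Thm.\ref{thm: gln-enf} (proved via the adjunction formula (\ref{equ: gln-e2G}) in Prop.\ref{prop: gln-ngi} and Lem.\ref{lem: gln-ssn}), and the density argument via Prop.\ref{prop: gln-0lni} together with Chenevier's standard accumulation machinery \cite[Prop.6.2.7, 6.4.6]{Che} is precisely what the paper invokes. One small slip to fix: the inference ``locally algebraic points are Zariski-dense in $\cV(U^u,W_p^u)$, hence in the closed subspace'' is not valid on general grounds; the correct reason locally algebraic points are dense in $\cV_{I,\lambda_0}^{\chi_0}(U^u,W_p^u)$ is that $\chi|_{T_I^o}$ is automatically locally algebraic there (Rem.\ref{rem: gln-y0ct}), so local algebraicity of $\chi$ reduces to that of $\kappa(z)\in\widehat{Z_{L_I}^o}$, and such points are dense and pull back densely by the finite-surjective $\kappa$ of Prop.\ref{prop: gln-0lni}(2) --- which is in any case already implicit in your accumulation argument at $z_0$.
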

\begin{remark}\label{rem: gln-gii}
By Thm.\ref{thm: gln-vv0i},  $\cV_{I,\lambda_0}^{\chi_0}(U^u, W_p^u)_{\red}$ \big(the reduced subspace of $\cV_{I,\lambda_0}^{\chi_0}(U^u,W_p^u)$\big) is actually the  Zariski-closure of the classical points in $\cV_{I,\lambda_0}^{\chi_0}(U^u, W_p^u)'$ (cf. Rem.\ref{rem: gln-msa}).
\end{remark}
\begin{proof}[Proof of Thm.\ref{thm: gln-vv0i}]By Prop.\ref{prop: gln-0lni} (and the discussion above it),  and some standard arguments as in \cite[Prop.6.2.7, 6.4.6]{Che}, Thm.\ref{thm: gln-vv0i} follows from the classicality criterion Thm.\ref{thm: gln-enf} below \big(see also Rem.\ref{rem: gln-k11}, the $\{\wt(\chi_j')\}_{j\in \{1,\cdots,r\}}$ in Rem.\ref{rem: gln-k11} gives the weight of $\chi|_{Z_{L_I}^o}$\big). Note $(\chi,\fh)\mapsto \us\big(\chi\big(\beta_{a_j}(p)\big)\big)$ (see \S \ref{sec: 3.2.1} below for notations) is locally constant on $\cV_{I,\lambda_0}^{\chi_0}(U^u, W_p^u)$.\end{proof}
%\begin{remark}
  %\textbf{Support? Closed subspace of $?$ etc. }
%\end{remark}
\subsubsection{A result of classicality} \label{sec: 3.2.1}Let $I\subset \{1,\cdots,n-1\}$ (where we identify the latter set with $\Delta$ as in \S \ref{sec: gln-3.1}), one can get a partition of the ordered set $\{1,\cdots,n\}=S_1\sqcup\cdots \sqcup S_r$ such that $L_I=\GL_{|S_1|}\times \cdots \times \GL_{|S_r|}$. For $j\in \{1,\cdots, r\}$, put $a_j:=|S_1|+\cdots +|S_j|$, and $a_0=0$. One has $\beta_{\alpha_j}(p)\in Z_{L_I}(\Q_p)^+$ for $j\in \{1,\cdots,r\}$. The main result of this section is (compare with  \cite[Prop.4.7.4]{Che})

%put $I':=\big\{i\in \{1,\cdots n\}\ |\ i-1\notin I\big\}$. In particular, $1\in I'$. Denote by $n_i:=k_{\lambda_0,i}-k_{\lambda_0,i+1}$ for $i\in I$. For $z=(\chi,\fh)\in \cV(U^p,I,\lambda_0,\chi_0)$, as in Rem.\ref{}, $\wt(\chi)_i-\wt(\chi)_{i+1}=n_i$, thus the weight of $\chi$ can be determined by $\{\wt(\chi)_i\}_{i\in I'}$.

%We say $\chi$ is regular if $ a$. In particular, if $\chi=\chi_{\wt(\chi)}\psi$ is regular, $\big(\Ind_{\overline{B}(\Q_p)}^{\GL_n(\Q_p)}\psi \delta_B^{-1}\big)^{\infty}$ is irreducible.

\begin{theorem}\label{thm: gln-enf} Let $L$ be a finite extension of $E$, $z=(\chi,\fh)$ be an $L$-point of $\cV_{I,\lambda_0}^{\chi_0}(U^u, W_p^u)$ with $\chi$ locally algebraic and dominant, if (where $\us(\cdot)$ denotes the $p$-adic valuation normalized by $\us(p)=1$)
\begin{equation}\label{equ: gln-gab}
  \us\Big(\chi\big(\beta_{a_j}(p)\big)\delta_B^{-1}\big(\beta_{a_j}(p)\big)\Big)\leq k_{\wt(\chi),a_j}-k_{\wt(\chi),a_j+1}+1,\ \forall 1\leq j\leq r-1\end{equation}
 then the point $z$ is classical.
\end{theorem}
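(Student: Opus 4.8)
The plan is to reduce this classicality criterion to the known classicality criterion for the Jacquet–Emerton module $J_B(\cdot)$ via the adjunction formula in Theorem~\ref{thm: gln-ycn}, combined with the finite-slope small-slope argument à la Coleman. First I would unwind what it means for $z=(\chi,\fh)$ to be a closed point of $\cV_{I,\lambda_0}^{\chi_0}(U^u,W_p^u)$: by the construction it means the eigenspace
\begin{equation*}
  \big(J_{B,(P_I,\lambda_0)}^{T_I^o,\chi_0}(\widehat{S}(U^u,W_p^u)_{\an})\otimes_E L\big)^{T(\Q_p)=\chi,\cH^{S,u}=\fh}\neq 0.
\end{equation*}
Since $\chi$ is locally algebraic and dominant and $z$ lies on $\cV_{I,\lambda_0}^{\chi_0}$, Remark~\ref{rem: gln-y0ct} forces $\wt(\chi)$ to restrict to a $P_I$-dominant integral weight compatible with $\lambda_0$ on the $\GL$-blocks cut out by $I$; write $\wt(\chi)=\lambda$ and $\chi=\psi\,\chi_{\lambda}$ with $\psi$ a smooth character of $T(\Q_p)$. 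A non-zero vector in the above eigenspace corresponds, by Theorem~\ref{thm: gln-ycn} (applied with $P_1=B$, $P_2=P_I$, $\pi=\psi$, and $V=\widehat{S}(U^u,W_p^u)$, which is very strongly admissible by Lemma~\ref{lem: gln-htn}), to a non-zero $\GL_n(\Q_p)$-equivariant map
\begin{equation*}
  \cF_{\overline{P}_I}^{G}\big((\text{U}(\ug)\otimes_{\text{U}(\overline{\fp}_I)}\sL_I(\lambda)')^{\vee},(\Ind_{\overline{B}(\Q_p)\cap L_I(\Q_p)}^{L_I(\Q_p)}\psi\otimes_E\delta_B^{-1})^{\infty}\big)\lra \widehat{S}(U^u,W_p^u)_{\an}.
\end{equation*}
(One should be slightly careful to carry the $\delta_{12}$ versus $\delta_1$ bookkeeping correctly, exactly as in the proof of Theorem~\ref{thm: gln-ycn}; I would copy that normalization verbatim.)

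Next I would invoke the structure of the source representation. The generalized Verma module $\text{U}(\ug)\otimes_{\text{U}(\overline{\fp}_I)}\sL_I(\lambda)'$ is a quotient of the full Verma module $\text{U}(\ug)\otimes_{\text{U}(\overline{\ub})}(-\lambda)$, so $\cF_{\overline{P}_I}^{G}(\dots)$ is a \emph{subobject} of the locally analytic principal series $\cF_{\overline{B}}^{G}((\text{U}(\ug)\otimes_{\text{U}(\overline{\ub})}(-\lambda))^{\vee},\psi\delta_B^{-1})$, whose unique irreducible sub is the locally algebraic representation $(\Ind_{\overline{B}(\Q_p)}^{\GL_n(\Q_p)}\psi\delta_B^{-1})^{\infty}\otimes_E\sL(\lambda)$ — and this sub does occur in $\cF_{\overline{P}_I}^{G}(\dots)$ (it is $\cF_{\overline{P}_I}^G(M(\lambda),\cdot)$, with $M(\lambda)$ the simple quotient). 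The point of the small-slope hypothesis \eqref{equ: gln-gab} is to force the image of our non-zero map to already contain this locally algebraic constituent, i.e. to kill the ``companion'' constituents $\cF_{\overline{P}_I}^G(M_i,\cdot)$ coming from the non-trivial composition factors $M_i$ of $\text{U}(\ug)\otimes_{\text{U}(\overline{\fp}_I)}\sL_I(\lambda)'$. Concretely, each non-locally-algebraic constituent of $\cF_{\overline{P}_I}^G(\dots)$ has, after applying $J_B(\cdot)$, a $T(\Q_p)$-eigencharacter of the form $\psi'\chi_{w\cdot\lambda}$ for some $1\neq w\in \sW^{P_I}$ (minimal-length coset representatives not in $\sW_I$), and by the combinatorial formula $k_{w\cdot\lambda,i}=k_{\lambda,w^{-1}(i)}-(w^{-1}(i)-i)$ the $\us\circ\beta_{a_j}(p)$-slope of such an eigencharacter strictly exceeds the bound in \eqref{equ: gln-gab} at some $j$ — here the precise inequality is exactly the one appearing in the standard non-critical slope criterion for the Borel, e.g. \cite[Prop.4.7.4]{Che}. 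So a non-zero map out of $\cF_{\overline{P}_I}^G(\dots)$ whose associated $J_B$-eigencharacter is $\psi\chi_\lambda$ (our given $\chi$, of minimal slope) cannot hit any companion constituent, hence factors through the locally algebraic quotient, giving a non-zero
\begin{equation*}
  (\Ind_{\overline{B}(\Q_p)}^{\GL_n(\Q_p)}\psi\delta_B^{-1})^{\infty}\otimes_E\sL(\lambda)\lra \widehat{S}(U^u,W_p^u)_{\an},
\end{equation*}
i.e. $J_B(\widehat{S}(U^u,W_p^u)_{\lalg}\otimes_E L)^{T(\Q_p)=\chi,\cH^{S,u}=\fh}\neq0$. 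That is exactly classicality of $z$ as a point of $\cV(U^u,W_p^u)$.

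The main obstacle is making the slope bookkeeping airtight: I need to know precisely which $T(\Q_p)$-eigencharacters can appear on $J_B$ of the non-locally-algebraic constituents of $\cF_{\overline{P}_I}^G((\text{U}(\ug)\otimes_{\text{U}(\overline{\fp}_I)}\sL_I(\lambda)')^{\vee},(\Ind\dots)^\infty)$, and to verify that for \emph{every} such character the slope inequality fails at some index $a_j$, so that none of them can coincide with (or be killed into) our minimal-slope character $\chi$. This requires the decomposition of the generalized Verma module into its BGG composition factors $\sL(w\cdot\lambda)$ over $w$ in a suitable set of Weyl coset representatives, together with a Jacquet-module computation of $J_B\cF_{\overline{B}}^G(\sL(w\cdot\lambda)^\vee,\psi\delta_B^{-1})$ (which by adjunction \eqref{equ: gon-ebgg} and exactness of $\cF$ is controlled by the weights $w\cdot\lambda$), and then a careful comparison of $p$-adic valuations using $k_{w\cdot\lambda,a_j}-k_{w\cdot\lambda,a_j+1}$ versus $k_{\lambda,a_j}-k_{\lambda,a_j+1}$. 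The one extra subtlety relative to the Borel case, and the reason the hypothesis \eqref{equ: gln-gab} only ranges over $j\le r-1$ (the ``internal'' cuts of the partition $S_1\sqcup\cdots\sqcup S_r$), is that the weights in the $L_I$-blocks are already pinned by $\lambda_0$ and $\chi_0$, so no slope condition is needed there; I would record this explicitly and check that the Weyl coset representatives $w\in{}^{\sW_I}\!\sW$ indeed only move weight between distinct blocks, so that the relevant slopes only involve the $\beta_{a_j}(p)$ with $1\le j\le r-1$. Once this combinatorial lemma is in place, the argument is the standard Coleman-type ``small slope forces classical'' deduction, here transported through the Orlik–Strauch functor via Theorem~\ref{thm: gln-ycn} and Theorem~\ref{thm: gln-pst}.
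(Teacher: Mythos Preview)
Your overall architecture is right and matches the paper's: pass through the adjunction formula (Theorem~\ref{thm: gln-ycn}) to get a nonzero map from $\cF_{\overline{P}_I}^{\GL_n}\big((\text{U}(\ug)\otimes_{\text{U}(\overline{\fp}_I)}\sL_I(\lambda)')^{\vee},\pi_I\big)$ into $\widehat{S}(U^u,W_p^u)_{\an}$, then show this map factors through the locally algebraic quotient $\cF_{\overline{B}}^{\GL_n}(\sL(\lambda)',\pi_\psi\delta_B^{-1})$, which yields classicality. (A small slip: since $M_I$ is a quotient of the full Verma, $M_I^{\vee}$ is a sub, and by contravariance $\cF_{\overline{P}_I}^G(\ldots)$ is a \emph{quotient}, not a subobject, of the full principal series; the locally algebraic piece is a quotient, not a sub.)

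The gap is in the middle step. You write that each companion constituent $\cF_{\overline{P}_{I_w}}^{\GL_n}(\sL(w\cdot(-\lambda)),\pi_w)$, $w\neq 1$, has $J_B$-eigencharacters whose slope ``strictly exceeds the bound'' and that therefore a map with associated character $\chi$ ``cannot hit'' such a constituent. But this is not a contradiction: the adjunction attaches $\chi$ to the map out of the \emph{whole} source, and nothing prevents the kernel of the projection to the locally algebraic quotient from mapping nontrivially to $V$ just because its own Jacquet characters differ from $\chi$. Knowing the $J_B$-characters of the companion pieces gives you other points on the eigenvariety, not an obstruction. What you are missing is an \emph{intrinsic} obstruction to an embedding $\cF_{\overline{P}_{I_w}}^{\GL_n}(\sL(w\cdot(-\lambda)),\pi_w)\hookrightarrow \widehat{S}(U^u,W_p^u)_{\an}$.

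The paper supplies that obstruction via unitarity: $\widehat{S}(U^u,W_p^u)$ is a unitary Banach representation, so any closed subrepresentation admits an invariant lattice. By \cite[Cor.~3.5]{Br13I}, if $\cF_{\overline{P}_{I_w}}^{\GL_n}(\sL(w\cdot(-\lambda)),\pi_w)$ has an invariant lattice then its central character satisfies $\chi_{w\cdot\lambda}(z)\psi(z)\delta_B^{-1}(z)\in\co_E$ for all $z\in Z_{L_{I_w}}(\Q_p)^+$. The combinatorial input (your paragraph on $k_{w\cdot\lambda,i}$ is the right computation) is then packaged as a separate lemma: for $w\neq 1$ with $w\cdot\lambda$ $P_I$-dominant there exists $1\leq j\leq r-1$ with $\beta_{a_j}(p)\in Z_{L_{I_w}}(\Q_p)^+$ and $\langle w\cdot\lambda-\lambda,\beta_{a_j}\rangle\leq k_{\lambda,a_j+1}-k_{\lambda,a_j}-1$; plugging this into the integrality forces $\us\big(\chi(\beta_{a_j}(p))\delta_B^{-1}(\beta_{a_j}(p))\big)\geq k_{\lambda,a_j}-k_{\lambda,a_j+1}+1$, contradicting \eqref{equ: gln-gab}. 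So the slope condition enters not by comparing eigencharacters on $J_B$, but by contradicting the lattice/integrality criterion. Once you insert this unitarity step (and the paper actually proves the stronger Proposition~\ref{prop: gln-ngi} on the full generalized eigenspace, needed later for \'etaleness), your outline is complete.
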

\begin{remark}\label{rem: gln-k11}
By Rem.\ref{rem: gln-y0ct}, $k_{\wt(\chi),a_j+1}-k_{\wt(\chi),a_{j+1}}=k_{\lambda_0,a_j+1}-k_{\lambda_0,a_{j+1}}$ for $0\leq j\leq r-1$. % if $\{i,i+1\}\subseteq S_j$ for some $j\in \{1,\cdots,r\}$, $k_{\wt(\chi),i}-k_{\wt(\chi),i+1}=k_{\lambda_0,i}-k_{\lambda_0,i+1}$, which only depends on $\lambda_0$ and thus is fixed for any closed point of $\cV_{I,\lambda_0}^{\chi_0}(U^u, W_p^u)$. In particular, we see
   It's  easy to see $\wt(\chi)$ is determined by $k_{\wt(\chi),a_j}$ for $j\in \{1,\cdots,r\}$ and $\lambda_0$. Moreover, consider the restriction $\chi|_{Z_{L_I}}=\chi'_1\otimes\cdots \chi'_r$ as a character of $\prod_{j=1}^r \Q_p^{\times}$, with $\chi'_i=\chi_{a_{i-1}+1}\cdots \chi_{a_i}$, thus \begin{equation*}\wt(\chi'_j)=k_{\wt(\chi),a_i} |S_j|+\sum_{i= a_{j-1}+1}^{a_j} (k_{\lambda_0,i}-k_{\lambda_0,a_j}).\end{equation*}
   For $j\in \{1,\cdots,r\}$, denote by $N_j:=\sum_{i= a_{j-1}+1}^{a_j} (k_{\lambda_0,i}-k_{\lambda_0,a_j})$, the numerical criterion in (\ref{equ: gln-gab}) can be reformulated as
   \begin{equation*}
       \us\Big(\chi\big(\beta_{a_j}(p)\big)\delta_B^{-1}\big(\beta_{a_j}(p)\big)\Big)\leq \frac{\wt(\chi_j')-N_j}{|S_j|}-\frac{\wt(\chi_{j+1}')-N_{j+1}}{|S_{j+1}|}+(k_{\lambda_0,a_j+1}-k_{\lambda_0,a_{j+1}})+1
   \end{equation*}
   for $j\in \{1,\cdots, r-1\}$.
\end{remark}
The theorem follows easily from the following proposition
\begin{proposition}\label{prop: gln-ngi}
  Keep the above notation, if (\ref{equ: gln-gab}) holds,
  any vector $v$ in the generalized eigenspace
  \begin{equation*}
    J_{B,(P_I,\lambda_0)}\big(\widehat{S}(U^u,W_p^u)_{\an}\otimes_E L\big)^{T^o=\chi}[T(\Q_p)=\chi,\cH^{S,u}=\fh]
  \end{equation*}
  is locally algebraic, i.e. $v\in J_{B,(P_I,\lambda_0)}\big(\widehat{S}(U^u,W_p^u)_{\lalg}\otimes_E L\big)$.
\end{proposition}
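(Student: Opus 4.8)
The plan is to transport the question, via the adjunction formula \eqref{equ: gln-e2G}, into a statement about $\GL_n(\Q_p)$-equivariant maps into $\widehat{S}(U^u,W_p^u)_{\an}$, and then to rule out every non-locally-algebraic possibility by a slope estimate using the unitarity of $\widehat{S}(U^u,W_p^u)$.

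First I would make two harmless reductions. Since $\cH^{S,u}$ commutes with $\GL_n(\Q_p)$, I may replace $\widehat{S}(U^u,W_p^u)_{\an}\otimes_E L$ by the closed (still very strongly admissible) subrepresentation $V:=\widehat{S}(U^u,W_p^u)_{\an}[\cH^{S,u}=\fh]\otimes_E L$. Writing $\chi=\psi\chi_\lambda$ with $\psi$ smooth and $\lambda=\wt(\chi)$ dominant integral, a vector $v$ as in the statement generates (by essential admissibility) a finite length $T(\Q_p)$-subrepresentation of $J_{B,(P_I,\lambda_0)}(V)^{T^o=\chi}$ of the form $\Psi\otimes_E\chi_\lambda$, where $\Psi$ is a finite length smooth representation of $T(\Q_p)$ all of whose constituents are $\cong\psi$; it suffices to prove that the inclusion $\Psi\otimes_E\chi_\lambda\hookrightarrow J_{B,(P_I,\lambda_0)}(V)$ factors through $J_{B,(P_I,\lambda_0)}\big(\widehat{S}(U^u,W_p^u)_{\lalg}\otimes_E L\big)$. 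By Thm.\ref{thm: gln-ycn} applied with $P_1=B$ and $P_2=P_I$ (so that $\sL_1(\lambda)=\chi_\lambda$, $\delta_1=\delta_B$), this inclusion corresponds to a nonzero $\GL_n(\Q_p)$-map $\phi\colon W\to V$, with
\[
W:=\cF_{\overline{P}_I}^{\GL_n}\!\Big(\big(\text{U}(\ug)\otimes_{\text{U}(\overline{\fp}_I)}\sL_I(\lambda)'\big)^{\vee},\ \big(\Ind_{\overline{B}(\Q_p)\cap L_I(\Q_p)}^{L_I(\Q_p)}\Psi\otimes_E\delta_B^{-1}\big)^{\infty}\Big).
\]
Since $\lambda$ is dominant, the dual generalized Verma module $\big(\text{U}(\ug)\otimes_{\text{U}(\overline{\fp}_I)}\sL_I(\lambda)'\big)^{\vee}$ has the finite dimensional representation $\sL(\lambda)$ as its unique irreducible subobject, all other composition factors being of the form $\sL(w\cdot\lambda)$ with $1\neq w\in{}^{I}\sW$ (minimal length representatives of $\sW_I\backslash\sW$) and $w\cdot\lambda<\lambda$. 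Correspondingly, by the Remark following Thm.\ref{thm: gln-ycn} together with Thm.\ref{thm: gln-pst}(2),(3), $W$ has a locally algebraic quotient $W_{\lalg}$, namely $\cF_{\overline{P}_I}^{\GL_n}(\sL(\lambda),-)\cong\cF_{\overline{B}}^{\GL_n}(\sL(\lambda),\Psi\otimes_E\delta_B^{-1})$, and the kernel $W_0$ of $W\twoheadrightarrow W_{\lalg}$ has all of its composition factors of the form $\cF_{\overline{B}}^{\GL_n}(\sL(w\cdot\lambda),-)$ with $1\neq w\in{}^{I}\sW$.

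The crux — and the step I expect to be the main obstacle — is to show that the hypothesis \eqref{equ: gln-gab} forces $\phi|_{W_0}=0$. Suppose not; then $\phi(W_0)$ is a nonzero subrepresentation of $V$, hence has a composition factor $X\cong\cF_{\overline{B}}^{\GL_n}(\sL(w\cdot\lambda),\psi\otimes_E\delta_B^{-1})$ for some $1\neq w\in{}^{I}\sW$. Since $w\neq1$ lies in ${}^{I}\sW$, there is $j\in\{1,\dots,r-1\}$ with $w^{-1}(\alpha_{a_j})\in\Phi^{-}$ (otherwise $w^{-1}$ sends all simple roots into $\Phi^{+}$ and $w=1$). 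Applying $J_B$ to the image of $X$ in $V$ and using the adjunction \eqref{equ: gon-ebgg}, one obtains a $T(\Q_p)$-eigenvector in $J_B(\widehat{S}(U^u,W_p^u)_{\an})[\cH^{S,u}=\fh]$ with eigencharacter $\psi\chi_{w\cdot\lambda}$; a combinatorial computation with $k_{w\cdot\lambda,i}=k_{\lambda,w^{-1}(i)}-(w^{-1}(i)-i)$, using the dominance of $\lambda$ and $w^{-1}(\alpha_{a_j})\in\Phi^{-}$, gives $\us\big(\psi\chi_{w\cdot\lambda}(\beta_{a_j}(p))\big)\le\us\big(\chi(\beta_{a_j}(p))\big)-\big(k_{\wt(\chi),a_j}-k_{\wt(\chi),a_j+1}+1\big)$. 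Since $\delta_B^{-1}(\beta_{a_j}(p))$ is common to $\chi$ and $\psi\chi_{w\cdot\lambda}$ and $\us\big(\delta_B^{-1}(\beta_{a_j}(p))\big)=a_j(n-a_j)>0$, the hypothesis \eqref{equ: gln-gab} yields $\us\big(\psi\chi_{w\cdot\lambda}(\beta_{a_j}(p))\big)\le-a_j(n-a_j)<0$. But $\beta_{a_j}(p)\in T(\Q_p)^{+}$ and $\widehat{S}(U^u,W_p^u)$ is a \emph{unitary} admissible Banach representation (Lem.\ref{lem: gln-htn}), so $\pi_{\beta_{a_j}(p)}$ has operator norm $\le1$ on $\widehat{S}(U^u,W_p^u)$, hence its eigenvalues on $J_B(\widehat{S}(U^u,W_p^u)_{\an})$ have non-negative valuation — a contradiction. (Note that the strictly positive summand $a_j(n-a_j)$ from the $\delta_B^{-1}$-twist is what makes the argument survive the boundary case of equality in \eqref{equ: gln-gab}.)

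Granting $\phi|_{W_0}=0$, $\phi$ factors as $W\twoheadrightarrow W_{\lalg}\xrightarrow{\ \overline{\phi}\ }V$; as $W_{\lalg}$ is locally algebraic, $\overline{\phi}$ has image inside $\widehat{S}(U^u,W_p^u)_{\lalg}\otimes_E L$, so $\phi$ factors through $\widehat{S}(U^u,W_p^u)_{\lalg}\otimes_E L\hookrightarrow V$. Applying the left exact functor $J_{B,(P_I,\lambda_0)}$ and tracking $v$ (the adjunction map for $\phi$ factors through $J_{B,(P_I,\lambda_0)}$ of the image of $\phi$) then gives $v\in J_{B,(P_I,\lambda_0)}\big(\widehat{S}(U^u,W_p^u)_{\lalg}\big)\otimes_E L$, which is the assertion. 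The technical points I would need to nail down carefully are the precise composition-factor structure of $W_0$ (each factor genuinely of the form $\cF_{\overline{B}}^{\GL_n}(\sL(w\cdot\lambda),-)$ with $w\neq1$), the combinatorial slope inequality above, and the exact normalization of the $\pi_t$-action so that unitarity of $\widehat{S}(U^u,W_p^u)$ delivers the bound $\us\ge0$; for these I would closely follow the arguments of \cite[Prop.4.7.4]{Che} and \cite[\S9]{Br13II}.
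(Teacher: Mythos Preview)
Your proposal is correct and follows essentially the same strategy as the paper: pass through the adjunction formula of Thm.~\ref{thm: gln-ycn} to a map from the Orlik--Strauch representation into $\widehat{S}(U^u,W_p^u)_{\an}$, and then exclude every non-locally-algebraic constituent by a slope/integrality argument coming from unitarity. The only packaging difference is that the paper phrases the exclusion via \cite[Cor.~3.5]{Br13I} (no invariant lattice) together with a separate combinatorial Lemma giving the required index $a_{j}$, whereas you argue directly with $J_B$ and unitarity and pick $j$ via $w^{-1}(\alpha_{a_j})\in\Phi^{-}$; your combinatorial inequality is in fact valid (set $f(i)=k_{\lambda,i}-i$, note $f$ is strictly decreasing and compare $\{w^{-1}(1),\dots,w^{-1}(a_j)\}$ with $\{1,\dots,a_j\}$), so the two routes are equivalent.
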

Without loss of generality, we assume $L=E$, since $\chi$ is locally algebraic and has dominant weight, we can write $\chi=\chi_{\wt(\chi)}\psi$ where $\psi$ is a smooth character of $T(\Q_p)$ over $E$. For any non-zero vector
\begin{equation*}
  v\in J_{B,(P_I,\lambda_0)}\big(\widehat{S}(U^u,W_p^u)_{\an}\big)^{T^o=\chi}[T(\Q_p)=\chi,\cH^{S,u}=\fh],
\end{equation*}
the $T(\Q_p)$ subrepresentation generated by $v$ is isomorphic to $\chi_{\wt(\chi)}\otimes_E \pi_{\psi}$ with $\pi_{\psi}$ a finite length smooth representation of $T(\Q_p)$, whose irreducible components are all $\psi$. By the adjunction formula Thm.\ref{thm: gln-ycn}, the injection $\chi_{\wt(\chi)}\otimes_E \pi_{\psi}\hookrightarrow  J_{B,(P_I,\lambda_0)}\big(\widehat{S}(U^u,W_p^u)_{\an}\big)$ induces a non-zero map
\begin{equation}\label{equ: gln-ngug}
  \cF_{\overline{P}_I}^{\GL_n}\Big(\big(\text{U}(\ug)\otimes_{\text{U}(\overline{\fp}_I)} \sL_I(\wt(\chi))'\big)^{\vee}, \big(\Ind_{\overline{B}\cap L_I(\Q_p)}^{L_I(\Q_p)} \pi_\psi\otimes_E\delta_B^{-1}\big)^{\infty}\Big)\lra \widehat{S}(U^p,E)_{\an}.
\end{equation}
Recall any irreducible component of $\big(\text{U}(\ug)\otimes_{\text{U}(\overline{\fp}_I)} \sL_I(\wt(\chi))'\big)^{\vee}\in \co^{\overline{\ub}}$ has the form $\sL(w\cdot (-\wt(\chi)))$ with $w\cdot (-\wt(\chi))$ being $P_I$-dominant \big(recall $-\wt(\chi)$ denotes the highest weight of $\sL(\wt(\chi))'$\big). Denote by $\pi_I:=\big(\Ind_{\overline{B}\cap L_I(\Q_p)}^{L_I(\Q_p)} \psi\delta_B^{-1}\big)^{\infty}$ for simplicity, and note $\pi_I$ has the central character $\psi\delta_B^{-1}$.

Let $w\in \sW$ such that $w\cdot (-\wt(\chi))$ is $P_I$-dominant, let $I_w\supseteq I$ such that $P_{I_w}$ is the maximal parabolic subgroup for $\sL(w\cdot (-\wt(\chi)))$. Let $\pi_w$ be an irreducible component of $\big(\Ind_{P_I\cap L_{I_w}(\Q_p)}^{L_{I_w}(\Q_p)} \pi_I\big)^{\infty}$, which also has the central character $\psi\delta_B^{-1}$. We would show

\begin{lemma}\label{lem: gln-ssn}
Keep the above notation, and suppose the conditions in (\ref{equ: gln-gab}) hold, then the irreducible representation $\cF_{\overline{P}_{I_w}}^{\GL_n}\big(\sL(w\cdot (-\wt(\chi))), \pi_w\big)$ does not admit any invariant lattice. Consequently, $\cF_{\overline{P}_{I_w}}^{\GL_n}\big(\sL(w\cdot (-\wt(\chi))),\pi_w\big)$ can not be a subrepresentation of $\widehat{S}(U^u,W_p^u)_{\an}$ \big(since $\widehat{S}(U^u,W_p^u)$ is unitary\big).
\end{lemma}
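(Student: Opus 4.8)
The plan is to deduce the non-existence of an invariant lattice from an incompatibility of $p$-adic valuations on the Jacquet--Emerton module of $V:=\cF_{\overline{P}_{I_w}}^{\GL_n}\big(\sL(w\cdot(-\wt(\chi))),\pi_w\big)$, the hypothesis (\ref{equ: gln-gab}) being precisely what pushes the relevant slope below the unitarity threshold. First I would record the standard reduction: if $V$ carried a $\GL_n(\Q_p)$-invariant $\co_E$-lattice, then, $V$ being admissible locally analytic, its universal unitary completion $\widehat V$ would be a nonzero admissible unitary Banach representation with a continuous $\GL_n(\Q_p)$-equivariant injection $V\hookrightarrow(\widehat V)_{\an}$; applying $J_B(\cdot)$ (left exact, \cite{Em11}) gives $J_B(V)\hookrightarrow J_B\big((\widehat V)_{\an}\big)$. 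Hence it suffices to exhibit a locally algebraic character $\chi'=\chi_{\mu'}\psi'$ of $T(\Q_p)$ occurring in $J_B(V)$ together with an index $j\in\{1,\dots,r-1\}$ for which $\us\big(\psi'(\beta_{a_j}(p))\big)$ is strictly below the lower bound that unitarity of the completion forces on Jacquet-module eigenvalues in the $\beta_{a_j}$-direction (Emerton's estimate, in the form used in the classicality arguments of \cite{Che} and \cite{Br13II}).

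To produce $\chi'$ I would compute $J_B(V)$. Since $w\cdot(-\wt(\chi))$ is $P_{I_w}$-dominant, $\sL_{I_w}(w\cdot(-\wt(\chi)))$ is finite dimensional and $\sL(w\cdot(-\wt(\chi)))$ is the simple quotient of the generalized Verma module $\text{U}(\ug)\otimes_{\text{U}(\overline{\fp}_{I_w})}\sL_{I_w}(w\cdot(-\wt(\chi)))$ in $\co^{\overline{\fp}_{I_w}}$. Contravariant exactness of $\cF_{\overline{P}_{I_w}}^{\GL_n}(\cdot,\pi_w)$ (Theorem \ref{thm: gln-pst}(2)) exhibits $V$ as a closed subrepresentation of the locally analytic induction $\big(\Ind_{\overline{P}_{I_w}(\Q_p)}^{\GL_n(\Q_p)}\sL_{I_w}(w\cdot(-\wt(\chi)))\otimes_E\pi_w\big)^{\an}$, and computing the Jacquet module of the latter — via Theorem \ref{thm: gln-sgv} and the adjunction \ref{thm: gln-ycn}, or directly from the Orlik--Strauch/Breuil description of $J_B$ of $\cF_{\overline P}^{\GL_n}(M,\pi)$ (cf.\ \cite{Br13II}) — one reads off that $J_B(V)$ contains a character $\chi'$ of weight $w\cdot(-\wt(\chi))$ (up to the $\delta_B$-shift intrinsic to Emerton's normalisation) whose smooth part $\psi'$ ranges over the characters of $J_{B\cap L_{I_w}}(\pi_w)$; because $\pi_w$ is a constituent of a parabolic induction built from $\psi\delta_B^{-1}$ and the smooth Jacquet functor along $B\cap L_{I_w}$ is exact, $\us\big(\psi'(\beta_{a_j}(p))\big)$ introduces no slope beyond that of $\psi$. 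The key point is that, apart from the locally algebraic case with $\sL(w\cdot(-\wt(\chi)))$ finite dimensional (which is not at issue here), $w\cdot(-\wt(\chi))$ fails to be $G$-dominant, and does so along a simple root $\alpha_{a_j}$ with $a_j\notin I_w$, i.e.\ $\langle w\cdot(-\wt(\chi)),\alpha_{a_j}^\vee\rangle<0$.

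The remaining step is the valuation bookkeeping. Writing $\chi=\chi_{\wt(\chi)}\psi$ and using the dot-action formula $k_{w\cdot\mu,i}=k_{\mu,w^{-1}(i)}-(w^{-1}(i)-i)$, I would relate $\us\big(\chi'(\beta_{a_j}(p))\big)$ to $\us\big(\chi(\beta_{a_j}(p))\delta_B^{-1}(\beta_{a_j}(p))\big)$ and to $k_{\wt(\chi),a_j}-k_{\wt(\chi),a_j+1}$; unitarity then forces $\us\big(\psi'(\beta_{a_j}(p))\big)$ to be at least the contribution coming from the Verma-submodule structure in the direction $\alpha_{a_j}$ (essentially $-\langle w\cdot(-\wt(\chi)),\alpha_{a_j}^\vee\rangle$), whereas (\ref{equ: gln-gab}) combined with the above identities yields the reverse strict inequality — a contradiction, so $V$ admits no invariant lattice. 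The final clause is then formal: the unit ball of the unitary Banach representation $\widehat{S}(U^u,W_p^u)$ is a $\GL_n(\Q_p)$-invariant lattice, so a closed $\GL_n(\Q_p)$-subrepresentation of $\widehat{S}(U^u,W_p^u)_{\an}$, on being intersected with it, would carry an invariant lattice; hence $V$ cannot embed.

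I expect the main obstacle to be the combinatorics-plus-normalisation step: determining precisely which Weyl-translate of $w\cdot(-\wt(\chi))$ (highest versus lowest weight, and the interplay of $\overline{P}_{I_w}$ with the standard Borel) actually appears in $J_B(V)$, and then aligning the resulting valuation with the exact shape of the unitarity bound so that exactly the single inequality of (\ref{equ: gln-gab}) indexed by the failing $j$ is what is needed — in effect a careful $\GL_n$-specific transcription of the classicality argument of \cite[\S4.7]{Che} and \cite{Br13II} into Orlik--Strauch language. A secondary subtlety is checking that $\pi_w$, being only a constituent of a possibly non-standard (``mixed'') parabolic induction, still has its $B\cap L_{I_w}$-Jacquet module controlled by $\psi$ with no stray modulus or slope contribution; this follows from transitivity of smooth induction and the exactness already invoked in the proofs of the adjunction formulas, but deserves to be spelled out.
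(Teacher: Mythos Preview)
Your overall strategy — deduce non-integrability of a slope from the existence of an invariant lattice, and contradict (\ref{equ: gln-gab}) — is the right one, but your route is considerably more roundabout than the paper's, and two steps are not solid as written.

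\textbf{How the paper does it.} The paper bypasses the universal unitary completion and the computation of $J_B(V)$ entirely. It invokes \cite[Cor.~3.5]{Br13I}, which says directly that if $\cF_{\overline{P}_{I_w}}^{\GL_n}\big(\sL(w\cdot(-\wt(\chi))),\pi_w\big)$ admits an invariant lattice then $\chi_{w\cdot\wt(\chi)}(z)\psi(z)\delta_B^{-1}(z)\in\co_E$ for all $z\in Z_{L_{I_w}}(\Q_p)^+$. (This is the integrality input you are trying to extract from $J_B$ of a unitary completion; Breuil's corollary packages it in one line.) Rewriting $\chi_{w\cdot\wt(\chi)}\psi\delta_B^{-1}=\chi_{w\cdot\wt(\chi)-\wt(\chi)}\chi\delta_B^{-1}$, the paper then proves a separate combinatorial lemma: for $w\neq 1$ with $w\cdot\lambda$ $P$-dominant, there exists $a$ with $\beta_a(p)\in Z_{L_P}(\Q_p)^+$ and $\langle w\cdot\lambda-\lambda,\beta_a\rangle\leq k_{\lambda,a+1}-k_{\lambda,a}-1$. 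Since $P_I\subseteq P_{I_w}$, this $a$ is one of the $a_j$, and the inequality immediately gives $\us\big(\chi(\beta_{a_j}(p))\delta_B^{-1}(\beta_{a_j}(p))\big)\geq k_{\wt(\chi),a_j}-k_{\wt(\chi),a_j+1}+1$, contradicting (\ref{equ: gln-gab}).

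\textbf{Where your argument is loose.} First, the claim that the universal unitary completion $\widehat V$ is \emph{admissible} is not justified, and without it you cannot apply Emerton's Jacquet functor to $(\widehat V)_{\an}$ as you propose; this detour is in any case unnecessary, since the integrality statement you need follows from the invariant lattice alone (this is precisely the content of \cite[Cor.~3.5]{Br13I}). Second, and more substantively, your combinatorial step is too weak: you assert that $w\cdot(-\wt(\chi))$ fails $G$-dominance at some $\alpha_{a_j}$, i.e.\ $\langle w\cdot(-\wt(\chi)),\alpha_{a_j}^\vee\rangle<0$, and imply this is enough. But the contradiction with (\ref{equ: gln-gab}) requires a quantitative bound on the pairing with the \emph{cocharacter} $\beta_{a_j}$ (a sum over a whole block of the partition), namely $\langle w\cdot\wt(\chi)-\wt(\chi),\beta_{a_j}\rangle\leq k_{\wt(\chi),a_j+1}-k_{\wt(\chi),a_j}-1$; this does not follow from non-dominance at a single coroot and is exactly what the paper isolates as a separate lemma with its own inductive proof. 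You should either cite \cite[Cor.~3.5]{Br13I} for the integrality and then prove that combinatorial lemma, or at least recognise that the ``valuation bookkeeping'' you defer is the nontrivial part.
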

\begin{proof}[Proof of Prop.\ref{prop: gln-ngi}]
  By Thm.\ref{thm: gln-pst} and the discussion that precedes  Lem.\ref{lem: gln-ssn}, any irreducible component of  $$\cF_{\overline{P}_I}^{\GL_n}\Big(\big(\text{U}(\ug)\otimes_{\text{U}(\overline{\fp})} \sL_I(\wt(\chi))'\big)^{\vee}, \big(\Ind_{\overline{B}\cap L_I(\Q_p)}^{L_I(\Q_p)} \pi_\psi\otimes_E\delta_B^{-1}\big)^{\infty}\Big)$$
  has the form $\cF_{\overline{P}_{I_w}}^{\GL_n}\big(\sL(w\cdot (-\wt(\chi))),\pi_w\big)$ (with the above notations). By  Lem.\ref{lem: gln-ssn}, we see any non-zero map as in (\ref{equ: gln-ngug}) factors through
  \begin{equation*}
     \cF_{\overline{B}}^{\GL_n}\big(\sL(\wt(\chi))',\pi_{\psi} \otimes_E \delta_B^{-1}\big)\cong \big(\Ind_{\overline{B}}^{\GL_n}\pi_{\psi}\otimes_E \delta_B^{-1}\big)^{\infty}\otimes_E \sL(\wt(\chi)),
  \end{equation*}
  and thus has image in $\widehat{S}(U^u,W_p^u)_{\lalg}$.
\end{proof}
\begin{proof}[Proof of Lem.\ref{lem: gln-ssn}]By \cite[Cor.3.5]{Br13I}, if $\cF_{\overline{P}_w}^{\GL_n}\big(\sL(w\cdot (-\wt(\chi))), \pi_w\big)$ admits an invariant lattice, then
  \begin{equation*}
  \chi_{w\cdot \wt(\chi)}(z)\psi(z)\delta_B^{-1}(z)\in \co_E
\end{equation*}
for all $z\in Z_{L_{I_w}}(\Q_p)^+$. Note $\chi_{w\cdot \wt(\chi)}\psi \delta_B^{-1}=\chi_{w\cdot \wt(\chi)-\wt(\chi)}\chi\delta_B^{-1}$.
By the lemma below \big(applied to $\lambda=\wt(\chi)$, $P=P_{I_w}$\big), there exists $1\leq j_w \leq r$ such that $\beta_{a_{j_w}}(p)\in Z_{L_{I_w}}(\Q_p)^+$ \big(indeed, since $P_I\subseteq P_{I_w}$, for $1\leq a \leq n$, if $\beta_a(p)\in Z_{L_{I_w}}(\Q_p)^+$, one gets $a=a_j$ for some $1\leq j \leq r$\big), and $\langle w\cdot \wt(\chi)-\wt(\chi), \beta_{j_w}\rangle \leq k_{\wt(\chi),a_{j_w}+1}-k_{\wt(\chi),a_{j_w}}-1$. Thus $\us\big(\chi_{w\cdot \wt(\chi)-\wt(\chi)}(\beta_{j_w}(p))\big)\leq k_{\wt(\chi),a_{j_w}+1}-k_{\wt(\chi),a_{j_w}}-1$. Consequently,
\begin{equation*}
  \us\Big(\chi(\beta_{j_w}(p))\delta_B^{-1}(\beta_{j_w}(p))\Big)\geq k_{\wt(\chi),a_{j_w}}-k_{\wt(\chi),a_{j_w}+1}+1,
\end{equation*}
a contradiction with (\ref{equ: gln-gab}).
\end{proof}
\begin{lemma}
  Let $P\supseteq B$ be a parabolic subgroup of $\GL_n$, $\lambda$ be a dominant weight of $\ft$, $w\in \sW$, $w\neq 1$ such that $w\cdot \lambda$ is $P$-dominant, then there exists $1\leq a \leq n$, such that the cocharacter $\beta_{a}$ satisfies $\beta_{a}(p)\in Z_{L_P}(\Q_p)^+$ and $\langle w\cdot \lambda-\lambda, \beta_{a}\rangle\leq k_{\lambda,a+1}-k_{\lambda,a}-1$.
\end{lemma}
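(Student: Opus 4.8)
\emph{Plan of proof.} Write $P=P_I$ and let $\{1,\dots,n\}=S_1\sqcup\cdots\sqcup S_r$ be the partition attached to $I$ as in \S\ref{sec: 3.2.1}, so $L_P\cong\GL_{|S_1|}\times\cdots\times\GL_{|S_r|}$ and $a_j=|S_1|+\cdots+|S_j|$, $a_0=0$, $a_r=n$. The cocharacters $\beta_{a_j}(p)=\mathrm{diag}(p,\dots,p,1,\dots,1)$ ($a_j$ entries equal to $p$) are block--scalar, hence lie in $Z_{L_P}(\Q_p)$, and conjugation by them multiplies each entry of $N_P$ by $1$ or $p$, so $\beta_{a_j}(p)\in Z_{L_P}(\Q_p)^+$ for every $j$ (this is already noted in \S\ref{sec: 3.2.1}). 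Thus it is enough to exhibit $j\in\{1,\dots,r-1\}$ (so that $a_j\le n-1$) with $\langle w\cdot\lambda-\lambda,\beta_{a_j}\rangle\le k_{\lambda,a_j+1}-k_{\lambda,a_j}-1$, and I would not try to pin down which one.

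First I would pass to the $\rho$--shifted coordinates $\widetilde\lambda_k:=k_{\lambda,k}-k$; dominance of $\lambda$ gives $\widetilde\lambda_k-\widetilde\lambda_{k+1}=(k_{\lambda,k}-k_{\lambda,k+1})+1\ge1$, so $\widetilde\lambda_1>\widetilde\lambda_2>\cdots>\widetilde\lambda_n$. Using $\langle\mu,\beta_a\rangle=\sum_{i=1}^ak_{\mu,i}$ together with $k_{w\cdot\lambda,i}=k_{\lambda,w^{-1}(i)}-(w^{-1}(i)-i)$, a short computation rewrites the desired inequality, for $a=a_j$ and $K_j:=w^{-1}(\{1,\dots,a_j\})$, as
\[
  \sum_{k\in K_j}\widetilde\lambda_k-\sum_{k=1}^{a_j}\widetilde\lambda_k\ \le\ \widetilde\lambda_{a_j+1}-\widetilde\lambda_{a_j}.
\]
The elementary input is that, since $\widetilde\lambda_1>\cdots>\widetilde\lambda_n$, for \emph{any} $a$--element subset $K\ne\{1,\dots,a\}$ one has $\sum_{k\in K}\widetilde\lambda_k\le\sum_{k=1}^{a}\widetilde\lambda_k+\widetilde\lambda_{a+1}-\widetilde\lambda_a$, the right side being the second largest possible value of $\sum_{k\in K}\widetilde\lambda_k$ (attained at $K=\{1,\dots,a-1,a+1\}$); this I would verify by writing $K$ as $\{1,\dots,a\}$ with some $t\ge1$ of its smallest elements deleted and $t$ elements exceeding $a$ inserted. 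Hence, if $K_j\ne\{1,\dots,a_j\}$ for some $j\le r-1$, then $a=a_j$ works.

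So the whole matter reduces to: \emph{there is some $j\in\{1,\dots,r-1\}$ with $w^{-1}(\{1,\dots,a_j\})\ne\{1,\dots,a_j\}$}, and this is the step where the hypotheses $w\ne1$ and ``$w\cdot\lambda$ is $P$--dominant'' must be used (without them the statement is false). I would argue by contradiction: if $w^{-1}(\{1,\dots,a_j\})=\{1,\dots,a_j\}$ for all $j$ (automatic for $j=0,r$), then subtracting consecutive ones shows $w^{-1}$ maps each block $S_j$ onto itself. On the other hand $P$--dominance of $w\cdot\lambda$ says $k_{w\cdot\lambda,i}\ge k_{w\cdot\lambda,i+1}$ whenever $i,i+1$ lie in one block; since $k_{w\cdot\lambda,i}=\widetilde\lambda_{w^{-1}(i)}+i$, this forces $\widetilde\lambda_{w^{-1}(i)}>\widetilde\lambda_{w^{-1}(i+1)}$, hence $w^{-1}(i)<w^{-1}(i+1)$ by strict monotonicity of $\widetilde\lambda$. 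Thus $w^{-1}$ is strictly increasing on each block it stabilises, i.e.\ $w^{-1}=\mathrm{id}$, contradicting $w\ne1$.

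I expect the only real care needed is in the bookkeeping of the $\rho$--shift (so that the target becomes exactly $\widetilde\lambda_{a_j+1}-\widetilde\lambda_{a_j}$) and in observing that the obstruction $w^{-1}(\{1,\dots,a_j\})\ne\{1,\dots,a_j\}$ cannot be concentrated solely at $j=r$, which is what guarantees the index produced satisfies $a_j\le n-1$ so that $k_{\lambda,a_j+1}$ is meaningful.
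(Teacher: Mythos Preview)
Your argument is correct. The $\rho$-shift to $\widetilde\lambda_k=k_{\lambda,k}-k$ is exactly right (dominance gives a strictly decreasing sequence), the rewriting of the target inequality as $\sum_{k\in K_j}\widetilde\lambda_k\le\sum_{k=1}^{a_j}\widetilde\lambda_k+\widetilde\lambda_{a_j+1}-\widetilde\lambda_{a_j}$ is a clean computation, and the ``second-largest subset sum'' claim is an easy majorisation. Your final contradiction step is also fine: if every $K_j$ were the initial segment, then $w^{-1}$ would stabilise each block, and $P$-dominance of $w\cdot\lambda$ together with strict monotonicity of $\widetilde\lambda$ forces $w^{-1}$ to be increasing on each block, hence $w=1$. (The edge case $r=1$, i.e.\ $P=G$, is excluded because then $\lambda$ and $w\cdot\lambda$ are both dominant with $\lambda+\rho$ regular, so already $w=1$.)

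The paper's route is somewhat different and more constructive. Rather than arguing existentially, it \emph{names} the index: take $i_0$ the smallest index with $w^{-1}(i_0)\ne i_0$, let $a$ be the right endpoint of the block containing $i_0$, and then use $P$-dominance to show $w^{-1}$ is strictly increasing on $\{i_0,\dots,a\}$ (hence $w^{-1}(i)\ge i+1$ there); the desired bound then drops out of a direct estimate of $\sum_{i=i_0}^a(k_{\lambda,w^{-1}(i)}-(w^{-1}(i)-i)-k_{\lambda,i})$. Your approach trades this explicit choice for the combinatorial ``second-best subset'' lemma, which makes the inequality step completely uniform once \emph{some} $K_j$ is non-initial; the paper's approach buys an explicit $a$ but needs the block-wise monotonicity argument to control the sum. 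Both proofs use $P$-dominance at a single, analogous point (yours in the contradiction, the paper's in the monotonicity claim).
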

\begin{proof}
Consider the partition of the ordered set $\{1,\cdots, n\}=S_{P,1}\sqcup \cdots \sqcup S_{P,s}$ such that $L_P=\GL_{|S_{P,1}|} \times \cdots \times \GL_{|S_{P,s}|}$.
Since $w\cdot \lambda$ is $P$-dominant, $k_{w\cdot \lambda,i}\geq k_{w \cdot \lambda,i+1}$ if $i,i+1\in S_{P,j}$ for some $j\in \{1,\cdots,s\}$. Since $w\neq 1$, there exists $i$ such that $w^{-1}(i)\neq i$. Let $i_0 \in \{1,\cdots, n\}$ be the smallest number such that $w^{-1}(i_0)\neq i_0$ (thus $i_0<w^{-1}(i_0)$), let $j_0\in \{1,\cdots,r\}$ such that $i_0\in S_{P,j_0}$. Note $k_{w\cdot \lambda,i_0}=k_{\lambda,w^{-1}(i_0)}-(w^{-1}(i_0)-i_0)<k_{\lambda,i}$, for $i_0 < i \leq w^{-1}(i_0)$.

If $i\in S_{P,j_0}$, $i\geq i_0$, we claim $w^{-1}(i)\geq w^{-1}(i_0)+i-i_0$. Indeed, if $i_0+1\in S_{P,j_0}$, and if $w^{-1}(i_0+1)<w^{-1}(i_0)$ then $k_{w\cdot \lambda,i_0+1}=k_{\lambda,w^{-1}(i_0+1)}-(w^{-1}(i_0+1)-i_0-1)\geq k_{\lambda,w^{-1}(i_0)}-(w^{-1}(i_0+1)-i_0-1)>k_{w\cdot \lambda,i_0}$ which contradicts to the fact $w\cdot \lambda$ is $P$-dominant. The claim follows then by induction on $i-i_0$.

We has thus  $k_{w\cdot \lambda,i}<k_{\lambda,i}$ for $i\geq i_0$, $i\in S_{P,j_0}$, let $a:=|S_{P,1}|+\cdots+|S_{P,j_0}|$, one has
\begin{multline*}
  \langle w\cdot \lambda-\lambda, \beta_{a}\rangle=\sum_{i\geq i_0,i\in S_{P,j_0}} (k_{w\cdot \lambda,i}-k_{\lambda,i})=\sum_{i\geq i_0, i\in S_{P,j_0}} (k_{\lambda, w^{-1}(i)}-(w^{-1}(i)-i)-k_{\lambda,i})\\ \leq \sum_{i\geq i_0,i\in S_{P,j_0}} (k_{\lambda,w^{-1}(i)}-1-k_{\lambda,i}) \leq k_{\lambda, w^{-1}(a)}-k_{\lambda,a}-1\leq k_{\lambda,a+1}-k_{\lambda,a}-1.
\end{multline*}
%
%We have $k_{w\cdot \lambda,i_0+1}\leq k_{w\cdot \lambda,i_0}$, from which we deduce for $i\in I_{j_0}$, $i\geq i_0$, $w^{-1}(i)\neq i$, and $k_{w\cdot \lambda,i}<k_{\lambda, w^{-1}(i)}$. Put $\beta:=\beta_{|I_1|+\cdots+|I_{j_0}|}$, thus
%  \begin{equation*}
%    \langle w\cdot \lambda-\lambda, \beta\rangle=\sum_{i\in I_{j_0}, i\geq i_0} (k_{w\cdot \lambda,i}-k_{\lambda,i}) \leq ?
%  \end{equation*}
The lemma follows.
\end{proof}
\subsection{Families of Galois representations}\label{sec: 3.3}
Let $\rho$ be an $n$-dimensional continuous representation  of $\Gal(\overline{F}/F)$ over $E$ satisfying
\begin{itemize}
\item $\rho^c\cong \rho^{\vee}\otimes_E \varepsilon^{1-n}$ where $\rho^c(g):=\rho(cgc)$ for all $g\in \Gal(\overline{F}/F)$, $c$ is the unique non-trivial element in $\Gal(F/F^+)$, $\varepsilon$ denotes the cyclotomic character;
\item $\rho$ is unramified for all but finitely many places of $F$, and $\rho$ is unramified for places of $F$ lying above places in $S(U^p)$.
\end{itemize}Note by Chebotarev density theorem, $\rho$ is determined by $\rho(\Frob_{\tilde{v}})$ for $\tilde{v}|v$, $v\in S(U^p)$. One can associate to $\rho$ a maximal ideal $\fm_{\rho}$ of $\cH^{S,p}_0 \otimes_{\co_E}E$ generated by elements $\Big\{(-1)^j \Norm(\tilde{v})^{\frac{j(j-1)}{2}} T_{\tilde{v}}^{(j)}-a_{\tilde{v}}^{(j)}\Big\}_{j,{\tilde{v}}}$ where $j\in \{1,\cdots, n\}$, ${\tilde{v}}\in S(U^p)$, $\Norm({\tilde{v}})$ is the cardinality of the residue field of $F_{\tilde{v}}$, $X^n +a_{\tilde{v}}^{(1)} X^{n-1}+ \cdots + a_{\tilde{v}}^{(n-1)}X+a_{\tilde{v}}^{(n)}\in E[X]$ is the characteristic polynomial of $\rho(\Frob_{\tilde{v}})$ with $\Frob_{\tilde{v}}$ a geometric Frobenius at ${\tilde{v}}$. Put
\begin{equation}\label{equ: gln-ohr}
  \widehat{\Pi}(\rho):=\widehat{S}(U^u,W_p^u)^{\fm_{\rho}}
\end{equation}
the subspace of $\widehat{S}(U^u,W_p^u)$ annulated by $\fm_{\rho}$, which is thus an admissible unitary Banach representation of $G(F_u^+)\cong \GL_n(\Q_p)$ (via $i_{G,\tilde{u}}$). The Galois representation $\rho$ is called \emph{modular} if $\widehat{\Pi}(\rho)_{\lalg}\neq 0$, in other word, if $\rho$ is associated to certain automorphic representation of $G$ (cf. Prop.\ref{prop: gln-stm}); $\rho$ is called \emph{promodular} if $\widehat{\Pi}(\rho)\neq 0$.

Recall to any closed point $z=(\chi,\fh)$ of $\cV(U^u,W_p^u)$ \big(thus of $\cV_{I,\lambda_0}^{\chi_0}(U^u, W_p^u)$\big), one can associate a continuous absolutely semi-simple representation $\rho_z: \Gal(\overline{\Q}/\cE) \ra \GL_n(k(z))$ where $k(z)$ denotes the residue field at $z$. Suppose moreover \begin{itemize}
\item $\chi$ is locally algebraic,
\item $\wt(\chi)=w\cdot \lambda$ for  certain integral dominant weight $\lambda$;
%\item for, $\psi_i \psi_j^{-1}\unr(p^{j-i})\neq 1$ for $i,j\in \{1,\cdots, n\}$, $i\neq j$.
\end{itemize}
let $(\psi_1,\cdots, \psi_n):=\chi\chi_{\wt(\chi)}^{-1}$, by global triangulation theory \big(\cite{KPX} \cite{Liu} applied to $\cV(U^u,W_p^u)$\big), the restriction $\rho_{z,\tilde{u}}:=\rho_z|_{\Gal(\overline{\Q_p}/F_{\tilde{u}})}$ is trianguline \emph{of parameter $(\delta_1,\cdots, \delta_n)$}, (e.g. see \cite[\S 7]{Br13I}) \Big(i.e. the $(\varphi,\Gamma)$-module $D_{\rig}(\rho_{z,\tilde{u}})$ over $\cR_L$ admits a filtration $\{\Fil^i\}_{i=0,\cdots,n}$ with $\Fil^0 D_{\rig}(\rho_{z,\tilde{u}})=0$, $\Fil^n D_{\rig}(\rho_{z,\tilde{u}})=D_{\rig}(\rho_{z,\tilde{u}})$, and $\gr^i D_{\rig}(\rho_{z,\tilde{u}}):=\Fil^{i}/\Fil^{i-1} D_{\rig}(\rho_{z,\tilde{u}})\cong \cR_L(\delta_i)$ is a rank $1$ $(\varphi,\Gamma)$-module over $\cR_L$ associated to $\delta_i$\Big), where $\delta_i: \Q_p^{\times} \ra L^{\times}$ are continuous characters given by
\begin{equation}\label{equ: gln-sp1}
  (\delta_1,\cdots,\delta_i,\cdots, \delta_n)=\big(\psi_1\unr(p^{n-1}),\cdots, \psi_i\varepsilon^{1-i} \unr(p^{n+1-2i}), \cdots, \psi_n \varepsilon^{1-n} \unr(p^{1-n})\big) \chi_{w(z) \cdot \lambda},
\end{equation}
with $w(z)\in \sW$.

We define an order on $n$-tuples in $\Z^{n}$: $(k_1,\cdots, k_n)\leq (k_1',\cdots, k_n')$ if $\sum_{j=1}^i k_j\leq \sum_{j=1}^i k_j'$ for all $1\leq i \leq n$, and define an action of $\sW\cong S_n$ on $\Z^n$ by $w(k_1,\cdots, k_n)=(k_{w^{-1}(1)},\cdots, k_{w^{-1}(n)})$. Let $\ul{h}(z):=\big(-k_{\lambda,1}, \cdots, -(k_{\lambda,i}+1-i),\cdots, -(k_{\lambda,n}+1-n)\big)$ which are in fact the Hodge-Tate weights of $\rho_{z,\tilde{u}}$ (where we choose the convention that the Hodge-Tate weight of the cyclotomic character $\varepsilon$ is $-1$), and is\emph{ anti-dominant}, i.e. strictly increasing.
\begin{proposition}[$\text{\cite[Prop.9.2]{Br13II}}$]\label{prop: gln-enn}Keep the above notation and assumption, one has $w(\ul{h}(z)) \leq w(z)(\ul{h}(z))$.
\end{proposition}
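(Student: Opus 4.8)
This is \cite[Prop.\,9.2]{Br13II}; we indicate the strategy. By the global triangulation theorem (\cite{KPX}, \cite{Liu}) applied to the family of Galois representations over $\cV(U^u,W_p^u)$ --- together with the density of trianguline points on it --- $D_{\rig}(\rho_{z,\tilde u})$ is trianguline of parameter $(\delta_1,\dots,\delta_n)$ as in (\ref{equ: gln-sp1}); write $\Fil^\bullet$ for the resulting triangulation, so that each $\Fil^k$ is a saturated sub-$(\varphi,\Gamma)$-module of rank $k$ with $\gr^k\cong\cR_L(\delta_k)$. Reading off the algebraic part of $\delta_i$ from (\ref{equ: gln-sp1}) and using $k_{w'\cdot\lambda,i}+1-i=k_{\lambda,w'^{-1}(i)}+1-w'^{-1}(i)$ for $w'\in\sW$, the Hodge--Tate--Sen weight of $\gr^i$ equals $-(k_{w(z)\cdot\lambda,i}+1-i)=\ul h(z)_{w(z)^{-1}(i)}$, i.e.\ it is the $i$-th coordinate of $w(z)(\ul h(z))$ --- in particular this recovers that $\ul h(z)$ is the multiset of Hodge--Tate weights of $\rho_{z,\tilde u}$.

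Unwinding the definition of the action of $\sW$ on $\Z^n$ and of the order on $\Z^n$, and using the previous paragraph, the desired inequality $w(\ul h(z))\leq w(z)(\ul h(z))$ becomes
\begin{equation*}
  \sum_{i=1}^{k}\big(\text{HTS-weight of }\gr^i\big)\ \geq\ \sum_{i=1}^{k}\ul h(z)_{w^{-1}(i)}\qquad\text{for all }1\leq k\leq n .
\end{equation*}
The left-hand side is the ``Hodge height'' of the saturated sub-$(\varphi,\Gamma)$-module $\Fil^k\subseteq D_{\rig}(\rho_{z,\tilde u})$, and the right-hand side is $\sum_{j\in w^{-1}(\{1,\dots,k\})}\ul h(z)_j$, the partial sum of the Hodge--Tate weights read in the order dictated by $\wt(\chi)=w\cdot\lambda$. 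So it remains to bound the Hodge height of each $\Fil^k$ from below by this weight-order partial sum.

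For this one genuinely uses that $z$ is a point of $\cV(U^u,W_p^u)$ and that $\widehat S(U^u,W_p^u)$ is \emph{unitary}, not merely the trianguline structure of $\rho_{z,\tilde u}$. Running in reverse the mechanism of \S\ref{sec: 3.2.1}: a nonzero vector of $J_B\big(\widehat S(U^u,W_p^u)_{\an}\big)[T(\Q_p)=\chi,\cH^{S,u}=\fh]$ together with the adjunction formula for $J_B$ (\cite[Thm.\,4.3]{Br13II}, cf.\ (\ref{equ: gon-ebgg})) produces a nonzero $\GL_n(\Q_p)$-morphism from $\cF_{\overline B}^{\GL_n}\big((\mathrm U(\ug)\otimes_{\mathrm U(\overline\ub)}(-\wt(\chi)))^\vee,\ \psi\otimes_E\delta_B^{-1}\big)$ into $\widehat S(U^u,W_p^u)_{\an}$, where $\psi$ is the smooth part of $\chi$. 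By unitarity every irreducible constituent $\cF_{\overline P}^{\GL_n}(\sL(\mu),\pi)$ of the image admits a $\GL_n(\Q_p)$-invariant lattice, hence by \cite[Cor.\,3.5]{Br13I} its $Z_{L_P}(\Q_p)^+$-central character is $\co_E$-valued; written out on the cocharacters $\beta_a$ this is a system of $\us(\cdot)$-inequalities relating $\wt(\chi)$, $\psi$ and the Frobenius data at $\tilde u$. Matching these against the triangulation (\ref{equ: gln-sp1}), i.e.\ performing a weak-admissibility-type comparison of the triangulation flag $\Fil^\bullet$ with the Hodge--Tate filtration of $\rho_{z,\tilde u}$, yields the displayed inequality for all $k$, and hence $w(\ul h(z))\leq w(z)(\ul h(z))$.

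The main difficulty is this last step: neither the trianguline structure of $\rho_{z,\tilde u}$ nor its Hodge--Tate--Sen weights alone ``see'' the permutation $w$ attached to the weight of $\chi$, so one must feed in the unitarity of $\widehat S(U^u,W_p^u)$ and the explicit shape (\ref{equ: gln-sp1}) of the parameter; once the polygon inequality of the second paragraph is in hand, the conclusion is purely combinatorial.
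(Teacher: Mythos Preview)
The paper itself does not prove this proposition; it is stated with attribution to \cite[Prop.~9.2]{Br13II} and used as a black box. So the comparison is really with Breuil's argument in \cite{Br13II}.

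Your first two paragraphs are correct: the Hodge--Tate--Sen weight of $\gr^i$ is indeed $\ul h(z)_{w(z)^{-1}(i)}$, and the statement unwinds to the partial-sum inequality you display.

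The third paragraph, however, is not Breuil's argument and, as written, has a gap. Breuil's proof is \emph{entirely on the Galois side} and does not use unitarity of $\widehat S(U^u,W_p^u)$ or the adjunction formula at all. The key input you are missing is the finer content of the global triangulation theorem of \cite{KPX}, \cite{Liu}: at the point $z$ one obtains not only a (saturated) triangulation $\Fil^\bullet$ with graded pieces $\cR(\delta_i)$, but also, for each $k$, a (generally non-saturated) rank-$k$ sub-$(\varphi,\Gamma)$-module $\widetilde{\Fil}^k\subseteq\Fil^k$ whose successive quotients are the $\cR(\tilde\delta_i)$ built directly from $\chi$ (so with Sen weight $\ul h(z)_{w^{-1}(i)}$), and with $\Fil^k/\widetilde{\Fil}^k$ killed by a power of $t$. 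Taking determinants, $\det\widetilde{\Fil}^k=t^{m_k}\det\Fil^k$ with $m_k\in\Z_{\ge0}$, which immediately gives $\sum_{i\le k}\ul h(z)_{w^{-1}(i)}\le\sum_{i\le k}\ul h(z)_{w(z)^{-1}(i)}$, i.e.\ $w(\ul h(z))\le w(z)(\ul h(z))$. The permutation $w$ enters precisely through the parameters $\tilde\delta_i$ interpolated by the family, and $w(z)$ through their saturations; no automorphic input beyond the existence of this family is required.

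By contrast, your route via unitarity and \cite[Cor.~3.5]{Br13I} only tells you that \emph{some} irreducible constituent $\cF_{\overline P}^{\GL_n}(\sL(w'\cdot(-\wt(\chi))),\pi)$ of the image admits an invariant lattice, yielding integrality of its $Z_{L_P}(\Q_p)^+$-central character. But this constrains $w'w\cdot\lambda$ against $\psi$, with $w'$ an uncontrolled extra permutation; it does not compare $w$ with $w(z)$, which is defined purely by how the family triangulation saturates at $z$. The phrase ``weak-admissibility-type comparison of the triangulation flag with the Hodge--Tate filtration'' in your last sentence is closer to the truth, but the comparison one actually performs is between $\widetilde{\Fil}^\bullet$ and its saturation $\Fil^\bullet$, not with the Hodge filtration, and it comes straight out of \cite{KPX}, \cite{Liu} rather than from any lattice argument.
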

\begin{remark}
  Note that, conjecturally, one should have  $w \leq w(z)$ by the Bruhat's ordering  (cf. \cite[Rem.9.4 (2)]{Br13II}).
\end{remark}
\begin{definition}[$\text{\cite[Def.2.10]{BHS2}}$]\label{def: gln-fon}
  Let $\rho_p$ be a crystalline representation of $\Gal(\overline{\Q_p}/\Q_p)$ of dimension $n$ over $E$, we say $\rho_p$ is very regular if
 \begin{itemize}
   \item $\rho_p$ has distinct Hodge-Tate weights,
   \item the eigenvalues $\{\phi_1,\cdots, \phi_n\}$of the crystalline Frobenius on $D_{\cris}(\rho_p)$ satisfies $\phi_i\phi_j^{-1}\neq 1,p$, for $i\neq j$,
   \item $\phi_1\phi_2 \cdots \phi_i$ is a simple eigenvalue of the crystalline Frobenius on $\wedge^i_E D_{\cris}(\rho_p)$ for $1\leq i\leq n$.
 \end{itemize}
\end{definition}The following proposition follows from the same argument as in \cite[Thm.4.8, Thm.4.10]{Che11}
\begin{proposition}\label{prop: gln-hlt}Let $z=(\chi,\fh)$ be a classical point of $\cV_{I,\lambda_0}^{\chi_0}(U^u, W_p^u)$ satisfying that $\rho_z$ is absolutely irreducible,  $\rho_{z,\tilde{v}}$ is crystalline and very regular for all $\widetilde{v}|p$. Suppose $n\leq 3$, or $F/F^+$ unramified, $G$ quasi-split at all finite places, $U_v$ maximal hyperspecial at all inert places. If
  \begin{multline}\label{equ: gln-sese}
    J_{B,(P_I,\lambda_0)}\big(\widehat{S}(U^u,W_p^u)_{\lalg}\otimes_E L\big)^{T^o=\chi}[T(\Q_p)=\chi,\cH^{S,u}=\fh] \\ \xlongrightarrow{\sim} J_{B,(P_I,\lambda_0)}\big(\widehat{S}(U^u,W_p^u)_{\an}\otimes_E L\big)^{T^o=\chi}[T(\Q_p)=\chi,\cH^{S,u}=\fh],
  \end{multline}
  then the map $\kappa: \cV_{I,\lambda_0}^{\chi_0}(U^u, W_p^u) \ra \widehat{Z_{L_I}^o}$ is \'etale at $z$.
\end{proposition}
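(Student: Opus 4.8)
The plan is to reduce the étaleness statement to the étaleness of the analogous map on the full eigenvariety $\cV(U^u,W_p^u)$, or rather to transport the argument of Chenevier from that setting. First I would recall, following Prop.\ref{prop: gln-0lni}(2), that there is an affinoid open neighborhood $\cU$ of $z$ in $\cV_{I,\lambda_0}^{\chi_0}(U^u, W_p^u)$ such that $\kappa(\cU)=\Spm R$ is an affinoid open in $\widehat{Z_{L_I}^o}$ and $\kappa\colon \cU\to \Spm R$ is finite, surjective on each irreducible component of $\cU$; moreover by Cor.\ref{cor: gln-0iv} the coherent sheaf $\cM_{I,\lambda_0}^{\chi_0}(U^u,W_p^u)$ can be chosen so that its sections over $\cU$ form a finite projective $R$-module $M$, whose fiber at $z$ is dual to the finite-dimensional eigenspace $J_{B,(P_I,\lambda_0)}^{T_I^o,\chi_0}\big(\widehat{S}(U^u,W_p^u)_{\an}\big)[T(\Q_p)=\chi,\cH^{S,u}=\fh]$ (after the $\chi_\lambda$-twist). Étaleness of $\kappa$ at $z$ is then equivalent to the statement that $M$ is locally free of rank one near $z$, equivalently (since $M$ is already $R$-projective) that $\dim_{k(z)}(M\otimes_R k(z))=1$, i.e. that the above eigenspace is one-dimensional over $k(z)$.

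The heart of the argument is therefore to show this eigenspace is one-dimensional. Here I would invoke hypothesis (\ref{equ: gln-sese}): it says the analytic generalized eigenspace coincides with the locally algebraic one, so it suffices to bound the dimension of the locally algebraic eigenspace. Now I would use Prop.\ref{prop: gln-stm} together with the adjunction formula Thm.\ref{thm: gln-ycn} and the classical local-global compatibility: the locally algebraic eigenspace is built from $(\pi^{\infty,p})^{U^p}\otimes(\pi_p\otimes W_p)$-pieces, so after applying $J_{B,(P_I,\lambda_0)}$ and passing to the $T(\Q_p)=\chi$, $\cH^{S,u}=\fh$ generalized eigenspace, it is controlled by the Hecke eigenspace $(\pi^{\infty,p})^{U^p}$ attached to the automorphic representations contributing $\rho$, together with the Jacquet module of the component at $u$. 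The hypotheses on $\rho$ — $\rho_z$ absolutely irreducible, $\rho_{z,\tilde v}$ crystalline and very regular at all $\tilde v\mid p$ — together with the global hypotheses in the $n>3$ case (so that strong base change / the results underlying \cite{Che11} Thm.4.8, 4.10 apply, giving multiplicity one $m(\pi)=1$ and a one-dimensional Hecke eigenspace), force this to be one-dimensional: the very regular crystalline condition at $u$ makes the relevant classical Jacquet module on the $B\cap L_I$ side one-dimensional (the refinement $\chi$ singles out a single eigenvector of Frobenius on each exterior power), so there is exactly one vector with the prescribed $T(\Q_p)$-eigencharacter. This is exactly the computation performed in the proof of \cite[Thm.4.8, Thm.4.10]{Che11}, now carried out on $\cV_{I,\lambda_0}^{\chi_0}(U^u,W_p^u)$ instead of $\cV$; the point is that restricting to $\cV_{I,\lambda_0}^{\chi_0}$ does not enlarge, and under (\ref{equ: gln-sese}) does not shrink, the relevant eigenspace.

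Finally I would combine the two reductions: one-dimensionality of the fiber $M\otimes_R k(z)$, together with $M$ finite projective over $R$ (hence of generic rank one on the component through $z$, by the surjectivity and finiteness of $\kappa$ on irreducible components and equidimensionality from Prop.\ref{prop: gln-0lni}(1)), implies $M$ is free of rank one in a neighborhood of $z$ and hence $\kappa$ is étale at $z$ — this is the standard "Nakayama + constancy of fiber dimension for a finite projective module forces local freeness of rank one" argument used at the end of \cite[Thm.4.8]{Che11}. The main obstacle I expect is the automorphic input: verifying that under the stated global hypotheses (when $n>3$) the classical eigenspace attached to an absolutely irreducible, pointwise crystalline very regular $\rho$ is genuinely one-dimensional, which requires base-change descent, multiplicity one for the unitary group, and the precise bookkeeping between the refinement parameter $\chi$ and the choice of crystalline Frobenius eigenvalues on each $\wedge^i D_{\cris}$ — all of which is where the very regular hypothesis and the restriction $n\le 3$-or-global-hypotheses are really used, exactly as in \cite{Che11}.
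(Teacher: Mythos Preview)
Your plan has the right ingredients but contains a genuine identification error that breaks the argument. You write that the $R$-fiber $M\otimes_R k(z)$ is dual to the generalized eigenspace $J_{B,(P_I,\lambda_0)}^{T_I^o,\chi_0}\big(\widehat{S}(U^u,W_p^u)_{\an}\big)[T(\Q_p)=\chi,\cH^{S,u}=\fh]$. This is not correct: $R=\co(\kappa(\cU))$ lives on $\widehat{Z_{L_I}^o}$, so the $R$-fiber at $\kappa(z)$ is (dual to) the \emph{entire} $T^o=\chi|_{T^o}$-eigenspace of $J_{B,(P_I,\lambda_0)}^{T_I^o,\chi_0}$, before cutting out a single $T(\Q_p)$-character or a single Hecke eigensystem. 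The $(\chi,\fh)$-generalized eigenspace is only the piece of this corresponding to the one point $z$ of the eigenvariety lying over $\kappa(z)$; there may be others. Hypothesis (\ref{equ: gln-sese}) controls only that piece, so your multiplicity-one computation does not bound $\dim_{k(\kappa(z))}(M\otimes_R k(\kappa(z)))$, and hence does not force the $R$-rank of $M$ to be one.

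The paper closes exactly this gap by invoking Prop.~\ref{prop: gln-ngi}: it produces an accumulation set of classical points near $z$ at which the analogue of (\ref{equ: gln-sese}) holds automatically (this is the input playing the role of \cite[(4.19)]{Che11}). That accumulation set is what allows one to compute the \emph{generic} behaviour of $M$ over $R$ via the classical multiplicities, and then the hypothesis (\ref{equ: gln-sese}) at $z$ together with multiplicity one (the role of \cite[(4.20)]{Che11}) lets one compare the special fiber at $z$ with this generic rank and conclude \'etaleness exactly as in \cite[Thm.~4.8, 4.10]{Che11}. Your sketch omits this accumulation step entirely; without it, even if you localize $M$ at $z$ as an $A$-module (so that its $R$-fiber \emph{is} the $(\chi,\fh)$-generalized eigenspace), you lose $R$-projectivity and the ``fiber dimension one $\Rightarrow$ free of rank one'' step no longer applies.
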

\begin{proof}
Indeed, by the the discussion above Prop.\ref{prop: gln-0lni}, one can reduce to the same situation as in the beginning of the proof of \cite[Thm.4.8]{Che11}. Then Prop.\ref{prop: gln-ngi} ensures there exists a set of classical points satisfying (\ref{equ: gln-sese}) which accumulates over $z$ (as in  \cite[(4.19)]{Che11}). The hypothesis on $z$ gives the property \cite[(4.20)]{Che11}. This proposition then follows from the multiplicity one result as in the proof of \cite[Thm.4.8, Thm.4.10]{Che11}.
\end{proof}
\begin{lemma}\label{lem: gln-nin}
  Let $w,w'\in \sW$, $\ul{h}=(h_1,\cdots, h_n)\in \Z^n$, $h_{i}<h_{i+1}$ for $i=\{1,\cdots, n-1\}$, suppose $w(\ul{h})\geq w'(\ul{h})$, then if $w\in \sW_I$, so is $w'$.
\end{lemma}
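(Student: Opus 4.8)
The plan is to reduce the statement to a simple convexity fact about the entries of $\ul h$. Recall that $I\subseteq\Delta=\{1,\dots,n-1\}$ determines a partition of the ordered set $\{1,\dots,n\}$ into consecutive blocks with boundaries $0=a_0<a_1<\dots<a_{r-1}<a_r=n$ (the $a_j$ for $1\le j\le r-1$ being exactly the elements of $\Delta\setminus I$), and that $\sW_I$ consists precisely of those $\sigma\in\sW$ with $\sigma(\{1,\dots,a_j\})=\{1,\dots,a_j\}$ for every $j\in\{0,\dots,r\}$ (the conditions for $j=0$ and $j=r$ being vacuous). So it suffices to show that $w'(\{1,\dots,a_j\})=\{1,\dots,a_j\}$ for each $j\in\{1,\dots,r-1\}$.

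First I would record the convexity fact: for any subset $A\subseteq\{1,\dots,n\}$ of cardinality $m$ one has $\sum_{i\in A}h_i\ge\sum_{i=1}^m h_i$, with equality if and only if $A=\{1,\dots,m\}$; this is immediate from $h_1<h_2<\dots<h_n$ (in particular the $h_i$ are pairwise distinct). Applying it with $A=\sigma^{-1}(\{1,\dots,m\})$, and using $\sum_{l=1}^m(\sigma(\ul h))_l=\sum_{l=1}^m h_{\sigma^{-1}(l)}=\sum_{i\in\sigma^{-1}(\{1,\dots,m\})}h_i$, we obtain for every $\sigma\in\sW$ and every $m$ that $\sum_{l=1}^m(\sigma(\ul h))_l\ge\sum_{i=1}^m h_i$, with equality precisely when $\sigma(\{1,\dots,m\})=\{1,\dots,m\}$.

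Then I would carry out the comparison. Since $w\in\sW_I$ we have $w(\{1,\dots,a_j\})=\{1,\dots,a_j\}$, hence $\sum_{l=1}^{a_j}(w(\ul h))_l=\sum_{i=1}^{a_j}h_i$ for each $j\in\{1,\dots,r-1\}$. On the other hand the hypothesis $w(\ul h)\ge w'(\ul h)$ gives, at the index $a_j$, that $\sum_{l=1}^{a_j}(w'(\ul h))_l\le\sum_{l=1}^{a_j}(w(\ul h))_l=\sum_{i=1}^{a_j}h_i$, while the convexity fact applied to $\sigma=w'$ gives the reverse inequality $\sum_{l=1}^{a_j}(w'(\ul h))_l\ge\sum_{i=1}^{a_j}h_i$. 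Hence equality holds for every such $j$, and the equality case forces $w'(\{1,\dots,a_j\})=\{1,\dots,a_j\}$ for all $j\in\{1,\dots,r-1\}$, i.e. $w'\in\sW_I$.

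I do not expect any real obstacle here; the only points needing care are the translation of the membership condition $\sigma\in\sW_I$ into the assertion that $\sigma$ fixes the ``prefix sets'' $\{1,\dots,a_j\}$, and keeping track of the direction of the partial order — the larger tuple $w(\ul h)$ sits, at the boundary indices, at the minimal possible values of its prefix sums (because $w$ respects the blocks), and then $w'(\ul h)\le w(\ul h)$ is squeezed to those same minimal values.
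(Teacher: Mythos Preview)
Your proof is correct and follows essentially the same idea as the paper's: both arguments hinge on the fact that for any $\sigma\in\sW$ and any block boundary $a_j$, the prefix sum $\sum_{l=1}^{a_j}(\sigma(\ul h))_l$ is bounded below by $\sum_{i=1}^{a_j}h_i$, with equality precisely when $\sigma$ fixes $\{1,\dots,a_j\}$. The paper phrases this as a contradiction (picking the smallest index where $(w')^{-1}$ leaves its block and exhibiting a strict inequality at the corresponding $a_j$), whereas you argue directly via the squeeze between the convexity lower bound and the hypothesis $w'(\ul h)\le w(\ul h)$; these are minor presentational variants of the same argument.
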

\begin{proof}If $w'\notin \sW_I$,
let $i\in \{1,\cdots,n\}$ be the smallest number such that $(w')^{-1}(i)$ and $i$ do not belong to the same partition defined as in the beginning of \S \ref{sec: 3.2.1}, let $i_0\in \{1,\cdots r\}$ such that $i\in S_{i_0}$ (see the beginning of \S \ref{sec: 3.2.1}), let $a:=|S_1|+\cdots+|S_{i_0}|$, one sees easily (where the first equality follows from $w\in \sW_I$)
\begin{equation*}
  \sum_{i=1}^{a} h_{w^{-1}(i)}=\sum_{i=1}^a h_i < \sum_{i=1}^a h_{(w')^{-1}(i)},
\end{equation*}
a contradiction.
\end{proof}
The following theorem generalizes \cite[Thm.4.8,Thm.4.10]{Che11}.
\begin{theorem}\label{thm: gln-tv0}
  Let $z=(\chi,\fh)$ be a classical point of $\cV_{I,\lambda_0}^{\chi_0}(U^u, W_p^u)$ satisfying that $\rho_z$ is absolutely irreducible,  $\rho_{z,\widetilde{v}}$ is crystalline and very regular for all $\widetilde{v}|p$. Suppose $n\leq 3$, or $F/F^+$ unramified, $G$ quasi-split at all finite places, $U_v$ maximal hyperspecial at all inert places.  If $w(z)\in \sW_I$ (cf. (\ref{equ: gln-sp1})), then $\cV_{I,\lambda_0}^{\chi_0}(U^u, W_p^u)$ is \'etale over $\widehat{Z_{L_I}^o}$ at $z$.
\end{theorem}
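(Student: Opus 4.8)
The plan is to deduce the theorem from Proposition \ref{prop: gln-hlt}: it suffices to check that the infinitesimal classicality isomorphism (\ref{equ: gln-sese}) holds at $z$, i.e. that every vector in the analytic generalized eigenspace $J_{B,(P_I,\lambda_0)}\big(\widehat{S}(U^u,W_p^u)_{\an}\big)^{T^o=\chi}[T(\Q_p)=\chi,\cH^{S,u}=\fh]$ already lies in $J_{B,(P_I,\lambda_0)}\big(\widehat{S}(U^u,W_p^u)_{\lalg}\big)$. Once this is established, the \'etaleness of $\kappa$ at $z$ is exactly the output of Proposition \ref{prop: gln-hlt}, whose proof imports the multiplicity-one argument of \cite[Thm.4.8, Thm.4.10]{Che11} (this is the step using that $\rho_z$ is absolutely irreducible, $\rho_{z,\tilde{v}}$ crystalline and very regular, and the global hypotheses when $n\geq 4$).

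To prove the infinitesimal classicality I would follow the proof of Proposition \ref{prop: gln-ngi} (which is modeled on \cite[Thm.9.10]{Br13II}). Since $z$ is classical, $\wt(\chi)=\lambda$ is dominant; write $\chi=\chi_{\lambda}\psi$ with $\psi$ smooth. A nonzero vector $v$ in the above eigenspace generates a $T(\Q_p)$-subrepresentation $\chi_{\lambda}\otimes_E\pi_\psi$ with $\pi_\psi$ smooth of finite length, all of whose Jordan--H\"older factors are $\psi$; by the adjunction formula Theorem \ref{thm: gln-ycn} we obtain a nonzero $\GL_n(\Q_p)$- and $\cH^{S,u}$-equivariant map
\[
\cF_{\overline{P}_I}^{\GL_n}\Big(\big(\text{U}(\ug)\otimes_{\text{U}(\overline{\fp}_I)}\sL_I(\lambda)'\big)^{\vee},\big(\Ind_{\overline{B}\cap L_I(\Q_p)}^{L_I(\Q_p)}\pi_\psi\otimes_E\delta_B^{-1}\big)^{\infty}\Big)\lra\widehat{S}(U^u,W_p^u)_{\an}.
\]
By Theorem \ref{thm: gln-pst}(2)--(3) the irreducible constituents of the source are the representations $\cF_{\overline{P}_{I_w}}^{\GL_n}\big(\sL(w\cdot(-\lambda)),\pi_w\big)$ for the $w\in\sW$ with $w\cdot(-\lambda)$ $P_I$-dominant, the locally algebraic ones being those with $w\cdot(-\lambda)$ $G$-dominant; as in the proof of Proposition \ref{prop: gln-ngi}, the locally algebraic part is a quotient of the source by the subrepresentation generated by all the non-locally-algebraic constituents, so it is enough to show that no non-locally-algebraic constituent can be a subrepresentation of $\widehat{S}(U^u,W_p^u)_{\an}$ on which $\cH^{S,u}$ acts through a generalized eigenvalue extending $\fh$: then the above map factors through that quotient and $v$ is locally algebraic.

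The heart of the matter is this last claim, and this is where the hypothesis $w(z)\in\sW_I$ enters. Suppose some non-locally-algebraic $\cF_{\overline{P}_{I_w}}^{\GL_n}\big(\sL(w\cdot(-\lambda)),\pi_w\big)$ embeds as above. Applying Theorem \ref{thm: gln-pst}(3) and Breuil's adjunction (\ref{equ: gon-ebgg}) (with $V=\widehat{S}(U^u,W_p^u)_{\an}$), this embedding produces a closed point $z'=(\chi',\fh)$ of $\cV(U^u,W_p^u)$ with $\chi'$ locally algebraic of weight $\wt(\chi')=w^{\alg}\cdot\lambda$ for some $w^{\alg}\in\sW\setminus\{1\}$, and moreover $w^{\alg}\notin\sW_I$: this is precisely the design of the subfunctor $J_{B,(P_I,\lambda_0)}$, whose non-locally-algebraic constituents are the $C(w^{\alg},\cdot)$ with $w^{\alg}$ leaving the Weyl chamber of $L_I$ (compare (\ref{equ: gln-w1t})). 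Since $\fh$ determines the Galois representation, $\rho_{z'}=\rho_z$, so $\rho_{z,\tilde{u}}$ is crystalline very regular; and since the smooth parameter of $\chi'$ is built from a subquotient of a smooth induction of $\psi$, the points $z$ and $z'$ are companion points of one and the same refinement, so the triangulations of $D_{\rig}(\rho_{z,\tilde{u}})$ at $z$ and $z'$ coincide and $w(z')=w(z)$. Proposition \ref{prop: gln-enn} applied at $z'$ then gives $w^{\alg}(\ul{h}(z))\leq w(z')(\ul{h}(z))=w(z)(\ul{h}(z))$; as $w(z)\in\sW_I$ and $\ul{h}(z)$ is strictly increasing by very regularity, Lemma \ref{lem: gln-nin} forces $w^{\alg}\in\sW_I$, contradicting $w^{\alg}\notin\sW_I$. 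This excludes all non-locally-algebraic constituents and completes the argument. I expect the main obstacle to be in making the two identifications of this paragraph fully rigorous: that a non-locally-algebraic irreducible Orlik--Strauch subrepresentation of $\widehat{S}(U^u,W_p^u)_{\an}$ yields a bona fide companion point on $\cV(U^u,W_p^u)$ with the asserted locally algebraic weight $w^{\alg}\cdot\lambda$, $w^{\alg}\notin\sW_I$; and that the refinements at $z$ and $z'$ match, so that $w(z')=w(z)$ (this uses global triangulation theory, \cite{KPX} and \cite{Liu}, at both points, together with the compatibility of smooth parameters). A secondary, more technical point — and the reason for the global hypotheses when $n\geq 4$ — is the multiplicity-one input of \cite[Thm.4.8, Thm.4.10]{Che11}, needed both to pass from infinitesimal classicality to \'etaleness via Proposition \ref{prop: gln-hlt} and to ensure that $\widehat{S}(U^u,W_p^u)_{\an}[\cH^{S,u}=\fh]$ is controlled by $\widehat{\Pi}(\rho_z)_{\an}$.
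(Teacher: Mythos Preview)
Your proposal is correct and follows essentially the same route as the paper's proof: reduce to (\ref{equ: gln-sese}) via Prop.~\ref{prop: gln-hlt}, use the adjunction formula Thm.~\ref{thm: gln-ycn} as in the proof of Prop.~\ref{prop: gln-ngi}, and rule out each non--locally-algebraic constituent $\cF_{\overline{P}_{I_w}}^{\GL_n}(\sL(w\cdot(-\lambda)),\pi_w)$ by producing a companion point $z'=(\chi\chi_{w\cdot\lambda-\lambda},\fh)$ on $\cV(U^u,W_p^u)$, then combining Prop.~\ref{prop: gln-enn} with Lem.~\ref{lem: gln-nin}. The two technical concerns you flag are handled briskly in the paper: the companion point is obtained by applying $J_B$ to the embedding and invoking \cite[Cor.~3.4]{Br13I} (rather than the adjunction (\ref{equ: gon-ebgg})), and the equalities $w(z')=w(z)$, $\ul{h}(z')=\ul{h}(z)$ are immediate since $\chi$ and $\chi'$ share the same smooth part and $\fh$ determines $\rho_z=\rho_{z'}$, so the refinement and Hodge--Tate weights coincide.
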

 \begin{proof}
   By Prop.\ref{prop: gln-hlt}, it's sufficient to show that if $w(z)\in \sW_I$, then any vectors in the generalized eigenspace
   \begin{equation*}
     J_{B,(P_I,\lambda_0)}\big(\widehat{S}(U^u,W_p^u)_{\an}\otimes_E L\big)^{T^o=\chi}[T(\Q_p)=\chi,\cH^{S,u}=\fh]
   \end{equation*}
   is locally algebraic (cf. Prop.\ref{prop: gln-ngi}). And as in the proof of Prop.\ref{prop: gln-ngi} (and we use the notations there), it's sufficient to prove that  $\cF_{\overline{P}_{I_w}}^{\GL_n}\big(\sL(-w\cdot \wt(\chi)), \pi_w\big)$ (see Lem.\ref{lem: gln-ssn}) can not be a subrepresentation of $\widehat{S}(U^p,E)[\cH^{S,u}=\fh]$  if $w\neq 1$ and $w\cdot \wt(\chi)$ is $P_I$-dominant.   Suppose there exist $w\neq 1$, $w\cdot \wt(\chi)$ being $P_I$-dominant and an injection
   \begin{equation*}
     \cF_{\overline{P}_{I_w}}^{\GL_n}\big(\sL(-w\cdot \wt(\chi)), \pi_w\big) \hooklongrightarrow\widehat{S}(U^u,W_p^u)_{\an}[\cH^{S,u}=\fh].
   \end{equation*}
  Applying the Jacquet-Emerton functor $J_B(\cdot)$, by \cite[Cor.3.4]{Br13I}, we get a closed point $z'=(\chi',\fh)\in \cV(U^u,W_p^u)$ with $\chi'=\chi\chi_{{w\cdot \lambda}-\lambda}$ (see also the proof of \cite[Thm.9.3]{Br13II}). By Prop.\ref{prop: gln-enn}, $w(\ul{h}(z'))\leq w(z')(\ul{h}(z'))$, note $w(z')=w(z)$, $\ul{h}(z')=\ul{h}(z)$, and thus by Lem.\ref{lem: gln-nin}  $w\in \sW_I$, which contradicts the fact that $w \cdot \wt(\chi)$ is $P_I$-dominant and $w\neq 1$.
 \end{proof}
Conversely, we have the following result which follows directly from results of Bergdall (\cite{Bergd14})
\begin{theorem}\label{thm: gln-tvI0}
  Let $z=(\chi,\fh)$ be a classical point of $\cV_{I,\lambda_0}^{\chi_0}(U^u, W_p^u)$ satisfying that $\rho_z$ is absolutely irreducible,  and that $\rho_{z,\widetilde{u}}$ is crystalline and very regular. Suppose $w(z) \notin \sW_I$, then $\cV_{I,\lambda_0}^{\chi_0}(U^u, W_p^u)$ is not \'etale over $\widehat{Z_{L_I}^o}$ at $z$.
\end{theorem}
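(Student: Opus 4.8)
The strategy is to reduce the assertion to a tangent-space computation and then invoke Bergdall's study of paraboline eigenvarieties \cite{Bergd14}; the only real work is to match our space $\cV_{I,\lambda_0}^{\chi_0}(U^u,W_p^u)$ and our map $\kappa$ with the objects considered there. Suppose, for contradiction, that $\kappa$ is \'etale at $z$. Since $\widehat{Z_{L_I}^o}$ is smooth of dimension $r:=n-|I|$ (the number of blocks of $L_I$), \'etaleness forces $\cV_{I,\lambda_0}^{\chi_0}(U^u,W_p^u)$ to be smooth at $z$ of dimension $r$ and the differential $d\kappa\colon T_z\bigl(\cV_{I,\lambda_0}^{\chi_0}(U^u,W_p^u)\bigr)\to T_{\kappa(z)}\widehat{Z_{L_I}^o}$ to be an isomorphism; in particular $d\kappa$ is injective, so it is enough to produce a non-zero tangent vector at $z$ along which the weight does not vary, i.e.\ a non-zero element of $\ker(d\kappa)$.

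Next I would pass to the Galois side. Composing the closed embedding (\ref{equ: gln-gwo}) with the family of Galois representations carried by $\cV(U^u,W_p^u)$ and applying the global triangulation theory of \cite{KPX},\cite{Liu} as in \S\ref{sec: 3.3}, an element of $\ker(d\kappa)$ produces an $L[\epsilon]/(\epsilon^2)$-deformation $\rho_{\tilde z}$ of $\rho_z$ which is again self-dual, unramified outside the bad places, and such that $\rho_{\tilde z,\tilde u}$ is trianguline with a triangulation deforming the one at $z$ and with parameter $\delta$ held constant up to unramified twists --- the last point being exactly the statement that $\tilde z$ lies in $\ker(d\kappa)$, combined with the constraints of Rem.\ref{rem: gln-y0ct} coming from membership in $\cV_{I,\lambda_0}^{\chi_0}(U^u,W_p^u)$. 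Conversely, any non-trivial weight-constant $P_I$-paraboline trianguline deformation of $\rho_z$ arises from an element of $\ker(d\kappa)$: this direction uses that $\rho_z$ is absolutely irreducible, that $\rho_{z,\tilde u}$ is crystalline and very regular, and the multiplicity-one description of $\widehat{S}(U^u,W_p^u)_{\lalg}$ together with the adjunction formula of Thm.\ref{thm: gln-ycn}, exactly as in the proofs of Prop.\ref{prop: gln-hlt} and of \cite[Thm.4.8]{Che11}. Thus $\kappa$ can be \'etale at $z$ only if $\rho_{z,\tilde u}$ admits no non-trivial deformation over $L[\epsilon]/(\epsilon^2)$ as a trianguline $(\varphi,\Gamma)$-module carrying its $P_I$-paraboline flag and with constant parameter.

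Finally I would apply \cite[Thm.B]{Bergd14}. Under our hypotheses --- so that the refinement of $\rho_{z,\tilde u}$ recorded by $w(z)\in\sW$ is in the most generic position --- the non-vanishing of exactly this local deformation space is Bergdall's criterion for the paraboline eigenvariety to fail to be \'etale over its weight space at $z$, and it occurs precisely when $w(z)\notin\sW_I$: the extra tangent direction is produced by a non-split self-extension of rank-one $(\varphi,\Gamma)$-modules forced along the triangulation at the boundary between two blocks of $L_I$ which $w(z)$ interchanges (equivalently, an additional crystalline period appears when one crosses such a boundary with the ``wrong'' ordering). This contradicts the injectivity of $d\kappa$, and the theorem follows.

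The step I expect to be the main obstacle is the bookkeeping of the second and third paragraphs: identifying our construction (which follows Hill--Loeffler via the functor $J_{B,(P_I,\lambda_0)}^{T_I^o,\chi_0}(\cdot)$) and the map $\kappa$ to $\widehat{Z_{L_I}^o}$ with the paraboline eigenvariety and weight map of \cite{Bergd14}, and verifying that the weight-constant normal directions to $z$ inside $\cV_{I,\lambda_0}^{\chi_0}(U^u,W_p^u)$ correspond under global triangulation to precisely the $P_I$-paraboline trianguline deformations of $\rho_{z,\tilde u}$ with fixed parameter. The hypotheses that $\rho_z$ be absolutely irreducible and $\rho_{z,\tilde u}$ be crystalline and very regular are exactly what make this dictionary clean and reduce the obstruction to the purely combinatorial condition $w(z)\notin\sW_I$.
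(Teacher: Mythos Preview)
Your overall plan --- reduce to a tangent-space statement and then appeal to \cite[Thm.~B]{Bergd14} --- is the right shape, but the paper's argument is both simpler and attacks the opposite side of the tangent map. Rather than producing a nonzero element of $\ker(d\kappa)$, the paper shows that $d\kappa$ is not \emph{surjective}. Concretely: via the closed embedding (\ref{equ: gln-gwo}) one has a tangent map $\nabla_z\colon T_{\cV,z}\to T_{\widehat{T},\chi}\cong k(z)^n$, and by Rem.~\ref{rem: gln-y0ct} its image already lies in the block-constant subspace $X=\{(x_i):x_i=x_{i'}\text{ whenever }i,i'\in S_j\}$, which maps isomorphically onto $T_{\cW,\kappa(z)}$. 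What \cite[Thm.~B]{Bergd14} actually supplies (and this is the main mismatch with your reading) is a \emph{linear relation} on $\Ima(\nabla_z)$: the $i$-th and $w(z)^{-1}(i)$-th coordinates of any image vector coincide. When $w(z)\notin\sW_I$ there is some $i$ with $i$ and $w(z)^{-1}(i)$ in different blocks, so this relation is not implied by membership in $X$; hence $\Ima(\nabla_z)\subsetneq X$ and $d\kappa$ is not surjective. No identification of $\cV_{I,\lambda_0}^{\chi_0}(U^u,W_p^u)$ with a paraboline eigenvariety is needed --- everything takes place on the full Borel eigenvariety via (\ref{equ: gln-gwo}).

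Your route has a genuine gap in the ``converse'' step of the second paragraph: you need every weight-constant $P_I$-paraboline trianguline deformation of $\rho_{z,\tilde u}$ to arise from an element of $\ker(d\kappa)$, i.e.\ from a tangent vector to the eigenvariety. The references you cite (Prop.~\ref{prop: gln-hlt}, \cite[Thm.~4.8]{Che11}) run in the opposite direction --- from automorphic data to \'etaleness --- and do not furnish a map from local Galois deformation spaces into $T_{\cV,z}$; obtaining that direction would amount to an infinitesimal $R=\bT$-type statement which is not available in this setting. Moreover, your reading of \cite[Thm.~B]{Bergd14} as ``non-vanishing of a local deformation space'' producing an extra tangent direction is not what that theorem asserts: it constrains the image of the weight map rather than exhibiting kernel elements. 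The paper's non-surjectivity argument sidesteps both of these issues entirely.
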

\begin{proof}Denote by $\cV:=\cV_{I,\lambda_0}^{\chi_0}(U^u, W_p^u)$, $\cW:=\widehat{Z_{L_I}^o}$ for simplicity.
  Consider the tangent map $\nabla_z$:  $T_{\cV,z}\ra T_{\widehat{T},\chi}$, and let
  \begin{equation*}X:=\big\{(x_i)\in T_{\widehat{T},\chi}\cong k(z)^{n}\ |\ x_i=x_{i'}, \text{ for } i, i'\in S_j, \ j\in \{1,\cdots,r\}\big\}
   \end{equation*}
%note $\Ima(\nabla_z)_i=\Ima(\nabla_z)_{i'}$, if $i, i'\in S_j$ for $j\in \{1,\cdots, r\}$.
We have $\Ima(\nabla_z)\subseteq X$ (e.g. see Rem.\ref{rem: gln-gii}). The natural map $T_{\widehat{T},\chi}\ra T_{\cW,\kappa(z)}$ thus induces an isomorphism $X\xrightarrow{\sim} T_{\cW,\kappa(z)}$. Since $w(z) \notin \sW_I$, there exists $i\in \{1,\cdots, n\}$ such that $(w(z))^{-1}(i)$ and $i$ do not belong to the same partition defined by $I$ (cf. \S \ref{sec: 3.2.1}). By \cite[Thm.B]{Bergd14}, $\Ima(\nabla_z)_i=\Ima(\nabla_z)_{w(z)^{-1}(i)}$, from which we deduce $\nabla_z$ is not surjective onto $X$, the theorem follows.
\end{proof}
\begin{remark}\label{rem: gln-scr}
  The two above theorems also imply that in general the rigid space $\cV_{I,\lambda_0}^{\chi_0}(U^u, W_p^u)'$ (cf. Rem.\ref{rem: gln-msa}) is different from $\cV_{I,\lambda_0}^{\chi_0}(U^u, W_p^u)$. For example, applying Thm.\ref{thm: gln-tvI0} to $\cV(U^u,W_p^u)$, if $z$ is a classical point with $w(z)\neq 1$, then $\cV(U^u,W_p^u)$ is not \'etale over $\widehat{T^o}$ at $z$ with the induced tangent map non-surjective (hence non-injective), from which we can deduce $\cV_{I,\lambda_0}^{\chi_0}(U^u, W_p^u)'$ is not \'etale over $\widehat{Z_{L_I}^o}$ at $z$ for any $I\subsetneq \Delta$; however, if there exists $I\subsetneq \Delta$ such that $w(z)\in \sW_I$, then $\cV_{I,\lambda_0}^{\chi_0}(U^u, W_p^u)$ should be \'etale over $\widehat{Z_{L_I}^o}$ at $z$.
\end{remark}

\section{Local-global compatibility}
Let $\rho_p$ be an $n$-dimensional very regular crystalline representation of $\Gal(\overline{\Q_p}/\Q_p)$ over $E$. Let $\ul{h}=(h_1,\cdots, h_n)$ be the Hodge-Tate weights of $\rho_p$ (with $h_1<h_2\cdots <h_n$), $(\phi_1,\phi_2,\cdots, \phi_n)$ be the eigenvalues of the crystalline Frobenius on $D_{\cris}(\rho_p)$.
Recall $\rho_p$ admits $n!$ triangulations (which are also called refinements) parameterized by $\sW\cong S_n$. Indeed, for $w\in \sW$, one has a triangulation of $\rho_p$ of parameter
\begin{equation*}
\big(\unr\big(\phi_{w^{-1}(1)}\big), \cdots, \unr\big(\phi_{w^{-1}(i)}\big)x^{1-i}, \cdots, \unr\big(\phi_{w^{-1}(n)}\big)x^{1-n}\big)\chi_{w^{\alg}(w)\cdot \lambda}
\end{equation*}
for some $w^{\alg}(w)\in \sW$ (uniquely determined by $w$ and $\rho_p$), where $\lambda$ is the dominant weight of $\ft$ with $k_{\lambda,i}=-h_i+i-1$ for $i=1,\cdots, n$. Let $\psi_{w,i}:=\unr\big(\phi_{w^{-1}(i)}p^{i-n}\big)$, $\psi_w:=\psi_{w,1}\otimes\cdots \psi_{w,n}$. Note by the assumption on $\{\phi_i\}$, the smooth representations $\pi:=\big(\Ind_{\overline{B}(\Q_p)}^{\GL_n(\Q_p)} \psi_{w}\delta_B^{-1}\big)^{\infty}$ of $\GL_n(\Q_p)$ are irreducible and isomorphic to each other \big(recall $\delta_B=\unr(p^{1-n}) \otimes\cdots \otimes \unr(p^{2i-n-1}) \otimes\cdots \otimes \unr(p^{n-1})$\big).
Following Breuil (\cite[\S 6]{Br13I}, see also \cite[\S 6]{Br13II}), for $(w^{\alg}, w)\in S_n \times S_n$, put (note the notation is slight different from that in \cite[\S 6]{Br13II}, indeed, the dominant weight $\lambda$ that we use differs from ``$\lambda_{\tilde{v}}$" in \emph{loc. cit.} by $n-1$)
\begin{equation}\label{equ: gln-ngaa}
  C(w^{\alg},w):=\cF_{\overline{B}}^{\GL_n}\big(\sL(w^{\alg}\cdot (-\lambda)), \psi_w\delta_B^{-1}\big)
\end{equation}
which is irreducible by Thm.\ref{thm: gln-pst} (4).

Let $\rho$ be an $n$-dimensional  continuous representation of  $\Gal(\overline{F}/F)$ over $E$ as in the beginning of \S \ref{sec: 3.3}. Suppose $\rho_{\tilde{v}}$ is crystalline and very regular for all $\tilde{v}|p$. Let $\ul{h}_{\tilde{u}}:=(h_{\tilde{u},1},\cdots, h_{\tilde{u},n})$ denote the Hodge-Tate weights of $\rho_{\tilde{u}}$, $(\phi_{\tilde{u},1}, \cdots, \phi_{\tilde{u},n})$ eigenvalues of crystalline Frobenius on $D_{\cris}(\rho_{\tilde{u}})$ . Let $\lambda_{\tilde{u}}$ be the dominant weight of $\ft$ with $k_{\lambda_{\tilde{u}},i}=-h_{\tilde{u},i}+i-1$ for $i=1,\cdots, n$. We associate as above to $\rho_{\widetilde{u}}$ locally analytic representations $\big\{C(w_{\tilde{u}}^{\alg},w_{\tilde{u}})\big\}_{(w_{\tilde{u}}^{\alg},w_{\tilde{u}})\in S_n\times S_n}$. Suppose moreover $\widehat{\Pi}(\rho)_{\lalg}\neq 0$.
\begin{conjecture}[$\text{\cite{Br13I}, \cite[Conj.5.3]{Br13II}}$]\label{conj: gln-noa}
  Keep the above notation and assumption, then
  \begin{equation*}
    \Hom_{\GL_n(\Q_p)}\big(C(w_{\tilde{u}}^{\alg},w_{\tilde{u}}), \widehat{\Pi}(\rho)_{\an}\big) \neq 0
  \end{equation*}
  if and only if $w_{\tilde{u}}^{\alg}\leq w_{\tilde{u}}^{\alg}(w_{\tilde{u}})$, where the $\GL_n(\Q_p)$ acts on $\widehat{\Pi}(\rho)$ via $i_{G, \tilde{u}}$.
\end{conjecture}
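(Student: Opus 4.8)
Since Conjecture \ref{conj: gln-noa} is not known in general, the plan is to describe the strategy by which it is established when $w^{\alg}_{\tilde u}$ is a simple reflection (Thm.\ref{thm: gln-wsn}), and, under the extra global hypotheses of Prop.\ref{prop: gln-hlt}, partially for general $n$. Write $w^{\alg}:=w^{\alg}_{\tilde u}$ and $w:=w_{\tilde u}$, and let $z_{\rho,w}=(\chi_w,\fh_\rho)$ with $\chi_w=\psi_w\chi_{\lambda_{\tilde u}}$ be the classical point attached to the refinement $w$ on the eigenvarieties of \S\ref{sec: gln-3.1}. The key point is to read off embeddings $C(w^{\alg},w)\hookrightarrow\widehat\Pi(\rho)_{\an}$ from the local geometry, near $z_{\rho,w}$, of the closed subspaces $\cV_{I,\lambda_0}^{\chi_0}(U^u,W_p^u)$ cut out by the refined Jacquet--Emerton functors $J_{B,(P_I,\lambda_0)}^{T_I^o,\chi_0}(\cdot)$.

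The \emph{only if} direction is Breuil's (Thm.\ref{thm: gln-sio}(1)): given a nonzero element of $\Hom_{\GL_n(\Q_p)}(C(w^{\alg},w),\widehat\Pi(\rho)_{\an})$, apply $J_B(\cdot)$ and \cite[Cor.3.4]{Br13I} to obtain a locally algebraic point $z=(\chi,\fh_\rho)$ of $\cV(U^u,W_p^u)$; global triangulation theory (\cite{KPX},\cite{Liu}) makes $\rho_{z,\tilde u}$ trianguline of parameter (\ref{equ: gln-sp1}) with $w(z)=w^{\alg}(w)$, and Prop.\ref{prop: gln-enn} gives $w^{\alg}(\ul h_{\tilde u})\le w^{\alg}(w)(\ul h_{\tilde u})$; for $n=3$ this forces $w^{\alg}\le w^{\alg}(w)$ in the Bruhat order.

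For the \emph{if} direction when $w^{\alg}=s$ is a simple reflection, take $I:=\Delta\setminus\{s\}$, so that $P_I$ is a maximal parabolic with $s\notin\sW_I$; an elementary check in $S_3$ shows that ``$s\le w^{\alg}(w)$'' is equivalent to ``$w^{\alg}(w)\notin\sW_I$''. Consider the map $\kappa\colon\cV_{I,\lambda_0}^{\chi_0}(U^u,W_p^u)\to\widehat{Z_{L_I}^o}$ at the point $z=z_{\rho,w}$. On one side, the adjunction formula (\ref{equ: gln-e2G}) combined with the two-step shape (\ref{equ: gln-w1t}) of the relevant $\cF_{\overline P}^{\GL_n}(\cdot,\cdot)$ yields the infinitesimal classicality of Prop.\ref{prop: gln-hlt}: if $C(s,w)$ is \emph{not} a subrepresentation of $\widehat\Pi(\rho)_{\an}$, then the generalized $z$-eigenspace in $J_{B,(P_I,\lambda_0)}(\widehat S(U^u,W_p^u)_{\an})$ is already locally algebraic, whence, by the multiplicity-one argument of \cite[Thm.4.8]{Che11} (cf. Thm.\ref{thm: gln-tv0}), $\kappa$ is étale at $z$; contrapositively, $\kappa$ not étale at $z$ implies $C(s,w)\hookrightarrow\widehat\Pi(\rho)_{\an}\subseteq\widehat\Pi(\rho)$. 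On the other side, Bergdall's tangent-space computation (\cite[Thm.B]{Bergd14}, cf. Thm.\ref{thm: gln-tvI0}) shows $\kappa$ is \emph{not} étale at $z$ as soon as $w^{\alg}(w)=w(z)\notin\sW_I$. Combining the two, $s\le w^{\alg}(w)$ implies $C(s,w)\hookrightarrow\widehat\Pi(\rho)$, which is Thm.\ref{thm: gln-wsn}; the ``in particular'' statement follows since, for $\lg w^{\alg}(w)\ge2$, one has $s\le w^{\alg}(w)$ for both $s\in\Delta$ and the two $C(s,w)$ are non-isomorphic irreducibles.

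Running the argument of \cite[Thm.4.8]{Che11} inside $\cV_{I,\lambda_0}^{\chi_0}(U^u,W_p^u)$ also requires the Zariski-density of classical points there (Thm.\ref{thm: gln-aen}, Thm.\ref{thm: gln-vv0i}); since some weights are frozen on this subspace, the usual classicality criterion on $\cV(U^u,W_p^u)$ is not enough, so one first proves the sharper criterion Thm.\ref{thm: gln-enf}, again from (\ref{equ: gln-e2G}) and the invariant-lattice obstruction of \cite[Cor.3.5]{Br13I}. I expect the main obstacle — and the reason the full conjecture remains open — to be twofold: Prop.\ref{prop: gln-enn} controls only the \emph{weight} ordering, so the \emph{only if} direction with full Bruhat strength is presently available only for $\GL_3$; and detecting companion points with $\lg w^{\alg}\ge2$ would require non-étaleness ``in higher codimension'' for the subspaces $\cV_{I,\lambda_0}^{\chi_0}$ attached to smaller $I$, beyond what the available tangent-space computations of Bergdall and Breuil provide, the $\GL_n$ case moreover needing the extra global hypotheses of Prop.\ref{prop: gln-hlt}.
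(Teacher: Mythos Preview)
The statement is a conjecture, not a theorem, and the paper gives no proof of it; you correctly recognize this and instead outline the strategy behind the partial results the paper does establish (Thm.\ref{thm: gln-3dG}, Cor.\ref{cor: gln-ang}, Cor.\ref{cor: gln-nmf}). Your summary is accurate and follows the paper's own route: the ``only if'' direction via Prop.\ref{prop: gln-enn} and \cite[Cor.3.4]{Br13I}; the ``if'' direction for simple reflections by choosing $I=\Delta\setminus\{s\}$, combining the infinitesimal-classicality/\'etaleness criterion of Prop.\ref{prop: gln-hlt} (fed by the adjunction formula Thm.\ref{thm: gln-ycn} and the two-step structure (\ref{equ: gln-gspi})) with Bergdall's non-\'etaleness Thm.\ref{thm: gln-tvI0}; and the supporting density result Thm.\ref{thm: gln-vv0i} via the sharpened classicality criterion Thm.\ref{thm: gln-enf}. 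Your diagnosis of the remaining obstacles is also in line with the paper's own remarks (cf. Rem.9.4(2) of \cite{Br13II} cited after Prop.\ref{prop: gln-enn}, and the closing $\GL_4$ example).
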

\begin{remark}
  By \cite[Prop.5.4]{Br13II}, the conjecture is in fact independent of the choice of $\tilde{u}$, i.e. if Conj.\ref{conj: gln-noa} holds for $\tilde{u}$ then it holds for $\tilde{u}^c$.
\end{remark}
%Let $\rho: \Gal(\overline{F}/F) \ra \GL_n(E)$ such that $\widehat{\Pi}(\rho)_{\lalg}\neq 0$, suppose for all $\widetilde{v}|p$, $\rho_{\widetilde{v}}:=\rho|_{\Gal(\overline{\Q_p}/F_{\widetilde{v}})}$ is crystalline with $n$ different eigenvalues  for crystalline Frobenius, moreover, suppose $\phi_i \phi_j^{-1}\neq p$ for two such eigenvalues.
%Let $\{\phi_1,\cdots, \phi_n\}$ be the $n$-different eigenvalues of crystalline Frobenius of $\rho_{\widetilde{u}}$. Recall, for any $w\in \sW$, we can associate to $\rho$ a classical point $z_w:=(\chi_w, \fh_{\rho})$ with
% we fix an order of Frobenius eigenvalues. By results of chenevier? (or ?), for any $w\in \sW$, one get a classical point $z_w:=(\chi_w,\fh_{\rho})$, with
%\begin{equation*}
%  \chi_w=?
%\end{equation*}
%For each $w$, let $w^{\alg}(w):=w(z_w)\in \sW$ (cf. (\ref{})).% such that $\rho_u$ admits a triangulation
%\begin{equation*}
%%  (\delta_1,\cdots, \delta_n)=?
%\end{equation*}
%For $(w',w)\in \sW\times \sW$, Breuil defined a locally analytic representation $\cC(w',w)$ (which depends on $w'$, $w$ and the ordered set $\{\}$. Recall
%We (recall and) prove some results on Conj.\ref{conj: gln-noa}.,
Recall
\begin{theorem}[$\text{\cite{Br13II}}$]\label{thm: gln-esj} Keep the notation and assumption as in Conj.\ref{conj: gln-noa}.

(1) If $\Hom_{\GL_n(\Q_p)}\big(C(w_{\tilde{u}}^{\alg},w_{\tilde{u}}), \widehat{\Pi}(\rho)_{\an}\big)\neq 0$, then $w_{\tilde{u}}^{\alg}(\ul{h}_{\tilde{u}})\leq w_{\tilde{u}}^{\alg}(w_{\tilde{u}})(\ul{h}_{\tilde{u}})$. If moreover $n<3$ or $\lg(w_{\tilde{u}}^{\alg}(w_{\tilde{u}}))\leq 2$, then $w_{\tilde{u}}^{\alg}\leq w_{\tilde{u}}^{\alg}(w_{\tilde{u}})$.

(2) Suppose $n\leq 3$, or $F/F^+$ unramified, $G$ quasi-split at all finite places and $U_v$ maximal hyperspecial at all inert places, for $w_{\tilde{u}}\in S_n$, if $w^{\alg}_{\tilde{u}}(w_{\tilde{u}})\neq 1$, then there exists $w^{\alg}_{\tilde{u}}\in S_n\setminus \{1\}$, such that
\begin{equation*}
 \Hom_{\GL_n(\Q_p)}\big(C(w_{\tilde{u}}^{\alg},w_{\tilde{u}}), \widehat{\Pi}(\rho)_{\an}\big)\neq 0.
\end{equation*}
\end{theorem}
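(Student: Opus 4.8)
The plan is to read off both statements from the geometry of the eigenvariety $\cV := \cV(U^u,W_p^u)$ attached to $J_B\big(\widehat{S}(U^u,W_p^u)_{\an}\big)$, using the adjunction formula of \cite[Thm.4.3]{Br13II} (the case $P_1=P_2=B$ of Thm.\ref{thm: gln-ycn}; it applies since $\widehat{\Pi}(\rho)_{\an}$ is very strongly admissible by Lem.\ref{lem: gln-htn}) to translate between $\GL_n(\Q_p)$-equivariant maps out of the $C(w^{\alg},w)$ and $T(\Q_p)$-eigenvectors in $J_B$, together with the trianguline structure on classical points of $\cV$ from \S\ref{sec: 3.3}.

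For (1): assuming $\Hom_{\GL_n(\Q_p)}\big(C(w_{\tilde{u}}^{\alg},w_{\tilde{u}}),\widehat{\Pi}(\rho)_{\an}\big)\neq 0$ and recalling $C(w_{\tilde{u}}^{\alg},w_{\tilde{u}})=\cF_{\overline{B}}^{\GL_n}\big(\sL(w_{\tilde{u}}^{\alg}\cdot(-\lambda_{\tilde{u}})),\psi_{w_{\tilde{u}}}\delta_B^{-1}\big)$ (cf.\ \eqref{equ: gln-ngaa}), I would apply $J_B(\cdot)$ and invoke \cite[Cor.3.4]{Br13I} (as in the proof of Thm.\ref{thm: gln-tv0}) to produce a closed point $z'=(\chi',\fh_\rho)\in\cV(\overline{E})$ of weight $\wt(\chi')=w_{\tilde{u}}^{\alg}\cdot\lambda_{\tilde{u}}$ lying over the refinement $w_{\tilde{u}}$ of $\rho$; since $\rho_{z',\tilde{u}}\cong\rho_{\tilde{u}}$, the parametrization \eqref{equ: gln-sp1} forces its Weyl element to be $w_{\tilde{u}}^{\alg}(w_{\tilde{u}})$ and $\ul{h}(z')=\ul{h}_{\tilde{u}}$. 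Prop.\ref{prop: gln-enn} applied to $z'$ then yields $w_{\tilde{u}}^{\alg}(\ul{h}_{\tilde{u}})\leq w_{\tilde{u}}^{\alg}(w_{\tilde{u}})(\ul{h}_{\tilde{u}})$, the first claim. For the second: when $n<3$ this is immediate from the smallness of $S_n$, and when $\lg(w_{\tilde{u}}^{\alg}(w_{\tilde{u}}))\leq 2$ one checks by an elementary argument in $S_n$ (comparing partial sums of the strictly increasing tuple $\ul{h}_{\tilde{u}}$, cf.\ \cite{Br13II}) that this tuple inequality upgrades to $w_{\tilde{u}}^{\alg}\leq w_{\tilde{u}}^{\alg}(w_{\tilde{u}})$ in the Bruhat order.

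For (2): let $z=(\chi,\fh_\rho)\in\cV(\overline{E})$ be the point with $\chi=\psi_{w_{\tilde{u}}}\chi_{\lambda_{\tilde{u}}}$, which is classical since $\widehat{\Pi}(\rho)_{\lalg}\neq 0$ and has Weyl element $w(z)=w_{\tilde{u}}^{\alg}(w_{\tilde{u}})$. I would argue by contradiction, assuming $\Hom_{\GL_n(\Q_p)}\big(C(w_{\tilde{u}}^{\alg},w_{\tilde{u}}),\widehat{\Pi}(\rho)_{\an}\big)=0$ for all $w_{\tilde{u}}^{\alg}\neq 1$. Running the adjunction formula \eqref{equ: gon-ebgg} on the generalized eigenspace $J_B\big(\widehat{S}(U^u,W_p^u)_{\an}\big)[T(\Q_p)=\chi,\cH^{S,u}=\fh_\rho]$, and using the exactness of $\cF_{\overline{B}}^{\GL_n}(\cdot,\cdot)$ (Thm.\ref{thm: gln-pst}) with the assumed vanishing, one should obtain the infinitesimal classicality equality $J_B\big(\widehat{S}(U^u,W_p^u)_{\an}\big)[T(\Q_p)=\chi,\cH^{S,u}=\fh_\rho]=J_B\big(\widehat{S}(U^u,W_p^u)_{\lalg}\big)[T(\Q_p)=\chi,\cH^{S,u}=\fh_\rho]$, the $I=\emptyset$ analogue of the hypothesis of Prop.\ref{prop: gln-hlt}. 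Feeding this into the argument of Prop.\ref{prop: gln-hlt} with $I=\emptyset$ — which is where the hypotheses $n\leq 3$, or $F/F^+$ unramified with $G$ quasi-split and $U_v$ hyperspecial at inert places, enter, in order to invoke the multiplicity one input of \cite[Thm.4.8, Thm.4.10]{Che11} — gives that $\cV$ is \'etale over $\widehat{T^o}$ at $z$. But $w(z)=w_{\tilde{u}}^{\alg}(w_{\tilde{u}})\neq 1$, so Thm.\ref{thm: gln-tvI0} in the case $I=\emptyset$ (i.e.\ \cite[Thm.B]{Bergd14}) says $\cV$ is \emph{not} \'etale over $\widehat{T^o}$ at $z$ — a contradiction. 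Hence some $w_{\tilde{u}}^{\alg}\neq 1$ satisfies $\Hom_{\GL_n(\Q_p)}\big(C(w_{\tilde{u}}^{\alg},w_{\tilde{u}}),\widehat{\Pi}(\rho)_{\an}\big)\neq 0$.

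I expect the hard part to be the infinitesimal classicality step in (2): converting ``no nontrivial companion constituent $C(w^{\alg},w)$ embeds'' into the equality of the two generalized eigenspaces inside $J_B$, and from there into \'etaleness of $\cV$ over weight space. This step fuses the adjunction machinery of \cite[Thm.4.3]{Br13II}, the decomposition of (generalized) Verma modules under $\cF_{\overline{B}}^{\GL_n}(\cdot,\cdot)$, and Chenevier's multiplicity one and smoothness results — which is why the global hypotheses are needed for $n>3$; by contrast part (1) and the Bergdall non-\'etaleness used in part (2) are comparatively formal once $\cV$ and its trianguline structure are in hand.
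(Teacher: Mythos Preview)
The paper does not give its own proof of this theorem---it is stated purely as a citation from \cite{Br13II}. Your sketch is correct and is essentially the $I=\emptyset$ specialization of the arguments the paper itself develops: part~(1) is exactly the mechanism used in the proof of Thm.~\ref{thm: gln-tv0} (produce a companion point via \cite[Cor.~3.4]{Br13I} and apply Prop.~\ref{prop: gln-enn}), and part~(2) is the $I=\emptyset$ case of the proof of Thm.~\ref{thm: gln-3dG} (contrapositive of Prop.~\ref{prop: gln-hlt} combined with Bergdall's non-\'etaleness Thm.~\ref{thm: gln-tvI0}), which in turn is modeled on \cite{Br13II}. So your reconstruction matches the intended approach.
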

In particular, when $\lg(w^{\alg}_{\tilde{u}}(w_{\tilde{u}})\leq 1$, Conj.\ref{conj: gln-noa} was proved (under the global hypothesis as in Thm.\ref{thm: gln-esj}).
The following theorem is the main result of this note, which improves Thm.\ref{thm: gln-esj} (2)
\begin{theorem}\label{thm: gln-3dG}Keep the notation and assumption as in Conj.\ref{conj: gln-noa}.
 Suppose $n\leq 3$, or $F/F^+$ unramified, $G$ quasi-split at all finite places and $U_v$ maximal hyperspecial at all inert places. Let $I\subsetneq \Delta$, for $w_{\tilde{u}}\in S_n$, if $w^{\alg}_{\tilde{u}}(w_{\tilde{u}})\notin \sW_I$, then there exists $w^{\alg}_{\tilde{u}}\in S_n$ satisfying
\begin{itemize}
  \item $w^{\alg}_{\tilde{u}}\neq 1$,
  \item $\sL(w^{\alg}_{\tilde{u}}\cdot (-\lambda_{\tilde{u}}))$ is an irreducible component of the generalized Verma module $\text{U}(\ug)\otimes_{\text{U}(\overline{\fp}_I)} \sL_I(-\lambda_{\tilde{u}})$ \big(which implies in particular $w^{\alg}_{\tilde{u}}\cdot (-\lambda_{\tilde{u}})$ is $P_I$-dominant\big),
\end{itemize}
such that
\begin{equation*}
 \Hom_{\GL_n(\Q_p)}\big(C(w_{\tilde{u}}^{\alg},w_{\tilde{u}}), \widehat{\Pi}(\rho)_{\an}\big)\neq 0.
\end{equation*}
%Let $I\subsetneq \Delta$If $w^{\alg}(w)\notin \sW_I$, then there exists $w^{\alg}\neq 1$ with $w^{\alg}\cdot \wt(\chi)$ $P_I$-dominant, $w^{\alg}(\ul{h}) \leq w^{\alg}(w)(\ul{h})$ such that $\cC(w^{\alg},)$ is a subrepresentation of $\widehat{\Pi}(\rho)_{\an}$.
\end{theorem}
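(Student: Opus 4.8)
Here is the plan. The goal is to realize one of the companion constituents $C(w^{\alg}_{\tilde u},w_{\tilde u})$ with $w^{\alg}_{\tilde u}\neq 1$ as a genuine subrepresentation of $\widehat{\Pi}(\rho)_{\an}$, by running, on the \emph{refined} eigenvariety $\cV_{I,\lambda_0}^{\chi_0}(U^u,W_p^u)$, the analogue of the $\GL_3$ argument of the introduction (Bergdall's non-étaleness plus the adjunction formula).

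\emph{The point.} Let $\lambda_0$ be the restriction of $\lambda_{\tilde u}$ to $\ft_{\SL_n}$ and $\chi_0$ the smooth character of $T_I^o$ induced by $\psi_{w_{\tilde u}}\chi_{\lambda_{\tilde u}}$, so that the classical point $z:=z_{\rho,w_{\tilde u}}=(\chi,\fh_\rho)$ with $\chi:=\psi_{w_{\tilde u}}\chi_{\lambda_{\tilde u}}$ lies on $\cV_{I,\lambda_0}^{\chi_0}(U^u,W_p^u)$. Indeed, $\widehat{\Pi}(\rho)_{\lalg}\neq 0$ together with the fact that, by very regularity, $(\Ind_{\overline B}^{\GL_n}\psi_w\delta_B^{-1})^\infty$ is the \emph{same} irreducible smooth representation for every refinement $w$, forces (via Proposition \ref{prop: gln-stm} and the usual computation of $J_B$ of a principal series) $C(1,w_{\tilde u})\hookrightarrow\widehat{\Pi}(\rho)_{\lalg}$ for each $w_{\tilde u}$, whence $z$ is classical and lies on the closed subspace $\cV_{I,\lambda_0}^{\chi_0}(U^u,W_p^u)$ because $J_{B,(P_I,\lambda_0)}(C(1,w_{\tilde u}))\neq 0$ (it is locally algebraic of type $\sL(\lambda_{\tilde u})$). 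By construction and global triangulation theory, the element $w(z)\in\sW$ attached to $z$ in (\ref{equ: gln-sp1}) equals $w^{\alg}_{\tilde u}(w_{\tilde u})$, which by hypothesis lies outside $\sW_I$.

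\emph{From non-étaleness to a non-classical vector, then to a map.} Since $w(z)\notin\sW_I$, Theorem \ref{thm: gln-tvI0} (Bergdall) shows $\kappa\colon\cV_{I,\lambda_0}^{\chi_0}(U^u,W_p^u)\to\widehat{Z_{L_I}^o}$ is \emph{not} étale at $z$. The hypotheses of Proposition \ref{prop: gln-hlt} hold at $z$ ($\rho_z=\rho$ absolutely irreducible, $\rho_{z,\tilde v}$ crystalline and very regular for all $\tilde v\mid p$, and the running global hypothesis), so its contrapositive forces the failure of (\ref{equ: gln-sese}): the generalized eigenspace $J_{B,(P_I,\lambda_0)}\big(\widehat{S}(U^u,W_p^u)_{\an}\big)^{T^o=\chi}[T(\Q_p)=\chi,\cH^{S,u}=\fh_\rho]$ contains a vector $v$ that is \emph{not} locally algebraic. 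Exactly as in the proof of Proposition \ref{prop: gln-ngi}, let $\pi_\psi$ be the (cyclic, hence finite length with irreducible cosocle) smooth $T(\Q_p)$-representation with all Jordan--Hölder factors $\cong\psi_{w_{\tilde u}}$ underlying the $T(\Q_p)$-subrepresentation generated by $v$; the adjunction formula Theorem \ref{thm: gln-ycn} (with $P_1=B$, $P_2=P_I$, $\lambda=\lambda_{\tilde u}$, $V=\widehat{S}(U^u,W_p^u)_{\an}$ very strongly admissible) turns the inclusion of this subrepresentation into a nonzero $\GL_n(\Q_p)$-map
\begin{multline*}
  f\colon\ \mathcal A:=\cF_{\overline B}^{\GL_n}\Big(\big(\text{U}(\ug)\otimes_{\text{U}(\overline{\fp}_I)}\sL_I(\lambda_{\tilde u})'\big)^{\vee},\ \pi_\psi\otimes_E\delta_B^{-1}\Big)\\
  \longrightarrow\ \widehat{S}(U^u,W_p^u)_{\an}\big[\cH^{S,u}=\fh_\rho\big]
\end{multline*}
(cf. (\ref{equ: gln-ngug})), and the image of $f$ is \emph{not} locally algebraic because $v$ is not. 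Using Theorem \ref{thm: gln-pst} and the remark following it (legitimate since $(\Ind_{\overline B}^{\GL_n}\psi_{w_{\tilde u}}\delta_B^{-1})^\infty$ is irreducible) together with Lemma \ref{lem: gln-ssn}, the irreducible constituents of $\mathcal A$ are precisely the $C(w^{\alg}_{\tilde u},w_{\tilde u})$ (see (\ref{equ: gln-ngaa})) with $\sL(w^{\alg}_{\tilde u}\cdot(-\lambda_{\tilde u}))$ an irreducible constituent of the generalized Verma module — the condition in the statement, written there via $\sL_I(-\lambda_{\tilde u})$ — among which is $C(1,w_{\tilde u})$ for $w^{\alg}_{\tilde u}=1$. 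Since the generalized Verma module has simple quotient and $\pi_\psi$ has irreducible cosocle, $\mathcal A$ has \emph{simple cosocle} $C(1,w_{\tilde u})$; its maximal locally algebraic quotient is $\mathcal A^{\lalg}:=(\Ind_{\overline B}^{\GL_n}\pi_\psi\delta_B^{-1})^\infty\otimes_E\sL(\lambda_{\tilde u})$, and $\mathcal A^0:=\ker(\mathcal A\twoheadrightarrow\mathcal A^{\lalg})$ has no locally algebraic constituent, all of its factors being $C(w^{\alg}_{\tilde u},w_{\tilde u})$ with $w^{\alg}_{\tilde u}\neq 1$.

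\emph{Extracting a subrepresentation, and conclusion.} As $\Ima(f)$ is not locally algebraic, $f$ does not factor through $\mathcal A^{\lalg}$, i.e. $f(\mathcal A^0)\neq 0$. Hence $f(\mathcal A^0)$ is a nonzero $\GL_n(\Q_p)$-subrepresentation of $\widehat{S}(U^u,W_p^u)_{\an}[\cH^{S,u}=\fh_\rho]$ every irreducible constituent of which is some $C(w^{\alg}_{\tilde u},w_{\tilde u})$ with $w^{\alg}_{\tilde u}\neq 1$; its socle therefore contains such a $C(w^{\alg}_{\tilde u},w_{\tilde u})$ as an honest $\GL_n(\Q_p)$-subrepresentation of $\widehat{S}(U^u,W_p^u)_{\an}$. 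On this irreducible piece $\cH^{S,u}$ acts through the character $\fh_\rho$, so $\fm_\rho$ annihilates it and $C(w^{\alg}_{\tilde u},w_{\tilde u})\hookrightarrow\widehat{S}(U^u,W_p^u)^{\fm_\rho}_{\an}=\widehat{\Pi}(\rho)_{\an}$ with $w^{\alg}_{\tilde u}\neq 1$ and $\sL(w^{\alg}_{\tilde u}\cdot(-\lambda_{\tilde u}))$ a constituent of $\text{U}(\ug)\otimes_{\text{U}(\overline{\fp}_I)}\sL_I(-\lambda_{\tilde u})$, which is exactly the assertion.

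\emph{Where the difficulty lies.} All the heavy machinery is imported: the non-étaleness of $\kappa$ when $w(z)\notin\sW_I$ (Theorem \ref{thm: gln-tvI0}, Bergdall), the converse ``infinitesimal classicality $\Rightarrow$ étale'' (Proposition \ref{prop: gln-hlt}, via Chenevier-type multiplicity-one arguments — this is where the global hypothesis $n\le 3$ / unramified–quasi-split–hyperspecial is used), and the adjunction formula Theorem \ref{thm: gln-ycn} for the refined functor $J_{B,(P_I,\lambda_0)}$. Granting these, the one genuinely delicate step is the passage from a \emph{subquotient} statement to a \emph{subrepresentation} statement in the last paragraph; this is made possible exactly by the simple-cosocle structure of $\cF_{\overline B}^{\GL_n}\big((\text{U}(\ug)\otimes_{\text{U}(\overline{\fp}_I)}\sL_I(\lambda_{\tilde u})')^{\vee},-\big)$ (the locally algebraic constituent $C(1,w_{\tilde u})$ sits on top, so that the non-locally-algebraic part $\mathcal A^0$, which embeds into $\widehat{S}(U^u,W_p^u)_{\an}$ after dividing by $\ker f$, contains no locally algebraic factor at all). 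One must also check the bookkeeping of the first paragraph placing $z_{\rho,w_{\tilde u}}$ on $\cV_{I,\lambda_0}^{\chi_0}(U^u,W_p^u)$ for the right $(\lambda_0,\chi_0)$; this is routine but it is precisely what brings the refined eigenvariety (rather than $\cV(U^u,W_p^u)$, whose general classicality criterion is too weak here) into play.
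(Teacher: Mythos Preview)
Your proof is correct and follows essentially the same approach as the paper's: place the classical point $z_{\rho,w_{\tilde u}}$ on the refined eigenvariety $\cV_{I,\lambda_0}^{1}(U^u,W_p^u)$ (note $\psi_{w_{\tilde u}}$ is unramified, so your $\chi_0$ is indeed the trivial character, as in the paper), invoke Bergdall's non-\'etaleness (Theorem~\ref{thm: gln-tvI0}), take the contrapositive of Proposition~\ref{prop: gln-hlt} to produce a non-locally-algebraic vector in the generalized eigenspace, and then apply the adjunction formula (Theorem~\ref{thm: gln-ycn}). Your write-up is more explicit than the paper's on the final step --- spelling out why the resulting map $f$ does not factor through the locally algebraic quotient $\mathcal A^{\lalg}$, and why $f(\mathcal A^0)$ therefore contributes an honest irreducible subrepresentation $C(w^{\alg}_{\tilde u},w_{\tilde u})$ with $w^{\alg}_{\tilde u}\neq 1$ to the socle --- but this is exactly what the paper's terse ``the theorem follows by applying the adjunction formula'' is encoding.
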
 \begin{proof}
  Since $\widehat{\Pi}(\rho)_{\lalg}\neq 0$, we associate to $\rho$ a system of Hecke eigenvalues $\fh_{\rho}:\cH^{S,u}\ra E$. Indeed, by Prop.\ref{prop: gln-stm} and (\ref{equ: gln-ohr}), there exists an automorphic representation $\pi=\pi^{\infty}\otimes \pi_{\infty}$, with $\cH^{S,p}_0$ acting on $(\pi^{\infty})^{U^u}$ by $\cH^{S,p}_0/\fm_{\rho}$ (since $\pi^{\infty}$ is defined over a number field, by enlarging $E$, we assume $\pi^{\infty}$ is defined over $E$). Thus $\cH^{S,u}$ acts on $(\pi^{\infty})^{U^u}$ via a morphism $\fh_{\rho}: \cH^{S,u}\ra E$. Let $\lambda_0:=\lambda_{\tilde{u}}|_{\ft^{\cD}}$, for each $w\in S_n$, we get an $E$-point $z_{w_{\tilde{u}}}=(\chi_{w_{\tilde{u}}},\fh_{\rho})$ in $\cV:=\cV_{I,\lambda_0}^{1}(U^u, W_p^u)$ with $\chi_{w_{\tilde{u}}}=\psi_{w_{\tilde{u}}}\chi_{\lambda_{\tilde{u}}}$ (where ``$1$" denotes the trivial character). Note, with the notation in Thm.\ref{thm: gln-tvI0}, $w^{\alg}_{\tilde{u}}(w_{\tilde{u}})$ is just $w(z_{w_{\tilde{u}}})$. By Thm.\ref{thm: gln-tvI0}, if $w^{\alg}_{\tilde{u}}(w_{\tilde{u}})\notin \sW_I$, then $\cV$ is not \'etale over $\widehat{Z_{L_I}^o}$. Then by Prop.\ref{prop: gln-hlt}, the natural injection
 \begin{multline*}
    J_{B,(P_I,\lambda_0)}\big(\widehat{S}(U^u,W_p^u)_{\lalg}\big)^{T^o=\chi_{w_{\tilde{u}}}}[T(\Q_p)=\chi_{w_{\tilde{u}}},\cH^{S,u}=\fh_{\rho}] \\ \hooklongrightarrow J_{B,(P_I,\lambda_0)}\big(\widehat{S}(U^u,W_p^u)_{\an}\big)^{T^o=\chi_{w_{\tilde{u}}}}[T(\Q_p)=\chi_{w_{\tilde{u}}},\cH^{S,u}=\fh_{\rho}],
  \end{multline*}
  is not surjective. The theorem follows by applying the adjunction formula Thm.\ref{thm: gln-ycn} to the $T(\Q_p)$-representation $J_{B,(P_I,\lambda_0)}\big(\widehat{S}(U^u,W_p^u)_{\an}\big)^{T^o=\chi_{w_{\tilde{u}}}}[T(\Q_p)=\chi_{w_{\tilde{u}}},\cH^{S,u}=\fh_{\rho}]$.
 \end{proof}
In particular, when $n=3$, taking $P_I$ to be a maximal proper parabolic subgroup of $\GL_3$, then ``$w^{\alg}_{\tilde{u}}$" in the theorem is equal to the simple reflection $s\notin \sW_I$ if exists. Indeed, in this case, one has an exact sequence (cf. \cite[\S 9.5]{Hum08})
\begin{equation}\label{equ: gln-gspi}
  0 \ra \sL(s \cdot (-\lambda_{\tilde{u}})) \ra \text{U}(\ug)\otimes_{\text{U}(\overline{\fp}_I)} \sL_I(-\lambda_{\tilde{u}}) \ra \sL(-\lambda_{\tilde{u}})\ra  0.
\end{equation}Thus putting Thm.\ref{thm: gln-esj} (1) and Thm.\ref{thm: gln-3dG} together, one gets
\begin{corollary}\label{cor: gln-ang}
  Suppose $n=3$, let $\alpha\in \Delta$, then $s_{\alpha}\leq w^{\alg}_{\tilde{u}}(w_{\tilde{u}})$ if and only if $C(s_{\alpha}, w_{\tilde{u}})$ is a subrepresentation of $\widehat{\Pi}(\rho)_{\an}$. In particular, if $\lg(w_{\tilde{u}}^{\alg}(w_{\tilde{u}}))\geq 2$, then $\oplus_{\alpha\in \Delta}C(s_{\alpha},w_{\tilde{u}})$ is a subrepresentation of $\widehat{\Pi}(\rho)_{\an}$.
\end{corollary}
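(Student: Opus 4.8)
The plan is to deduce the equivalence by combining the ``only if'' half of Theorem~\ref{thm: gln-esj}(1) with the ``if'' half supplied by Theorem~\ref{thm: gln-3dG}, and then to read off the ``in particular'' clause from the Bruhat combinatorics of $\sW\cong S_3$. Write $w:=w_{\tilde u}$ and $\lambda:=\lambda_{\tilde u}$; since $\rho_{\tilde u}$ is very regular, $\lambda$ is regular dominant, so $w^{\alg}\mapsto w^{\alg}\cdot(-\lambda)$ is injective on $\sW$. Consequently each $C(w^{\alg},w)$ is irreducible by Theorem~\ref{thm: gln-pst}(4), the assignment $w^{\alg}\mapsto C(w^{\alg},w)$ is injective up to isomorphism, and for such an irreducible $C(w^{\alg},w)$ the conditions ``$C(w^{\alg},w)$ is a subrepresentation of $\widehat{\Pi}(\rho)_{\an}$'' and ``$\Hom_{\GL_3(\Q_p)}(C(w^{\alg},w),\widehat{\Pi}(\rho)_{\an})\neq0$'' coincide; I shall use them interchangeably.

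For the ``only if'' direction, suppose $C(s_\alpha,w)$ embeds into $\widehat{\Pi}(\rho)_{\an}$. If $\lg(w^{\alg}_{\tilde u}(w))\leq2$, then Theorem~\ref{thm: gln-esj}(1), applied to $w^{\alg}_{\tilde u}=s_\alpha$, yields $s_\alpha\leq w^{\alg}_{\tilde u}(w)$ directly. If instead $\lg(w^{\alg}_{\tilde u}(w))=3$, then $w^{\alg}_{\tilde u}(w)$ is the longest element of $S_3$ and $s_\alpha\leq w^{\alg}_{\tilde u}(w)$ is automatic. So this implication costs nothing beyond Theorem~\ref{thm: gln-esj}(1).

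For the ``if'' direction, assume $s_\alpha\leq w^{\alg}_{\tilde u}(w)$ and let $\beta\in\Delta$ be the simple root distinct from $\alpha$, so that with $I:=\{\beta\}$ the subgroup $P_I$ is a maximal proper parabolic of $\GL_3$ and $\sW_I=\{1,s_\beta\}$. The combinatorial point that makes the statement work is that in $S_3$ one has $s_\alpha\not\leq 1$ and $s_\alpha\not\leq s_\beta$, so $s_\alpha\leq w^{\alg}_{\tilde u}(w)$ forces $w^{\alg}_{\tilde u}(w)\notin\sW_I$. I would then invoke Theorem~\ref{thm: gln-3dG} for this $I$: it produces $w^{\alg}_{\tilde u}\neq1$ with $\sL(w^{\alg}_{\tilde u}\cdot(-\lambda))$ an irreducible constituent of $\text{U}(\ug)\otimes_{\text{U}(\overline{\fp}_I)}\sL_I(-\lambda)$ and $\Hom_{\GL_3(\Q_p)}(C(w^{\alg}_{\tilde u},w),\widehat{\Pi}(\rho)_{\an})\neq0$. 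By the short exact sequence (\ref{equ: gln-gspi}) this generalized Verma module has exactly the two constituents $\sL(s_\alpha\cdot(-\lambda))$ and $\sL(-\lambda)=\sL(1\cdot(-\lambda))$; since $w^{\alg}_{\tilde u}\neq1$ and $w^{\alg}\mapsto w^{\alg}\cdot(-\lambda)$ is injective, necessarily $w^{\alg}_{\tilde u}=s_\alpha$, whence $C(s_\alpha,w)$ is a subrepresentation of $\widehat{\Pi}(\rho)_{\an}$.

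Finally, for the ``in particular'' clause: if $\lg(w^{\alg}_{\tilde u}(w))\geq2$ then $w^{\alg}_{\tilde u}(w)$ dominates both simple reflections, so by the equivalence just established $C(s_1,w)$ and $C(s_2,w)$ both embed into $\widehat{\Pi}(\rho)_{\an}$; they are non-isomorphic irreducible subrepresentations because $s_1\cdot(-\lambda)\neq s_2\cdot(-\lambda)$, hence their internal sum is direct and equals $\oplus_{\alpha\in\Delta}C(s_\alpha,w)\hookrightarrow\widehat{\Pi}(\rho)_{\an}$. The only substantive ingredient is Theorem~\ref{thm: gln-3dG} itself, whose proof passes through the non-\'etaleness result of Bergdall (Theorem~\ref{thm: gln-tvI0}) and the adjunction formula (Theorem~\ref{thm: gln-ycn}); for the present corollary what remains is $S_3$-Bruhat bookkeeping together with the two-step structure recorded in (\ref{equ: gln-gspi}), and I anticipate no real difficulty beyond correctly matching the parabolic $P_I$ to the simple reflection $s_\alpha$.
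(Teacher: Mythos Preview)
Your proposal is correct and follows essentially the same approach as the paper: the ``only if'' direction is Theorem~\ref{thm: gln-esj}(1) (together with the trivial case $\lg=3$), and the ``if'' direction applies Theorem~\ref{thm: gln-3dG} to the maximal proper parabolic $P_I$ with $I=\{\beta\}$, using the short exact sequence~(\ref{equ: gln-gspi}) to pin down $w^{\alg}_{\tilde u}=s_\alpha$. Your write-up simply makes explicit a few points (regularity of $\lambda$, non-isomorphism of $C(s_1,w)$ and $C(s_2,w)$) that the paper leaves implicit or cites from \cite{Br13I}.
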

In the general setting, let $\cP$ denote the set of maximal proper parabolic subgroups $P_I$ of $\GL_n$ such that $w^{\alg}_{\tilde{u}}(w_{\tilde{u}}) \notin \sW_I$, thus $|\cP|=|\{\alpha\in \Delta\ |\ s_{\alpha}\leq w^{\alg}_{\tilde{u}}(w_{\tilde{u}})\}|$. For each $P_I\in \cP$, by Thm.\ref{thm: gln-3dG} one gets a subrepresentation $C(w^{\alg}_{\tilde{u},I}, w)$ of $\widehat{\Pi}(\rho)_{\an}$ \big(where $w^{\alg}_{\tilde{u},I}$ is the ``$w^{\alg}_{\tilde{u}}$" in Thm.\ref{thm: gln-3dG} applied to $P_I$\big). Note that these $w^{\alg}_{\tilde{u},I}$ are distinct since for $w\in \sW$ if $w\cdot (-\lambda_{\tilde{u}})$ is dominant for two different maximal proper parabolic subgroups, then $w\cdot (-\lambda_{\tilde{u}})$ is dominant, and hence $w=1$. By  \cite[Lem.6.2]{Br13I}, the locally analytic representations $C(w^{\alg}_{\tilde{u},I},w_{\tilde{u}})$ are distinct for different $P_I$. Thus one has
\begin{corollary}\label{cor: gln-nmf}
  Keep the above notation and the assumption in Thm.\ref{thm: gln-3dG}, $\oplus_{P_I\in \cP} C(w^{\alg}_{\tilde{u},I}, w_{\tilde{u}})$ is a subrepresentation of $\widehat{\Pi}(\rho)_{\an}$.
\end{corollary}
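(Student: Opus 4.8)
The plan is to assemble the individual companion embeddings produced by Theorem \ref{thm: gln-3dG} into a single embedding of the direct sum. For each maximal proper parabolic $P_I\in\cP$, i.e. each $I=\Delta\setminus\{\alpha\}$ with $s_\alpha\leq w^{\alg}_{\tilde u}(w_{\tilde u})$ (so that $w^{\alg}_{\tilde u}(w_{\tilde u})\notin\sW_I$), I would apply Theorem \ref{thm: gln-3dG} to obtain a nonzero $\GL_n(\Q_p)$-equivariant map $\iota_I\colon C(w^{\alg}_{\tilde u,I},w_{\tilde u})\to\widehat{\Pi}(\rho)_{\an}$, where $w^{\alg}_{\tilde u,I}\neq 1$ and $\sL(w^{\alg}_{\tilde u,I}\cdot(-\lambda_{\tilde u}))$ is an irreducible constituent of $\text{U}(\ug)\otimes_{\text{U}(\overline{\fp}_I)}\sL_I(-\lambda_{\tilde u})$. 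Since $C(w^{\alg}_{\tilde u,I},w_{\tilde u})$ is irreducible (noted after (\ref{equ: gln-ngaa}), via Theorem \ref{thm: gln-pst}(4)), each $\iota_I$ is injective; write $V_I:=\iota_I(C(w^{\alg}_{\tilde u,I},w_{\tilde u}))\subseteq\widehat{\Pi}(\rho)_{\an}$ for its image, an irreducible subrepresentation.

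Next I would record that the $V_I$, $P_I\in\cP$, are pairwise non-isomorphic; this is exactly the content of the two observations in the paragraph preceding the statement. First, the elements $w^{\alg}_{\tilde u,I}$ are pairwise distinct: if a single $w$ made $w\cdot(-\lambda_{\tilde u})$ dominant for two different maximal proper parabolic subgroups, then $w\cdot(-\lambda_{\tilde u})$ would be (fully) dominant, forcing $w=1$ and contradicting $w^{\alg}_{\tilde u,I}\neq 1$. Second, for distinct values of $w^{\alg}$ the representations $C(w^{\alg},w_{\tilde u})$ are pairwise non-isomorphic by \cite[Lem.6.2]{Br13I}. Hence $V_I\not\cong V_{I'}$ whenever $I\neq I'$.

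It then remains to invoke the elementary fact that a sum of pairwise non-isomorphic irreducible subrepresentations inside a given representation is automatically their internal direct sum. Setting $V:=\sum_{P_I\in\cP}V_I\subseteq\widehat{\Pi}(\rho)_{\an}$: if this sum were not direct, pick $\cP'\subseteq\cP$ minimal for which $\sum_{P_I\in\cP'}V_I$ fails to be direct, and $P_{I_0}\in\cP'$; then irreducibility of $V_{I_0}$ together with minimality of $\cP'$ gives $V_{I_0}\subseteq\sum_{P_I\in\cP'\setminus\{I_0\}}V_I=\bigoplus_{P_I\in\cP'\setminus\{I_0\}}V_I$, a semisimple representation all of whose irreducible constituents lie among the $V_I$ with $I\neq I_0$; since $V_{I_0}$ is irreducible it would be isomorphic to one of those, contradicting the previous paragraph. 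So $V=\bigoplus_{P_I\in\cP}V_I\cong\bigoplus_{P_I\in\cP}C(w^{\alg}_{\tilde u,I},w_{\tilde u})$ is a subrepresentation of $\widehat{\Pi}(\rho)_{\an}$, which is the assertion. The only substantive ingredient is Theorem \ref{thm: gln-3dG}; the rest is bookkeeping, so I anticipate no genuine obstacle — the single point that truly needs checking is the pairwise non-isomorphism of the $V_I$, which is what guarantees that the images are independent rather than overlapping.
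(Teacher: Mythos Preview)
Your proposal is correct and follows essentially the same approach as the paper: apply Theorem \ref{thm: gln-3dG} to each $P_I\in\cP$, observe that the resulting $w^{\alg}_{\tilde u,I}$ are pairwise distinct via the dominance argument, invoke \cite[Lem.6.2]{Br13I} to get pairwise non-isomorphism of the $C(w^{\alg}_{\tilde u,I},w_{\tilde u})$, and conclude that the sum of these irreducible subrepresentations is direct. You spell out the last step more carefully than the paper, but the logic is identical.
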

Indeed, when $n\geq 4$, for a maximal proper parabolic subgroup $P$ of $\GL_n$, the generalized Verma module $\text{U}(\ug) \otimes_{\text{U}(\overline{\fp})} \sL_I(-\lambda_{\tilde{u}})$ might be more complicated, consequently,  in general Thm.\ref{thm: gln-3dG} could not explicate the ``$w^{\alg}_{\tilde{u},I}$" in Cor.\ref{cor: gln-nmf} (unlike the $\GL_3(\Q_p)$ case as in Cor.\ref{cor: gln-ang}). We end this note by an example for $\GL_4(\Q_p)$.
\begin{example}
  Suppose $n=4$, $\lambda_{\tilde{u}}=0$, we identify the set of simple roots $\Delta $ with $\{1,2,3\}$ as in \S \ref{sec: gln-3.1}, thus  $L_{\{1,2\}}= \GL_3\times \GL_1$, $L_{\{2,3\}}= \GL_1\times \GL_3$ and $L_{\{1,3\}}=\GL_2 \times \GL_2$. Denote by $s_i\in S_4$, $i\in \{1,2,3\}$ the corresponding simple reflection, let $I\subseteq \{1,2,3\}$, $|I|=2$, $i^I\in \{1,2,3\}$, $i^I\notin I$, and denote by $\cS_I$ the set of irreducible composants of the generalized Verma module $\text{U}(\ug) \otimes_{\text{U}(\overline{\fp}_I)} \sL_I(0)$ (which all have multiplicity one by \cite[Thm.8.4]{EnSh}). By \emph{loc. cit.}, one has
  \begin{equation*}
    \cS_I=\begin{cases}
       \big\{\sL(0), \sL(s_{i^I}\cdot 0)\big\} & I\in \{\{1,2\}, \{2,3\}\}\\
       \big\{\sL(0), \sL(s_2\cdot 0), \sL((s_2s_3s_1s_2)\cdot 0)\big\} & I=\{1,3\}
    \end{cases}.
  \end{equation*}
  Thus as in $\GL_3(\Q_p)$ case, one has
  \begin{itemize}
    \item let $i=1$ or $3$, then $s_i\leq w^{\alg}(w_{\tilde{u}})$ if and only if $C(s_i,w_{\tilde{u}})$ is a subrepresentation of $\widehat{\Pi}(\rho)_{\an}$.
  \end{itemize}
  Since $\lambda_{\tilde{u}}=0$, $\ul{h}_{\tilde{u}}=(0,1,2,3)$. One can check for $w\in S_4$, $w(\ul{h}_{\tilde{u}})\geq (s_2s_3s_1s_2)(\ul{h}_{\tilde{u}})$ if and only if $w\geq s_2s_3s_1s_2$. Thus by Thm.\ref{thm: gln-3dG} and Thm.\ref{thm: gln-esj} (1), one gets
   \begin{itemize}\item if $w^{\alg}(w_{\tilde{u}})\ngeq s_2s_3s_1s_2$, then $s_i \leq w^{\alg}(w_{\tilde{u}})$ if and only if $C(s_i, w_{\tilde{u}})$ is a subrepresentation of $\widehat{\Pi}(\rho)_{\an}$ for $i\in \{1,2,3\}$.
   \end{itemize}
   However, if $w^{\alg}(w_{\tilde{u}})\geq s_2s_3s_1s_2$, the author does not know how to see the (conjectured) injection $C(s_2,w_{\tilde{u}})\hookrightarrow \widehat{\Pi}(\rho)_{\an}$.
\end{example}


\begin{thebibliography}{99}
  \bibitem{BCh}
Bellaïche J., Chenevier G., {\em Families of Galois representations and Selmer groups}, Astérique 324, (2009).

  \bibitem{Bergd14}
  Bergdall J., {\em Paraboline variation of $p$-adic families of $(\varphi,\Gamma) $-modules}, arXiv preprint arXiv:1410.3412, (2014).
  \bibitem{BCho14}
  Bergdall J., Chojecki P., {\em Ordinary representations and companion points for $U(3)$ in the indecomposable case}, arXiv preprint arXiv:1405.3026 (2014).
\bibitem{Br0}
Breuil C., {\em The emerging $p$-adic Langlands programme}, Proceedings of I.C.M.2010, Vol II, 203-230.

  \bibitem{Br13I}
Breuil C., {\em Vers le socle localement analytique pour $\GL_n$, I}, to appear in Annales de l'Institut Fourier, (2013).

\bibitem{Br13II}
Breuil C., {\em Vers le socle localement analytique pour $\GL_n$,  II}, Math. Annalen. 361, (2015), 741-785.

\bibitem{BHS1}
Breuil C., Hellmann E., Schraen B., {\em Une interpr\'etation modulaire de la vari\'et\'e trianguline}, preprint, arXiv:1411.7260, (2014).
\bibitem{BHS2}
Breuil C., Hellmann E., Schraen B., {\em Smoothness and Classicality on eigenvarieties},  preprint arXiv:1510.01222, (2015).

\bibitem{Bu}
Buzzard K., {\em Eigenvarieties}, London mathmatical society lecture note series 320, (2007), P.59.


\bibitem{Che}
Chenevier G., {\em Familles $p$-adiques de formes automorphes pour $\GL_n$}, J. reine angew. Math, 570, (2004), 143-217.
\bibitem{Che11}
Chenevier G., {\em On the infinite fern of Galois representations of unitary type}, Ann. Sci. \'Ec. Norm. Sup\'er.(4), 44(6), (2011), 963-1019.

\bibitem{Cole97}
Coleman R., {\em p-adic Banach spaces and families of modular forms}. Invent. Math. 127(3), (1997), 417-479.
\bibitem{CD}
Colmez P., Dospinescu G.,  {\em Compl\'et\'es universels de repr\'esentations de $\GL_2(\Q_p)$},  Algebra \& Number Theory, 8(6), (2014), 1447-1519.

\bibitem{Ding}
Ding Y., {\em Formes modulaires $p$-adiques sur les courbes de Shimura unitares et compatibilit\'e local-global}, thesis, available at: \texttt{https://sites.google.com/site/yiwendingmath/thesis.pdf}.

\bibitem{Em04}
Emerton M., {\em Locally analytic vectors in representations of locally p-adic analytic groups}, to appear in Memoirs of the Amer. Math. Soc., (2004).
\bibitem{Em1}
Emerton M., {\em On the interpolation of systems of eigenvalues attached to automorphic Hecke eigenforms}, Invent. Math. 164, (2006), 1-84.

\bibitem{Em11}
Emerton M., {\em Jacquet Modules of locally analytic representations of $p$-adic reductive groups I. constructions and first properties}, Ann. Sci. E.N.S.39, no. 5, (2006), 775-839.

\bibitem{Em2}
Emerton M., {\em Jacquet modules of locally analytic representations of p-adic reductive groups II. The relation to parabolic induction}, J. Institut Math. Jussieu, (2007).

\bibitem{EnSh}
Enright T., Shelton B., {\em Categories of highest weight modules: applications to classical Hermitian symmetric pairs}. Vol. 367. American Mathematical Soc., 1987.


\bibitem{HL}
Hill R.,  Loeffler D., {\em Emerton's Jacquet functors for non-Borel parabolic subgroups}, Documenta Mathematica, 16, (2011), 1-31.

\bibitem{Hum08}
Humphreys J. E., {\em Representations of semisimple Lie algebras in the BGG category O,} American Mathematical Soc., (2008).

\bibitem{KPX}
Kedlaya K., Pottharst J., Xiao L., {\em Cohomology of arithmetic families of $(\varphi,\Gamma)$-modules}, to appear in J. of the Amer. Math. Soc., (2012).
\bibitem{Liu}
Liu R., {\em Triangulation of refined families}, arXiv preprint arXiv:1202.2188, (2012).



\bibitem{OS}
Orlik S.,  Strauch M., {\em On Jordan-H\"older series of some locally analytic representations},  Journal of the American Mathematical Society, 28(1), 99-157.

\bibitem{ST01f}
Schneider P., Teitelbaum J., {\em U(g)-finite locally analytic representations}, Represent. Theory 5, (2001), 111-128.

\bibitem{ST03}
Schneider P., Teitelbaum J., {\em Algebras of $p$-adic distributions and admissible representations}, Invent. math. 153, (2003), 145-196.
  \end{thebibliography}
\end{document}